\numberwithin{equation}{section}
\newtheorem{Proposition}[equation]{Proposition}
\newtheorem{Lemma}[equation]{Lemma}
\newtheorem{Theorem}[equation]{Theorem}
\newtheorem{Corollary}[equation]{Corollary}
\newtheorem{MainTheorem}{Theorem}
\theoremstyle{definition}  
\newtheorem{Example}[equation]{Example}
\newcommand\Comment[2][\relax]{\space\par\medskip\noindent%
   \fbox{\begin{minipage}{\textwidth}\textbf{Comment\ifx\relax#1\else---#1\fi}\newline%
        #2\end{minipage}}\medskip
}
\def\bi{{\bm{i}}}
\def\bj{{\bm{j}}}
\def\bolde{{\bm{e}}}
\def\bk{{\bm{k}}}
\def\b1{{\bm{1}}}
\def\bd{{\bm{d}}}
\def\bzero{{\bm{0}}}
\def\bolde{\text{\bf e}}
\def\bLa{{\bm{\Lambda}}}
\def\beps{{\bm{\varepsilon}}}
\def\bde{{\bm{\delta}}}
\def\bga{{\bm{\gamma}}}
\def\pmod#1{\text{ }(\text{\rm mod } #1)\,}
\newcommand{\KP}{\operatorname{KP}}
\newcommand{\Ext}{\operatorname{Ext}}
\newcommand{\EXT}{\operatorname{Ext}}
\newcommand{\im}{\operatorname{im}}
\newcommand{\id}{\operatorname{id}}
\def\sgn{\mathtt{sgn}}
\def\e{\mathtt{e}}
\newcommand{\head}{\operatorname{head}}
\newcommand{\Z}{\mathbb{Z}}
\def\eps{{\varepsilon}}
\def\phi{{\varphi}}
\newcommand{\F}{{\mathbb F}}
\newcommand{\ga}{\gamma}
\newcommand{\la}{\lambda}
\newcommand{\La}{\Lambda}
\newcommand{\al}{\alpha}
\newcommand{\be}{\beta}
\def\Si{\mathfrak{S}}
\newcommand{\si}{\sigma}
\newcommand{\om}{\omega}
\newcommand{\de}{\delta}
\newcommand{\De}{\Delta}
\renewcommand{\th}{\theta}
\def\id{\mathop{\mathrm {id}}\nolimits}
\newcommand{\Ind}{{\mathrm {Ind}}}
\newcommand{\C}{{\mathbb C}}
\newcommand{\D}{{\mathscr D}}
\newcommand{\di}{\mathrm{div}}
\def\b{\mathfrak{b}}
\def\k{\Bbbk}
\def\y{y}
\def\height{{\operatorname{ht}}}
\def\im{{\mathrm{im}\,}}
\def\into{{\hookrightarrow}}
\def\Mod#1{#1\!\operatorname{-Mod}}
\def\iso{\stackrel{\sim}{\longrightarrow}}
\def\col{{\rm col}}
\def\row{{\tt row}}
\def\HOM{\operatorname{Hom}}
\def\words{I}
\def\cc{{\tt c}}
\newcommand{\NH}{\mathcal{NH}}
  \gdef\set#1{\mathinner{\lbrace\,{\mathcode`\|"8000%
  \let|\midvert #1}\,\rbrace}}
\def\midvert{\egroup\mid\bgroup}
\colorlet{darkgreen}{green!50!black}
\tikzset{dots/.style={very thick,loosely dotted},
         greendot/.style={fill,circle,color=darkgreen,inner sep=1.5pt,outer sep=0},
         blackdot/.style={fill,circle,color=black,inner sep=1.5pt,outer sep=0},
         graydot/.style={fill,circle,color=gray,inner sep=1.1pt,outer sep=0}
}
\def\greendot(#1,#2){\node[greendot] at(#1,#2){}}
\def\blackdot(#1,#2){\node[blackdot] at(#1,#2){}}
\def\graydot(#1,#2){\node[graydot] at(#1,#2){}}
\newenvironment{braid}{
  \begin{tikzpicture}[baseline=6mm,black,line width=1pt, scale=0.32,
                      draw/.append style={rounded corners},
                      every node/.append style={font=\fontsize{5}{5}\selectfont}]%
  }{\end{tikzpicture}
}
\def\Grid(#1,#2){
  \draw[very thin,gray,step=2mm] (0,0)grid(#1,#2);
  \draw[very thin,darkgreen,step=10mm] (0,0)grid(#1,#2);
}
\newcommand\Tableau[2][\relax]{
  \begin{tikzpicture}[scale=0.5,draw/.append style={thick,black}]
    \ifx\relax#1\relax%
    \else 
      \foreach\box in {#1} { \filldraw[blue!30]\box+(-.5,-.5)rectangle++(.5,.5); }
    \fi
    \newcount\row\newcount\col
    \row=0
    \foreach \Row in {#2} {
       \col=1
       \foreach\k in \Row {
          \draw(\the\col,\the\row)+(-.5,-.5)rectangle++(.5,.5);
          \draw(\the\col,\the\row)node{\k};
          \global\advance\col by 1
       }
       \global\advance\row by -1
    }
  \end{tikzpicture}
}
\newcommand\YoungDiagram[2][\relax]{
  \begin{tikzpicture}[scale=0.5,draw/.append style={thick,black}]
    \ifx\relax#1\relax%
    \else 
    \foreach\box in {#1} {
      \filldraw[blue!30]\box rectangle ++(1,1);
    }
    \fi
    \newcount\row
    \row=0
    \foreach \col in {#2} {
       \draw(1,\the\row)grid ++(\col,1);
       \global\advance\row by -1
    }
  \end{tikzpicture}
}
\begin{document}

\title[Standard resolutions]{Resolutions of standard modules over  KLR algebras of type $A$}

\author{\sc Doeke Buursma}
\address{Department of Mathematics\\ University of Oregon\\Eugene\\ OR 97403, USA}\email{dbuursma@uoregon.edu}

\author{\sc Alexander Kleshchev}
\address{Department of Mathematics\\ University of Oregon\\Eugene\\ OR 97403, USA}\email{klesh@uoregon.edu}

\author{\sc David J. Steinberg}
\address{Department of Mathematics\\ University of Oregon\\
Eugene\\ OR~97403, USA}
\email{dsteinbe@uoregon.edu}

\subjclass[2010]{16E05, 16G99, 17B37}

\thanks{The second author was supported by the NSF grants DMS-1161094, DMS-1700905, the Max-Planck-Institut, the Fulbright Foundation, and the DFG Mercator program through the University of Stuttgart.}

\begin{abstract}
Khovanov-Lauda-Rouquier algebras $R_\theta$ of finite Lie type are affine quasihereditary with standard modules $\De(\pi)$ labeled by Kostant partitions $\pi$ of $\theta$. In type $A$, we construct explicit projective resolutions of standard modules $\De(\pi)$.
\end{abstract}

\maketitle

\section{Introduction}\label{SIntro}
Let $R_{\theta,\F}$ be a Khovanov-Lauda-Rouquier (KLR) algebra of finite Lie type over a field $\F$ corresponding to $\theta\in Q_+$.
It is known that $R_{\theta,\F}$ is affine quasihereditary \cite{Kato,BKM,Kdonkin,KlL}. In particular, it has finite global dimension and comes with a family of {\em standard modules} $\{\De(\pi)_\F\mid \pi\in\KP(\theta)\}$ and {\em proper standard modules} $\{\bar\De(\pi)_\F\mid \pi\in\KP(\theta)\}$, where $\KP(\theta)$ denotes the set of the Kostant partitions of $\theta$. The (proper) standard modules have well-understood formal  characters.
Moreover, $L(\pi)_\F:=\head \De(\pi)_\F\cong \head \bar\De(\pi)_\F$ is irreducible, and $\{L(\pi)_\F\mid \pi\in \KP(\theta)\}$ is a complete set of irreducible $R_{\theta,\F}$-modules up to isomorphism and degree shift. Finally, the projective cover $P(\pi)_\F$ of $L(\pi)_\F$ has a finite $\De$-filtration and the (graded) decomposition number $d_{\si,\pi}^\F:=[\bar\De(\si)_\F:L(\pi)_\F]_q$ equals the (well-defined) multiplicity $(P(\pi)_\F:\De(\si)_\F)_q$, see \cite[Corollary 3.14]{BKM}.

The KLR algebra is defined over the integers, so we have a $\Z$-algebra  $R_\theta=R_{\theta,\Z}$ with $R_{\theta,\F}=R_\theta\otimes_\Z\F$. The standard modules have natural integral forms $\De(\pi)$ with $\De(\pi)_\F\cong \De(\pi)\otimes_\Z\F$, see \cite[\S4.2]{KS}. Moreover, as illustrated in \cite[\S4]{KS}, understanding $p$-torsion in the $\Z$-module $\Ext_{R_\theta}^m(\De(\pi),\De(\si))$ is relevant for comparing decomposition numbers $d_{\si,\pi}^\C$ and $d_{\si,\pi}^\F$ for a field $\F$ of characteristic $p$. This motivates our interest in
$\Ext_{R_\theta}^m(\De(\pi),\De(\si))$ and projective resolutions of (integral forms of) standard modules.

The problem of constructing a projective resolution of $\De(\pi)$ reduces easily to the semicuspidal case. To be more precise, let us fix a convex total order $\preceq$ on the set $\Phi_+$ of positive roots. A {\em Kostant partition} of $\theta$ is a sequence $\pi = (\beta_1^{m_1}, \ldots, \beta_t^{m_t})$ such that $m_1,\dots, m_t\in \Z_{>0}$,
$\beta_1 \succ \cdots \succ \beta_t$ are positive roots, and
$m_1\beta_1 + \cdots + m_t\beta_t = \theta$. Then
$\De(\pi)\cong \De(\beta_1^{m_1})\circ\dots\circ \De(\beta_t^{m_t})$ where `$\circ$' stands for induction product. Since induction product preserves projective modules, it is enough to resolve the standard modules of the form $\De(\be_s^{m_s})$, which are exactly the {\em semicuspidal standard modules}.
Moreover, the case where $\be_s$ is a simple root is easy: the algebra $R_{m\al_i}$ is the nil-Hecke algebra of rank $m$ and the standard module $\De(\al_i^m)$ is the projective indecomposable module $P(\al_i^m)=R_{m\al_i}1_{i^{(m)}}$ for an explicit primitive idempotent $1_{i^{(m)}}\in R_{m\al_i}$, see \cite[\S2.2]{KL1}.

Let us now specialize to Lie type $A_\infty$. Then every non-simple positive root is of the form $\al=\al_a+\al_{a+1}+\dots+\al_{b+1}$ for integers $a\leq b$, and we will work with the lexicographic convex order on the positive roots. Let $\theta:=m\al$. We consider the set of compositions
$$\La:=\{\la=(\la_a,\dots,\la_b)\mid 0\leq  \la_a,\dots,\la_b\leq m\}.$$
Let $\La(n):=\{\la\in\La\mid \la_a+\dots+\la_b=n\}$.
In Section~\ref{SDPR}, for each $\la\in \La$, we define explicit idempotents $e_\la\in R_{\theta}$ as concatenations of `divided power idempotents' of the form $1_{i^{(m)}}$ mentioned above. We define projective modules $P_\la :=q^{s_\la}R_\theta e_\la$, where $q^{s_\la}$ stands for the grading shift by an explicitly defined integer $s_\la$. For every $\mu\in\La(n+1)$ and $\mu\in \La(n)$ we then define explicit elements $d_n^{\mu,\la}\in e_\mu R_\theta e_\la$ so that
$$
P_\mu\mapsto P_\la,\ xe_\mu\mapsto xe_\mu d_n^{\mu,\la}
$$
is an $R_\theta$-module homomorphism. Taking direct sum over all such gives us a homomorhism
$$
d_n:P_{n+1}:=\bigoplus_{\mu\in \La(n+1)}P_\mu\longrightarrow P_{n}:=\bigoplus_{\la \in\La(n)}P_\la.
$$
We note that $P_n=0$ for $n>m(b-a+1)$ and that $d_n^{\mu,\la}=0$ unless the compositions $\mu$ and $\la$ differ in just one part. We also have a natural map $${\sf p}:P_0=q^{s_0}R_\theta e_0 \longrightarrow \De(\al^m),\ xe_0\mapsto xv_{\al^m},$$
where $v_{\al^m}$ is the standard generator of $\De(\al^m)$ of weight $a^m(a+1)^m\cdots (b+1)^m$, see (\ref{ESG}).

\begin{MainTheorem}\label{TA}
We have that
$$
0\longrightarrow P_{m(b-a+1)}\longrightarrow \dots \longrightarrow P_{n+1}\stackrel{d_n}{\longrightarrow} P_n\longrightarrow\dots \stackrel{d_0}{\longrightarrow} P_0\stackrel{\sf p}{\longrightarrow} \De(\al^m)\longrightarrow 0
$$
is a projective resolution of the standard module $\De(\al^m)$.
\end{MainTheorem}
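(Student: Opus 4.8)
The plan is to verify three things in turn: that the putative resolution is a complex, that the augmented complex $P_\bullet \to \De(\al^m)$ is exact, and that each $P_n$ is projective. Projectivity is immediate since $P_\la = q^{s_\la} R_\theta e_\la$ is a summand of the free module $R_\theta$ (the $e_\la$ being genuine idempotents) and $P_n$ is a finite direct sum of such. So the content is the first two points, and both will be reduced to explicit computations with the divided-power idempotents and the elements $d_n^{\mu,\la}$ inside $R_\theta$.

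First I would prove that $d_{n-1}\circ d_n = 0$. Since $d_n$ is assembled from the right-multiplication maps $x e_\mu \mapsto x e_\mu d_n^{\mu,\la}$, the composite on the $\mu$-summand ($\mu\in\La(n+1)$) lands in $\bigoplus_{\nu\in\La(n-1)} P_\nu$ via the elements $\sum_{\la\in\La(n)} d_n^{\mu,\la} d_{n-1}^{\la,\nu} \in e_\mu R_\theta e_\nu$, so it suffices to show each such sum vanishes in $R_\theta$. Because $d_n^{\mu,\la} = 0$ unless $\mu$ and $\la$ differ in exactly one part, the only $\la$ contributing to a fixed pair $(\mu,\nu)$ are those obtained from $\mu$ by lowering one part and then from which $\nu$ is obtained by lowering one part; the combinatorics forces $\mu$ and $\nu$ to differ in at most two parts, and when they differ in two parts there are exactly two intermediate $\la$'s, giving a two-term cancellation (a sign or a braid/quadratic relation identity among the defining diagrams), while when they differ in one part one gets a telescoping/commuting relation. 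I expect this to come down to the nil-Hecke relations $\psi_i^2 = 0$ and the commuting relations $\psi_i\psi_j = \psi_j\psi_i$ for $|i-j|>1$ applied to the concatenated divided-power idempotents — essentially a local check on two-column configurations.

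For exactness, the natural approach is to compare graded dimensions and then show injectivity of each $d_n$ on the relevant quotient, or, cleaner, to exhibit a contracting homotopy after applying a suitable exact functor, or to pass to characters. Concretely I would: (a) compute the $q$-character (graded dimension as a module over the relevant polynomial center) of each $P_\la$ using that $e_\la$ is a concatenation of divided-power idempotents, so $P_\la$ is, up to shift, a tensor-type product of nil-Hecke projectives whose characters are known $q$-factorials; (b) compute the character of $\De(\al^m)$ from the known formal character of semicuspidal standard modules in type $A$; and (c) check that the alternating sum $\sum_n (-1)^n \ch P_n = \ch \De(\al^m)$ — this should be a $q$-binomial identity, the Euler characteristic matching a product over the columns $a,\dots,b$ of cube-like complexes, one for each column, each contributing a factor that telescopes. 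Granting the complex is exact in positive degrees, the Euler characteristic identity plus right-exactness (surjectivity of ${\sf p}$, which holds because $v_{\al^m}$ generates $\De(\al^m)$ and $e_0$ acts as the identity on the relevant weight space) pins down exactness everywhere.

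The main obstacle is establishing exactness in the middle degrees — the Euler characteristic argument only gives it for free once one already knows the homology is concentrated in degree $0$. I expect the real work to be showing $\ker d_{n-1} \subseteq \im d_n$, and the cleanest route is probably an inductive "column-by-column" or "mapping cone" decomposition: the poset $\La$ is a product of chains $\{0,1,\dots,m\}^{b-a+1}$, and the complex $P_\bullet$ should be realized as an iterated mapping cone (a total complex of a multicomplex) built from $b-a+1$ short complexes, one per column, each of which resolves a one-column standard module $\De(\al^m)$ for a rank-one situation where the nil-Hecke description makes exactness transparent. Reducing to that rank-one base case, and checking that the maps $d_n^{\mu,\la}$ really do assemble into such an iterated cone (i.e.\ that the tensor/induction structure is compatible with the differentials), is the technical heart; once that is in place, exactness follows from the Künneth-type fact that a tensor product of resolutions is a resolution, together with the known projective resolution of the divided-power module over the nil-Hecke algebra.
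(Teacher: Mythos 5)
The central gap is in the exactness argument. You propose to realize $P_\bullet$ as an iterated mapping cone (a total complex over the product of chains $[0,m]^{b-a+1}$) built from $b-a+1$ one-column complexes and to conclude by a K\"unneth argument from a rank-one nil-Hecke base case. No such decomposition exists: only the \emph{index set} $\La$ is a product of chains. The algebra $R_{m\al}$ does not factor along the columns, the idempotent $e_\la=1_{\bi^\la}$ involves the nested word $a^{(m-\la_a)}\cdots(b+1)^{(m)}\cdots a^{(\la_a)}$ which is not a concatenation compatible with any column-wise parabolic induction, and $\De(\al^m)$ is semicuspidal precisely because it is not an induced product of modules supported on proper sub-sums of $\al$. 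The decomposition that does work runs across the $m$ copies of $\al$, not across the columns: the paper forms $Q_\bullet:=q^{m(m-1)/2}(P^{\al}_\bullet)^{\circ m}$, which by Lemma~\ref{circresolution} resolves $q^{m(m-1)/2}\De(\al)^{\circ m}\cong [m]^!\De(\al^m)$. Even this only resolves $[m]^!$ copies of $\De(\al^m)$; the real work (Sections~\ref{SF} and \ref{SG}, the bulk of the paper) is the construction of explicit degree-zero chain maps $f:P_\bullet\to Q_\bullet$ and $g:Q_\bullet\to P_\bullet$ with $g\circ f=\id$, exhibiting $P_\bullet$ as a direct summand of $Q_\bullet$ and identifying the zeroth homology with $\De(\al)^{\circ m}e_m=\De(\al^m)$. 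This splitting is a genuinely hard thick-calculus computation with divided-power idempotents, not a formal tensor-product argument, and nothing in your proposal supplies it. Your Euler-characteristic step is, as you yourself note, only a consistency check and cannot produce exactness in middle degrees. Note also that your intended base case, the $m=1$ resolution of $\De(\al)$, is itself the nontrivial Theorem 4.12 of [BKM] (Lemma~\ref{L070417} here), not a transparent nil-Hecke statement.

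A secondary issue concerns $d_{n-1}\circ d_n=0$. Your two-term cancellation covers $\nu=\mu-\e_i-\e_j$ with $i\neq j$, but when $\nu=\mu-2\e_i$ there is a \emph{unique} intermediate $\la$, so a single product $d_n^{\mu,\la}d_{n-1}^{\la,\nu}$ must vanish on its own; this uses the divided-power idempotents (same-colour double crossings killed inside $1_{i^{(k)}}$, via Lemma~\ref{LW0Y0}), not a telescoping identity, and for adjacent colours the relevant KLR relation is $\psi_r^21_\bi=\pm(y_r-y_{r+1})1_\bi$ rather than $\psi_r^2=0$. In the paper $d^2=0$ is never checked directly: it follows formally once $f$ and $g$ are chain maps with $g\circ f=\id$, since then $d_P^2=(d_Pg)(fd_P)=gd_Q^2f=0$.
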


When $m=1$ this is a version of the  resolution constructed in \cite[Theorem 4.12]{BKM}, and our resolution can be thought of as the `thick calculus generalization' (cf. \cite{KLMS}) of that resolution.

For an arbitrary $\theta$, we denote $\De:=\bigoplus_{\pi\in\KP(\theta)}\De(\pi)$.
In \cite{BKSTwo}, we will use the resolution from Theorem~\ref{TA} to describe the algebra $\Ext_{R_\theta}^*(\De,\De)$ in two special cases: (1) $\theta$ is a positive root in type $A$ and (2) Lie type is $A_2$, i.e. $\theta$ is of the form $r\al_1+s\al_2$.

We now describe the contents of the paper and the main idea of the proof. After reviewing KLR algebras and standard modules in Section~\ref{SPrelim}, we introduce the necessary combinatorial notation and define the resolution $P_\bullet$ in Section~\ref{SDPR}.

In order to prove that $P_\bullet$ is a resolution of $\De(\al^m)$, we want to show that it is a direct summand of a resolution $Q_\bullet$ of $q^{m(m-1)/2}\De(\al)^{\circ m}$ introduced in \S\ref{SQdef}. The resolution $Q_\bullet$ is obtained by taking the $m$th induced power of the resolution constructed in \cite[Theorem 4.12]{BKM}.

To check that
$P_\bullet$ is a direct summand of $Q_\bullet$, in \S\ref{SSComp}
we construct what will end up being a pair of chain maps
$f:P_\bullet\to Q_\bullet$ and $g:Q_\bullet\to P_\bullet$ with $g\circ f=\id$. The main difficulty is to verify that $f$ and $g$ are indeed chain maps. This verification occupies Sections~\ref{SF} and \ref{SG}. Modulo the fact that $f$ and $g$ are chain maps,  Theorem~\ref{TA} is proved in \S\ref{SSProof}.

\section{Preliminaries}\label{SPrelim}
\subsection{Basic notation}
Throughout, we work over an arbitrary commutative unital ground~$\k$ (since everything is defined over $\Z$, one could just consider the case $\k=\Z$).

For $r,s\in\Z$, we use the segment notation
$[r,s]:=\{t\in\Z\mid r\leq t\leq s\},\ [r,s):=\{t\in\Z\mid r\leq t< s\},\ \text{etc.}
$

Let $q$ be a variable, and $\Z((q))$ be the ring of Laurent series. For a non-negative integer $n$ we define
$
[n]:=(q^n-q^{-n})/(q-q^{-1})$ and
$[n]^!:=[1][2]\cdots [n].
$

We denote by $\Si_d$ the symmetric group on $d$ letters.
It is a Coxeter group with generators $\{s_r:=(r,r+1)\mid 1\leq r<d\}$ and the corresponding length function $\ell$. The longest element of $\Si_d$ is denoted $w_0$ or $w_{0,d}$. An element $w\in \Si_d$ is called fully commutative if it is possible to go between any two reduced decompositions of $w$ using only the relations of the form $s_r s_t=s_ts_r$ for $|r-t|>1$.
By definition, $\Si_d$ acts on $[1,d]$ on the left.
For a set $I$ the $d$-tuples from $I^d$ are written as words $\bi=i_1\cdots i_d$. The group $\Si_d$ acts on $I^d$ via place permutations as $w\cdot \bi=i_{w^{-1}(1)}\cdots i_{w^{-1}(d)}$.

Given a composition $\la=(\la_1,\dots,\la_k)$ of $d$, we have the corresponding standard parabolic subgroup
$\Si_\la:=\Si_{\la_1}\times \dots\times \Si_{\la_k}\leq \Si_d$.
For compositions $\la$ and $\mu$ of $d$, we denote by $\D^\la$ (resp. ${}^\mu\D$, resp. ${}^\mu\D^\la$) the set of the shortest coset representatives for
$\Si_d/\Si_\la$ (resp.
$\Si_\mu\backslash\Si_d$, resp.
$\Si_\mu\backslash\Si_d/\Si_\la$). The following is well-known and can be deduced for example from \cite[Lemma 1.6]{DJ}:

\begin{Lemma} \label{LDJ} 
Let $\la,\mu$ be compositions of $d$ and
 $w\in {}^\mu\D$. Then there exist unique elements $x\in {}^\mu\D^\la$ and $y\in\Si_\la$ such that $w=xy$ and $\ell(w)=\ell(x)+\ell(y)$.
\end{Lemma}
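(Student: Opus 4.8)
\textbf{Proof proposal for Lemma~\ref{LDJ}.}
The plan is to derive this from the standard double-coset factorization theory for Coxeter groups, specializing the general statement about $\Si_\mu\backslash\Si_d/\Si_\la$ decompositions. First I would recall the basic fact (see \cite[Lemma 1.6]{DJ}) that every element $u\in\Si_d$ has a unique factorization $u = x z$ with $x\in{}^\mu\D^\la$ and $z\in\Si_\mu x^{-1}\Si_\la x\cap\Si_\la$ — more precisely, that the map $(x,z)\mapsto xz$ gives a bijection realizing each double coset $\Si_\mu x\Si_\la$ as a union of left $\Si_\mu$-cosets indexed by the shortest representatives $z$ of $\Si_\lambda\cap x^{-1}\Si_\mu x$ in $\Si_\lambda$, and that lengths add along this factorization. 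I would phrase this cleanly at the level we need: if $w\in{}^\mu\D$, then $w$ lies in some double coset $\Si_\mu x\Si_\la$ with $x\in{}^\mu\D^\la$, so $w = x y'$ for some $y'\in\Si_\la$.

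Next I would upgrade this to the length-additive and uniqueness statements. Given $w\in{}^\mu\D$ with $w=xy'$, $x\in{}^\mu\D^\la$, $y'\in\Si_\la$, I would replace $y'$ by the shortest element $y$ of its coset $(\Si_\la\cap x^{-1}\Si_\mu x)y'$ inside $\Si_\la$; the standard theory of parabolic double cosets then gives $\ell(xy)=\ell(x)+\ell(y)$ and shows $xy$ is the shortest element of $\Si_\mu(xy)$, i.e. $xy\in{}^\mu\D$. Since $w\in{}^\mu\D$ as well and $w$ and $xy$ lie in the same left $\Si_\mu$-coset (both equal $\Si_\mu x\Si_\la$ intersected appropriately — one checks $w\in\Si_\mu xy$), minimality of representatives forces $w=xy$. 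This establishes existence of the required factorization with length adding.

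For uniqueness, suppose $w=x_1y_1=x_2y_2$ with $x_i\in{}^\mu\D^\la$, $y_i\in\Si_\la$, and $\ell(w)=\ell(x_i)+\ell(y_i)$. Then $\Si_\mu w\Si_\la=\Si_\mu x_i\Si_\la$, and since the shortest double-coset representative is unique we get $x_1=x_2=:x$; cancelling gives $y_1=y_2$. The length-additivity of each factorization is exactly what guarantees $x_i$ is a shortest double-coset representative, so it is used essentially here.

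The main obstacle is bookkeeping rather than conceptual: one must be careful that ``$w\in{}^\mu\D$'' (shortest in its \emph{left} $\Si_\mu$-coset) combined with the double-coset factorization really does pin down the left-coset representative on the nose, and that the intermediate element $y$ chosen as a minimal $(\Si_\la\cap x^{-1}\Si_\mu x)$-coset representative genuinely makes $xy$ minimal in $\Si_\mu xy$. Both points are contained in \cite[Lemma 1.6]{DJ}, so I would simply cite it and assemble the pieces; no new combinatorial input is needed. \qedhere
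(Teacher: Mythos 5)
Your argument is correct and matches the paper's treatment, which offers no proof beyond citing \cite[Lemma 1.6]{DJ} — exactly the parabolic double-coset machinery (each double coset $\Si_\mu x\Si_\la$ with $x\in{}^\mu\D^\la$ decomposing length-additively into left $\Si_\mu$-cosets $\Si_\mu xz$ indexed by distinguished representatives $z$ of $(\Si_\la\cap x^{-1}\Si_\mu x)\backslash\Si_\la$) that you assemble. Two cosmetic slips worth fixing: your opening "basic fact" is misstated (not every $u\in\Si_d$ factors as $xz$ with $x\in{}^\mu\D^\la$, $z\in\Si_\la$ — only your "more precisely" version is the correct statement, and the subgroup should read $x^{-1}\Si_\mu x\cap\Si_\la$), and in the uniqueness step length-additivity is not actually needed, since $x_i\in{}^\mu\D^\la$ together with $w\in\Si_\mu x_i\Si_\la$ already forces $x_1=x_2$ by uniqueness of the shortest double-coset representative.
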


\subsection{KLR Algebras}\label{SSKLR}

From now on, we set $I:=\Z$. If $i,j \in I$ with $|i-j|=1$ we set $\eps_{i,j}:=j-i\in\{1,-1\}$.
For $i,j\in I$, we set
\begin{equation*}\label{ECN}
\cc_{i, j}:=
\left\{
\begin{array}{ll}
2 &\hbox{if $i=j$,}\\
-1 &\hbox{if $|i-j|=1$,}\\
0 &\hbox{otherwise.}
\end{array}
\right.
\end{equation*}

We identify $I$ with the set of vertices of the Dynkin diagram of type $A_\infty$ so that the numbers $\cc_{i,j}$ are the entries of the corresponding Cartan matrix.
The corresponding simple roots are denoted $\{\al_i\mid i\in I\}$ and set of the  positive roots is $\Phi_+:= \{\al_r+\dots+\al_s\mid r\leq s\}$. The root lattice is $Q:=\bigoplus_{i\in I}\Z\cdot\al_i$, and we set $Q_+:=\{\sum_i m_i\al_i\in Q\mid m_i\in\Z_{\geq 0}\ \text{for all $i$}\}$. For $\theta=\sum_i m_i\al_i\in Q_+$, we define its height $\height(\theta):=\sum_i m_i$. For $\theta\in Q_+$ of height $d$, we define
$I^\theta:=\{\bi=i_1\cdots i_d\in I^d\mid \al_{i_1}+\dots+\al_{i_d}=\theta\}$. If $\bi\in I^\theta$ and $\bj\in I^\eta$ then the concatenation of words $\bi\bj$ is an element of $ I^{\theta+\eta}$.

Let $\theta\in Q_+$ be of height $d$. The {\em KLR algebra}  \cite{KL1,Ro} is the unital $\k$-algebra $R_\theta$ (with identity denoted $1_\theta$)
with generators
$$\{1_\bi \mid  \bi \in \words^\theta \} \cup \{ y_1, \dots, y_d\} \cup \{\psi_1, \dots, \psi_{d-1}\}$$ and defining relations
\begin{align*}
	&\y_r \y_s = \y_s \y_r; 
	\\
	&1_\bi1_\bj=\de_{\bi,\bj}1_\bi \quad \text{and} \quad \sum_{\bi\in\words^\theta}1_\bi=1_\theta;  
	\\
	&\y_r 1_\bi = 1_\bi\y_r \quad \text{and} \quad \psi_r 1_\bi = 1_{s_r\cdot \bi}\psi_r; 
	\\
	&(\psi_r y_t - y_{s_r(t)} \psi_r)1_\bi = \de_{i_r,i_{r+1}}(\de_{t,r+1}-\de_{t,r})1_\bi; 
	\\
	&\psi_r^2 1_\bi =
	\begin{cases}
		0 & \text{if } i_r=i_{r+1}, \\
		\eps_{i_r,i_{r+1}}({y_r}-{y_{r+1}})1_\bi & \text{if } |i_r-i_{r+1}|=1, \\
		1_\bi & \text{otherwise};
	\end{cases} 
	\\
	&\psi_r \psi_s = \psi_s \psi_r \text{ if } |r-s|>1; 
	\\
	&(\psi_{r+1} \psi_{r} \psi_{r+1} - \psi_{r}\psi_{r+1}\psi_{r}) 1_\bi =
	\begin{cases}
		\eps_{i_r,i_{r+1}}1_\bi & \text{if } |i_r-i_{r+1}|=1 \text{ and } i_r = i_{r+2}, \\
		0 & \text{otherwise}.
	\end{cases} 
\end{align*}
The algebra $R_\theta$ is graded with $\deg 1_{\bi}=0$; $\deg (y_s)= 2$; $\deg (\psi_{r}1_{\bi})=-\cc_{i_r,i_{r+1}}$.

We will  use the Khovanov-Lauda \cite{KL1} diagrammatic notation for elements of $R_\theta$. In particular, for $\bi=i_1\cdots i_d\in I^\theta$, $1\leq r<d$ and $1\leq s\leq d$, we denote
$$
1_\bi=
\begin{braid}\tikzset{baseline=0mm}

\draw(0,1) node[above]{$i_1$}--(0,-1);
\draw(1,1) node[above]{$i_2$}--(1,-1);
\draw(2,1.1) node[above]{$\cdots$};
\draw(3,1) node[above]{$i_d$}--(3,-1);

\end{braid},\quad
1_\bi\psi_r=
\begin{braid}\tikzset{baseline=0mm}

\draw(0,1) node[above]{$i_1$}--(0,-1);
\draw(1,1.1) node[above]{$\cdots$};
\draw(2.5,1) node[above]{$i_{r-1}$}--(2.5,-1);
\draw(4,1.05) node[above]{$i_{r}$}--(5.3,-1);
\draw(5.3,1) node[above]{$i_{r+1}$}--(4,-1);
\draw(7,1) node[above]{$i_{r+2}$}--(7,-1);
\draw(8.5,1.1) node[above]{$\cdots$};
\draw(9.5,1) node[above]{$i_{d}$}--(9.5,-1);

\end{braid}
,\quad
1_\bi y_s=
\begin{braid}\tikzset{baseline=0mm}

\draw(0,1) node[above]{$i_1$}--(0,-1);
\draw(1,1.1) node[above]{$\cdots$};
\draw(2.5,1) node[above]{$i_{s-1}$}--(2.5,-1);
\draw(4,1.05) node[above]{$i_{s}$}--(4,-1);
\draw(5.3,1) node[above]{$i_{s+1}$}--(5.3,-1);
\draw(7.5,1.1) node[above]{$\cdots$};
\draw(8.5,1) node[above]{$i_{d}$}--(8.5,-1);
\blackdot(4,0);

\end{braid}
$$

For each element $w \in \Si_n$, fix a reduced expression $w=s_{r_1}\cdots s_{r_l}$ which determines an element $\psi_w = \psi_{r_1} \cdots \psi_{r_l}$ (depending on the choice of a reduced expression).

\begin{Theorem}\label{TBasis}{\cite[Theorem 2.5]{KL1}}, \cite[Theorem 3.7]{Ro}
Let $\th\in Q_+$ and $d=\height(\th)$. Then  the following sets are $\k$-bases of  $R_\th$:
\begin{align*} \{\psi_w y_1^{k_1}\dots y_d^{k_d}1_\bi\mid w\in \Si_d,\ k_1,\dots,k_d\in\Z_{\geq 0}, \ \bi\in I^\th\},
\\
 \{y_1^{k_1}\dots y_d^{k_d}\psi_w 1_\bi\mid w\in \Si_d,\ k_1,\dots,k_d\in\Z_{\geq 0}, \ \bi\in I^\th\}.
\end{align*}
\end{Theorem}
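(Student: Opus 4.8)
The plan is to establish the first displayed set as a $\k$-basis; the second then follows by a symmetric argument (or from the anti-automorphism discussed at the end). The argument has two halves in the classical Bergman-style pattern: first show the set \emph{spans} $R_\th$ over $\k$, and then produce a faithful representation of $R_\th$ on which the displayed monomials act by $\k$-linearly independent operators. Spanning gives a surjection from the free $\k$-module on the set onto $R_\th$; composing with the representation and checking that the composite is injective forces the set to be a basis (and the representation to be faithful).

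\emph{Spanning.} Starting from an arbitrary word in the generators $1_\bi,y_s,\psi_r$, I would rewrite it into the normal form $\psi_w y_1^{k_1}\cdots y_d^{k_d}1_\bi$ by a double induction: the outer variable is the number of $\psi$-letters, and the inner one is the Coxeter length of the permutation they spell. Using $\psi_r 1_\bi=1_{s_r\cdot\bi}\psi_r$ and $y_s 1_\bi=1_\bi y_s$ one first collects a single idempotent $1_\bi$ at the far right (anything not lying in some $\words^\th$ becomes $0$). Then, applying $(\psi_r y_t-y_{s_r(t)}\psi_r)1_\bi=\de_{i_r,i_{r+1}}(\de_{t,r+1}-\de_{t,r})1_\bi$ repeatedly, one slides every $y$ rightward past every $\psi$; each such move either commutes cleanly or creates an error term with \emph{one fewer} $\psi$, absorbed by the outer induction. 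This leaves $\psi_{r_1}\cdots\psi_{r_l}\cdot y_1^{k_1}\cdots y_d^{k_d}1_\bi$. If $l$ exceeds $\ell(w)$ for $w:=s_{r_1}\cdots s_{r_l}$, then reduced-word manipulations expose a factor $\psi_r^2$ or a wrong-order braid $\psi_{r+1}\psi_r\psi_{r+1}$, and the quadratic relation and the KLR braid relation replace these by $\k$-combinations of (i) shorter $\psi$-words and (ii) $\psi$-words times polynomials in the $y$'s — all handled by the two inductions. The point to be disciplined about is that $\psi_w$ is well-defined only modulo strictly shorter $\psi$-words (times $y$-polynomials), so the normal form is "upper triangular" in $\ell(w)$, which is exactly what the inductive setup tolerates.

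\emph{Linear independence.} Here I would invoke the Khovanov--Lauda polynomial representation on the free $\k$-module
$$V:=\bigoplus_{\bi\in\words^\th}\k[y_1,\dots,y_d]\,1_\bi,$$
with $1_\bj$ acting as projection onto its summand, $y_s$ as multiplication by $y_s$, and $\psi_r$ carrying the $\bi$-summand to the $s_r\cdot\bi$-summand via a divided-difference operator $(f-{}^{s_r}f)/(y_r-y_{r+1})$ when $i_r=i_{r+1}$, via a degree-one polynomial in $y_r,y_{r+1}$ times the transposition ${}^{s_r}(\cdot)$ when $|i_r-i_{r+1}|=1$ (coefficients forced by $\psi_r^2=\eps_{i_r,i_{r+1}}(y_r-y_{r+1})$), and via the plain transposition otherwise. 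One checks all KLR relations hold on $V$; the substantive cases are the mixed $(\psi_r y_t-\cdots)$ relation, the quadratic relation, and the braid relation, where the correction term $\eps_{i_r,i_{r+1}}1_\bi$ must come out exactly. Finally, one deduces linear independence of the operators $\psi_w y_1^{k_1}\cdots y_d^{k_d}1_\bi$ by a leading-term argument: applied to the vacuum vectors, such a monomial vanishes on $1_\bj$ for $\bj\ne\bi$ and, on $1_\bi$, lands in the $(w\cdot\bi)$-summand with a distinguished top contribution (the monomial $y_1^{k_1}\cdots y_d^{k_d}$ transported by $w$) modulo contributions coming from $\psi_{w'}$ with $\ell(w')<\ell(w)$; this triangularity makes the operators independent, hence the algebra elements are independent, and with spanning we conclude the first set is a $\k$-basis and that $V$ is faithful.

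\emph{Second basis and main obstacle.} The second set is handled identically, sliding $y$'s \emph{left} rather than right, or more cleanly via the $\k$-linear anti-automorphism $\tau\colon R_\th\to R_\th$ fixing each $1_\bi,y_s,\psi_r$ (it exists because the defining relations are invariant under reversing products), which sends the first basis to the second. The genuinely laborious step is verifying that the polynomial representation is well-defined — i.e.\ checking the braid and quadratic relations against the divided-difference/transposition formulas, which is precisely where the $\eps_{i,j}$ signs and the braid correction term get pinned down. The spanning step is comparatively routine bookkeeping, provided one respects the "modulo shorter $\psi$-words" ambiguity of $\psi_w$ so that the nested inductions close.
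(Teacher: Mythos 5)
This theorem is not proved in the paper at all --- it is imported with citations to Khovanov--Lauda and Rouquier --- and your proposal reconstructs precisely the standard argument from those sources: spanning by straightening an arbitrary word into the normal form $\psi_w y_1^{k_1}\cdots y_d^{k_d}1_\bi$ via the commutation, quadratic and braid relations (inducting on the number of $\psi$-letters and on length), and linear independence via the faithful polynomial representation together with a leading-term triangularity argument, so the approach is correct and matches the cited proof. The one detail to state carefully is the action of $\psi_r$ when $|i_r-i_{r+1}|=1$: the degree-one polynomial factor must occur in only one of the two crossing directions (according to whether $i_{r+1}=i_r+1$ or $i_r-1$), with the plain transposition in the other, since a linear factor on both sides would make $\psi_r^2 1_\bi$ quadratic in the $y$'s and contradict the relation $\psi_r^21_\bi=\eps_{i_r,i_{r+1}}(y_r-y_{r+1})1_\bi$ --- as you note, the relation itself forces this choice, so the gap is one of phrasing rather than substance.
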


\subsection{Parabolic subalgebras and divided power idempotents}
Let $\theta_1,\dots,\theta_t\in Q_+$ and set $\theta:=\theta_1+\dots+\theta_t$. Set
$$1_{\theta_1,\dots,\theta_t}:=\sum_{\bi^1\in I^{\theta_1},\dots,\bi^t\in I^{\theta_t}}1_{\bi^1\cdots\bi^t}\in R_\theta.
$$
Then we have an algebra embedding
\begin{equation}\label{EEmb}
\iota_{\theta_1,\dots,\theta_t}: R_{\theta_1}\otimes\dots\otimes R_{\theta_t}\into  1_{\theta_1,\dots,\theta_t}R_{\theta_1+\dots+\theta_t}1_{\theta_1,\dots,\theta_t}
\end{equation}
obtained by horizontal concatenation of the Khovanov-Lauda diagrams. For  $r_1\in R_{\theta_1},\dots, r_t\in R_{\theta_t}$, when there is no confusion, we often write
$$
r_1\circ\dots\circ r_t:=\iota_{\theta_1,\dots,\theta_t}(r_1\otimes\dots\otimes r_t).
$$
For example,
\begin{equation}\label{E240918}
1_{\bi^1}\circ\dots\circ  1_{\bi^t}=1_{\bi^1\cdots \bi^t}\qquad(\bi^1\in I^{\theta_1},\dots,\bi^t\in I^{\theta_t}).
\end{equation}

We fix for the moment $i\in I$, $d\in\Z_{>0}$ and take $\theta=d\al_i$, in which case $R_{d\al_i}$ is isomorphic to the rank $d$ nil-Hecke algebra $\NH_d$.
Following \cite{KL1}, we consider the following elements of $R_{d\al_i}$:
$$
y_{0,d}:=\prod_{r=1}^d y_r^{r-1},\quad
1_{i^{(d)}}:=\psi_{w_{0,d}}y_{0,d},\quad\text{and}\quad
1_{i^{(d)}}':=y_{0,d}\psi_{w_{0,d}}.
$$

The following is well-known, see for example  \cite[\S2.2]{KL1}:

\begin{Lemma} \label{LW0Y0} 
In $R_{d\al_i}$, the elements $1_{i^{(d)}}$ and $1_{i^{(d)}}'$ are  idempotents. Moreover,
\begin{enumerate}
\item \label{LW0Y0i} $\psi_{w_{0,d}}f\psi_{w_{0,d}}=0$ for any polynomial $f$ in $y_1,\dots,y_d$ of degree less than $d(d-1)/2$.
\item \label{LW0Y0ii} $\psi_{w_{0,d}}y_{0,d}\psi_{w_{0,d}}=\psi_{w_{0,d}}$.
\end{enumerate}
\end{Lemma}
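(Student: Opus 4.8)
The plan is to reduce everything to the calculus of Demazure operators. In $R_{d\al_i}=\NH_d$ we have $i_r=i_{r+1}$ for every $r$, so the defining relations degenerate: $\psi_r^2=0$, the elements $\psi_1,\dots,\psi_{d-1}$ satisfy the braid relations of $\Si_d$ (hence, by Matsumoto's theorem, $\psi_w$ does not depend on the chosen reduced word), and $\psi_r y_t=y_{s_r(t)}\psi_r+(\de_{t,r+1}-\de_{t,r})$. Let $s_r$ act on $P:=\k[y_1,\dots,y_d]$ by permuting the variables and put $\partial_r(f):=(s_r\cdot f-f)/(y_r-y_{r+1})\in P$. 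The last relation above is the case $f=y_t$ of the straightening formula
\[
\psi_r f=(s_r\cdot f)\,\psi_r+\partial_r(f)\qquad(f\in P).
\]
First I would prove this for all $f$: it holds for $f=1$ and for each $f=y_t$, and the twisted Leibniz rule $\partial_r(fg)=\partial_r(f)g+(s_r\cdot f)\partial_r(g)$ shows that the set of $f$ satisfying it is closed under multiplication, hence it holds for all monomials and, by $\k$-linearity, for all $f$. I would also record the standard facts that $\partial_r$ lowers polynomial degree by exactly $1$ (so $\partial_r$ kills $\k$), that $\partial_r^2=0$ and the $\partial_r$ satisfy the braid relations, and hence that $\partial_w:=\partial_{r_1}\cdots\partial_{r_\ell}$ is well defined for a reduced word $w=s_{r_1}\cdots s_{r_\ell}$ and lowers degree by $\ell(w)$.

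Next, fix a reduced word $w_{0,d}=s_{c_1}\cdots s_{c_\ell}$ with $\ell=d(d-1)/2$ and iterate the straightening formula to carry a polynomial $f$ past $\psi_{w_{0,d}}$. Each application either keeps a factor $\psi_{c_j}$ (and does nothing to the degree of the accompanying polynomial) or turns it into $\partial_{c_j}$ (lowering that degree by $1$). Expanding in the basis of Theorem~\ref{TBasis} (for $\theta=d\al_i$ there is only the word $i^d$, so the basis is the free $P$-module on $\{\psi_v\}$), one gets $\psi_{w_{0,d}}f=\partial_{w_{0,d}}(f)+\sum_{v\neq e}g_v\,\psi_v$ for suitable $g_v\in P$: the coefficient of $\psi_e=1$ is exactly $\partial_{c_1}\cdots\partial_{c_\ell}(f)=\partial_{w_{0,d}}(f)$, because the only subword of $s_{c_1}\cdots s_{c_\ell}$ representing $e$ is the empty one (a nonempty product of $\psi$'s is either $0$ or some $\psi_v$ with $v\neq e$). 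Now multiply on the right by $\psi_{w_{0,d}}$: since $w_{0,d}$ is longest, $\ell(s_r w_{0,d})<\ell(w_{0,d})$ for every $r$, so $\psi_r\psi_{w_{0,d}}=0$ (write $\psi_{w_{0,d}}=\psi_r\psi_{w'}$ using a reduced word starting with $s_r$ and apply $\psi_r^2=0$), whence $\psi_v\psi_{w_{0,d}}=0$ for all $v\neq e$. Therefore
\[
\psi_{w_{0,d}}\,f\,\psi_{w_{0,d}}=\partial_{w_{0,d}}(f)\,\psi_{w_{0,d}}\qquad(f\in P).
\]

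Part (i) is now immediate: if $\deg f<d(d-1)/2=\ell(w_{0,d})$ then $\partial_{w_{0,d}}(f)=0$ for degree reasons. For part (ii), $y_{0,d}=\prod_{r=1}^{d}y_r^{r-1}$ is homogeneous of degree $\sum_{r=1}^{d}(r-1)=d(d-1)/2=\ell(w_{0,d})$, so $\partial_{w_{0,d}}(y_{0,d})\in\k$, and it remains to show this scalar is $1$. I would do this by induction on $d$ via the length-additive factorization $w_{0,d}=v\cdot w_{0,d-1}$, where $w_{0,d-1}$ is the longest element of $\Si_{\{1,\dots,d-1\}}$ and $v=s_1 s_2\cdots s_{d-1}$: since $\partial_{w_{0,d-1}}$ involves only $y_1,\dots,y_{d-1}$ it commutes with multiplication by $y_d^{d-1}$, so, writing $y_{0,d}=y_{0,d-1}\,y_d^{d-1}$ and using the inductive hypothesis, $\partial_{w_{0,d-1}}(y_{0,d})=y_d^{d-1}\partial_{w_{0,d-1}}(y_{0,d-1})=y_d^{d-1}$; and a short downward induction on $r$ gives $\partial_r\partial_{r+1}\cdots\partial_{d-1}(y_d^{d-1})=h_{r-1}(y_r,\dots,y_d)$ (complete homogeneous symmetric polynomial), so $\partial_v(y_d^{d-1})=\partial_1\cdots\partial_{d-1}(y_d^{d-1})=h_0=1$. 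Hence $\psi_{w_{0,d}}y_{0,d}\psi_{w_{0,d}}=\psi_{w_{0,d}}$, which is (ii), and idempotency follows by inserting it: $(\psi_{w_{0,d}}y_{0,d})^2=\psi_{w_{0,d}}(y_{0,d}\psi_{w_{0,d}}y_{0,d})=(\psi_{w_{0,d}}y_{0,d}\psi_{w_{0,d}})y_{0,d}=\psi_{w_{0,d}}y_{0,d}$, and symmetrically $(y_{0,d}\psi_{w_{0,d}})^2=y_{0,d}\psi_{w_{0,d}}$. The step I expect to require the most care is the iterated straightening — making precise that the coefficient of $\psi_e$ in $\psi_{w_{0,d}}f$ is $\partial_{w_{0,d}}(f)$ and that every other term dies upon right multiplication by $\psi_{w_{0,d}}$ — together with the explicit evaluation $\partial_{w_{0,d}}(y_{0,d})=1$; everything else is formal manipulation of the defining relations.
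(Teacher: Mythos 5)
Your proof is correct. The paper does not actually prove this lemma — it is stated as well-known with a citation to \cite[\S2.2]{KL1} — and your argument is precisely the standard one from that source: the straightening formula $\psi_r f=(s_r\cdot f)\psi_r+\partial_r(f)$, the consequence $\psi_{w_{0,d}}f\psi_{w_{0,d}}=\partial_{w_{0,d}}(f)\psi_{w_{0,d}}$ (using that every term retaining a $\psi$ is killed on the right by $\psi_{w_{0,d}}$ via $\psi_r^2=0$), degree counting for (i), and the evaluation $\partial_{w_{0,d}}(y_{0,d})=1$ via the length-additive factorization $w_{0,d}=(s_1\cdots s_{d-1})\,w_{0,d-1}$ and complete homogeneous symmetric polynomials for (ii). All the individual steps check out, including the sign conventions matching the paper's dot-crossing relation.
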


Now, let $\theta\in Q_+$ be arbitrary. We define $I^{\theta}_{\di}$ to be the set of all expressions of the form
$i_1^{(d_1)} \cdots i_r^{(d_r)}$ with
$d_1,\dots,d_r\in \Z_{\ge 0}$, $i_1,\dots,i_r\in I$
 and $d_1 \al_{i_1} + \cdots + d_r \al_{i_r} = \theta$. We refer to such expressions as {\em divided power words}.  We identify $I^\theta$ with the subset of $I^\theta_\di$ which consists of all divided power words as above with all $d_k=1$.
 We use the same notation for concatenation of divided power words as for concatenation of words.
For $\bi = i_1^{(d_1)}\cdots i_r^{(d_r)} \in I^{\theta}_{\di}$, we define the {\em divided power idempotent}
\[
1_{\bi}=1_{i_1^{(d_1)}\cdots i_r^{(d_r)}}:=1_{i_1^{(d_1)}}\circ\dots\circ 1_{i_r^{(d_r)}} \in R_{\theta}.
\]
Then we have the following analogue of (\ref{E240918}):
\begin{equation}\label{E240918_2}
1_{\bi^1}\circ\dots\circ  1_{\bi^t}=1_{\bi^1\cdots \bi^t}\qquad(\bi^1\in I^{\theta_1}_{\di},\dots,\bi^t\in I^{\theta_t}_{\di}).
\end{equation}
To be used as part of the Khovanov-Lauda diagrammatics, we denote
$$
\psi_{w_{0,d}}=:
\begin{braid}\tikzset{baseline=0mm}
\draw(1,-0.7) node[above]{\small $w_0$};

\draw [rounded corners,color=gray] (-0.2,0.8)--(2.2,0.8)--(2.2,-0.4)--(-0.2,-0.4)--cycle;

\end{braid},\quad
y_{0,d}=:
\begin{braid}\tikzset{baseline=0mm}
\draw(1,-0.7) node[above]{\small $y_0$};

\draw [rounded corners,color=gray] (-0.2,0.8)--(2.2,0.8)--(2.2,-0.4)--(-0.2,-0.4)--cycle;

\end{braid},\quad
1_{i^{(d)}}=
\begin{braid}\tikzset{baseline=0mm}
\draw(0,2.5)node[above]{$i$}--(0,1.5);
\draw(1,2.5) node[above]{$\cdots$};
\draw(2,2.5)node[above]{$i$}--(2,1.6);

\draw(1,-2) node[above]{\small $y_0$};

\draw(1,0.1) node[above]{\small $w_0$};

\draw [rounded corners,color=gray] (-0.2,1.6)--(2.2,1.6)--(2.2,0.4)--(-0.2,0.4)--cycle;

\draw(0,0.5)--(0,-0.5);
\draw(2,0.5)--(2,-0.5);
\draw(1,-0.7) node[above]{$\cdots$};

\draw [rounded corners,color=gray] (-0.2,-0.5)--(2.2,-0.5)--(2.2,-1.7)--(-0.2,-1.7)--cycle;

\end{braid}=:
\begin{braid}\tikzset{baseline=0mm}
\draw(0,-0.7) node[above]{\small $i$};
\draw(1,-0.7) node[above]{\small $\cdots$};
\draw(2,-0.7) node[above]{\small $i$};
\draw [rounded corners,color=gray] (-0.4,0.8)--(2.4,0.8)--(2.4,-0.4)--(-0.4,-0.4)--cycle;
\end{braid}
=
\begin{braid}\tikzset{baseline=0mm}
\draw(1,-0.7) node[above]{\small $i^{d}$};
\draw [rounded corners,color=gray] (-0.4,0.8)--(2.4,0.8)--(2.4,-0.4)--(-0.4,-0.4)--cycle;
\end{braid}
$$

For example, if $d=3$, we have
$$
1_{i^3}\psi_{w_0}=
\begin{braid}\tikzset{baseline=0mm}
\draw(0,1) node[above]{$i$}--(0,0);
\draw(1,1) node[above]{$i$}--(1,0);
\draw(2,1) node[above]{$i$}--(2,0);
\draw(1,-1.3) node[above]{\small $w_0$};
\draw [rounded corners,color=gray] (-0.2,0)--(2.2,0)--(2.2,-1)--(-0.2,-1)--cycle;
\end{braid}
=
\begin{braid}\tikzset{baseline=0mm}
\draw(0,1) node[above]{$i$}--(2,-2);
\draw(1,1) node[above]{$i$}--(0,-0.5)--(1,-2);
\draw(2,1) node[above]{$i$}--(0,-2);
\end{braid},\quad
1_{i^3}y_0=
\begin{braid}\tikzset{baseline=0mm}
\draw(0,1) node[above]{$i$}--(0,0);
\draw(1,1) node[above]{$i$}--(1,0);
\draw(2,1) node[above]{$i$}--(2,0);
\draw(1,-1.3) node[above]{\small $y_0$};

\draw [rounded corners,color=gray] (-0.2,0)--(2.2,0)--(2.2,-1)--(-0.2,-1)--cycle;

\end{braid}
=
\begin{braid}\tikzset{baseline=0mm}
\draw(0,1) node[above]{$i$}--(0,-2);
\draw(1,1) node[above]{$i$}--(1,-2);
\draw(2,1) node[above]{$i$}--(2,-2);
\blackdot(1,-0.5);
\blackdot(2,0);
\blackdot(2,-1);
\end{braid},
\quad
1_{i^{(3)}}=
\begin{braid}\tikzset{baseline=0mm}
\draw(0,-0.7) node[above]{\small $i$};
\draw(1,-0.7) node[above]{\small $i$};
\draw(2,-0.7) node[above]{\small $i$};
\draw [rounded corners,color=gray] (-0.4,0.7)--(2.4,0.7)--(2.4,-0.5)--(-0.4,-0.5)--cycle;
\end{braid}=
\begin{braid}\tikzset{baseline=0mm}
\draw(0,1) node[above]{$i$}--(2,-2);
\draw(1,1) node[above]{$i$}--(0,-0.5)--(1,-2);
\draw(2,1) node[above]{$i$}--(0,-2);
\blackdot(0.8,-1.7);
\blackdot(1.4,-1.1);
\blackdot(1.8,-1.7);
\end{braid}
$$
More generally, we denote
$$
1_{i_1^{(d_1)}\cdots\, i_r^{(d_r)}}=:
\begin{braid}\tikzset{baseline=0mm}
\draw(1,-0.7) node[above]{\small $i_1^{d_1}$};
\draw [rounded corners,color=gray] (-0.4,1.2)--(2.4,1.2)--(2.4,-0.5)--(-0.4,-0.5)--cycle;
\end{braid}\cdots
\begin{braid}\tikzset{baseline=0mm}
\draw(1,-0.7) node[above]{\small $i_r^{d_r}$};
\draw [rounded corners,color=gray] (-0.4,1.2)--(2.4,1.2)--(2.4,-0.5)--(-0.4,-0.5)--cycle;
\end{braid}
$$

For any $1\leq r< t\leq d$, we denote the cycle $(t,t-1,\dots,r)\in\Si_d$ by $(t\to r)$. Note that $(t\to r)=s_{t-1}s_{t-2}\cdots s_{r}$ is a unique reduced decomposition. So
\begin{equation}\label{EPsiCycle}
\psi_{t\to r}:=\psi_{(t\to r)}=\psi_{t-1}\psi_{t-2}\dots\psi_{r}\in R_\theta,
\end{equation}
In terms of the diagrammatic notation we have
$$
1_\bi\psi_{t\to r}=\begin{braid}\tikzset{baseline=0mm}

\draw(0,1) node[above]{$i_1$}--(0,-1);
\draw(1,1.1) node[above]{$\cdots$};
\draw(2.5,0.9) node[above]{$i_{r-1}$}--(2.5,-1);
\draw(4,1) node[above]{$i_{r}$}--(5.2,-1);

\draw(5.1,1.1) node[above]{$\cdots$};
\draw(6.5,0.9) node[above]{$i_{t-1}$}--(7.7,-1);

\draw(7.7,1) node[above]{$i_{t}$}--(4,-1);
\draw(8.9,0.9) node[above]{$i_{t+1}$}--(8.9,-1);

\draw(10.4,1.1) node[above]{$\cdots$};
\draw(11.4,1) node[above]{$i_{d}$}--(11.4,-1);

\end{braid}.
$$

\begin{Lemma} \label{LDivPower} 
In the algebra $R_{d\al_i}$, we have
\begin{enumerate}
\item \label{LDivPoweri} If $r_1+\dots+r_t=d$ then
$1_{i^{(r_1)} \cdots i^{(r_t)}}\psi_{w_{0,d}}=\psi_{w_{0,d}}$ and
$1_{i^{(r_1)} \cdots i^{(r_t)}}1_{i^{(d)}}=1_{i^{(d)}}$.
\item \label{LDivPowerii} $1_{i^{(d)}}\psi_{d\to 1}1_{i\,i^{(d-1)}}
=\psi_{d\to 1}1_{i\,i^{(d-1)}}$.
\end{enumerate}
\end{Lemma}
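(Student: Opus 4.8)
The plan is to carry out every computation inside the nil-Hecke algebra $\NH_d\cong R_{d\al_i}$, using the factorization $1_{i^{(d)}}=\psi_{w_{0,d}}y_{0,d}$ together with part~(ii) of Lemma~\ref{LW0Y0}, namely $\psi_{w_{0,d}}y_{0,d}\psi_{w_{0,d}}=\psi_{w_{0,d}}$. The one structural fact I will use throughout is that in $R_{d\al_i}$ the element $\psi_w$ does not depend on the chosen reduced word for $w$: the only relations among the $\psi$'s here are far commutation and the braid relation $\psi_{r+1}\psi_r\psi_{r+1}=\psi_r\psi_{r+1}\psi_r$ (the cubic term vanishes since all strands carry the label $i$), which is the usual situation for $\NH_d$. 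Consequently, whenever $w=w_1w_2$ in $\Si_d$ with $\ell(w)=\ell(w_1)+\ell(w_2)$ we may write $\psi_w=\psi_{w_1}\psi_{w_2}$, and such factorizations are compatible with the parabolic embeddings $\iota$.

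For the first claim, let $w^{(k)}$ be the longest element of the symmetric group on $\{c_k+1,\dots,c_k+r_k\}$, where $c_k:=r_1+\dots+r_{k-1}$, and put $\bar w:=w^{(1)}\cdots w^{(t)}$, the longest element of the parabolic $\Si_{r_1}\times\dots\times\Si_{r_t}\le\Si_d$; the lengths of the $w^{(k)}$ add to $\ell(\bar w)$. Unwinding the definition of $\circ$ gives $1_{i^{(r_1)}\cdots i^{(r_t)}}=\psi_{\bar w}\,Y$, where $Y=\prod_kY^{(k)}$ and $Y^{(k)}$ is the copy of $y_{0,r_k}$ supported on the $k$-th block. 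Since $w_{0,d}$ is the longest element, $\ell(vw_{0,d})=\ell(w_{0,d})-\ell(v)$ for every $v\in\Si_d$; taking $v=\bar w$ shows $w_{0,d}=\bar w\cdot u$ with $u:=\bar w\,w_{0,d}$ and $\ell(w_{0,d})=\ell(\bar w)+\ell(u)$, hence $\psi_{w_{0,d}}=\psi_{\bar w}\psi_u$. Therefore
$$
1_{i^{(r_1)}\cdots i^{(r_t)}}\psi_{w_{0,d}}=\psi_{\bar w}\,Y\,\psi_{\bar w}\,\psi_u,
$$
and since anything supported on block $k$ commutes with anything supported on block $l\neq k$, we reorder $\psi_{\bar w}Y\psi_{\bar w}=\prod_k\bigl(\psi_{w^{(k)}}Y^{(k)}\psi_{w^{(k)}}\bigr)$; applying part~(ii) of Lemma~\ref{LW0Y0} inside each block collapses each factor to $\psi_{w^{(k)}}$, so $\psi_{\bar w}Y\psi_{\bar w}=\psi_{\bar w}$ and the right-hand side becomes $\psi_{\bar w}\psi_u=\psi_{w_{0,d}}$. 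Right-multiplying the identity just obtained by $y_{0,d}$ and using $1_{i^{(d)}}=\psi_{w_{0,d}}y_{0,d}$ yields $1_{i^{(r_1)}\cdots i^{(r_t)}}1_{i^{(d)}}=1_{i^{(d)}}$.

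For the second claim, note $1_{i\,i^{(d-1)}}=1_i\circ 1_{i^{(d-1)}}$; unwinding $\circ$ gives $1_{i\,i^{(d-1)}}=\psi_{w_0^{[2,d]}}\,y_0^{[2,d]}$, where $w_0^{[2,d]}$ is the longest element of the symmetric group on $\{2,\dots,d\}$ and $y_0^{[2,d]}:=\prod_{j=2}^{d}y_j^{j-2}$ is the copy of $y_{0,d-1}$ on strands $2,\dots,d$. A direct computation shows that the group product $(d\to 1)\cdot w_0^{[2,d]}$ is the permutation $k\mapsto d+1-k$, which is precisely $w_{0,d}$, and the lengths add: $\ell\bigl((d\to 1)\bigr)+\ell\bigl(w_0^{[2,d]}\bigr)=(d-1)+\binom{d-1}{2}=\binom{d}{2}=\ell(w_{0,d})$. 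Hence $\psi_{d\to 1}\psi_{w_0^{[2,d]}}=\psi_{w_{0,d}}$, so $\psi_{d\to 1}1_{i\,i^{(d-1)}}=\psi_{w_{0,d}}\,y_0^{[2,d]}$, and then left-multiplying by $1_{i^{(d)}}=\psi_{w_{0,d}}y_{0,d}$ and invoking part~(ii) of Lemma~\ref{LW0Y0} gives
$$
1_{i^{(d)}}\psi_{d\to 1}1_{i\,i^{(d-1)}}=\psi_{w_{0,d}}y_{0,d}\psi_{w_{0,d}}\,y_0^{[2,d]}=\psi_{w_{0,d}}\,y_0^{[2,d]}=\psi_{d\to 1}1_{i\,i^{(d-1)}},
$$
as required.

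The only steps that need genuine care are the two length-additive factorizations $w_{0,d}=\bar w\,u$ and $w_{0,d}=(d\to 1)\,w_0^{[2,d]}$, together with the bookkeeping of reduced words passing through the embeddings $\iota$ (which is why I isolated the reduced-word independence at the outset); the underlying Coxeter inputs — $\ell(vw_{0,d})=\ell(w_{0,d})-\ell(v)$ and the explicit one-line form of $w_{0,d}$ — are elementary, so I do not anticipate a real obstacle here.
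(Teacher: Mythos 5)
Your proof is correct and follows essentially the same route as the paper's: for (i) you factor $\psi_{w_{0,d}}$ length-additively through the parabolic longest element and collapse $\psi_{\bar w}Y\psi_{\bar w}$ blockwise via Lemma~\ref{LW0Y0}(\ref{LW0Y0ii}), and for (ii) you merge $\psi_{d\to 1}$ with the embedded $\psi_{w_{0,d-1}}$ into $\psi_{w_{0,d}}$ and absorb the idempotent, which is exactly the paper's computation (you merely make explicit the length bookkeeping and the reduced-word independence that the paper leaves implicit).
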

\begin{proof}
(i) Write $\psi_{w_{0,d}}=(\psi_{w_{0,r_1}}\circ\dots\circ \psi_{w_{0,r_t}})\psi_u$
for some $u\in\Si_d$ and use Lemma~\ref{LW0Y0}.

(ii) We have
\begin{align*}
1_{i^{(d)}}\psi_{d-1}\psi_{d-2}\dots\psi_11_{i\,i^{(d-1)}}
&=
1_{i^{(d)}}\psi_{d-1}\psi_{d-2}\dots\psi_1(1_{\al_i}\circ \psi_{w_{0,d-1}}y_{0,d-1})
\\
&=
1_{i^{(d)}}\psi_{w_{0,d}}(1_{\al_i}\circ y_{0,d-1})
\\
&=
\psi_{w_{0,d}}(1_{\al_i}\circ y_{0,d-1})
\\
&=
\psi_{d-1}\psi_{d-2}\dots\psi_1(1_{\al_i}\circ \psi_{w_{0,d-1}}y_{0,d-1})
\\
&=
\psi_{d\to 1}1_{i\,i^{(d-1)}},
\end{align*}
where we have used Lemma~\ref{LW0Y0}(\ref{LW0Y0ii}) for the third equality.
\end{proof}

The following lemma easily follows from the defining relations of $R_\theta$:

\begin{Lemma} \label{LPull} 
Let $1\leq r< s\leq d$, $t\in(r,s)$, $u\in [r,s)$, and $\bi\in I^\theta$. Then in $R_\theta$ we have:
\begin{enumerate}
\item[{\rm (i)}] $1_\bi\psi_{s\to r} \psi_t=1_\bi\psi_{t-1}\psi_{s\to r}$ unless $i_s=i_{t-1}=i_t\pm 1$.
\item[{\rm (ii)}] $1_\bi\psi_{s\to r} y_{u+1}=1_\bi y_{u}\psi_{s\to r}$ unless $i_s=i_{u}$.
\end{enumerate}
\end{Lemma}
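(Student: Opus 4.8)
The plan is to prove both parts by a direct ``slide a strand past crossings and dots'' computation in $R_\theta$, pushing the factor $\psi_t$ in (i), resp.\ the dot $y_{u+1}$ in (ii), leftward through the factorisation $\psi_{s\to r}=\psi_{s-1}\psi_{s-2}\cdots\psi_r$ one generator at a time, using only the defining relations. Since $r<t<s$, the factor $\psi_t$ occurs among $\psi_{s-1},\dots,\psi_r$, so I write $\psi_{s\to r}=A\,\psi_t\psi_{t-1}\,B$ with $A=\psi_{s-1}\cdots\psi_{t+1}$ and $B=\psi_{t-2}\cdots\psi_r$ (each possibly empty); likewise, since $r\le u<s$, I write $\psi_{s\to r}=C\,\psi_u\,D$ with $C=\psi_{s-1}\cdots\psi_{u+1}$ and $D=\psi_{u-1}\cdots\psi_r$.

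For (i): the factor $\psi_t$ commutes with every factor of $B$ (their indices differ by at least two), so $\psi_{s\to r}\psi_t=A\,\psi_t\psi_{t-1}\psi_t\,B$, and I apply the braid relation to the middle triple $\psi_t\psi_{t-1}\psi_t$. Inside the product $1_\bi A(\psi_t\psi_{t-1}\psi_t)B$ the error term of the braid relation can contribute only for the single word $\bj$ that labels the strands between $A$ and the triple; reading the diagram of $1_\bi A$ from top to bottom, positions $1,\dots,t$ are fixed while the strand that started at position $s$ (colour $i_s$) is transported down to position $t+1$, so $\bj$ has colours $i_{t-1},i_t,i_s$ at positions $t-1,t,t+1$. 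Hence, \emph{unless} $i_s=i_{t-1}=i_t\pm1$, the error term vanishes, $\psi_t\psi_{t-1}\psi_t$ may be replaced by $\psi_{t-1}\psi_t\psi_{t-1}$, the new left-hand $\psi_{t-1}$ commutes past all of $A$, and $A\psi_t\psi_{t-1}B=\psi_{s\to r}$ gives $1_\bi\psi_{s\to r}\psi_t=1_\bi\psi_{t-1}\psi_{s\to r}$. (In the excluded case the same bookkeeping produces the explicit correction $\eps_{i_{t-1},i_t}\,1_\bi\,\psi_{s-1}\cdots\psi_{t+1}\psi_{t-2}\cdots\psi_r$, which we will not need here.)

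For (ii) the argument is the same, with the dot-through-crossing relation — which replaces $\psi_u y_{u+1}$ by $y_u\psi_u$ together with a correction idempotent that appears exactly when the two strands crossed by $\psi_u$ have equal colour — in place of the braid relation. Here $y_{u+1}$ commutes past every factor of $D$, so $\psi_{s\to r}y_{u+1}=C\,\psi_u y_{u+1}\,D$, and in $1_\bi C(\psi_u y_{u+1})D$ the two strands crossed by $\psi_u$ carry colours $i_u$ (position $u$, untouched by $C$) and $i_s$ (position $u+1$, now holding the strand that began at position $s$). Thus, unless $i_s=i_u$, we may replace $\psi_u y_{u+1}$ by $y_u\psi_u$, commute $y_u$ past $C$, and use $C\psi_u D=\psi_{s\to r}$ to conclude $1_\bi\psi_{s\to r}y_{u+1}=1_\bi y_u\psi_{s\to r}$.

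The only delicate point — and the only step that is not a colour-free commutation — is this bookkeeping of which word labels the strands at the single spot where the colour-sensitive relation (the braid relation, resp.\ the dot-through-crossing relation) is invoked; multiplying through by $1_\bi$ on the left pins that word down and shows the correction term survives exactly in the cases excluded from the statement. No $\psi^2$ relation is needed anywhere, so I do not anticipate any further obstacle.
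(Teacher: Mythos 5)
Your proof is correct. The paper states this lemma without proof (asserting only that it ``easily follows from the defining relations''), and your computation --- splitting $\psi_{s\to r}$ around the relevant factor, applying the braid relation (resp.\ the dot--crossing relation) exactly once, and pinning down the unique word at that interface via $1_\bi$ to see that the correction term survives precisely in the excluded cases --- is exactly the intended verification.
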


\subsection{Modules over $R_\theta$}
Let $\theta\in Q_+$. We denote by $\Mod{R_\theta}$ the category of 
graded left $R_\theta$-modules. The morphisms in this category are all homogeneous degree zero  $R_\theta$-homomorphisms, which we denote $\hom_{R_\theta}(-,-)$.
For  $V\in\Mod{R_\theta}$, let $q^d V$ denote its grading shift by $d$,  so if $V_m$ is the degree $m$ component of $V$, then $(q^dV)_m= V_{m-d}.$ More generally, for a Laurent series  $a=a(q)=\sum_{n}a_nq^n\in\Z[q,q^{-1}]$ with non-negative coefficients, we set $a V:=\bigoplus_n(q^n V)^{\oplus a_n}$.
For $U,V\in \Mod{R_\theta}$,
we set
$\HOM_{R_\theta}(U, V):=\bigoplus_{d \in \Z} \HOM_{R_\theta}(U, V)_d$,
where
$$
\HOM_{R_\theta}(U, V)_d := \hom_{R_\theta}(q^d U, V) = \hom_{R_\theta}(U, q^{-d}V).
$$
We define
$\EXT^m_{R_\theta}(U,V)$ similarly in terms of $\operatorname{ext}^m_{R_\theta}(U,V)$.



For $\theta_1,\dots,\theta_t\in Q_+$ and $\theta:=\theta_1+\dots+\theta_t$, recalling (\ref{EEmb}), we have a 
functor
$$\Ind_{\theta_1,\dots,\theta_t}=R_\theta 1_{\theta_1,\dots,\theta_t}\otimes_{R_{\theta_1}\otimes\dots\otimes R_{\theta_t}}-:\Mod{(R_{\theta_1}\otimes\dots\otimes R_{\theta_t})}\to\Mod{R_{\theta}}.
$$
For $V_1\in\Mod{R_{\theta_1}}, \dots, V_t\in\Mod{R_{\theta_t}}$, we denote by $V_1\boxtimes \dots\boxtimes V_t$ the $\k$-module $V_1\otimes \dots\otimes V_t$, considered naturally as an $(R_{\theta_1}\otimes\dots\otimes R_{\theta_t})$-module, and set
$$V_1\circ\dots\circ V_t:=\Ind_{\theta_1,\dots,\theta_t} V_1\boxtimes \dots\boxtimes V_t.$$
For $v_1\in V_1,\dots, v_t\in V_t$, we denote
$$
v_1\circ \dots\circ v_t:= 1_{\theta_1,\dots,\theta_t}\otimes v_1\otimes\dots\otimes v_t\in V_1\circ \dots\circ V_t.
$$
Since $R_\theta 1_{\theta_1,\dots,\theta_t}$ is a free $R_{\theta_1}\otimes\dots\otimes R_{\theta_t}$-module of finite rank by Theorem~\ref{TBasis}, we get the following well-known properties:

\begin{Lemma}\label{circexact}
The functor $\Ind_{\theta_1,\dots,\theta_t}$	 is exact and sends finitely generated projectives to finitely generated projectives.
\end{Lemma}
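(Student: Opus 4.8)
The plan is to reduce both assertions to the structural input quoted immediately before the statement: writing $A:=R_{\theta_1}\otimes\dots\otimes R_{\theta_t}$ and regarding it inside $R_\theta$ via $\iota_{\theta_1,\dots,\theta_t}$, the left $R_\theta$-module $R_\theta 1_{\theta_1,\dots,\theta_t}$ is free of finite rank as a right $A$-module. So the first step is to pin this down. Setting $d_k:=\height(\theta_k)$ and $d:=\height(\theta)$, I would show that $\{\psi_x 1_{\theta_1,\dots,\theta_t}\mid x\in\D^{(d_1,\dots,d_t)}\}$ is a right $A$-basis, where $\D^{(d_1,\dots,d_t)}$ is the finite set of shortest coset representatives for $\Si_d/\Si_{(d_1,\dots,d_t)}$. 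This falls out of Theorem~\ref{TBasis}: an arbitrary basis vector $\psi_w y_1^{k_1}\cdots y_d^{k_d}1_{\bi^1\cdots\bi^t}$ of $R_\theta 1_{\theta_1,\dots,\theta_t}$ is rewritten, using the factorization $w=xy$ with $x\in\D^{(d_1,\dots,d_t)}$, $y\in\Si_{(d_1,\dots,d_t)}$ and $\ell(w)=\ell(x)+\ell(y)$ supplied by Lemma~\ref{LDJ} together with a compatible choice of reduced word for $\psi_w$, in the form $\psi_x\cdot\bigl(\psi_y y_1^{k_1}\cdots y_d^{k_d}1_{\bi^1\cdots\bi^t}\bigr)$ with the bracketed factor lying in $\iota_{\theta_1,\dots,\theta_t}(A)$ (its only $\psi$'s act within the blocks, and the dots and idempotents are block-wise); freeness over $A$ is then inherited from the linear independence in Theorem~\ref{TBasis}. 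Alternatively, one just cites this well-known fact.

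Granting this, exactness of $\Ind_{\theta_1,\dots,\theta_t}=R_\theta 1_{\theta_1,\dots,\theta_t}\otimes_A(-)$ is automatic, since tensoring with a free --- hence flat --- right $A$-module is an exact functor. For the statement about projectives, note that $\Ind_{\theta_1,\dots,\theta_t}$ sends the rank-one free module ${}_AA$ to $R_\theta 1_{\theta_1,\dots,\theta_t}$, which is a direct summand of the left regular module $R_\theta=R_\theta 1_\theta$ because $1_{\theta_1,\dots,\theta_t}$ is idempotent; hence it is a finitely generated projective $R_\theta$-module. Since $\Ind_{\theta_1,\dots,\theta_t}$ is additive and commutes with finite direct sums, it carries $A^{\oplus n}$ to $(R_\theta 1_{\theta_1,\dots,\theta_t})^{\oplus n}$, and therefore carries any direct summand of such an $A^{\oplus n}$ --- i.e. any finitely generated projective $A$-module --- to a direct summand of $(R_\theta 1_{\theta_1,\dots,\theta_t})^{\oplus n}$, which is again finitely generated projective over $R_\theta$.

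The only non-formal ingredient is the finite-rank freeness of $R_\theta 1_{\theta_1,\dots,\theta_t}$ as a right $A$-module; the rest is routine homological algebra. Since the paper treats this freeness as known, I would either cite the standard reference or record the straightening argument sketched above, which is the place where the real (though short) work lies.
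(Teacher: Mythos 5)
Your proposal is correct and follows essentially the same route as the paper, which simply observes that $R_\theta 1_{\theta_1,\dots,\theta_t}$ is a free right module of finite rank over $R_{\theta_1}\otimes\dots\otimes R_{\theta_t}$ by Theorem~\ref{TBasis} and then treats both conclusions as well-known consequences. Your straightening argument via Lemma~\ref{LDJ} just makes explicit the freeness the paper takes for granted.
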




The following lemma is easy to check using the fact that $1_\bk$ is an idempotent in $R_\chi$ for $\bk\in I^\chi_{\di}$:

\begin{Lemma} \label{LIndIdemp} 
For $\bi\in I^{\theta}_{\di}$ and $\bj\in I^{\eta}_{\di}$, we have
$R_\theta1_{\bi}\circ R_\eta 1_\bj\cong R_{\theta+\eta}1_{\bi\bj}$.
\end{Lemma}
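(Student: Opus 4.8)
The plan is to unwind the definition of the induction product and then appeal to a standard fact about extension of scalars along an idempotent. Abbreviate $A:=R_\theta\otimes R_\eta$ and $B:=R_{\theta+\eta}$, and let $\iota:=\iota_{\theta,\eta}\colon A\hookrightarrow 1_{\theta,\eta}B1_{\theta,\eta}$ be the embedding from (\ref{EEmb}), so that $B1_{\theta,\eta}$ is a $(B,A)$-bimodule with right $A$-action $m\cdot a:=m\,\iota(a)$. Since $\iota$ preserves units, $\iota(1_\theta\otimes 1_\eta)=1_\theta\circ 1_\eta=1_{\theta,\eta}$ by (\ref{E240918}). By the definition of $\circ$ for modules,
$$
R_\theta 1_\bi\circ R_\eta 1_\bj=\Ind_{\theta,\eta}\bigl(R_\theta 1_\bi\boxtimes R_\eta 1_\bj\bigr)=B1_{\theta,\eta}\otimes_A\bigl(R_\theta 1_\bi\boxtimes R_\eta 1_\bj\bigr).
$$

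Next, since $1_\bi\in R_\theta$ and $1_\bj\in R_\eta$ are idempotents (being horizontal concatenations of the idempotents $1_{i^{(d)}}$ of Lemma~\ref{LW0Y0}, and horizontal concatenation is an algebra homomorphism), the element $e:=1_\bi\otimes 1_\bj\in A$ is idempotent, and $R_\theta 1_\bi\boxtimes R_\eta 1_\bj$ is, on the nose, the left $A$-module $Ae$. For any $(B,A)$-bimodule $M$ and any idempotent $e\in A$, one has a natural isomorphism of left $B$-modules $M\otimes_A Ae\iso Me$, $m\otimes ae\mapsto m\,\iota(ae)$: indeed $Ae$ is a direct summand of the free module ${}_AA$ with complement $A(1-e)$, the functor $M\otimes_A-$ is additive with $M\otimes_A A\cong M$ canonically, and the displayed map is the restriction of this canonical isomorphism to the summand cut out by $e$. (If preferred, injectivity can be checked by hand using that $B1_{\theta,\eta}$ is $A$-free by Theorem~\ref{TBasis}.) Applying this with $M=B1_{\theta,\eta}$ and using $1_{\theta,\eta}\iota(e)=\iota(e)$ yields
$$
R_\theta 1_\bi\circ R_\eta 1_\bj\cong (B1_{\theta,\eta})\cdot e=B\,1_{\theta,\eta}\,\iota(e)=B\,\iota(e).
$$

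It then remains only to identify $\iota(e)=\iota_{\theta,\eta}(1_\bi\otimes 1_\bj)=1_\bi\circ 1_\bj$, which is precisely $1_{\bi\bj}$ by (\ref{E240918_2}). Hence $R_\theta 1_\bi\circ R_\eta 1_\bj\cong B\,1_{\bi\bj}=R_{\theta+\eta}1_{\bi\bj}$, the isomorphism being $r\,1_{\theta,\eta}\otimes(x1_\bi\otimes y1_\bj)\mapsto r\,(x1_\bi\circ y1_\bj)=r\,(x\circ y)\,1_{\bi\bj}$. I do not anticipate any genuine obstacle here; the only points deserving care are the bimodule bookkeeping around the corner idempotent $1_{\theta,\eta}$ and the (trivial but convenient) observation that $R_\theta 1_\bi\boxtimes R_\eta 1_\bj$ equals $Ae$ rather than merely being isomorphic to it, which is what makes the resulting isomorphism completely explicit.
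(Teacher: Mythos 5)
Your argument is correct and is exactly the detailed version of what the paper merely asserts: the paper gives no proof, only the remark that the lemma "is easy to check using the fact that $1_\bk$ is an idempotent," and your reduction to $M\otimes_A Ae\cong Me$ with $e=1_\bi\otimes 1_\bj$, followed by the identification $\iota_{\theta,\eta}(1_\bi\otimes 1_\bj)=1_{\bi\bj}$ via (\ref{E240918_2}), is the intended elaboration. No issues.
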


\subsection{Standard modules}\label{SSStdMod}

The algebra $R_\theta$ is affine quasihereditary in the sense of \cite{Kdonkin}. In particular, it comes with an important class of {\em standard modules}, which we now describe explicitly, referring to \cite[\S3]{BKM}. We are going to work with the lexicographic convex order $\preceq$ on $\Phi_+$, i.e. for $\al = \alpha_r + \cdots + \alpha_s\in \Phi_+$ and $\al' = \alpha_{r'} + \cdots + \alpha_{s'}\in \Phi_+$, we have $\al \prec \al'$ if and only if either $r < r'$ or $r = r'$ and $s < s'$.

Fix $\al = \alpha_r + \cdots + \alpha_{s} \in \Phi_+$ of height $l:=s-r+1$, and set
$$\bi_\al := r\,(r+1)\cdots s\in I^\al.
$$
We define the $R_\al$-module $\Delta(\al)$ to be the cyclic $R_\al$-module generated by a vector $v_\al$ of degree $0$ with defining relations
\begin{itemize}
	\item $1_\bi v_\al = \delta_{\bi,\bi_\al}v_\al$ for all  $\bi\in I^\al$;
	\item $\psi_t v_\al = 0$ for all $1\leq t < l$;
	\item $y_t v_\al = y_u v_\al$ for all $1\leq t, u \leq l$.
\end{itemize}
By \cite[Corollary 3.5]{BKM}, $\De(\al)$ is indeed the standard module corresponding to $\al$. In fact, if $\k=\Z_p$, by \cite[(4.11)]{KS}, $\De(\al)$ is a universal $\k$-form of the standard module in the sense of \cite[\S4.2]{KS}.

The module $\Delta(\al)$ can be considered as an $(R_\al, \k[x])$-bimodule with the right action given by $v_\al x := y_1 v_\al.$ Then it is easy to check that there is an isomorphism of graded $\k$-modules $\k[x] \to \Delta(\al), \ f\mapsto v_\al f.$

Fix an integer $m\in \Z_{>0}$.
Let $\NH_m$ be the rank $m$ nil-Hecke algebra with standard generators $x_1,\dots,x_{m}$, $\tau_1,\dots,\tau_{m-1}$. For any $i\in I$, there is an isomorphism $\phi: R_{m\alpha_i}\iso \NH_m,\ y_t\mapsto x_t,\ \psi_u\mapsto \tau_u$, and we have the idempotent
\begin{equation} \label{DeltaIdem}
e_m := \phi(1_{i^{(d)}}') 
\in \NH_m.
\end{equation}

%

The $R_{m\al}$-module $\Delta(\al)^{\circ m}$ is cyclicly generated by $v_\al^{\circ m}$.
As explained in \cite[\S3.2]{BKM}, $\NH_m$ acts on $\Delta(\al)^{\circ m}$ on the right so that
\begin{align*}
v_\al^{\circ m}x_t&=v_\al^{\circ (t-1)}\circ (v_\al x)\circ v_\al^{\circ (m-t)}\qquad(1\leq t\leq m),
\\
v_\al^{\circ m}\tau_u&=v_\al^{\circ (u-1)}\circ (\psi_z (v_\al \circ v_\al))\circ v_\al^{\circ (m-u-1)}\qquad(1\leq u< m),
\end{align*}
where $z$ is the longest element of $\D^{(l,l)}$.

Define
\begin{equation}\label{DeltaDef}
\Delta(\al^m) := q^{\binom{m}{2}}\Delta(\al)^{\circ m}e_m.
\end{equation}
As in \cite[Lemma 3.10]{BKM}, we have an isomorphism of $R_{m\al}$-modules
\begin{equation}\label{deltaiso}
	\Delta(\al)^{\circ m} \cong [m]^!\Delta(\al^m).
\end{equation}

Given $\theta\in Q_+$ and a  Kostant partition $\pi = (\beta_1^{m_1}, \ldots, \beta_t^{m_t})\in\KP(\theta)$ as in the Introduction, we define the corresponding {\em standard module}
$$\Delta(\pi) := \Delta(\beta_1^{m_1}) \circ \cdots \circ \Delta(\beta_t^{m_t}).$$
If $\k$ is a field, the modules $\{\De(\pi)\mid \pi\in \KP(\theta)\}$ are the standard modules for an affine quasihereditary structure on the algebra $R_\theta$, see \cite{BKM,Kdonkin}. If $\k=\Z$ or $\Z_p$, they can be thought of as integral forms of the standard modules, see \cite[\S4]{KS}.

\section{Semicuspidal resolution}\label{SDPR}
Throughout the subsection, we fix $m\in\Z_{>0}$, $a,b\in\Z$ with $a\leq b$, and set $$l:=b+2-a,\quad d:=lm.$$
We denote
$$\al:=\al_a+\dots+\al_{b+1}\in \Phi_+$$
and $\theta:=m\al$. Note that $l=\height(\al)$ and $d=\height(\theta)$.
Our goal is to construct a resolution $P_\bullet=P^{\al^m}_{\bullet}$ of the semicuspidal standard module $\De(\al^m)$.

\subsection{Combinatorics}\label{SSComb}
We consider the set of compositions
$$\La=\La^{\al^m}:=\{\la=(\la_a,\dots,\la_b)\mid  \la_a,\dots,\la_b\in [0,m]\}.$$
For $\la\in\La$, we denote $|\la|:=\la_a+\dots+\la_b$, and for $n\in\Z_{\geq 0}$, we set
$$\La(n)=\La^{\al^m}(n):=\{\la\in\La \mid |\la|=n\}.$$
Let  $a\leq i\leq b$. We set
$$
\e_i:=(0,\dots,0,1,0,\dots,0)\in\La(1),
$$
with $1$ in the $i$th position.

Let $\la\in\La$. Set
\begin{align*}
s_\la&:=-\frac{lm(m-1)}{2}+(m+1)n-\sum_{i=a}^b\la_i^2\in\Z,\\
\bj^\la&:=a^{m-\la_a}(a+1)^{m-\la_{a+1}}\cdots b^{m-\la_b}(b+1)^{m}b^{\la_b} (b-1)^{\la_{b-1}}\cdots a^{\la_a}\in I^\theta,
\\
\bi^\la&:=a^{(m-\la_a)}(a+1)^{(m-\la_{a+1})}\dots b^{(m-\la_b)}(b+1)^{(m)}b^{(\la_b)} (b-1)^{(\la_{b-1})}\dots a^{(\la_a)}\in I^\theta_\di.
\end{align*}
We also associate to $\la$ a composition $\om_\la$ of $d$ with $2n+1$ non-negative parts: 
\begin{equation*}
\om_\la
:=(m-\la_a,m-\la_{a+1},\dots,m-\la_b,m,\la_b,\la_{b-1},\dots,\la_a).
\end{equation*}
Let  $i\in[a,b]$. We denote
\begin{align*}
r_i^-(\la)&:=\sum_{s=a}^{i}(m-\la_s),\quad
 r_i^+(\la):=d-\sum_{s=a}^{i-1}\la_s, \quad
l_i^\pm(\la):=r_{i\pm1}^\pm(\la)+1,
\end{align*}
where $r_{a-1}^-(\la)$ is interpreted as $0$, and $r_{b+1}^+(\la)$ is interpreted as $d-\sum_{s=a}^{b}\la_s$.
Moreover, denote
\begin{align*}
r_{b+1}(\la)&:=d-\sum_{s=a}^{b}\la_s,\quad
l_{b+1}(\la):=r_{b+1}(\la)-m+1.
\end{align*}
Define
\begin{align*}
U_i^\pm(\la)&:=[l_i^\pm(\la),r_i^\pm(\la)],\   U_i(\la):=U_i^-(\la)\sqcup U_i^+(\la), \   U_{b+1}(\la)&:=[l_{b+1}(\la),r_{b+1}(\la)].
\end{align*}
Observe that for all $j\in[a,b+1]$, we have
$$
U_j(\la)=\{s\in [1,d]\mid \bj^\la_s=j\}.
$$

We also consider the sets of multicompositions
\begin{align*}
\bLa&:=(\La^\al)^m=\{\bde=(\de^{(1)},\dots,\de^{(m)})\mid \de^{(1)},\dots,\de^{(m)}\in\La^\al\},
\\
\bLa(n)&:=\{\bde=(\de^{(1)},\dots,\de^{(m)})\in\bLa\mid |\de^{(1)}|+\dots+|\de^{(m)}|=n\}.
\end{align*}
Note that by definition all $\de^{(r)}_i\in\{0,1\}$.
For $1\leq r\leq m$ and $a\leq i\leq b$, we define ${\bolde}^{r}_i\in\bLa(1)$ to be the multicomposition whose $r$th component is $\e_i$ and whose other components are zero.
For $\bde\in\bLa$, denote
$$\la^\bde:=\de^{(1)}+\dots+\de^{(m)}\in\La.$$

Fix $\bde=(\de^{(1)},\dots,\de^{(m)})\in\bLa$. Define
$$\bj^\bde:=\bj^{\de^{(1)}}\dots\bj^{\de^{(m)}}\in I^\theta.$$
For $i\in[a,b]$  we define
\begin{align*}
U_i^{(\pm)}(\bde)&:=\{l(r-1)+u\mid \text{$r\in[1,m]$, $u\in U_i^{(\pm)}(\de^{(r)})$}\},\quad U_i(\bde):=U_i^+(\bde)\sqcup U_i^-(\bde),\\ U_{b+1}(\bde)&:=\{l(r-1)+u\mid \text{$r\in[1,m]$, $u\in U_{b+1}(\de^{(r)})$}\}.
\end{align*}
Observe that for any $j\in[a,b+1]$, we have
$$
U_j^{(\bde)}=\{s\in[1,d]\mid \bj^\bde_s=j\} \quad \text{and}\quad
|U_j^{(\pm)}(\la^\bde)|=|U_j^{(\pm)}(\bde)|.
$$

For $\la\in\La$, $\bde\in\bLa$, and $i\in[a,b]$, we define some signs:
\begin{align*}
\sgn_{\la;i}&:=(-1)^{\sum_{j=a}^{i-1}\la_{j}},
\qquad
\sgn_{\bde;r,i}:=(-1)^{\sum_{s=1}^{r-1}|\de^{(s)}|+\sum_{j=a}^{i-1}\de^{(r)}_j},
\\
t_\bde&:= \sum_{\substack{1\leq r<s\leq m \\
a\leq j<i\leq b }}\de^{(r)}_i\de^{(s)}_j,\qquad \si_\bde:=(-1)^{t_\bde},\\
\tau_\la&:=(-1)^{\sum_{i=a}^b\la_i(\la_i-1)/2},\qquad \tau_\bde:=\si_\bde\tau_{\la^\bde}.
\end{align*}
($\tau_\la,\tau_\bde$ are not to be confused with $\tau_w\in\NH_d$ which will not be used again).

\begin{Lemma} \label{LSigns} 
Let $\bde=(\de^{(1)},\dots,\de^{(m)})\in\bLa$, $i\in[a,b]$ and $r\in[1,m]$. If $\de_i^{(r)}=0$, then
$$
\sgn_{\la^\bde;i}\si_\bde=\si_{\bde+\bolde_i^{r}} \sgn_{\bde;r,i}(-1)^{\sum_{s=1}^{r-1}\de_i^{(s)}}.
$$
\end{Lemma}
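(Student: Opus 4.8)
The plan is to verify Lemma~\ref{LSigns} by a direct sign-exponent calculation, tracking how the four relevant exponents change when $\bolde_i^r$ is added to $\bde$. Write $\bga:=\bde+\bolde_i^r$; since $\de_i^{(r)}=0$, the component $\ga_i^{(r)}=1$ while all other coordinates of $\bga$ agree with $\bde$. I would reduce both sides to a statement about parities of integers, i.e. compare the exponents modulo $2$.

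First I would expand the left-hand exponent. By definition $\sgn_{\la^\bde;i}=(-1)^{\sum_{j=a}^{i-1}\la^\bde_j}$ with $\la^\bde_j=\sum_{s=1}^m\de_j^{(s)}$, and $\si_\bde=(-1)^{t_\bde}$ with $t_\bde=\sum_{1\leq r'<s'\leq m,\ a\leq j<i'\leq b}\de^{(r')}_{i'}\de^{(s')}_j$. On the right-hand side I would similarly expand $\si_{\bga}=(-1)^{t_{\bga}}$, $\sgn_{\bde;r,i}=(-1)^{\sum_{s=1}^{r-1}|\de^{(s)}|+\sum_{j=a}^{i-1}\de^{(r)}_j}$, and the explicit factor $(-1)^{\sum_{s=1}^{r-1}\de_i^{(s)}}$. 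So the claim becomes the congruence
\begin{equation*}
\sum_{j=a}^{i-1}\la^\bde_j + t_\bde \equiv (t_\bga) + \Bigl(\sum_{s=1}^{r-1}|\de^{(s)}| + \sum_{j=a}^{i-1}\de^{(r)}_j\Bigr) + \sum_{s=1}^{r-1}\de_i^{(s)} \pmod 2.
\end{equation*}

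The key step is to compute the increment $t_{\bga}-t_{\bde}$. The sum $t_\bde$ counts pairs $((r',i'),(s',j))$ with $r'<s'$, $j<i'$, and both entries $\de^{(r')}_{i'}=\de^{(s')}_{j}=1$. Passing from $\bde$ to $\bga$ only flips the single entry at position $(r,i)$ from $0$ to $1$, so $t_{\bga}-t_{\bde}$ equals the number of pairs in which $(r,i)$ participates, i.e. those of the form $((r,i),(s,j))$ with $s>r$, $j<i$, $\de^{(s)}_j=1$, plus those of the form $((r',i'),(r,i))$ with $r'<r$, $i'>i$, $\de^{(r')}_{i'}=1$. Hence modulo $2$,
\begin{equation*}
t_{\bga}-t_{\bde}\equiv \sum_{\substack{s>r\\ a\leq j<i}}\de^{(s)}_j + \sum_{\substack{r'<r\\ i<i'\leq b}}\de^{(r')}_{i'}\pmod 2.
\end{equation*}
Substituting this into the displayed congruence, I would cancel $t_\bde$ from both sides, then expand $\la^\bde_j=\sum_{s=1}^m\de_j^{(s)}$ in the term $\sum_{j=a}^{i-1}\la^\bde_j$, splitting the inner sum over $s$ into $s<r$, $s=r$, $s>r$. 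The $s=r$ piece is $\sum_{j=a}^{i-1}\de^{(r)}_j$, matching the corresponding summand on the right; the $s>r$ piece is $\sum_{s>r}\sum_{j=a}^{i-1}\de^{(s)}_j$, matching the first sum in $t_{\bga}-t_{\bde}$; the $s<r$ piece is $\sum_{s<r}\sum_{j=a}^{i-1}\de^{(s)}_j$, which needs to cancel against $\sum_{s=1}^{r-1}|\de^{(s)}|+\sum_{s=1}^{r-1}\de_i^{(s)}$ together with the second sum $\sum_{r'<r}\sum_{i<i'\leq b}\de^{(r')}_{i'}$ coming from $t_{\bga}-t_{\bde}$. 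Here I would use $|\de^{(s)}|=\sum_{j=a}^b\de_j^{(s)}=\sum_{j=a}^{i-1}\de_j^{(s)}+\de_i^{(s)}+\sum_{j=i+1}^{b}\de_j^{(s)}$, which precisely splits $\sum_{s=1}^{r-1}|\de^{(s)}|$ into the three pieces needed; everything cancels modulo $2$, and the lemma follows.

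I do not expect any genuine obstacle here — it is a bookkeeping verification of parities, and the only thing to be careful about is the exact index ranges (the half-open intervals $j<i$ versus $i<i'$, and the convention that positions strictly below $r$ versus strictly above) so that the three cross-term contributions match up sign for sign. The one subtlety worth stating explicitly is that the hypothesis $\de_i^{(r)}=0$ is used so that the flipped entry genuinely changes $t_\bde$ (otherwise $\bde+\bolde_i^r$ would not be a $0/1$-multicomposition and the bijective pair-counting argument would break).
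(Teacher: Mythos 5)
Your proposal is correct and follows essentially the same route as the paper: reduce to a congruence of exponents modulo $2$, compute the increment $t_{\bde+\bolde_i^r}-t_\bde$ as the two cross sums $\sum_{s>r,\,j<i}\de_j^{(s)}+\sum_{s<r,\,j>i}\de_j^{(s)}$, and cancel after splitting $\sum_{j<i}\la^\bde_j$ and $\sum_{s<r}|\de^{(s)}|$. The paper leaves the final cancellation as ``easy to see''; you have simply written it out, and the bookkeeping is right.
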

\begin{proof}
Let $\la:=\la^\bde$ and $\bga:=\bde+\bolde_i^{r}$.
Writing `$\equiv$' for `$\equiv\pmod{2}$', we have to prove
\begin{align*}
\sum_{j=a}^{i-1}\la_j+\sum_{t<s,\ k>j}\de_k^{(t)}\de_j^{(s)}\equiv
\sum_{t<s,\ k>j}\ga_k^{(t)}\ga_j^{(s)}+\sum_{s=1}^{r-1}|\de^{(s)}|+\sum_{j=a}^{i-1}\de_j^{(r)}+\sum_{s=1}^{r-1}\de_i^{(s)}.
\end{align*}
Note that
\begin{equation}\label{E170916}
\sum_{t<s,\ k>j}\ga_k^{(t)}\ga_j^{(s)}=\sum_{t<s,\ k>j}\de_k^{(t)}\de_j^{(s)}+\sum_{s>r,\ j<i}\de_j^{(s)}+\sum_{s<r,\ j>i}\de_j^{(s)},
\end{equation}
so the required comparison boils down to
\begin{align*}
\sum_{j=a}^{i-1}\la_j\equiv
\sum_{s>r,\ j<i}\de_j^{(s)}+\sum_{s<r,\ j>i}\de_j^{(s)}+\sum_{s=1}^{r-1}|\de^{(s)}|+\sum_{j=a}^{i-1}\de_j^{(r)}+\sum_{s=1}^{r-1}\de_i^{(s)},
\end{align*}
which is easy to see.
\end{proof}


\begin{Lemma} \label{LSignsG} 
Let $\bga=(\ga^{(1)},\dots,\ga^{(m)})\in\bLa$, $i\in[a,b]$ and $r\in[1,m]$. If $\ga_i^{(r)}=1$, then
$$
\tau_\bga \sgn_{\la^\bga;i}=\sgn_{\bga-\bolde_i^{r};r,i} \tau_{\bga-\bolde_i^{r}}(-1)^{\sum_{s=r+1}^m \ga_i^{(s)}\}}.
$$
\end{Lemma}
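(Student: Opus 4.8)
The plan is to deduce Lemma~\ref{LSignsG} directly from the preceding Lemma~\ref{LSigns} by running the latter backwards. First I would set $\bde:=\bga-\bolde_i^{r}$; since $\ga_i^{(r)}=1$, this $\bde$ still lies in $\bLa$ and satisfies $\de_i^{(r)}=0$ as well as $\bde+\bolde_i^{r}=\bga$, so Lemma~\ref{LSigns} applies to $\bde,i,r$ and gives $\sgn_{\la^\bde;i}\,\si_\bde=\si_{\bga}\,\sgn_{\bde;r,i}\,(-1)^{\sum_{s=1}^{r-1}\de_i^{(s)}}$. Since $\de_i^{(s)}=\ga_i^{(s)}$ for $s\ne r$ and all of $\sgn,\si,\tau$ take values in $\{\pm1\}$, this is equivalent to
\[
\si_{\bga}=\sgn_{\la^\bde;i}\,\si_\bde\,\sgn_{\bde;r,i}\,(-1)^{\sum_{s=1}^{r-1}\ga_i^{(s)}},
\]
which I will substitute at the last step.

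Next I would record how the two ``$\la$-side'' signs transform under $\bga\mapsto\bde$. Put $\mu:=\la^\bde$; then $\la^\bga=\mu+\e_i$, so $\la^\bga_i=\mu_i+1$ and $\la^\bga_j=\mu_j$ for $j\ne i$. Consequently: (i) the exponent $\sum_{j=a}^{i-1}\la^\bga_j$ defining $\sgn_{\la^\bga;i}$ does not involve the index $i$, hence $\sgn_{\la^\bga;i}=\sgn_{\mu;i}$; and (ii) only the $i$-th summand of the exponent defining $\tau$ changes, and since $\binom{n+1}{2}-\binom{n}{2}=n$ we get $\tau_{\la^\bga}=\tau_{\mu}\,(-1)^{\mu_i}=\tau_{\mu}\,(-1)^{\la^\bga_i-1}$. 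Finally, because $\la^\bga_i=\sum_{s=1}^{m}\ga_i^{(s)}$ and $\ga_i^{(r)}=1$, I would split $\la^\bga_i-1=\sum_{s=1}^{r-1}\ga_i^{(s)}+\sum_{s=r+1}^{m}\ga_i^{(s)}$.

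To finish, I would combine everything: using $\tau_\bga=\si_\bga\tau_{\la^\bga}$ together with (i), (ii) and the splitting,
\[
\tau_\bga\,\sgn_{\la^\bga;i}=\si_{\bga}\,\tau_\mu\,(-1)^{\sum_{s=1}^{r-1}\ga_i^{(s)}}\,(-1)^{\sum_{s=r+1}^{m}\ga_i^{(s)}}\,\sgn_{\mu;i},
\]
and then plug in the displayed expression for $\si_{\bga}$ from the first paragraph. In the resulting product the factor $\sgn_{\mu;i}$ appears twice and squares to $1$, and the two copies of $(-1)^{\sum_{s=1}^{r-1}\ga_i^{(s)}}$ cancel modulo $2$; what remains is $\si_\bde\,\sgn_{\bde;r,i}\,\tau_\mu\,(-1)^{\sum_{s=r+1}^{m}\ga_i^{(s)}}$, which equals $\sgn_{\bga-\bolde_i^{r};r,i}\,\tau_{\bga-\bolde_i^{r}}\,(-1)^{\sum_{s=r+1}^{m}\ga_i^{(s)}}$ once we recall $\si_\bde\tau_\mu=\tau_\bde$. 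That is the asserted identity.

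I do not expect a genuine obstacle here: Lemma~\ref{LSignsG} is essentially Lemma~\ref{LSigns} inverted, together with the transformation law for $\tau$. The one point that requires actual care is the parity bookkeeping in the last step---checking that the ``$s<r$'' contribution produced by the change of $\tau$ is exactly cancelled by the ``$s<r$'' term coming out of Lemma~\ref{LSigns}, so that only the ``$s>r$'' sum survives on the right-hand side---and keeping track of which exponent sums omit the index $i$ (so that $\sgn_{\la^\bga;i}=\sgn_{\mu;i}$ and $\sgn_{\bde;r,i}$ is insensitive to the modification at position $i$).
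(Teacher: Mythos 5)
Your proof is correct, and it takes a different route from the paper's. The paper proves Lemma~\ref{LSignsG} by the same method as Lemma~\ref{LSigns}: it expands all four signs into their defining exponents, reduces modulo $2$ using the identity (\ref{E170916}) and the observation $\sum_j\mu_j(\mu_j-1)/2-\sum_j\la_j(\la_j-1)/2=\la_i$, and checks the resulting congruence directly. You instead treat Lemma~\ref{LSigns} as a black box, invert it to solve for $\si_\bga$, and supply only the two transformation laws $\sgn_{\la^\bga;i}=\sgn_{\la^\bde;i}$ and $\tau_{\la^\bga}=\tau_{\la^\bde}(-1)^{\la^\bde_i}$ together with the splitting $\la^\bga_i-1=\sum_{s<r}\ga_i^{(s)}+\sum_{s>r}\ga_i^{(s)}$. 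All of these steps check out (in particular the cancellation of the two copies of $(-1)^{\sum_{s=1}^{r-1}\ga_i^{(s)}}$ and of $\sgn_{\la^\bde;i}^2$, and the final identification $\si_\bde\tau_{\la^\bde}=\tau_\bde$). Your organization buys a shorter argument that makes transparent why only the tail sum $\sum_{s>r}\ga_i^{(s)}$ survives, whereas the paper's direct computation is self-contained and does not rely on having stated Lemma~\ref{LSigns} first; the underlying parity arithmetic is of course equivalent.
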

\begin{proof}
Let $\mu:=\la^\bga$, $\bde:=\bga-\bolde_i^{r}$, and $\la:=\la^\bde$. Writing `$\equiv$' for `$\equiv\pmod{2}$', we have to prove the comparison
\begin{align*}
&\sum_{t<s,\ k>j}\ga_k^{(t)}\ga_j^{(s)}+\sum_{j=a}^b\mu_j(\mu_j-1)/2+\sum_{j=a}^{i-1}\mu_j
\\
\equiv
&
\sum_{s=1}^{r-1}|\de^{(s)}|+\sum_{j=a}^{i-1}\de^{(r)}_j
+\sum_{t<s,\ k>j}\de_k^{(t)}\de_j^{(s)}+\sum_{j=a}^b\la_j(\la_j-1)/2
+\sum_{s=r+1}^m \ga_i^{(s)}.
\end{align*}
Note that
$$
\sum_{j=a}^b\mu_j(\mu_j-1)/2-\sum_{j=a}^b\la_j(\la_j-1)/2=\la_i.
$$
So, using also (\ref{E170916}), the required comparison boils down to
\begin{align*}
\sum_{s>r,\ j<i}\de_j^{(s)}+\sum_{s<r,\ j>i}\de_j^{(s)}
+\sum_{j=a}^{i}\la_j\equiv
\sum_{s=1}^{r-1}|\de^{(s)}|+\sum_{j=a}^{i-1}\de_j^{(r)}+\sum_{s=r+1}^{m}\de_i^{(s)},
\end{align*}
which is easy to see.
\end{proof}


\subsection{The resolutions $P_\bullet^{\al^m}$ and $P_\bullet^{\pi}$}\label{SSRes}
Let $\la\in\La$. Recalling the divided power word $\bi^\la\in I^\theta_\di$ and the integer $s_\la$ from $\S\ref{SSComb}$, we set
\begin{align*}
e_\la:=1_{\bi^\la}\in R_\theta \qquad\text{and}\qquad
P_\la=P^{\al^m}_{\la}:=q^{s_\la}R_\theta e_\la.
\end{align*}
In particular, $P_\la$ is a projective left $R_\theta$-module.
Note that $1_{\bj^\la}e_\la=e_\la1_{\bj^\la}=e_\la$.
Further, set for any $n\in\Z_{\geq 0}$:
$$
P_n=P^{\al^m}_{n}:=\bigoplus_{\la\in\La(n)} P_\la.
$$
Note that $P_n=0$ for $n>d-m$. The projective resolution $P_\bullet=P^{\al^m}_\bullet$ of $\De(\al^m)$ will be of the form
$$
\dots \longrightarrow P_{n+1}\stackrel{d_n}{\longrightarrow} P_n\longrightarrow\dots \stackrel{d_0}{\longrightarrow} P_0.
$$

To describe the boundary maps $d_n$, we first consider a more general situation. Suppose we are given two sets of idempotents $\{e_a\mid a\in A\}$ and $\{f_b\mid b\in B\}$ in an algebra $R$. An $A\times B$ matrix $D:=(d^{a,b})_{a\in A, b\in B}$ with every $d^{a,b}\in e_aR f_b$ then yields the homomorphism between the projective $R$-modules
\begin{equation}\label{rightmult}
\rho_D:\bigoplus_{a\in A}Re_a\to \bigoplus_{b\in B}Rf_b,\ (r_ae_a)_{a\in A}\mapsto \big(\sum_{a\in A}r_ad^{a,b}\big)_{b\in B},
\end{equation}
which we refer to as the {\em right multiplication with $D$.}

We now define a $\La(n+1)\times\La(n)$ matrix $D_n$ with entries $d_n^{\mu,\la}\in e_\mu R_\theta e_\la$. Let $\la\in\La(n)$ and $a\leq i\leq b$ be such that $\la_i<m$. Recalling (\ref{EPsiCycle}), define
\begin{align*}
\psi_{\la;i}:=\psi_{r_i^+(\la+\e_i)\to r_i^-(\la)}.
\end{align*}
Note that $\psi_{\la;i}1_{\bj^\la}=1_{\bj^{\la+\e_i}}\psi_{\la;i}$.
Recalling the sign $\sgn_{\la;i}$ from \S\ref{SSComb}, we now set
$$
d_n^{\mu,\la}:=
\left\{
\begin{array}{ll}
\sgn_{\la;i}e_\mu\psi_{\la;i} e_\la &\hbox{if $\mu=\la+\e_i$ for some $a\leq i\leq b$,}\\
0 &\hbox{otherwise.}
\end{array}
\right.
$$
Diagrammatically, for $\mu=\la+\e_i$ as above, we have
$$
d^{\mu,\la}_n=\pm
\begin{braid}\tikzset{baseline=0mm}
\draw(1.05,2) node[above]{\small $a^{m-\la_a}$};
\draw [rounded corners,color=gray] (-0.6,3.6)--(2.6,3.6)--(2.6,2.2)--(-0.6,2.2)--cycle;
\draw(4,2) node[above]{\small $\cdots$};
\draw(7,2) node[above]{\small $i^{m-\la_i-1}$};
\draw [rounded corners,color=gray] (4.8,3.6)--(9,3.6)--(9,2.2)--(4.8,2.2)--cycle;
\draw(12,2) node[above]{\small $(i+1)^{\la_{i+1}}$};
\draw [rounded corners,color=gray] (9.3,3.6)--(14.5,3.6)--(14.5,2.2)--(9.3,2.2)--cycle;
\draw(16.6,2) node[above]{\small $\cdots$};
\draw(20,2) node[above]{\small $i^{\la_i+1}$};
\draw [rounded corners,color=gray] (18.2,3.6)--(21.9,3.6)--(21.9,2.2)--(18.2,2.2)--cycle;
\draw(25,2) node[above]{\small $(i-1)^{\la_{i-1}}$};
\draw [rounded corners,color=gray] (22.3,3.6)--(27.5,3.6)--(27.5,2.2)--(22.3,2.2)--cycle;
\draw(29,2) node[above]{\small $\cdots$};
\draw(31.4,2) node[above]{\small $a^{\la_a}$};
\draw [rounded corners,color=gray] (30,3.6)--(33,3.6)--(33,2.2)--(30,2.2)--cycle;

\draw(1.05,-3) node[above]{\small $a^{m-\la_a}$};
\draw [rounded corners,color=gray] (-0.6,-1.4)--(2.6,-1.4)--(2.6,-2.8)--(-0.6,-2.8)--cycle;
\draw(4,-3) node[above]{\small $\cdots$};
\draw(7.4,-3) node[above]{\small $i^{m-\la_i}$};
\draw [rounded corners,color=gray] (4.8,-1.4)--(9.4,-1.4)--(9.4,-2.8)--(4.8,-2.8)--cycle;
\draw(12.4,-3) node[above]{\small $(i+1)^{\la_{i+1}}$};
\draw [rounded corners,color=gray] (9.7,-1.4)--(14.9,-1.4)--(14.9,-2.8)--(9.7,-2.8)--cycle;
\draw(17,-3) node[above]{\small $\cdots$};
\draw(20.4,-3) node[above]{\small $i^{\la_i}$};
\draw [rounded corners,color=gray] (18.6,-1.4)--(21.8,-1.4)--(21.8,-2.8)--(18.6,-2.8)--cycle;
\draw(25,-3) node[above]{\small $(i-1)^{\la_{i-1}}$};
\draw [rounded corners,color=gray] (22.3,-1.4)--(27.5,-1.4)--(27.5,-2.8)--(22.3,-2.8)--cycle;
\draw(29,-3) node[above]{\small $\cdots$};
\draw(31.4,-3) node[above]{\small $a^{\la_a}$};
\draw [rounded corners,color=gray] (30,-1.4)--(33,-1.4)--(33,-2.8)--(30,-2.8)--cycle;

\draw(0,2.2)--(0,-1.4);
\draw(1,-0.3) node[above]{\small $\cdots$};
\draw(2,2.2)--(2,-1.4);
\draw(3.9,-0.3) node[above]{\small $\cdots$};
\draw(5.2,2.2)--(5.2,-1.4);
\draw(7,-0.3) node[above]{\small $\cdots$};
\draw(8.5,2.2)--(8.5,-1.4);
\draw(21.6,2.2)--(9,-1.4);
\draw(9.6,2.2)--(10,-1.4);
\draw(12.2,-0.3) node[above]{\small $\cdots$};
\draw(14,2.2)--(14.4,-1.4);
\draw(18.6,2.2)--(19,-1.4);
\draw(19.9,-0.3) node[above]{\small $\cdots$};
\draw(20.8,2.2)--(21.2,-1.4);
\draw(22.8,2.2)--(22.8,-1.4);
\draw(25,-0.3) node[above]{\small $\cdots$};
\draw(27,2.2)--(27,-1.4);
\draw(29,-0.3) node[above]{\small $\cdots$};
\draw(30.4,2.2)--(30.4,-1.4);
\draw(31.7,-0.3) node[above]{\small $\cdots$};
\draw(32.7,2.2)--(32.7,-1.4);
\end{braid}
$$

We now set the boundary map $d_n$ to be the right multiplication with $D_n$:
$$d_n:=\rho_{D_n}.$$

\begin{Example} \label{ExP} 
{\rm
Let $a=b=1$ and $m=2$. Then the resolution $P_\bullet$ is
$$
0\to R_\theta1_{2^{(2)}1^{(2)}}\stackrel{d_1}{\longrightarrow}
R_\theta1_{12^{(2)}1}\stackrel{d_0}{\longrightarrow}
q^{-2} R_\theta1_{1^{(2)}2^{(2)}}\longrightarrow\De((\al_1+\al_2)^2)\longrightarrow 0,
$$
where $d_1$ is a right multiplication with
$$
1_{2^{(2)}1^{(2)}} \psi_3\psi_2\psi_1 1_{12^{(2)}1}=\begin{braid}\tikzset{baseline=0mm}
\draw(1,2) node[above]{\small $2\ 2$};
\draw [rounded corners,color=gray] (-0.2,3.5)--(2.2,3.5)--(2.2,2.2)--(-0.2,2.2)--cycle;
\draw(3.55,2) node[above]{\small $1\ 1$};
\draw [rounded corners,color=gray] (2.4,3.5)--(4.8,3.5)--(4.8,2.2)--(2.4,2.2)--cycle;

\draw(0.4,-3) node[above]{\small $1$};
\draw [rounded corners,color=gray] (1.2,-1.5)--(3.6,-1.5)--(3.6,-2.8)--(1.2,-2.8)--cycle;
\draw(1.8,-3) node[above]{\small $2$};
\draw(2.9,-3) node[above]{\small $2$};
\draw(4.2,-3) node[above]{\small $1$};

\draw(0.5,2.2)--(1.7,-1.5);
\draw(1.5,2.2)--(2.7,-1.5);
\draw(3,2.2)--(4.2,-1.5);
\draw(4,2.2)--(0.5,-1.5);
\end{braid}
$$
and $d_0$ is a right multiplication with
$$
1_{12^{(2)}1} \psi_3\psi_2 1_{1^{(2)}2^{(2)}}=\begin{braid}\tikzset{baseline=0mm}
\draw(0.4,2) node[above]{\small $1$};
\draw [rounded corners,color=gray] (1.2,3.5)--(3.6,3.5)--(3.6,2.2)--(1.2,2.2)--cycle;
\draw(1.8,2) node[above]{\small $2$};
\draw(2.9,2) node[above]{\small $2$};
\draw(4.2,2) node[above]{\small $1$};
\draw(0.4,2) node[above]{\small $1$};

\draw(1,-3) node[above]{\small $1\ 1$};
\draw [rounded corners,color=gray] (-0.2,-1.5)--(2.2,-1.5)--(2.2,-2.8)--(-0.2,-2.8)--cycle;
\draw(3.55,-3) node[above]{\small $2\ 2$};
\draw [rounded corners,color=gray] (2.4,-1.5)--(4.8,-1.5)--(4.8,-2.8)--(2.4,-2.8)--cycle;

\draw(0.5,2.2)--(0.5,-1.5);
\draw(1.7,2.2)--(2.9,-1.5);
\draw(2.9,2.2)--(4.1,-1.5);
\draw(4,2.2)--(1.5,-1.5);
\end{braid}
$$
}
\end{Example}

It is far from clear that $\ker d_n=\im d_{n+1}$ but at least the following is easy to see:

\begin{Lemma} 
The homomorphisms $d_n$ are homogeneous of degree $0$ for all $n$.
\end{Lemma}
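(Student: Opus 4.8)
The claim is that each $d_n=\rho_{D_n}$ is a well-defined morphism in $\Mod{R_\theta}$ that is homogeneous of degree $0$. That $\rho_{D_n}$ is an $R_\theta$-module homomorphism is automatic from \eqref{rightmult}, provided each entry $d_n^{\mu,\la}$ actually lies in $e_\mu R_\theta e_\la$; and $e_\mu\psi_{\la;i}e_\la\in e_\mu R_\theta e_\la$ by construction, using $\psi_{\la;i}1_{\bj^\la}=1_{\bj^{\la+\e_i}}\psi_{\la;i}$ to see the left idempotent $e_\mu=e_{\la+\e_i}$ can be pulled through. So the only real content is the degree computation: I must show that the map $P_\mu\to P_\la$, $xe_\mu\mapsto xe_\mu d_n^{\mu,\la}$, shifts degrees by $s_\mu-s_\la$ in a way compatible with the grading shifts built into $P_\mu=q^{s_\mu}R_\theta e_\mu$ and $P_\la=q^{s_\la}R_\theta e_\la$, i.e. that $\deg(d_n^{\mu,\la})=s_\mu-s_\la$.

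First I would record the degree of a divided power idempotent: $\deg 1_{i^{(c)}}=0$ (it is an idempotent), hence $\deg e_\la=\deg e_\mu=0$ and these contribute nothing. Next, $\psi_{\la;i}=\psi_{r_i^+(\mu)\to r_i^-(\la)}$ is a product $\psi_{t-1}\psi_{t-2}\cdots\psi_{r}$ with $t=r_i^+(\mu)$ and $r=r_i^-(\la)$ by \eqref{EPsiCycle}, and $\deg(\psi_s 1_{\bk})=-\cc_{k_s,k_{s+1}}$ depends on the word $\bk$ at each step. The crucial point is that as the strand carrying label $i$ (starting at position $r_i^+(\mu)$ in $\bj^\mu$) moves leftward through the word $\bj^\mu$ to land at position $r_i^-(\la)$ inside the appropriate block, each crossing contributes $-\cc_{i,j}$ where $j$ is the label of the strand it crosses. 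So I would identify which labels $j$ appear among positions $[r_i^-(\la)+1,\,r_i^+(\mu)-1]$ of $\bj^\mu$ and with what multiplicities; by the description $U_j(\la)=\{s\mid \bj^\la_s=j\}$ one reads off that the moving strand crosses: every $(b+1)$-strand once (contributing $0$ since $\cc_{i,b+1}=0$ when $i\le b-1$, or a relevant term at the boundary), every $i$-strand already present (contributing $+2$ each via $-\cc_{i,i}=-2$... careful with signs), and the $(i\pm1)$-strands straddling the $(b+1)$-block (contributing $-\cc_{i,i\pm1}=+1$ each). Summing, $\deg(d_n^{\mu,\la})$ becomes an explicit expression in $\la_a,\dots,\la_b$ and $m$.

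Finally I would match this against $s_\mu-s_\la$. Since $\mu=\la+\e_i$, we have $|\mu|=n+1$, $|\la|=n$, and $s_\mu-s_\la=(m+1)\bigl((n+1)-n\bigr)-\bigl(\mu_i^2-\la_i^2\bigr)=(m+1)-\bigl((\la_i+1)^2-\la_i^2\bigr)=(m+1)-(2\la_i+1)=m-2\la_i$, the $\tfrac{lm(m-1)}{2}$ terms cancelling. So the target identity is simply $\deg(d_n^{\mu,\la})=m-2\la_i$, and the strand-counting above should reproduce exactly this: the moving $i$-strand crosses $m-\la_i$ strands labelled $i$ on its way (the $a^{(m-\la_i)}$ block it must traverse, minus appropriate boundary corrections) each costing $-2$... and various $\pm1$ contributions from neighbouring labels that conspire to leave $m-2\la_i$. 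The computation is short once the word $\bj^\mu$ and the endpoints $r_i^-(\la),r_i^+(\mu)$ are written out explicitly.

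**Main obstacle.** The bookkeeping is entirely in the off-by-one index juggling: correctly reading which positions of the word $\bj^\mu$ lie strictly between $r_i^-(\la)$ and $r_i^+(\mu)=r_i^+(\la+\e_i)$, and which labels sit there — in particular the behaviour at $i=b$, where the label $i+1=b+1$ occurs with multiplicity $m$ rather than $\le m$, and the definitions $l_i^\pm$, $r_i^\pm$, $r_{b+1}$, $l_{b+1}$ have special cases. Once that window of labels is pinned down, the Cartan-matrix contributions telescope immediately to $m-2\la_i=s_\mu-s_\la$, so I expect the proof to be a couple of lines of index arithmetic with no conceptual difficulty beyond keeping the conventions straight.
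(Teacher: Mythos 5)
Your proposal is correct and follows essentially the same route as the paper: reduce to checking $\deg(e_\mu\psi_{\la;i}e_\la)=s_\mu-s_\la$, compute $s_\mu-s_\la=(m+1)-(2\la_i+1)=m-2\la_i$, and match this against the degree of the crossing element (which the paper likewise dispatches as "an easy computation"). One detail to repair in your strand count: the moving $i$-strand, travelling from position $r_i^+(\mu)$ in $\bj^\mu$ down to position $r_i^-(\la)$, crosses exactly $\la_i$ strands of colour $i$ (each contributing $-\cc_{i,i}=-2$) and all $m$ strands of colour $i+1$ (each contributing $-\cc_{i,i+1}=+1$), with everything else contributing $0$ — it is not the $a$-block nor $m-\la_i$ same-coloured strands — and this gives $m-2\la_i$ directly.
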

\begin{proof}
Let $\la\in\La(n)$ be such that $\la_i<m$ for some $a\leq i\leq b$, so that $\mu:=\la+\e_i\in \La(n+1)$.
The homomorphism $d_n=\rho_{D_n}$ is a right multiplication with the matrix $D_n$. Its $(\mu,\la)$-component is a homomorphism
$
P_\mu\to P_\la
$ obtained by the right multiplication with $\pm e_\mu\psi_{\la;i}e_\la$. Recall that $P_\mu=q^{s_\mu}R_\theta e_\mu$ and $P_\la=q^{s_\la}R_\theta e_\la$. So we just need to show that $s_\mu=s_\la+\deg(e_\mu\psi_{\la;i}e_\la)$. This is an easy computation using the fact that by definition we have
$
\deg(e_\mu\psi_{\la;i}e_\la)=m-2\la_i
$.
\end{proof}


Let $\eta\in Q_+$ be arbitrary and $\pi = (\beta_1^{m_1}, \dots, \beta_t^{m_t})\in \KP(\eta)$. We will define the resolution $P_\bullet^\pi$ of $\Delta(\pi) = \Delta(\beta_1^{m_1})\circ\cdots \circ\Delta(\beta_t^{m_t})$ using the general notion of the induced product of chain complexes.


Let $(C_\bullet, d_C)$ and $(D_\bullet, d_D)$ be chain complexes of left $R_{\eta'}$-modules and $R_{\eta''}$-modules, respectively. We define a complex of $R_{\eta'+\eta''}$-modules
\begin{equation}\label{ECircProdRes}
(C_\bullet\circ D_\bullet)_n := \bigoplus_{p+q = n}C_p\circ D_q
\end{equation}
with differential given by
$$d_{C\circ D}: C_\bullet\circ D_\bullet \to C_\bullet\circ D_\bullet, \ x\circ y \mapsto x\circ d_D(y) + (-1)^q d_C(x)\circ y.$$

\begin{Lemma}\label{circresolution}
	If $C_\bullet$ and $D_\bullet$ are projective resolutions of modules $M$ and $N$, respectively, then $C_\bullet\circ D_\bullet$ is a projective resolution of $M\circ N$.
\end{Lemma}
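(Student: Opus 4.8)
The plan is to prove Lemma~\ref{circresolution} by reducing it to the two standard facts about the induction product functor established in Lemma~\ref{circexact}: that $\Ind_{\eta',\eta''}$ is exact and that it sends projectives to projectives. First I would observe that the complex $C_\bullet\circ D_\bullet$ is, up to the usual sign conventions, nothing but the total complex of the double complex $(C_p\circ D_q)_{p,q\geq 0}$ whose horizontal and vertical differentials are $d_C\circ\id$ and $\id\circ\, d_D$ (the signs $(-1)^q$ in the definition of $d_{C\circ D}$ are exactly what is needed to make these anticommute, so $d_{C\circ D}^2=0$; I would note this but not belabor it). Since each $C_p$ and each $D_q$ is a finitely generated projective $R_{\eta'}$- resp. $R_{\eta''}$-module, each term $C_p\circ D_q=\Ind_{\eta',\eta''}(C_p\boxtimes D_q)$ is a finitely generated projective $R_{\eta'+\eta''}$-module (using that $C_p\boxtimes D_q$ is projective over $R_{\eta'}\otimes R_{\eta''}$ and Lemma~\ref{circexact}); hence $C_\bullet\circ D_\bullet$ is a complex of projectives, and it only remains to compute its homology.

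To compute the homology I would use that $\Ind_{\eta',\eta''}$ is exact. Fix $p$ and consider the row $C_p\circ D_\bullet$. Because $\Ind_{\eta',\eta''}$ is exact and $C_p$ is flat (indeed projective) over $R_{\eta'}$, tensoring the projective resolution $D_\bullet\to N$ with $C_p$ over $R_{\eta''}$, and then applying the induction functor, keeps it exact; concretely $C_p\circ D_\bullet$ is a resolution of $C_p\circ N$. Dually, using exactness in the other slot, the column $C_\bullet\circ N$ is a resolution of $M\circ N$. Now I would run the standard spectral-sequence (or iterated-homology) argument for the double complex: taking homology first in the $D$-direction collapses each row to $C_p\circ N$ concentrated in degree $0$, leaving the complex $C_\bullet\circ N$, whose homology is $M\circ N$ in degree $0$ and zero elsewhere. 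Therefore the total complex $C_\bullet\circ D_\bullet$ has homology $M\circ N$ in degree $0$ and $0$ in positive degrees, which together with the projectivity of the terms shows it is a projective resolution of $M\circ N$.

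An essentially equivalent and perhaps cleaner route, which avoids invoking spectral sequences explicitly, is an induction on homological degree combined with the long exact sequence: write $D_\bullet$ as an iterated extension via the brutal truncations / syzygy modules $\Omega^q N:=\ker(D_{q-1}\to D_{q-2})$ (with $\Omega^0 N=N$), note $0\to\Omega^{q+1}N\to D_q\to\Omega^q N\to 0$ is exact, apply the exact functor $C_\bullet\circ(-)$, and splice. Either way, the only real content is bookkeeping with signs and the interchange of the two homology directions; there is no genuine obstacle, since exactness and preservation of projectives — the two potentially hard points — are already granted by Lemma~\ref{circexact}. The one place to be slightly careful is the sign in $d_{C\circ D}$: I would check once that with the convention $x\circ y\mapsto x\circ d_D(y)+(-1)^q d_C(x)\circ y$ one has $d_{C\circ D}^2=0$ and that the induced identification of the degree-$0$ homology with $M\circ N$ is the obvious one $\bar x\circ\bar y\mapsto \bar x\circ\bar y$, i.e. the map induced by the augmentations of $C_\bullet$ and $D_\bullet$.
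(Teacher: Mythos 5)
Your argument is correct and is essentially the paper's own: the paper proves this lemma by citing Lemma~\ref{circexact} together with \cite[Lemma 2.7.3]{Weibel}, and your double-complex/spectral-sequence argument (terms projective by preservation of projectives, rows resolve $C_p\circ N$ by exactness of $\Ind$, then collapse to $C_\bullet\circ N$) is exactly the content of that citation, just written out.
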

\begin{proof}
This follows from Lemma \ref{circexact} and \cite[Lemma 2.7.3]{Weibel}.
\end{proof}

We now define
\begin{equation}\label{fullresolution}
	P_\bullet^\pi := P_\bullet^{\beta_1^{m_1}}\circ\cdots\circ P_\bullet^{\beta_t^{m_t}}
\end{equation}
which, by Lemma~\ref{circresolution}, will turn out to be a projective resolution of $\Delta(\pi)$ in view of Theorem~\ref{main theorem}.

\subsection{The resolution $Q_\bullet$}\label{SQdef}
In order to check that $P_\bullet$ is a resolution of $\De(\al^m)$, we show that it is a direct summand of a known resolution $Q_\bullet$ of $q^{m(m-1)/2}\De(\al)^{\circ m}$. To describe the latter resolution, let us first consider the special case $m=1$.

\begin{Lemma} \label{L070417} 
We have that $P^\al_{\bullet}$ is a resolution of $\De(\al)$.
\end{Lemma}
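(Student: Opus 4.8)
The plan is to recognize $P^\al_\bullet$ as a reformulation of the projective resolution of $\De(\al)$ constructed in \cite[Theorem 4.12]{BKM}. This has to be carried out independently of Theorem~\ref{TA}, since the present lemma is precisely what is needed to build $Q_\bullet$ and hence to prove Theorem~\ref{TA}. The first step is to specialize the combinatorics of \S\ref{SSComb} to $m=1$: then every part $\la_i$ of a composition $\la\in\La$ lies in $\{0,1\}$, so $\La$ is the Boolean lattice of subsets of $[a,b]$ and $\La(n)$ is the set of $n$-element subsets; the divided power words $\bi^\la$ collapse to ordinary words, the shift $s_\la$ simplifies to $|\la|$, and $\psi_{\la;i}=\psi_{r_i^+(\la+\e_i)\to r_i^-(\la)}$ becomes an honest cycle of crossings with no $y_0$- or $w_0$-boxes. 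In particular $P_0=R_\al 1_{\bi_\al}$ and ${\sf p}\colon 1_{\bi_\al}\mapsto v_\al$ is the obvious surjection onto the cyclic module $\De(\al)$.

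The second step is to match $P^\al_\bullet$ with the resolution of \cite[Theorem 4.12]{BKM} term by term and differential by differential. On the level of modules this is immediate: in both complexes $P_n$ is the direct sum over $n$-element subsets $\la$ of the projectives $q^{|\la|}R_\al 1_{\bi^\la}$. On the level of differentials one checks that $\psi_{\la;i}$ is exactly the braid that drags the letter $i$ created in passing from $\bj^\la$ to $\bj^{\la+\e_i}$ past the block separating the two copies of $i$, and that $\sgn_{\la;i}=(-1)^{\sum_{j=a}^{i-1}\la_j}$ is the sign dictated by a fixed orientation of the Boolean lattice. Granting this, the squares $d_{n-1}d_n=0$ hold and $P^\al_\bullet$ is isomorphic, as a complex of $R_\al$-modules, to the BKM complex, hence is a projective resolution of $\De(\al)$.

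The only real obstacle is bookkeeping rather than anything conceptual: one must pin down the grading shift $s_\la$ and the sign convention $\sgn_{\la;i}$ and verify they agree, up to an overall automorphism of the complex, with those implicit in \cite{BKM} — a sign discrepancy here would propagate into the chain maps $f$ and $g$ built in Sections~\ref{SF} and \ref{SG}. If one instead prefers a self-contained argument, exactness of $P^\al_\bullet$ can be checked directly from the basis Theorem~\ref{TBasis}: exactness at $P_0$ is the content of the defining relations of $\De(\al)$, and the higher terms follow by an inductive argument using $\psi_r^2 1_\bi=\eps_{i_r,i_{r+1}}(y_r-y_{r+1})1_\bi$ to force the cancellations that make the squares close up — but this route requires considerably more combinatorial care with the crossing words and with the interaction between the boundary maps.
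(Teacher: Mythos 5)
Your proposal is correct and takes essentially the same route as the paper: the paper's proof is a one-line citation of \cite[Theorem 4.12]{BKM}, noting that $P^\al_\bullet$ is the special case corresponding to the standard choice of minimal pairs $(\al_{i+1}+\dots+\al_j,\al_i)$, which is exactly the term-by-term identification you carry out. Your $m=1$ specializations ($s_\la=|\la|$, divided power idempotents collapsing to ordinary ones, $P_0=R_\al 1_{\bi_\al}$) are all accurate.
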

\begin{proof}
This is a special case of \cite[Theorem 4.12]{BKM}, corresponding to the standard choice of $(\al_{i+1}+\dots+\al_j,\al_i)$ as the minimal pair for an arbitrary positive root $\al_i+\dots+\al_j$ in the definition of $\bi_{\al,\si}$, see \cite[\S4.5]{BKM}.
\end{proof}

Let $Q_\bullet$ be the resolution $q^{m(m-1)/2} (P^\al_{\bullet})^{\circ m}$, cf. (\ref{ECircProdRes}). To describe $Q_\bullet$ more explicitly,
let $n\in\Z_{\geq 0}$ and $\bde=(\de^{(1)},\dots,\de^{(m)})\in\bLa(n)$.
Recalling the definitions of \S\ref{SSComb}, we set
\begin{align*}
e_\bde:=1_{\bj^\bde}\in R_\theta,\quad
Q_\bde:=q^{n+m(m-1)/2}R_\theta e_\bde.
\end{align*}
For example taking each $\de^{(r)}$ to be $0\in\La(0)$, we get
\begin{equation}\label{EBZero}
\bde={\bzero}:=(0,\dots,0)\in\bLa(0)
\end{equation}
 and
$e_{\bzero}=1_{(a\,(a+1)\cdots (b+1))^m}.$
Further for any $n\in\Z_{\geq 0}$, we set
$$
Q_n:=\bigoplus_{\bde\in\bLa(n)}Q_\bde.
$$

The projective resolution $Q_\bullet$ is
$$
\dots \longrightarrow Q_{n+1}\stackrel{c_n}{\longrightarrow} Q_n\longrightarrow\dots \stackrel{c_0}{\longrightarrow} Q_0\stackrel{\sf q}{\longrightarrow} q^{m(m-1)/2}\De(\al)^{\circ m}\longrightarrow 0,
$$
with the augmentation map
\begin{equation}\label{EAugMap}
{\sf q}:Q_0\to q^{m(m-1)/2}\Delta(\alpha)^{\circ m},\ xe_{\bzero}\mapsto xv_\alpha^{\circ m},
\end{equation}
see \S\ref{SSStdMod}, and $c_n$ being right multiplication with the $\bLa(n+1)\times\bLa(n)$ matrix $C=(c_n^{\bga,\bde})$ defined as follows.
If $\bde+\bolde^{r}_i\in\bLa$ for some $r\in [1, m]$ and $i\in[a, b]$, i.e. $\de^{(r)}_i=0$, we set
\begin{equation}\label{EQ123}
\psi_{\bde;r,i}:=\iota_{\al,\dots,\al}(1^{\otimes (r-1)}\otimes \psi_{\de^{(r)};i}\otimes 1^{\otimes (m-r)})=1^{\circ (r-1)}\circ \psi_{\de^{(r)};i}\circ 1^{\circ (m-r)}.
\end{equation}
Recalling the signs defined in \S\ref{SSComb}, for $\bde\in\bLa(n)$ and $\bga\in\bLa(n+1)$, we now define
$$
c_n^{\bga,\bde}=
\left\{
\begin{array}{ll}
\sgn_{\bde;r,i}e_\bga\psi_{\bde;r,i}e_\bde &\hbox{if $\bga=\bde+\bolde^{r}_i$ for some $1\leq r\leq m$ and $a\leq i\leq b$,}\\
0 &\hbox{otherwise.}
\end{array}
\right.
$$

The fact that $Q_\bullet$ is indeed isomorphic to the resolution $q^{m(m-1)/2} (P^\al_{\bullet})^{\circ m}$ is easily checked using the isomorphism $R_\theta e_\bde\cong R_\theta 1_{\bj^{\de^{(1)}}}\circ\dots\circ R_\theta1_{\bj^{\de^{(m)}}}$, which comes from Lemma~\ref{LIndIdemp}.

\begin{Example} \label{ExQ} 
{\rm
Let $a=b=1$ and $m=2$. Then the resolution $Q_\bullet$ is
$$
0\to q^3R_\theta1_{2121}\stackrel{c_1}{\longrightarrow}
q^2R_\theta1_{2112}\oplus q^2R_\theta1_{1221}\stackrel{c_0}{\longrightarrow}
q R_\theta1_{1212}\stackrel{\sf q}{\longrightarrow} q\De(\al_1+\al_2)^{\circ 2}\longrightarrow 0,
$$
where $c_1$ is a right multiplication with the matrix
$
(-1_{2121} \psi_3\ \  1_{2121} \psi_1)
$,
and $c_0$ is a right multiplication with the matrix
$
\left(
\begin{matrix}
 1_{2112} \psi_1   \\
 1_{1221} \psi_3
\end{matrix}
\right).
$
}
\end{Example}

\subsection{Comparison maps} \label{SSComp}
We now construct what will end up being a pair of chain maps
$f:P_\bullet\to Q_\bullet$ and $g:Q_\bullet\to P_\bullet$ with $g\circ f=\id$. As usual, $f_n$ and $g_n$ will be given as right multiplications with certain matrices $F_n$ and $G_n$, respectively.

Let $\la\in\La$. Recall the definitions of \S\ref{SSComb}. We denote by $w^\la_0$ the longest element of the parabolic subgroup $\Si_{\om_\la}\leq \Si_d$. We also denote
$$
y^\la:=1_{a^{m-\la_a}\cdots b^{m-\la_b}}\circ y_{0,m}\circ  1_{b^{\la_b}\cdots a^{\la_a}}.
$$

Let $\bde=(\de^{(1)},\dots,\de^{(m)})\in\bLa$.
We define $u(\bde)\in\Si_d$ as follows: for all $i=a,\dots,b$, the permutation $u(\bde)$ maps:
\begin{enumerate}
\item[$\bullet$] the elements of $U^\pm_i(\la^\bde)$ increasingly to the elements of $U^\pm_i(\bde)$;
\item[$\bullet$] the elements of $U_{b+1}(\la^\bde)$ increasingly to the elements of $U_{b+1}(\bde)$.
\end{enumerate}
Set $w(\bde):=u(\bde)^{-1}$.
Then $w(\bde)$ can also be characterized as the element of $\Si_d$ which for all $i=a,\dots,b$, maps
the elements of $U^{(\pm)}_i(\bde)$ increasingly to the elements of $U^{(\pm)}_i(\la^\bde)$ and the elements of $U_{b+1}(\bde)$ increasingly to the elements of $U_{b+1}(\la^\bde)$.

Recall the signs $\si_\bde$ and $\tau_{\bde}$ defined in \S\ref{SSComb}.
We now define $F_n$ as the $\La(n)\times \bLa(n)$-matrix with the entries $f_n^{\la,\bde}$ defined for any $\la\in\La(n),\bde\in\bLa(n)$ as follows:
$$
f_n^{\la,\bde}:=
\left\{
\begin{array}{ll}
\si_{\bde}e_\la\psi_{w_0^\la}\psi_{w(\bde)}e_\bde &\hbox{if $\la=\la^\bde$,}\\
0 &\hbox{otherwise.}
\end{array}
\right.
$$
We define $G_n$ as the $\bLa(n)\times \La(n)$-matrix with the entries $g_n^{\bde,\la}$ defined for any $\bde\in\bLa(n), \la\in\La(n)$ as follows:
$$
g_n^{\bde,\la}:=
\left\{
\begin{array}{ll}
\tau_\bde e_\bde\psi_{u(\bde)}y^\la e_\la &\hbox{if $\la=\la^\bde$,}\\
0 &\hbox{otherwise.}
\end{array}
\right.
$$

\begin{Example} Let $m=d=2$ as in Examples~\ref{ExP} and \ref{ExQ}.  Then:
\begin{align*}
F_0
&=
\left(\begin{braid}\tikzset{baseline=0mm}
\draw(1,2) node[above]{\small $1\ 1$};
\draw [rounded corners,color=gray] (-0.2,3.5)--(2.2,3.5)--(2.2,2.2)--(-0.2,2.2)--cycle;
\draw(1,0) node[above]{\small $w_0$};
\draw [rounded corners,color=gray] (-0.2,1.5)--(2.2,1.5)--(2.2,0.2)--(-0.2,0.2)--cycle;
\draw(0.5,2.2)--(0.5,1.5);
\draw(1.5,2.2)--(1.5,1.5);
\draw(3.55,2) node[above]{\small $2\ 2$};
\draw [rounded corners,color=gray] (2.4,3.5)--(4.8,3.5)--(4.8,2.2)--(2.4,2.2)--cycle;
\draw(3.9,0) node[above]{\small $w_0$};
\draw [rounded corners,color=gray] (2.4,1.5)--(4.8,1.5)--(4.8,0.2)--(2.4,0.2)--cycle;
\draw(3.05,2.2)--(3.05,1.5);
\draw(4.05,2.2)--(4.05,1.5);
\draw(0.4,-3) node[above]{\small $1$};
\draw(1.8,-3) node[above]{\small $2$};
\draw(2.9,-3) node[above]{\small $1$};
\draw(4.2,-3) node[above]{\small $2$};
\draw(0.5,0.2)--(0.5,-1.5);
\draw(1.5,0.2)--(2.8,-1.5);
\draw(3,0.2)--(1.8,-1.5);
\draw(4,0.2)--(4,-1.5);
\end{braid}\right)
=
\left(\begin{braid}\tikzset{baseline=0mm}
\draw(1,2) node[above]{\small $1\ 1$};
\draw [rounded corners,color=gray] (-0.2,3.5)--(2.2,3.5)--(2.2,2.2)--(-0.2,2.2)--cycle;
\draw(3.55,2) node[above]{\small $2\ 2$};
\draw [rounded corners,color=gray] (2.4,3.5)--(4.8,3.5)--(4.8,2.2)--(2.4,2.2)--cycle;
\draw(0.4,-3) node[above]{\small $1$};
\draw(1.8,-3) node[above]{\small $2$};
\draw(2.9,-3) node[above]{\small $1$};
\draw(4.2,-3) node[above]{\small $2$};
\draw(0.5,2.2)--(2.9,-1.5);
\draw(1.5,2.2)--(0.5,-1.5);
\draw(3,2.2)--(4.2,-1.5);
\draw(4,2.2)--(1.7,-1.5);
\end{braid}\right),
\\
F_1&
=\left(\begin{braid}\tikzset{baseline=0mm}
\draw(0.4,2) node[above]{\small $1$};
\draw [rounded corners,color=gray] (1.2,3.5)--(3.6,3.5)--(3.6,2.2)--(1.2,2.2)--cycle;
\draw(1.8,2) node[above]{\small $2$};
\draw(2.9,2) node[above]{\small $2$};
\draw(2.4,0) node[above]{\small $w_0$};
\draw [rounded corners,color=gray] (1.2,1.5)--(3.6,1.5)--(3.6,0.2)--(1.2,0.2)--cycle;
\draw(4.2,2) node[above]{\small $1$};
\draw(0.4,-3) node[above]{\small $1$};
\draw(1.8,-3) node[above]{\small $2$};
\draw(2.9,-3) node[above]{\small $2$};
\draw(4.2,-3) node[above]{\small $1$};
\draw(0.5,2.2)--(0.5,-1.5);
\draw(1.75,2.2)--(1.75,1.5);
\draw(1.75,0.2)--(1.75,-1.5);
\draw(2.9,2.2)--(2.9,1.5);
\draw(2.9,0.2)--(2.9,-1.5);
\draw(4.2,2.2)--(4.2,-1.5);
\end{braid}\quad
\begin{braid}\tikzset{baseline=0mm}
\draw(0.4,2) node[above]{\small $1$};
\draw [rounded corners,color=gray] (1.2,3.5)--(3.6,3.5)--(3.6,2.2)--(1.2,2.2)--cycle;
\draw(1.8,2) node[above]{\small $2$};
\draw(2.9,2) node[above]{\small $2$};
\draw(2.4,0) node[above]{\small $w_0$};
\draw [rounded corners,color=gray] (1.2,1.5)--(3.6,1.5)--(3.6,0.2)--(1.2,0.2)--cycle;
\draw(4.2,2) node[above]{\small $1$};
\draw(0.4,-3) node[above]{\small $2$};
\draw(1.8,-3) node[above]{\small $1$};
\draw(2.9,-3) node[above]{\small $1$};
\draw(4.2,-3) node[above]{\small $2$};
\draw(0.5,2.2)--(1,0)--(2.9,-1.5);
\draw(1.75,2.2)--(1.75,1.5);
\draw(1.75,0.2)--(0.5,-1.5);
\draw(3,2.2)--(3,1.5);
\draw(3,0.2)--(4.2,-1.5);
\draw(4.2,2.2)--(3.8,0)--(1.7,-1.5);
\end{braid}
\right)
=
\left(\begin{braid}\tikzset{baseline=0mm}
\draw(0.4,2) node[above]{\small $1$};
\draw [rounded corners,color=gray] (1.2,3.5)--(3.6,3.5)--(3.6,2.2)--(1.2,2.2)--cycle;
\draw(1.8,2) node[above]{\small $2$};
\draw(2.9,2) node[above]{\small $2$};
\draw(4.2,2) node[above]{\small $1$};
\draw(0.4,-3) node[above]{\small $1$};
\draw(1.8,-3) node[above]{\small $2$};
\draw(2.9,-3) node[above]{\small $2$};
\draw(4.2,-3) node[above]{\small $1$};
\draw(0.5,2.2)--(0.5,-1.5);
\draw(1.75,2.2)--(2.9,-1.5);
\draw(2.95,2.2)--(1.7,-1.5);
\draw(4.2,2.2)--(4.2,-1.5);
\end{braid}\quad
\begin{braid}\tikzset{baseline=0mm}
\draw(0.4,2) node[above]{\small $1$};
\draw [rounded corners,color=gray] (1.2,3.5)--(3.6,3.5)--(3.6,2.2)--(1.2,2.2)--cycle;
\draw(1.8,2) node[above]{\small $2$};
\draw(2.9,2) node[above]{\small $2$};
\draw(4.2,2) node[above]{\small $1$};
\draw(0.4,-3) node[above]{\small $2$};
\draw(1.8,-3) node[above]{\small $1$};
\draw(2.9,-3) node[above]{\small $1$};
\draw(4.2,-3) node[above]{\small $2$};
\draw(0.5,2.2)--(2.9,-1.5);
\draw(1.75,2.2)--(4.2,-1.5);
\draw(3,2.2)--(0.5,-1.5);
\draw(4.2,2.2)--(1.7,-1.5);
\end{braid}
\right),
\\
F_2&=
\left(\begin{braid}\tikzset{baseline=0mm}
\draw(1,2) node[above]{\small $2\ 2$};
\draw [rounded corners,color=gray] (-0.2,3.5)--(2.2,3.5)--(2.2,2.2)--(-0.2,2.2)--cycle;
\draw(3.55,2) node[above]{\small $1\ 1$};
\draw [rounded corners,color=gray] (2.4,3.5)--(4.8,3.5)--(4.8,2.2)--(2.4,2.2)--cycle;
\draw(0.4,-3) node[above]{\small $2$};
\draw(1.8,-3) node[above]{\small $1$};
\draw(2.9,-3) node[above]{\small $2$};
\draw(4.1,-3) node[above]{\small $1$};
\draw(1,0) node[above]{\small $w_0$};
\draw [rounded corners,color=gray] (-0.2,1.5)--(2.2,1.5)--(2.2,0.2)--(-0.2,0.2)--cycle;
\draw(3.9,0) node[above]{\small $w_0$};
\draw [rounded corners,color=gray] (2.4,1.5)--(4.8,1.5)--(4.8,0.2)--(2.4,0.2)--cycle;
\draw(3.05,2.2)--(3.05,1.5);
\draw(4.05,2.2)--(4.05,1.5);
\draw(0.5,2.2)--(0.5,1.5);
\draw(1.5,2.2)--(1.5,1.5);
\draw(0.5,0.2)--(0.5,-1.5);
\draw(1.5,0.2)--(2.9,-1.5);
\draw(3,0.2)--(1.7,-1.5);
\draw(4,0.2)--(4,-1.5);
\end{braid}\right)
=
\left(\begin{braid}\tikzset{baseline=0mm}
\draw(1,2) node[above]{\small $2\ 2$};
\draw [rounded corners,color=gray] (-0.2,3.5)--(2.2,3.5)--(2.2,2.2)--(-0.2,2.2)--cycle;
\draw(3.55,2) node[above]{\small $1\ 1$};
\draw [rounded corners,color=gray] (2.4,3.5)--(4.8,3.5)--(4.8,2.2)--(2.4,2.2)--cycle;
\draw(0.4,-3) node[above]{\small $2$};
\draw(1.8,-3) node[above]{\small $1$};
\draw(2.9,-3) node[above]{\small $2$};
\draw(4.2,-3) node[above]{\small $1$};
\draw(0.5,2.2)--(2.9,-1.5);
\draw(1.5,2.2)--(0.5,-1.5);
\draw(3,2.2)--(4.2,-1.5);
\draw(4,2.2)--(1.7,-1.5);
\end{braid}\right),
\\
G_0&
=\left(
\begin{braid}\tikzset{baseline=0mm}
\draw(0.4,2) node[above]{\small $1$};
\draw(1.6,2) node[above]{\small $2$};
\draw(2.9,2) node[above]{\small $1$};
\draw(4.2,2) node[above]{\small $2$};
\draw(1,-3) node[above]{\small $1\ 1$};
\draw [rounded corners,color=gray] (-0.2,-2.8)--(2.2,-2.8)--(2.2,-1.5)--(-0.2,-1.5)--cycle;
\draw(3.55,-3) node[above]{\small $2\ 2$};
\draw [rounded corners,color=gray] (2.4,-2.8)--(4.8,-2.8)--(4.8,-1.5)--(2.4,-1.5)--cycle;
\draw(3.7,-1.1) node[above]{\small $y_0$};
\draw [rounded corners,color=gray] (2.4,0.4)--(4.8,0.4)--(4.8,-1)--(2.4,-1)--cycle;
\draw(0.5,2.2)--(0.5,-1.5);
\draw(1.5,2.2)--(2.9,0.4);
\draw(2.9,-1)--(2.9,-1.5);
\draw(3,2.2)--(2.3,1.3)--(1.5,-1.5);
\draw(4.2,2.2)--(4.2,0.4);
\draw(4.2,-1)--(4.2,-1.5);
\end{braid}\right)
=\left(
\begin{braid}\tikzset{baseline=0mm}
\draw(0.4,2) node[above]{\small $1$};
\draw(1.6,2) node[above]{\small $2$};
\draw(2.9,2) node[above]{\small $1$};
\draw(4,2) node[above]{\small $2$};
\draw(1,-3) node[above]{\small $1\ 1$};
\draw [rounded corners,color=gray] (-0.2,-2.8)--(2.2,-2.8)--(2.2,-1.5)--(-0.2,-1.5)--cycle;
\draw(3.55,-3) node[above]{\small $2\ 2$};
\draw [rounded corners,color=gray] (2.4,-2.8)--(4.8,-2.8)--(4.8,-1.5)--(2.4,-1.5)--cycle;
\draw(0.5,2.2)--(0.5,-1.5);
\draw(1.5,2.2)--(3,-1.5);
\draw(2.95,2.2)--(1.5,-1.5);
\draw(4,2.2)--(4,-1.5);
\blackdot(4,-0.5);
\end{braid}\right),
\\
G_1&
=\left(\begin{array}c
\begin{braid}\tikzset{baseline=0mm}
\draw(0.4,2) node[above]{\small $1$};
\draw(1.7,2) node[above]{\small $2$};
\draw(2.9,2) node[above]{\small $2$};
\draw(4.2,2) node[above]{\small $1$};
\draw(0.4,-3) node[above]{\small $1$};
\draw [rounded corners,color=gray] (1.2,-2.8)--(3.6,-2.8)--(3.6,-1.5)--(1.2,-1.5)--cycle;
\draw(1.8,-3) node[above]{\small $2$};
\draw(2.9,-3) node[above]{\small $2$};
\draw(4.2,-3) node[above]{\small $1$};
\draw(2.4,-1.1) node[above]{\small $y_0$};
\draw [rounded corners,color=gray] (1.2,0.4)--(3.6,0.4)--(3.6,-1)--(1.2,-1)--cycle;
\draw(0.5,2.2)--(0.5,-1.5);
\draw(1.7,2.2)--(1.7,0.4);
\draw(2.95,2.2)--(2.95,0.4);
\draw(1.7,-1)--(1.7,-1.5);
\draw(2.95,-1)--(2.95,-1.5);
\draw(4.2,2.2)--(4.2,-1.5);
\end{braid}\\
\begin{braid}\tikzset{baseline=0mm}
\draw(0.4,2) node[above]{\small $2$};
\draw(1.6,2) node[above]{\small $1$};
\draw(2.9,2) node[above]{\small $1$};
\draw(4.2,2) node[above]{\small $2$};
\draw(0.4,-3) node[above]{\small $1$};
\draw [rounded corners,color=gray] (1.2,-2.8)--(3.6,-2.8)--(3.6,-1.5)--(1.2,-1.5)--cycle;
\draw(1.8,-3) node[above]{\small $2$};
\draw(2.9,-3) node[above]{\small $2$};
\draw(4.2,-3) node[above]{\small $1$};
\draw(2.4,-1.1) node[above]{\small $y_0$};
\draw [rounded corners,color=gray] (1.2,0.4)--(3.6,0.4)--(3.6,-1)--(1.2,-1)--cycle;
\draw(1.7,-1)--(1.7,-1.5);
\draw(2.95,-1)--(2.95,-1.5);
\draw(0.5,2.2)--(1.7,0.4);
\draw(1.5,2.2)--(3.8,1.2)--(4.2,-1.5);
\draw(2.95,2.2)--(1.1,1.2)--(0.5,-1.5);
\draw(4.2,2.2)--(2.95,0.4);
\end{braid}
\end{array}\right)
=\left(\begin{array}c
\begin{braid}\tikzset{baseline=0mm}
\draw(0.4,2) node[above]{\small $1$};
\draw(1.7,2) node[above]{\small $2$};
\draw(2.9,2) node[above]{\small $2$};
\draw(4.2,2) node[above]{\small $1$};
\draw(0.4,-3) node[above]{\small $1$};
\draw [rounded corners,color=gray] (1.2,-2.8)--(3.6,-2.8)--(3.6,-1.5)--(1.2,-1.5)--cycle;
\draw(1.8,-3) node[above]{\small $2$};
\draw(2.9,-3) node[above]{\small $2$};
\draw(4.2,-3) node[above]{\small $1$};
\draw(0.5,2.2)--(0.5,-1.5);
\draw(1.7,2.2)--(1.7,-1.5);
\draw(2.95,2.2)--(2.95,-1.5);
\draw(4.2,2.2)--(4.2,-1.5);
\blackdot(2.95,-0.5);
\end{braid}\\
\begin{braid}\tikzset{baseline=0mm}
\draw(0.4,2) node[above]{\small $2$};
\draw(1.6,2) node[above]{\small $1$};
\draw(2.9,2) node[above]{\small $1$};
\draw(4.2,2) node[above]{\small $2$};
\draw(0.4,-3) node[above]{\small $1$};
\draw [rounded corners,color=gray] (1.2,-2.8)--(3.6,-2.8)--(3.6,-1.5)--(1.2,-1.5)--cycle;
\draw(1.8,-3) node[above]{\small $2$};
\draw(2.9,-3) node[above]{\small $2$};
\draw(4.2,-3) node[above]{\small $1$};
\draw(0.5,2.2)--(1.7,-1.5);
\draw(1.5,2.2)--(4.2,-1.5);
\draw(3,2.2)--(0.5,-1.5);
\draw(4.2,2.2)--(2.95,-1.5);
\blackdot(3.15,-1);
\end{braid}
\end{array}\right),
\\
G_2&
=\left(-\begin{braid}\tikzset{baseline=0mm}
\draw(0.4,2) node[above]{\small $2$};
\draw(1.6,2) node[above]{\small $1$};
\draw(2.9,2) node[above]{\small $2$};
\draw(4,2) node[above]{\small $1$};
\draw(1,-3) node[above]{\small $2\ 2$};
\draw [rounded corners,color=gray] (-0.2,-2.8)--(2.2,-2.8)--(2.2,-1.5)--(-0.2,-1.5)--cycle;
\draw(3.55,-3) node[above]{\small $1\ 1$};
\draw [rounded corners,color=gray] (2.4,-2.8)--(4.8,-2.8)--(4.8,-1.5)--(2.4,-1.5)--cycle;
\draw(1,-1.1) node[above]{\small $y_0$};
\draw [rounded corners,color=gray] (-0.2,0.4)--(2.2,0.4)--(2.2,-1)--(-0.2,-1)--cycle;
\draw(1.55,-1)--(1.55,-1.5);
\draw(0.5,-1)--(0.5,-1.5);
\draw(0.5,2.2)--(0.5,0.4);
\draw(1.65,2.2)--(3.1,-1.5);
\draw(3,2.2)--(1.5,0.4);
\draw(4,2.2)--(4,-1.5);
\end{braid}\right)
=\left(-\begin{braid}\tikzset{baseline=0mm}
\draw(0.4,2) node[above]{\small $2$};
\draw(1.6,2) node[above]{\small $1$};
\draw(2.9,2) node[above]{\small $2$};
\draw(4,2) node[above]{\small $1$};
\draw(1,-3) node[above]{\small $2\ 2$};
\draw [rounded corners,color=gray] (-0.2,-2.8)--(2.2,-2.8)--(2.2,-1.5)--(-0.2,-1.5)--cycle;
\draw(3.55,-3) node[above]{\small $1\ 1$};
\draw [rounded corners,color=gray] (2.4,-2.8)--(4.8,-2.8)--(4.8,-1.5)--(2.4,-1.5)--cycle;
\draw(0.5,2.2)--(0.5,-1.5);
\draw(1.6,2.2)--(3.05,-1.5);
\draw(3,2.2)--(1.5,-1.5);
\draw(4,2.2)--(4,-1.5);
\blackdot(1.73,-1);
\end{braid}\right).
\end{align*}
\end{Example}

\begin{Lemma} \label{LDeg} 
Let $\bde\in\bLa(n)$ and $\la=\la^\bde$.  Then:
\begin{enumerate}
\item[{\rm (i)}] $\deg(\psi_{w(\bde)}e_\bde)=\frac{m(m-1)(l-1)}{2}-mn+\sum_{i=a}^b\la_i^2$,
\item[{\rm (ii)}] $\deg(f_{n}^{\la,\bde})=-\frac{m(m-1)(l+1)}{2}+mn-\sum_{i=a}^b\la_i^2$,
\item[{\rm (iii)}] $\deg(g_{n}^{\bde,\la})=\frac{m(m-1)(l+1)}{2}-mn+\sum_{i=a}^b\la_i^2$.
\end{enumerate}
\end{Lemma}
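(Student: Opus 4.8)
The plan is to derive (ii) and (iii) from (i), and to prove (i) by a direct count of the crossings in the Khovanov--Lauda diagram of $\psi_{w(\bde)}e_\bde$. The one tool needed throughout is the degree formula: for $w\in\Si_d$ and $\bk\in I^\theta$ the element $\psi_w1_\bk$ is homogeneous with $\deg(\psi_w1_\bk)=-\sum\cc_{k_p,k_q}$, the sum running over the pairs $p<q$ inverted by $w$; equivalently, each crossing of two strands coloured $i,j$ contributes $-\cc_{i,j}$, that is $-2$ if $i=j$, $+1$ if $|i-j|=1$, and $0$ otherwise. I shall also use additivity of this degree along a length-additive factorisation $w=w'w''$, and the identity $\deg(\psi_{w^{-1}}1_{w\cdot\bk})=\deg(\psi_w1_\bk)$ obtained by flipping the diagram top-to-bottom.

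First, I would reduce (ii) and (iii) to (i). For (ii): since $w_0^\la$ permutes strands only within the equal-colour blocks of $\bj^\la$, the divided-power idempotent $e_\la$ is absorbed on the left (apply Lemma~\ref{LW0Y0}(\ref{LW0Y0ii}) blockwise), so $f_n^{\la,\bde}=\pm\psi_{w_0^\la}\psi_{w(\bde)}e_\bde$. The blocks of $\Si_{\om_\la}$ are exactly the colour classes, occurring in $\bj^\la$ in the order $U_a^-(\la),\dots,U_b^-(\la),U_{b+1}(\la),U_b^+(\la),\dots,U_a^+(\la)$, and since $u(\bde)=w(\bde)^{-1}$ is increasing on each of these classes, $w(\bde)$ has minimal length in the coset $\Si_{\om_\la}w(\bde)$; hence $\ell(w_0^\la w(\bde))=\ell(w_0^\la)+\ell(w(\bde))$ and $\deg(f_n^{\la,\bde})=\deg(\psi_{w_0^\la}1_{\bj^\la})+\deg(\psi_{w(\bde)}e_\bde)$. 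As $w_0^\la$ contributes only same-colour crossings inside blocks of sizes $m-\la_a,\dots,m-\la_b,m,\la_b,\dots,\la_a$, one gets $\deg(\psi_{w_0^\la}1_{\bj^\la})=-m(m-1)-\sum_{i=a}^b[(m-\la_i)(m-\la_i-1)+\la_i(\la_i-1)]=-lm(m-1)+2mn-2\sum_i\la_i^2$, and adding (i) yields (ii). For (iii): the flip identity gives $\deg(e_\bde\psi_{u(\bde)})=\deg(\psi_{w(\bde)}e_\bde)$, while $y^\la$ is a horizontal concatenation with middle factor $y_{0,m}$ of degree $m(m-1)$ and all other factors (and $e_\la$) of degree $0$; since degree is additive on products, $\deg(g_n^{\bde,\la})=\deg(\psi_{w(\bde)}e_\bde)+m(m-1)$, and adding $m(m-1)$ to (i) gives (iii).

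The heart of the matter is (i). Because $w(\bde)$ carries each of $U_i^{\pm}(\bde)$ and $U_{b+1}(\bde)$ increasingly into the corresponding block of $\bj^\la$, there are no crossings between strands lying in the same such set, and two strands at positions $p<q$ of $\bj^\bde$ cross precisely when $w(\bde)$ sends the $p$-strand into a block of $\bj^\la$ strictly later, in the order above, than the block it sends the $q$-strand into. Only two types of crossing have nonzero weight: between two strands of one colour $i\in[a,b]$ (residing in $U_i^-(\bde)$ and $U_i^+(\bde)$; weight $-2$) and between strands of adjacent colours $i,i+1$ with $i\in[a,b]$ (weight $+1$). A short inspection of the finitely many pairs of blocks that can occur — using that inside a single component the colours read $a<\cdots<b<(b+1)$ going right along the left half and then $b>b-1>\cdots>a$ along the right half — shows, writing $\bde_{\bullet,i}:=(\de^{(1)}_i,\dots,\de^{(m)}_i)\in\{0,1\}^m$, that the number of same-colour-$i$ crossings equals $A_i:=\#\{r<r'\mid\de^{(r)}_i=1,\ \de^{(r')}_i=0\}$ and the number of $(i,i+1)$-crossings equals $B_i:=\sum_{r:\,\de^{(r)}_i=0}(r-1)+\sum_{r:\,\de^{(r)}_i=1}(m-r)$; remarkably, each depends only on the single $0/1$ string $\bde_{\bullet,i}$. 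Hence $\deg(\psi_{w(\bde)}e_\bde)=\sum_{i=a}^b(-2A_i+B_i)$, and a one-line computation in which the quantity $\sum_{r:\de^{(r)}_i=1}r$ cancels gives $-2A_i+B_i=\tfrac{m(m-1)}{2}-m\la_i+\la_i^2$ for every $i$; summing over the $l-1$ indices $i\in[a,b]$ (and using $\sum_i\la_i=n$) produces exactly $\tfrac{m(m-1)(l-1)}{2}-mn+\sum_i\la_i^2$, which is (i).

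I expect the only real difficulty to lie in the bookkeeping of this block-pair analysis: listing the ordered pairs of blocks that actually occur, matching each to its crossing count, and — most importantly — verifying that the manifestly $\bde$-dependent tallies collapse to the stated function of $\la$ alone. A reassuring special case is $\bde=\bzero$ (so $\la=0$): then $w(\bzero)$ is the unshuffle sorting $(a\,(a+1)\cdots(b+1))^m$ into $a^m(a+1)^m\cdots(b+1)^m$, all of whose crossings are between consecutive colours and number $(l-1)\binom{m}{2}$, in agreement with (i).
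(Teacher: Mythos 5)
Your proposal is correct, and parts (ii) and (iii) follow the paper's reduction exactly: additivity of degree, the computation $\deg(e_\la\psi_{w_0^\la})=-m(m-1)-\sum_i\big(\la_i(\la_i-1)+(m-\la_i)(m-\la_i-1)\big)$, and $\deg(y^\la)=m(m-1)$ together with the top-to-bottom flip for $u(\bde)=w(\bde)^{-1}$. For part (i), however, your route is genuinely different. The paper argues by induction on $m$: it compares the increment $R(m)-R(m-1)$ of the right-hand side with the degree contributed by the $l$ strings of the last component $\bj^{\de^{(m)}}$, which do not cross one another. You instead count all nonzero-weight crossings of $\psi_{w(\bde)}e_\bde$ at once, organized by colour. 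I verified your two tallies: the same-colour-$i$ crossings are precisely the pairs $r<r'$ with $\de^{(r)}_i=1$ and $\de^{(r')}_i=0$ (a colour-$i$ strand of component $r$ lands in $U_i^+(\la^\bde)$ iff $\de^{(r)}_i=1$, the blocks $U_i^-$, $U_i^+$ being traversed increasingly), and the colour-$i$ strand of component $r$ crosses exactly $m-r$ colour-$(i+1)$ strands when $\de^{(r)}_i=1$ and exactly $r-1$ when $\de^{(r)}_i=0$ (the case $r=s$ of equal components contributing nothing, and the case $i=b$ against the $(b+1)$-block working the same way); this is your $B_i$, and the identity $-2A_i+B_i=\binom{m}{2}-m\la_i+\la_i^2$ checks out. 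Your approach buys a transparent per-colour decomposition of the degree, which makes it manifest that the answer depends only on $\la^\bde$ (so it reproves the subsequent corollary on equality of degrees directly), at the cost of the block-pair bookkeeping you flag; the paper's induction localizes the computation to one component but buries the cancellation in an ``easily seen'' identity. Both arguments are complete modulo the same kind of routine verification.
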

\begin{proof}
(i) We prove this by induction on $m$. Denote the right hand side by $R(m)$ and the left hand side by $L(m)$. If $m=1$ then $w(\bde)=1$, so $L(1)=0$. Moreover,
$$R(1)=-n+ \sum_{i=a}^b\la_i^2=-n+\sum_{i=a}^b(\de^{(1)}_i)^2=-n+\sum_{i=a}^b\de^{(1)}_i=0.$$

Let $m>1$. It suffices to prove that $R(m)-R(m-1)=L(m)-L(m-1)$.
Let $\de^{(m)}=(\eps_a,\dots,\eps_b)$.
Then, since all $\eps_i$ are $0$ or $1$, we have
\begin{align*}
R(m)-R(m-1)&=\frac{m(m-1)(l-1)}{2}-mn+\sum_{i=a}^b\la_i^2
 -
\frac{(m-1)(m-2)(l-1)}{2}
\\
&\ \ \
+(m-1)(n-\sum_{i=a}^b\eps_i)-\sum_{i=a}^b(\la_i-\eps_i)^2
\\&=(m-1)(l-1) -n -m\sum_{i=a}^b\eps_i+2\sum_{i=a}^b\la_i\eps_i.
\end{align*}

On the other hand, consider the Khovanov-Lauda diagram of $\psi_{w(\bde)}e_\bde$. The bottom positions of the diagram correspond to to the letters if the word $\bj^\bde$, and so the rightmost $l$ bottom positions of this diagram correspond to the letters of $\bj^{\de^{(m)}}$. In other words, counting from the right, the sequence of colors of these positions is $a^{\eps_a},\dots,b^{\eps_b},b+1,b^{1-\eps_b},\dots,a^{1-\eps_a}$.
Note that the strings which originate in these positions do not intersect each other, so $L(m)-L(m-1)$ equals the sum of the degrees of the intersections of these strings with the other strings of the diagram, i.e.
\begin{align*}
L(m)-L(m-1)&=\sum_{i=a+1}^b\eps_i(\la_{i-1}-\eps_{i-1})+\la_b-\eps_b
\\
&\ \ \ +\sum_{i=a+1}^b(1-\eps_i)(m-1-2(\la_i-\eps_i)+\la_{i-1}-\eps_{i-1})
\\
&\ \ \ + (1-\eps_a)(m-1-2(\la_a-\eps_a)),
\end{align*}
which is easily seen to equal the expression for $R(m)-R(m-1)$ obtained above.

(ii) This follows from (i) since $\deg(f_{n}^{\la,\bde})=\deg(\psi_{w(\bde)}e_\bd)+\deg(e_\la\psi_{w_0^\la})$ and
\begin{align*}\deg(e_\la\psi_{w_0^\la})=-m(m-1)-\sum_{i=a}^b(\la_i(\la_i-1)+(m-\la_i)(m-\la_i-1)).
\end{align*}

(iii) This follows from (i) since
$$
\deg(g_{n}^{\bde,\la})=\deg(e_\bde\psi_{u(\bde)})+\deg(y^\la)=\deg(\psi_{w(\bde)}e_\bde)+m(m-1).
$$
\end{proof}

\begin{Corollary} 
The homomorphisms $f_n$ and $g_n$ are homogeneous of degree $0$ for all $n$.
\end{Corollary}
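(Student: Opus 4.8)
The plan is to read the Corollary off directly from Lemma~\ref{LDeg}. The elementary point I would isolate first is the behaviour of right multiplication maps under grading shift: if $x\in e R_\theta f$ is homogeneous, then the right multiplication map $\rho_x\colon q^c R_\theta e\to q^{c'}R_\theta f$, $re\mapsto rex$, raises internal degree by $\deg(x)$ on $R_\theta e$, and hence, after accounting for the two shifts, is homogeneous of degree $\deg(x)+c'-c$; in particular it is homogeneous of degree $0$ exactly when $\deg(x)=c-c'$. Since $f_n=\rho_{F_n}$ and $g_n=\rho_{G_n}$ are built entrywise from such maps and a finite direct sum of homogeneous degree-$0$ maps is again homogeneous of degree $0$ (a zero entry being trivially degree $0$), it suffices to verify this shift equality for each nonzero entry of $F_n$ and of $G_n$.

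For $f_n^{\la,\bde}$: this entry is nonzero only when $\la=\la^\bde$, in which case $|\la|=n$, the source $P_\la=q^{s_\la}R_\theta e_\la$ has shift $c=s_\la$, and the target $Q_\bde=q^{n+m(m-1)/2}R_\theta e_\bde$ has shift $c'=n+m(m-1)/2$, so I must check that $\deg(f_n^{\la,\bde})=s_\la-n-m(m-1)/2$. Substituting the definition $s_\la=-\frac{lm(m-1)}{2}+(m+1)n-\sum_{i=a}^b\la_i^2$ gives $s_\la-n-m(m-1)/2=-\frac{m(m-1)(l+1)}{2}+mn-\sum_{i=a}^b\la_i^2$, which is exactly the value of $\deg(f_n^{\la,\bde})$ furnished by Lemma~\ref{LDeg}(ii). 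The entry $g_n^{\bde,\la}$ is handled symmetrically: now the source is $Q_\bde$ and the target is $P_\la$, so I need $\deg(g_n^{\bde,\la})=n+m(m-1)/2-s_\la=\frac{m(m-1)(l+1)}{2}-mn+\sum_{i=a}^b\la_i^2$, which is precisely Lemma~\ref{LDeg}(iii).

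Because all the nontrivial degree bookkeeping is already encapsulated in Lemma~\ref{LDeg} (and ultimately in its part (i), which is proved there by induction on $m$ by comparing the combinatorial expression with the crossing count in the Khovanov--Lauda diagram of $\psi_{w(\bde)}e_\bde$), there is essentially no obstacle here. The only thing that requires care is keeping the two conventions consistent --- that right multiplication by a homogeneous element of degree $k$ raises internal degree by $+k$, and that the grading-shift functor $q^c(-)$ shifts degrees up by $+c$ --- so that the arithmetic cancellation $s_\la-(n+m(m-1)/2)=\deg(f_n^{\la,\bde})$ (and its mirror for $g_n$) comes out correctly. Once those conventions are pinned down, this becomes a two-line deduction from Lemma~\ref{LDeg}.
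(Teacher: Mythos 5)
Your proof is correct and follows the same route as the paper: reduce to the entrywise degree condition for right multiplication between the shifted projectives, and then observe that the required identities $\deg(f_n^{\la,\bde})=s_\la-n-m(m-1)/2$ and $\deg(g_n^{\bde,\la})=n+m(m-1)/2-s_\la$ are exactly Lemma~\ref{LDeg}(ii) and (iii). The arithmetic with $s_\la$ checks out, so nothing is missing.
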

\begin{proof}
Let $\bde\in\bLa(n)$ and $\la=\la^\bde$.
The homomorphism $f_n$ is a right multiplication with the matrix $F_n$. Its $(\la,\bde)$-component is a homomorphism
$
P_\la\to Q_\bde
$ obtained by the right multiplication with $f_n^{\la,\bde}$. Recall that $P_\la=q^{s_\la}R_\theta e_\la$ and $Q_\bde=q^{n+m(m-1)/2}R_\theta e_\bde$. So we just need to show that
$s_\la=n+m(m-1)/2+\deg(f_n^{\la,\bde})$, which easily follows from Lemma~\ref{LDeg}(ii).

The homomorphism $g_n$ is a right multiplication with the matrix $G_n$. Its $(\bde,\la)$-component is a homomorphism
$
Q_\bde\to P_\la
$ obtained by the right multiplication with $g_n^{\bde,\la}$. So we just need to show that
$n+m(m-1)/2=s_\la+\deg(g_n^{\bde,\la})$, which easily follows from Lemma~\ref{LDeg}(iii).
\end{proof}

\begin{Corollary} \label{Csamedegree}
Suppose $\bde, \beps \in \bLa(n)$ are such that $\la^{\bde}=\la^{\beps}$.  Then $\deg(\psi_{w(\bde)}e_\bde) = \deg(\psi_{w(\beps)}e_{\beps})$.
\end{Corollary}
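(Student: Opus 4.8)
The plan is to read the statement off directly from Lemma~\ref{LDeg}(i). That lemma provides the closed formula
\[
\deg(\psi_{w(\bde)}e_\bde)=\frac{m(m-1)(l-1)}{2}-mn+\sum_{i=a}^b\la_i^2
\]
for every $\bde\in\bLa(n)$, where $\la=\la^\bde$ and $m$, $l$, $a$, $b$ are the parameters fixed throughout Section~\ref{SDPR}. The key point is that the right-hand side depends on $\bde$ only through the integer $n$ and through the composition $\la^\bde$; all other quantities appearing in it are fixed. First I would apply this formula to $\bde$, then to $\beps$; since both lie in $\bLa(n)$ for the same $n$ and, by hypothesis, $\la^\bde=\la^\beps$, the two right-hand sides coincide, which gives $\deg(\psi_{w(\bde)}e_\bde)=\deg(\psi_{w(\beps)}e_{\beps})$.

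There is no genuine obstacle here: the entire content of the corollary is already packaged in Lemma~\ref{LDeg}(i), and the argument is a one-line specialization of that formula. If one preferred not to quote part (i) explicitly, the same conclusion follows just as easily from the identity $\deg(\psi_{w(\bde)}e_\bde)=\deg(f_n^{\la,\bde})+\deg(e_\la\psi_{w_0^\la})$ recorded in the proof of Lemma~\ref{LDeg}(ii), together with the observation that each of the two terms on the right depends only on $\la$ and $n$; but invoking part (i) directly is the cleanest route.
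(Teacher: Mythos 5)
Your proof is correct and is exactly what the paper intends: the statement is placed as an immediate corollary of Lemma~\ref{LDeg}(i), whose right-hand side depends on $\bde$ only through $n$ and $\la^\bde$, and the paper supplies no further argument. Your one-line specialization matches the paper's (implicit) proof.
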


\subsection{Independence of reduced decompositions}
Throughout this subsection we fix $\bde\in\bLa(n)$ and set $\la:=\la^\bde$.

Recall that in general the element $\psi_w\in R_\theta$ depends on a choice of a reduced decomposition of $w\in\Si_d$.
While it is clear from the form of the braid relations in the KLR algebra that $e_\la\psi_{w_0^\la}$ does not depend on a  choice of a reduced decomposition of $w_0^\la$, it is not obvious that  a similar statement is true for $\psi_{w(\bde)}e_\bde$ and $e_\bde\psi_{u(\bde)}$. So a priori the elements $f_n^{\la,\bde}=\pm e_\la\psi_{w_0^\la}\psi_{w(\bde)}e_\bde$ and $g_n^{\bde,\la}=\pm e_\bde\psi_{u(\bde)}y^\la e_\la$ might depend on choices of reduced decompositions of $w(\bde)$ and $u(\bde)$.  In this subsection we will prove that this is not the case,   and so in a sense the maps  $f_n$ and $g_n$ are canonical.

Recall the composition $\om_\la$ and the words $\bj^\la,\bj^\bde$ from \S\ref{SSComb}.


\begin{Lemma}\label{Lunique}
The element $w(\bde)$ is the unique element of\,\, ${}^{\om_\la}\D^{(l^m)}$ with $w(\bde) \cdot \bj^\bde=\bj^\la$.
\end{Lemma}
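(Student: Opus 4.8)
\smallskip
\noindent\emph{Proof strategy.}
The plan is to check three things in turn: that $w(\bde)\cdot\bj^\bde=\bj^\la$; that $w(\bde)$ genuinely lies in ${}^{\om_\la}\D^{(l^m)}$; and that it is the only element of that set satisfying the equation. Two facts will be used repeatedly. First, the standard description of minimal coset representatives (cf.\ \cite[Lemma~1.6]{DJ} and Lemma~\ref{LDJ}): for a composition $\nu$ of $d$ an element $v\in\Si_d$ lies in $\D^{\nu}$ iff $v$ is increasing along every block of $\nu$, it lies in ${}^{\nu}\D$ iff $v^{-1}$ is, and ${}^{\mu}\D^{\nu}={}^{\mu}\D\cap\D^{\nu}$. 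Second, the shape of the words involved: reading $\bj^{\de^{(r)}}$ left to right the colours occurring are $\{i\in[a,b]\mid\de^{(r)}_i=0\}$ in increasing order, then $b+1$, then $\{i\in[a,b]\mid\de^{(r)}_i=1\}$ in decreasing order, so that the position of a colour $i\le b$ inside the block $[l(r-1)+1,lr]$ lies in $U_i^-(\bde)$ when $\de^{(r)}_i=0$ and in $U_i^+(\bde)$ when $\de^{(r)}_i=1$; and, as is immediate from the formulas for $l_i^\pm(\la),r_i^\pm(\la)$, the sets
\[
U_a^-(\la)<U_{a+1}^-(\la)<\cdots<U_b^-(\la)<U_{b+1}(\la)<U_b^+(\la)<U_{b-1}^+(\la)<\cdots<U_a^+(\la)
\]
are consecutive intervals partitioning $[1,d]$, on which $\bj^\la$ is constant with values $a,\dots,b,b+1,b,\dots,a$.

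The equation is immediate: for $s\in U_i^\pm(\la)$ we have $w(\bde)^{-1}(s)=u(\bde)(s)\in U_i^\pm(\bde)\subseteq\{t\mid\bj^\bde_t=i\}$, so $(w(\bde)\cdot\bj^\bde)_s=\bj^\bde_{u(\bde)(s)}=i=\bj^\la_s$, and likewise for $s\in U_{b+1}(\la)$. For membership, $w(\bde)^{-1}=u(\bde)$ is increasing along every block $U_i^\pm(\la),U_{b+1}(\la)$ of $\om_\la$ by construction, so $w(\bde)\in{}^{\om_\la}\D$; and $w(\bde)\in\D^{(l^m)}$ because for positions $t<t'$ in one block $[l(r-1)+1,lr]$ the shape of $\bj^{\de^{(r)}}$ puts the pair of colours $(\bj^\bde_t,\bj^\bde_{t'})$ into one of the forms ``both ascending with $\bj^\bde_t<\bj^\bde_{t'}$'', ``$\bj^\bde_t$ ascending, $\bj^\bde_{t'}\in\{b+1\}\cup(\text{descending colours})$'', ``$\bj^\bde_t=b+1$, $\bj^\bde_{t'}$ descending'', ``both descending with $\bj^\bde_t>\bj^\bde_{t'}$'', and in every case the displayed order of the intervals shows that the $\bj^\la$-block containing $w(\bde)(t)$ is strictly to the left of the one containing $w(\bde)(t')$.

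For uniqueness, let $x\in{}^{\om_\la}\D^{(l^m)}$ with $x\cdot\bj^\bde=\bj^\la$. From the equation we know at least that $x(\{t\mid\bj^\bde_t=i\})=U_i^-(\la)\sqcup U_i^+(\la)$ for $i\le b$ and $x(\{t\mid\bj^\bde_t=b+1\})=U_{b+1}(\la)$; the crux is to refine this to $x(U_i^-(\bde))=U_i^-(\la)$ and $x(U_i^+(\bde))=U_i^+(\la)$. Fix a block $[l(r-1)+1,lr]$; since $x\in\D^{(l^m)}$, $x$ is increasing on it. If a position $t$ of some ascending colour $c$ had $x(t)\in U_c^+(\la)$, then the position $t'$ immediately following $t$ in the block is either the next ascending position (colour $c'>c$, so $x(t')\in U_{c'}^-(\la)\sqcup U_{c'}^+(\la)$, both of which lie left of $U_c^+(\la)$) or the $(b+1)$-position (so $x(t')\in U_{b+1}(\la)$, again left of $U_c^+(\la)$); either way $x(t')<x(t)$, contradicting monotonicity. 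So $x(U_c^-(\bde))\subseteq U_c^-(\la)$, and a symmetric argument using the position immediately preceding a given descending position (a descending colour larger than $c$, or the $(b+1)$-position) gives $x(U_c^+(\bde))\subseteq U_c^+(\la)$. Comparing cardinalities, using $|U_i^\pm(\bde)|=|U_i^\pm(\la)|$, makes these equalities, and $x(U_{b+1}(\bde))=U_{b+1}(\la)$ then follows. Hence $x^{-1}$ carries each block of $\om_\la$ onto the corresponding $U$-set of $\bde$; being increasing on these blocks (as $x\in{}^{\om_\la}\D$), $x^{-1}$ is the unique increasing bijection between them, namely $u(\bde)$. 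Thus $x^{-1}=u(\bde)$ and $x=w(\bde)$.

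I expect the uniqueness step to be the only nontrivial one: a priori an admissible $x$ may permute the colour-$i$ positions of $\bj^\bde$ in any way before sorting them onto $U_i(\la)$, and the real point is that combining the $\D^{(l^m)}$-condition with the ``ascending\,/\,$b+1$\,/\,descending'' shape of each $\bj^{\de^{(r)}}$ rigidifies this into the sign-refined assignment $U_i^\pm(\bde)\mapsto U_i^\pm(\la)$. The other two steps, as sketched, are just a direct unwinding of the definitions of $u(\bde)$ and $w(\bde)$ against the fixed linear order of the intervals $U_i^\pm(\la),U_{b+1}(\la)$.
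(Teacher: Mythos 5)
Your proof is correct and follows essentially the same route as the paper's: existence and membership are unwound from the definitions, and uniqueness is obtained by using $\D^{(l^m)}$-monotonicity within each length-$l$ block together with the linear ordering of the intervals $U_i^\pm(\la)$ to force $x(U_i^\pm(\bde))=U_i^\pm(\la)$, after which ${}^{\om_\la}\D$-membership pins down $x$ as $w(\bde)$. The only (immaterial) difference is that you compare each position with its immediate neighbour in the block, whereas the paper compares it with the block's $(b{+}1)$-position.
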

\begin{proof}
That $w(\bde)\in {}^{\om_\la}\D^{(l^m)}$ and $w(\bde) \cdot \bj^\bde=\bj^\la$ follows from the definitions. To prove the uniqueness statement, let $w\in {}^{\om_\la}\D^{(l^m)}$ and $w \cdot \bj^\bde=\bj^\la$.
Since $w\in {}^{\om_\la}\D$, it maps the elements of $U_{b+1}(\bde)$ increasingly to the elements of $U_{b+1}(\la)$.
By definition, $\bj^{\bde}=\bj^{\de^{(1)}}\dots\bj^{\de^{(m)}}$. For every $r\in [1,m]$, the entries of  $\bj^{\de^{(r)}}$ have the following properties: (1) each $i\in[a,b+1]$ appears among them exactly once; (2) the entries that precede $b+1$ appear in the increasing order; (3) the entries that succeed $b+1$ appear in the decreasing order.
Since $w\in \D^{(l^m)}$, it maps the positions corresponding to the entries in (2) to the positions which are to the left of the positions occupied with $b+1$ in $\bj^\la$, and it maps the positions corresponding to the entries in (3) to the positions which are to the right of the positions occupied with $b+1$ in $\bj^\la$.
In other words, for all $i\in[a,b]$, the permutation $w$ maps the elements of $U_i^\pm(\bde)$ to the elements of $U_i^\pm(\la)$.
As $w\in {}^{\om_\la}\D$, it now follows that for every $i\in[a,b]$, the permutation $w$ maps the elements of $U_i^\pm(\bde)$ to the elements of $U_i^\pm(\la)$  increasingly. We have shown that $w=w(\bde)$.
\end{proof}

\begin{Lemma} \label{Lposdeg}
Let $\eps,\de\in\La^\al$, and $\bj^\eps=w\cdot\bj^\de$ for some $w\in\Si_l$. Then either $\eps=\de$ and $w=1$, or $\deg(\psi_w 1_{\bj^\de})>0$.
\end{Lemma}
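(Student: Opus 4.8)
The plan is to reduce the statement to an elementary count of crossings in the Khovanov--Lauda diagram of $\psi_w 1_{\bj^\de}$. First I would note that each letter of $[a,b+1]$ occurs exactly once in $\bj^\de$, so the equation $\bj^\eps=w\cdot\bj^\de$ determines $w$ uniquely, and $w=1$ if and only if $\bj^\eps=\bj^\de$, i.e. $\eps=\de$. Hence it is enough to show: if $\eps\ne\de$, then $\deg(\psi_w 1_{\bj^\de})>0$.

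Next I would invoke the standard fact that $\deg(\psi_w 1_\bi)$ does not depend on the chosen reduced decomposition of $w$ and equals $\sum_{p<p',\ w(p)>w(p')}\bigl(-\cc_{\bi_p,\bi_{p'}}\bigr)$: in the diagram the strand starting at bottom position $p$ carries colour $\bi_p$ and ends at top position $w(p)$, and two strands starting at positions $p<p'$ cross exactly once, precisely when $w(p)>w(p')$. Since the letters of $\bi=\bj^\de$ are pairwise distinct, no crossing joins equal colours, so every summand is $-\cc_{i,j}$ with $i\ne j$, which is $1$ if $|i-j|=1$ and $0$ otherwise. Thus $\deg(\psi_w 1_{\bj^\de})\ge 0$ always, with equality exactly when no two strands whose colours differ by $1$ cross; equivalently, when for every $j\in[a,b]$ the colours $j$ and $j+1$ appear in the same relative order in $\bj^\de$ and in $\bj^\eps$.

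The key step is to read off this relative order from the shape of $\bj^\de$. By construction $\bj^\de$ is a unimodal permutation of $a,a+1,\dots,b+1$ with peak $b+1$: the letters $i$ with $\de_i=0$ lie, increasingly, to the left of $b+1$, and the letters $i$ with $\de_i=1$ lie, decreasingly, to its right. A short case analysis on which side of $b+1$ each of $j$ and $j+1$ sits (including the case $j+1=b+1$, where the peak plays the role of the ``right block'') shows that for every $j\in[a,b]$ the letter $j$ precedes $j+1$ in $\bj^\de$ if and only if $\de_j=0$. Consequently $\deg(\psi_w 1_{\bj^\de})=0$ forces $\de_j=\eps_j$ for all $j\in[a,b]$, i.e. $\de=\eps$; contrapositively, $\de\ne\eps$ implies that some pair of adjacent colours is crossed, so $\deg(\psi_w 1_{\bj^\de})>0$.

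I do not anticipate a genuine obstacle: once the degree is expressed as a nonnegative sum over adjacent-colour crossings, the only remaining work is the bookkeeping of where consecutive integers sit relative to the peak $b+1$ in a unimodal word, and the one point deserving mild care is confirming that the boundary case $j+1=b+1$ obeys the same rule as the cases $j,j+1\le b$.
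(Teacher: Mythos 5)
Your proposal is correct and follows essentially the same route as the paper: both arguments observe that, since every letter of $[a,b+1]$ occurs exactly once in $\bj^\de$, the diagram has no same-colour (negative-degree) crossings, and then exhibit a crossing of adjacent colours whenever $\eps\neq\de$. The only difference is cosmetic — you justify the adjacent-colour crossing by the full characterization ``$j$ precedes $j+1$ in $\bj^\de$ iff $\de_j=0$,'' whereas the paper simply picks $i$ maximal with $\eps_i\neq\de_i$ and asserts the crossing directly.
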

\begin{proof}
Since every $i\in[a,b+1]$ appears in $\bj^\de$ exactly once, $\eps=\de$ implies $w=1$. On the other hand, if $\eps\neq\de$, let $i$ be maximal with $\eps_i\neq \de_i$.
Then the strings colored $i$ and $i+1$ in the Khovanov-Lauda diagram $D$ for $\psi_w 1_{\bj^\de}$ intersect (for any choice of a reduced decomposition of $w$), which contributes a degree $1$ crossing into $D$. On the other hand, since every $j\in[a,b+1]$ appears in $\bj^\de$ exactly once, $D$ has no same color crossings, which are the only possible crossings of negative degree. The lemma follows.
\end{proof}

\begin{Lemma} \label{L160316} 
Suppose that $w\in {}^{(l^m)}\D^{\om_\la}$ and $w\cdot \bj^\la$ is of the form $\bi^{(1)}\dots\bi^{(m)}$ with $\bi^{(1)},\dots,\bi^{(m)}\in I^\al$. Then $w\cdot \bj^\la=\bj^\beps$ for some $\beps\in\bLa$ with $\la^\beps=\la$.
\end{Lemma}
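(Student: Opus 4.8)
The plan is to read off $\beps$ directly from the word $\bi:=w\cdot\bj^\la=\bi^{(1)}\cdots\bi^{(m)}$, using the combinatorics of a shortest double-coset representative. Write $B_r:=[l(r-1)+1,\,lr]$ for the $r$th block of the composition $(l^m)$, so $\bi^{(r)}$ occupies the positions $B_r$ of $\bi$, and recall that the place-permutation action sends the letter in position $s$ of $\bj^\la$ to position $w(s)$ of $\bi$.

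First I would prove that each $\bi^{(r)}$ equals $\bj^{\de^{(r)}}$ for some $\de^{(r)}\in\La^\al$. The point is that $\bj^\la$ is \emph{weakly unimodal}: read left to right, its letters weakly increase from $a$ up to the value $b+1$ and then weakly decrease back to $a$. Now $w\in{}^{(l^m)}\D$ is equivalent to $w^{-1}\in\D^{(l^m)}$, i.e.\ $w^{-1}$ is increasing on each block $B_r$; equivalently, $w$ preserves the relative order of any two positions of $\bj^\la$ that it carries into one and the same block $B_r$. Hence $\bi^{(r)}$, read left to right, is exactly the subword of $\bj^\la$ on the positions $w^{-1}(B_r)$, and so is itself weakly unimodal. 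Since by hypothesis $\bi^{(r)}\in I^\al$ contains each of $a,a+1,\dots,b+1$ exactly once, it is an injective weakly unimodal word, hence strictly increases to its maximum $b+1$ and then strictly decreases; thus $\bi^{(r)}=\bj^{\de^{(r)}}$, where $\de^{(r)}\in\La^\al$ is defined by $\de^{(r)}_i=1$ if $i$ lies to the right of $b+1$ in $\bi^{(r)}$ and $\de^{(r)}_i=0$ otherwise (uniqueness of $\de^{(r)}$ follows from Lemma~\ref{Lposdeg}). Setting $\beps:=(\de^{(1)},\dots,\de^{(m)})\in\bLa$ then gives $w\cdot\bj^\la=\bj^\beps$.

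It then remains to check $\la^\beps=\la$. Fix $i\in[a,b]$. In $\bj^\la$ the block $U_i^-(\la)$, of size $m-\la_i$, entirely precedes the block $U_{b+1}(\la)$ of $(b+1)$'s, and the block $U_i^+(\la)$, of size $\la_i$, entirely follows it; so every element of $U_i^-(\la)$ is smaller than every element of $U_{b+1}(\la)$, and every element of $U_i^+(\la)$ is larger. Because the sets $w^{-1}(B_r)$ partition $[1,d]$ and each $\bi^{(r)}$ contains $i$ and $b+1$ exactly once, for every $r$ there is a unique $s_r\in U_i(\la)\cap w^{-1}(B_r)$ and a unique $q_r\in U_{b+1}(\la)\cap w^{-1}(B_r)$, and $r\mapsto s_r$ runs bijectively over $U_i(\la)$. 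Applying the order-preservation property inside $B_r$ to $s_r$ and $q_r$ gives $w(s_r)<w(q_r)\iff s_r<q_r$; combined with the previous sentence this yields $\de^{(r)}_i=0$ precisely when $s_r\in U_i^-(\la)$ and $\de^{(r)}_i=1$ precisely when $s_r\in U_i^+(\la)$. As exactly $|U_i^+(\la)|=\la_i$ of the $s_r$ lie in $U_i^+(\la)$, we conclude $\la^\beps_i=\sum_{r=1}^m\de^{(r)}_i=\la_i$.

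The step needing care is the first one: one must invoke the correct half of the definition of a shortest double-coset representative — namely $w^{-1}\in\D^{(l^m)}$, which controls how the letters of $\bj^\la$ are allocated among the target blocks $B_r$ — and then observe that weak unimodality of $\bj^\la$ passes to each block-subword. Everything after that is bookkeeping; in fact the remaining hypothesis $w\in\D^{\om_\la}$ turns out not to be needed for this particular statement, though it causes no harm.
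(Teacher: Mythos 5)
Your proof is correct and takes essentially the same route as the paper's own (much terser) argument: the paper simply asserts that each $\bi^{(r)}$ contains every letter of $[a,b+1]$ exactly once, with the entries before $b+1$ increasing and those after $b+1$ decreasing, and concludes "the result follows". You supply exactly the missing details --- deriving those properties from $w^{-1}\in\D^{(l^m)}$ together with the unimodality of $\bj^\la$, and carrying out the bookkeeping for $\la^\beps=\la$ --- so this is a fleshed-out version of the same proof rather than a different one.
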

\begin{proof}
Ler $r\in[1,m]$. By assumption, the entries $i^{(r)}_1,\dots,i^{(r)}_l$ of $\bi^{(r)}$ have the following properties: (1) each $i\in[a,b+1]$ appears among them exactly once; (2) the entries that precede $b+1$ appear in the increasing order; (3) the entries that succeed $b+1$ appear in the decreasing order. The result follows.
\end{proof}

\begin{Lemma} \label{deg1}
Let $P=\{w\in  {}^{\om_\la}\D\mid w\cdot\bj^{\bde}=\bj^\la\}$. Then $w(\bde)\in P$ and $\deg(\psi_{w(\bde)}e_\bde)<\deg  (\psi_{w}e_\bde)$ for any $w\in P\setminus\{w(\bde)\}$.
\end{Lemma}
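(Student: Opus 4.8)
The plan is to reduce everything to the rank-one statement of Lemma~\ref{Lposdeg} by factoring an arbitrary element of $P$ through a double coset decomposition. The membership $w(\bde)\in P$ is immediate: Lemma~\ref{Lunique} gives $w(\bde)\in{}^{\om_\la}\D^{(l^m)}\subseteq{}^{\om_\la}\D$ together with $w(\bde)\cdot\bj^\bde=\bj^\la$.

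For the inequality, fix $w\in P$. First I would apply Lemma~\ref{LDJ} with the two compositions taken to be $\om_\la$ and $(l^m)$, writing $w=xy$ with $x\in{}^{\om_\la}\D^{(l^m)}$, $y\in\Si_{(l^m)}$ and $\ell(w)=\ell(x)+\ell(y)$. Since $\Si_{(l^m)}=\Si_l\times\dots\times\Si_l$ preserves the $m$ length-$l$ blocks, the word $y\cdot\bj^\bde=x^{-1}\cdot\bj^\la$ has the form $\bi^{(1)}\dots\bi^{(m)}$ with every $\bi^{(r)}\in I^\al$; as $x^{-1}\in{}^{(l^m)}\D^{\om_\la}$, Lemma~\ref{L160316} yields $x^{-1}\cdot\bj^\la=\bj^\beps$ for some $\beps\in\bLa$ with $\la^\beps=\la$, hence $\beps\in\bLa(n)$. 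Then $y\cdot\bj^\bde=\bj^\beps$ and $x\cdot\bj^\beps=\bj^\la$, and Lemma~\ref{Lunique} (applied with $\beps$ in place of $\bde$, legitimate since $\la^\beps=\la$, so $\om_{\la^\beps}=\om_\la$) forces $x=w(\beps)$.

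Writing $y=(y_1,\dots,y_m)$ with $y_r\in\Si_l$, block $r$ of the identity $y\cdot\bj^\bde=\bj^\beps$ reads $y_r\cdot\bj^{\de^{(r)}}=\bj^{\eps^{(r)}}$, so Lemma~\ref{Lposdeg} gives $\deg(\psi_{y_r}1_{\bj^{\de^{(r)}}})\geq 0$ with equality if and only if $y_r=1$; adding these up, $\deg(\psi_y e_\bde)\geq 0$ with equality if and only if $y=1$. Next, choosing a reduced decomposition of $w$ obtained by juxtaposing reduced decompositions of $x$ and $y$ (allowed since $\ell(w)=\ell(x)+\ell(y)$), the Khovanov--Lauda diagram of $\psi_w e_\bde$ stacks the diagram of $\psi_y e_\bde$ beneath that of $\psi_x 1_{\bj^\beps}$, so $\deg(\psi_w e_\bde)=\deg(\psi_x e_\beps)+\deg(\psi_y e_\bde)$ (and the degree of a $\psi_\bullet$-element on a fixed idempotent is independent of the chosen reduced decomposition, being determined by the multiset of colour pairs at crossings). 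Since $x=w(\beps)$ and $\la^\beps=\la^\bde$, Corollary~\ref{Csamedegree} gives $\deg(\psi_x e_\beps)=\deg(\psi_{w(\beps)}e_\beps)=\deg(\psi_{w(\bde)}e_\bde)$. Hence $\deg(\psi_w e_\bde)=\deg(\psi_{w(\bde)}e_\bde)+\deg(\psi_y e_\bde)\geq\deg(\psi_{w(\bde)}e_\bde)$, with equality only if $y=1$; but $y=1$ forces $\beps=\bde$, hence $w=x=w(\beps)=w(\bde)$. So the inequality is strict whenever $w\in P\setminus\{w(\bde)\}$.

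I do not anticipate a genuine obstacle here; the delicate part is the combinatorial bookkeeping --- verifying that $x^{-1}$ is indeed a minimal $((l^m),\om_\la)$-double coset representative so that Lemma~\ref{L160316} applies, that $\la^\beps=\la$ (so that $\om_{\la^\beps}=\om_\la$ and both Lemma~\ref{Lunique} and Corollary~\ref{Csamedegree} may be invoked for $\beps$), and that the length-additive factorization $w=xy$ really does make degrees add when passing to the stacked diagram.
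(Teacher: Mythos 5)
Your proposal is correct and follows essentially the same route as the paper's proof: the length-additive factorization $w=xy$ via Lemma~\ref{LDJ}, identification of $x$ as $w(\beps)$ via Lemmas~\ref{L160316} and~\ref{Lunique}, and the degree comparison via Corollary~\ref{Csamedegree} and Lemma~\ref{Lposdeg}. Your block-by-block application of Lemma~\ref{Lposdeg} to $y\in\Si_{(l^m)}$ and the observation that $y=1$ forces $w=w(\bde)$ merely make explicit steps the paper leaves implicit.
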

\begin{proof}
It is clear that $w(\bde)\in P$. On the other hand, by Lemma~\ref{LDJ}, an arbitrary $w\in P$ can be written uniquely in the form $w=xy$ with $x\in {}^{\om_\la}\D^{(l^m)}$, $y\in\Si_{(l^m)}$ and $\ell(xy)=\ell(x)+\ell(y)$.

Since $xy\cdot\bj^\bde=\bj^\la$, we have $y\cdot\bj^\bde=x^{-1}\cdot\bj^\la$.
As $x^{-1}\in {}^{(l^m)}\D^{\om_\la}$, it follows from Lemma~\ref{L160316}, that $y\cdot\bj^\bde=x^{-1}\cdot\bj^\la$ is of the form $\bj^\beps$ for some $\beps\in\bLa$ with $\la^{\beps}=\la$.  By Lemma~\ref{Lunique}, $x=w(\beps)$.  If $w \neq w(\bde)$, then  $y \neq 1$ and we have
\begin{align*}
\deg(\psi_{w}e_\bde)
=\deg(\psi_{w(\beps)} e_\beps)+\deg(\psi_{y}e_\bde)
=\deg(\psi_{w(\bde)}e_\bde)+\deg(\psi_y e_\bde)>\deg(\psi_{w(\bde)}e_\bde),
\end{align*}
where we have used Corollary~\ref{Csamedegree} for the second equality and Lemma~\ref{Lposdeg} for the inequality.
\end{proof}

\begin{Lemma}\label{LFIndependent}
The element $f_n^{\la,\bde}=\sigma_\bde e_\la\psi_{w_0^\la}\psi_{w(\bde)}e_\bde$ is independent of the choice of reduced expressions for $w_0^\la$ and $w(\bde)$.
\end{Lemma}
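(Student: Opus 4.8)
The plan is to reduce the claim to a single vanishing statement about the lower‑order terms that are produced when one changes a reduced word, and to prove that statement by induction on length, with Lemma~\ref{deg1} supplying the crucial degree estimate. First I would put $e_\la\psi_{w_0^\la}$ into a normal form. Writing $\om_\la=(d_1,\dots,d_{2n+1})$ and letting $c_j$ be the (constant) colour of the $j$-th run of $\bj^\la$, we have $w_0^\la\in\Si_{\om_\la}=\Si_{d_1}\times\dots\times\Si_{d_{2n+1}}$, so every reduced word for $w_0^\la$ uses only transpositions internal to monochromatic runs; hence no braid or quadratic error term ever occurs and $\psi_{w_0^\la}1_{\bj^\la}$ is canonical. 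Since $1_{\bi^\la}=1_{c_1^{(d_1)}}\circ\dots\circ 1_{c_{2n+1}^{(d_{2n+1})}}$ with $1_{c_j^{(d_j)}}=\psi_{w_{0,d_j}}y_{0,d_j}$, Lemma~\ref{LW0Y0}(\ref{LW0Y0ii}) gives
$$
e_\la\psi_{w_0^\la}=\psi_{w_{0,d_1}}\circ\dots\circ\psi_{w_{0,d_{2n+1}}}=:\Pi_\la,
$$
so in particular $e_\la\psi_{w_0^\la}$ does not depend on the reduced expression for $w_0^\la$.

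Next, given two realisations $\psi^{(1)},\psi^{(2)}$ of $\psi_{w(\bde)}$, the standard fact that $\psi_w1_\bi$ is well‑defined modulo the span of terms $\psi_v\,g\,1_\bi$ with $g\in\k[y_1,\dots,y_d]$ and $\ell(v)<\ell(w)$ shows that $\psi^{(1)}e_\bde-\psi^{(2)}e_\bde$ is a $\k$-linear combination of elements $\psi_w\,g\,e_\bde$ with $\ell(w)<\ell(w(\bde))$; as $e_\bde=1_{\bj^\bde}$ and $w(\bde)\cdot\bj^\bde=\bj^\la$, each such $w$ satisfies $w\cdot\bj^\bde=\bj^\la$, and by homogeneity each term is homogeneous of degree $\deg(\psi_{w(\bde)}e_\bde)$. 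It therefore suffices to prove that $\Pi_\la\,\psi_w\,g\,e_\bde=0$ whenever $w\cdot\bj^\bde=\bj^\la$, $\ell(w)<\ell(w(\bde))$, $\psi_w$ is any realisation, and $g$ is homogeneous with $\deg(\psi_w\,g\,e_\bde)\le\deg(\psi_{w(\bde)}e_\bde)$. I would prove this by induction on $\ell(w)$. If $w\in{}^{\om_\la}\D$ then $w$ lies in the set $P$ of Lemma~\ref{deg1} and $w\ne w(\bde)$ for length reasons, so $\deg(\psi_w\,g\,e_\bde)\ge\deg(\psi_we_\bde)>\deg(\psi_{w(\bde)}e_\bde)$, contradicting the hypothesis; this case is vacuous. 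Otherwise write $w=\sigma w'$ with $1\ne\sigma\in\Si_{\om_\la}$, $w'$ the minimal‑length representative of $\Si_{\om_\la}w$, and $\ell(w)=\ell(\sigma)+\ell(w')$, so that $w'\cdot\bj^\bde=\sigma^{-1}\cdot\bj^\la=\bj^\la$. Taking the reduced word for $w$ to be a reduced word for $\sigma$ followed by one for $w'$ realises $\psi_w=\psi_\sigma\psi_{w'}$, whence
$$
\psi_w\,g\,e_\bde=(\psi_\sigma1_{\bj^\la})\,\psi_{w'}g\,e_\bde+\sum_v\psi_v\,g_v\,e_\bde,
$$
the sum running over $v$ with $\ell(v)<\ell(w)$, $v\cdot\bj^\bde=\bj^\la$ and $\psi_v\,g_v\,e_\bde$ homogeneous of degree $\deg(\psi_w\,g\,e_\bde)$. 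Writing $\sigma=\sigma_1\cdots\sigma_{2n+1}$ with $\sigma_j\in\Si_{d_j}$ we get $\Pi_\la(\psi_\sigma1_{\bj^\la})=(\psi_{w_{0,d_1}}\psi_{\sigma_1})\circ\dots\circ(\psi_{w_{0,d_{2n+1}}}\psi_{\sigma_{2n+1}})$, and for each $j$ with $\sigma_j\ne1$ the factor $\psi_{w_{0,d_j}}\psi_{\sigma_j}$ is $0$ in $\NH_{d_j}$, because $w_{0,d_j}$ is the longest element and $\psi_a\psi_b$ vanishes in the nil‑Hecke algebra when lengths are not additive; some such $j$ exists, so the first term dies. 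Each remaining term vanishes by the inductive hypothesis, since $\ell(v)<\ell(w)<\ell(w(\bde))$. Together with the normal form above this shows that $f_n^{\la,\bde}$ depends on neither reduced expression.

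The step I expect to be the main obstacle is precisely the bookkeeping that forces the induction to be recursive: in the basis expansion of $\psi^{(1)}e_\bde-\psi^{(2)}e_\bde$ the symbols $\psi_w$ refer to the globally fixed reduced words, which need not factor as $\psi_\sigma\psi_{w'}$; one can only say they agree with such a factorisation modulo strictly shorter terms, so the argument must be fed back into itself. The degree estimate of Lemma~\ref{deg1} is exactly what prevents this recursion from ever reaching a term $\psi_{w(\bde)}\cdot(\text{scalar})\cdot e_\bde$, which $\Pi_\la$ would not annihilate.
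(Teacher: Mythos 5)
Your argument is correct and follows essentially the same route as the paper's proof: the braid relations make $e_\la\psi_{w_0^\la}$ canonical, the basis theorem expresses the discrepancy as lower-length terms $\psi_w g e_\bde$ with $w\cdot\bj^\bde=\bj^\la$ of the same degree, the parabolic factorization $w=\sigma w'$ lets $\psi_{w_0^\la}$ kill the leading term unless $w\in{}^{\om_\la}\D$, and Lemma~\ref{deg1} then gives the degree contradiction. Your explicit induction on $\ell(w)$ merely formalizes the recursion that the paper compresses into ``since we are using any preferred reduced decompositions for $u$, we may assume $u\in{}^{\om_\la}\D$,'' and is a sound (indeed slightly more careful) rendering of the same idea.
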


\begin{proof}
It is clear from the form of the braid relations in the KLR algebra that $e_\la\psi_{w_0^\la}$ is independent of the choice of a reduced expression for $w_0^\la$. On the other hand, if $\psi_{w(\bde)}e_\bde$ and ${\psi_{w(\bde)}'}e_\bde$ correspond to different reduced expressions of $w(\bde)$, it follows from the defining relations of the KLR algebra and Theorem~\ref{TBasis} that $\psi_{w(\bde)}e_\bde-{\psi_{w(\bde)}'}e_\bde$ is a linear combination of elements of the form $\psi_u ye_\bde$ with $u\in\Si_d$, $y\in \k[y_1,\dots,y_d]$ such that, $\deg(\psi_u ye_\bde)=\deg(\psi_{w(\bde)}e_\bde)$ and $u\bj^\bde=\bj^\la$. We have to prove $e_\la\psi_{w_0^\la}\psi_uye_\bde=0$. Suppose otherwise.

Since we are using any preferred reduced decompositions for $u$, we may assume in addition that $u\in{}^{\om_\la}\D$, since otherwise $e_\la\psi_{w_0^\la}\psi_u=0$. Now by Lemma~\ref{deg1}, $\deg(\psi_ue_\bde)>\deg(\psi_{w(\bde)}e_\bde)$, whence $\deg(\psi_u ye_\bde)>\deg(\psi_{w(\bde)}e_\bde)$, giving a contradiction.
\end{proof}

\begin{Lemma}\label{LGIndependent}
The element $g_n^{\bde,\la}=\tau_\bde e_\bde\psi_{u(\bde)}y^\la e_\la$ is independent of the choice of a reduced expression for $u(\bde)$.
\end{Lemma}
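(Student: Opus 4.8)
The plan is to run the proof of Lemma~\ref{LFIndependent} with left and right interchanged and one extra polynomial factor to carry along. Since $e_\la=1_{\bi^\la}$ is a horizontal product of divided power idempotents $1_{i^{(d)}}=\psi_{w_{0,d}}y_{0,d}$, it depends on no choices, and neither does $y^\la$; so the only factor in $g_n^{\bde,\la}=\tau_\bde e_\bde\psi_{u(\bde)}y^\la e_\la$ that a priori depends on a choice is $\psi_{u(\bde)}$. Using $e_\bde\psi_{u(\bde)}=\psi_{u(\bde)}1_{\bj^\la}$ together with $1_{\bj^\la}y^\la=y^\la$ (the colour content of $y^\la$ is exactly $\bj^\la$), I would first reduce to showing that $\psi_{u(\bde)}\cdot(y^\la e_\la)$ is independent of the chosen reduced expression for $u(\bde)$.

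Next I would compare two such expressions. By the defining relations of $R_\theta$ and Theorem~\ref{TBasis}, the difference $\bigl(\psi_{u(\bde)}-\psi_{u(\bde)}'\bigr)1_{\bj^\la}$ lies in $1_{\bj^\bde}R_\theta1_{\bj^\la}$ and is a $\k$-linear combination of basis vectors $\psi_w y_1^{k_1}\cdots y_d^{k_d}1_{\bj^\la}$ with $w\cdot\bj^\la=\bj^\bde$; the leading term $\psi_{u(\bde)}1_{\bj^\la}$ has coefficient $1$ for both expressions and cancels, so every surviving $w$ has $\ell(w)<\ell(u(\bde))$ and, by homogeneity, each surviving term has degree $\deg\bigl(\psi_{u(\bde)}1_{\bj^\la}\bigr)$; in particular $\deg(\psi_w1_{\bj^\la})\le\deg(\psi_{u(\bde)}1_{\bj^\la})$. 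It then remains to prove $\psi_w\,\bigl(y_1^{k_1}\cdots y_d^{k_d}\,y^\la\bigr)\,e_\la=0$ for every such $w$.

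The key fact I would use here is that $\psi_re_\la=0$ for every Coxeter generator $s_r\in\Si_{\om_\la}$: positions $r,r+1$ lie in a single block of $\bi^\la$, and inside the corresponding nil-Hecke algebra $R_{d\al_i}\cong\NH_d$ one has $\psi_r\cdot 1_{i^{(d)}}=0$ since $1_{i^{(d)}}=\psi_{w_{0,d}}y_{0,d}$, $\ell(s_rw_{0,d})<\ell(w_{0,d})$ and $\psi_r^2=0$ on two strands of colour $i$. Writing $w=\hat w\,u$ with $\hat w\in\D^{\om_\la}$, $u\in\Si_{\om_\la}$ and $\ell(w)=\ell(\hat w)+\ell(u)$, and repeatedly applying $\psi_r(Q e_\la)=(s_rQ)\psi_re_\la+\partial_r(Q)e_\la=\partial_r(Q)e_\la$ for a polynomial $Q$ (valid because $i_r=i_{r+1}$ for $s_r\in\Si_{\om_\la}$, with $\partial_r$ the associated Demazure operator), I would bring the term into the form $\psi_{\hat w}\,\partial_u(Q)\,e_\la$ with $Q=y_1^{k_1}\cdots y_d^{k_d}y^\la$ and $\partial_u$ the composite Demazure operator of $u$. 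Finally, since $\hat w\in\D^{\om_\la}$ and $\hat w\cdot\bj^\la=\bj^\bde$, one has $\hat w^{-1}\in{}^{\om_\la}\D$, $\hat w^{-1}\cdot\bj^\bde=\bj^\la$ and $\deg(\psi_{\hat w}1_{\bj^\la})=\deg(\psi_{\hat w^{-1}}e_\bde)$; because $\ell(\hat w)\le\ell(w)<\ell(u(\bde))$ forces $\hat w\ne u(\bde)=w(\bde)^{-1}$, Lemma~\ref{deg1} yields the strict inequality $\deg(\psi_{\hat w}1_{\bj^\la})>\deg(\psi_{u(\bde)}1_{\bj^\la})$, and I would derive $\psi_w Q e_\la=0$ by combining this strict jump with a joint bookkeeping of $\psi$-length, polynomial degree and $\ell(u)$ (using that a nonzero $\partial_u(Q)$ has nonnegative degree).

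\emph{The hard part} is exactly this last step. In Lemma~\ref{LFIndependent} the left factor $e_\la\psi_{w_0^\la}$ annihilates $\psi_u$ outright as soon as $u\notin{}^{\om_\la}\D$, and no polynomial ever enters; here, by contrast, sliding $\psi_w$ to the right past $y^\la$ generates Demazure corrections on the middle block instead of clean zeros, and one must show that the degree jump forced by Lemma~\ref{deg1} genuinely cannot be cancelled against those corrections. A parallel route is to apply the KLR anti-automorphism fixing all Khovanov--Lauda generators: it converts $g_n^{\bde,\la}$ into an element of the form $A\,\psi_{w(\bde)}\,e_\bde$ with $A$ a fixed element, to which the argument of Lemma~\ref{LFIndependent} applies verbatim and shows at once that $e_\bde\psi_{u(\bde)}e_\la$ is choice-independent; but the outstanding $y^\la$ acting on the middle block still has to be handled as above, so the two approaches lead to the same technical core.
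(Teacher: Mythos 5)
Your overall strategy is the paper's: expand the difference of the two choices of $\psi_{u(\bde)}$ in a PBW basis from Theorem~\ref{TBasis}, factor each surviving $w$ as $\hat w u$ with $\hat w\in\D^{\om_\la}$ and $u\in\Si_{\om_\la}$, and play the strict degree inequality of Lemma~\ref{deg1} against homogeneity. But the step you yourself flag as ``the hard part'' is a genuine gap, and the bookkeeping you propose does not close it. After pushing $\psi_u$ through $Q=y_1^{k_1}\cdots y_d^{k_d}\,y^\la$ you are left with $\psi_{\hat w}\partial_u(Q)e_\la$, and your two inequalities give only $\ell(u)>k_1+\dots+k_d$, hence $\deg\partial_u(Q)=2(k_1+\dots+k_d)+m(m-1)-2\ell(u)\le m(m-1)-2$. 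Since $\deg y^\la=m(m-1)$, this does \emph{not} force $\partial_u(Q)=0$: if $u$ has support in the middle block $U_{b+1}(\la)$, the Demazure operators eat into $y_{0,m}$ (for instance $\partial_{w_{0,m}}(y_{0,m})$ is a nonzero constant), and $\psi_{\hat w}Pe_\la$ for a nonzero polynomial $P$ of degree less than $m(m-1)$ has no reason to vanish.

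The missing ingredient --- the one sentence of the paper's proof that does real work beyond Lemma~\ref{deg1} --- is that every $w$ arising from rewriting $\psi_{u(\bde)}$ inherits from $u(\bde)$ the property that its strings of colour $b+1$ do not cross; consequently the component $u\in\Si_{\om_\la}$ is supported on the outer blocks of $\om_\la$, where $y^\la$ carries no dots. Then $\partial_u(Q)=\partial_u(y_1^{k_1}\cdots y_d^{k_d})\,y^\la$ and the inequality $\ell(u)>k_1+\dots+k_d$ does kill it. (The paper also sidesteps Demazure operators entirely by expanding in the \emph{other} basis of Theorem~\ref{TBasis}, with the polynomial to the left of $\psi_w$: each term becomes $e_\bde y\psi_w y^\la e_\la$, and for $u\neq 1$ supported on the outer blocks one has $\psi_u y^\la e_\la=y^\la\psi_u e_\la=0$ outright, since $\psi_r 1_{i^{(c)}}=\psi_r\psi_{w_{0,c}}y_{0,c}=0$.) Your anti-automorphism variant runs into the same obstruction, as you note: $e_\la y^\la\psi_r$ need not vanish, unlike $e_\la\psi_{w_0^\la}\psi_r$ in Lemma~\ref{LFIndependent}. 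So to complete the proof you must supply the $(b{+}1)$-crossing observation, or an equivalent reason why $u$ cannot meet the middle block.
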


\begin{proof}
The argument is similar to that of the previous lemma.
If $e_\bde\psi_{u(\bde)}$ and $e_\bde\psi_{u(\bde)}'$ correspond to different reduced expressions of $u(\bde)$, then $e_\bde\psi_{u(\bde)}-e_\bde\psi_{u(\bde)}'$ is a linear combination of elements of the form $e_\bde y\psi_w$ with $w\in\Si_d$, $y\in \k[y_1,\dots,y_d]$ such that $\deg(e_\bde y\psi_w)=\deg(e_\bde\psi_{u(\bde)})$ and $w^{-1}\bj^\bde=\bj^\la$. Moreover, in the KL dialgram of  $\psi_{w}1_{\bj^{\la}}$ the strings colored $b+1$ do not cross each other, since this was the case for the KL diagram of $\psi_{u(\bde)}1_{\bj^\la}$.
Hence, if $e_\bde y\psi_w y^\la e_\la\neq 0$, we may assume in addition that $w^{-1}\in{}^{\om_\la}\D$.  Now, using Lemma~\ref{deg1}, we conclude that $\deg(e_\bde y\psi_w)>\deg(e_\bde\psi_{u(\bde)})$, getting a contradiction.
\end{proof}

\subsection{Splitting}
In this subsection, we aim to show that $g\circ f = \id$.
We fix $\la \in \La(n)$ throughout the subsection. We need to prove
$\sum_{\bde\in\bLa(n)}f_n^{\la,\bde}g_n^{\bde,\la}=e_\la$. Since $f_n^{\la,\bde}=0$ unless $\la^\bde=\la$, this is equivalent to
$$\sum_{\bde\in\bLa(n),\ \la^\bde=\la}f_n^{\la,\bde}g_n^{\bde,\la}=e_\la.$$

Let $\bde\in\bLa(n)$ with $\la^\bde=\la$.  We say that $\bde$ is {\em initial} if $a$ preceeds $a+1$ in $\bj^{\de^{(r)}}$ for $r\in[1, m-\la_a]$ and $a$ succeeds $a+1$ in $\bj^{\de^{(r)}}$ for $r\in(m-\la_a, m]$. In other words, $\bde$ is initial if $\de^{(r)}_a=0$ for $r\in[1,m-\la_a]$ and $\de^{(r)}_a=1$ for $r\in(m-\la_a,m]$.

Let $w\in\Si_d$ and $1\leq r,s\leq d$. We say that $(r,s)$ is an {\em inversion pair for $w$} if $r<s$, $w(r)>w(s)$, and $\bj^\la_s-\bj^\la_r=\pm1$.

\begin{Lemma} \label{LInd} 
Let $\bde\in \bLa(n)$  be initial with $\la^\bde=\la$.  Set $\bar \al=\al_{a+1}+\dots+\al_b$, $\bar\theta=m\bar\al$, $\bar\la=(\la_{a+1},\dots,\la_b)$, $\bar n:=\la_{a+1}+\dots+\la_b$, and $\bar \bde=(\bar\de^{(1)},\dots,\bar\de^{(m)})$, where $\bar \de^{(r)}=(\de^{(r)}_{a+1},\dots,\de^{(r)}_b)$ for all $r\in[1,m]$. Then
\begin{equation}\label{E210916}
f_n^{\la,\bde} g_n^{\bde,\la}=1_{a^{(m-\la_a)}}\circ f_{\bar n}^{\bar\la,\bar\bde} g_{\bar n}^{\bar \bde,\bar \la}\circ 1_{a^{(\la_a)}}.
\end{equation}
\end{Lemma}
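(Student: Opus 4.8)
The plan is to substitute the definitions of $f_n^{\la,\bde}$ and $g_n^{\bde,\la}$, collapse all the signs into a single scalar, and then exploit the fact that for \emph{initial} $\bde$ every factor of the resulting product is a horizontal concatenation ($\circ$) with the $a$-coloured strands on the outside — except for one ``bubble'' term, which must be computed by hand.

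\emph{Reduction to a sign-free identity.} Since $f_n^{\la,\bde}$ carries the sign $\si_\bde$ and $g_n^{\bde,\la}$ carries $\tau_\bde=\si_\bde\tau_{\la^\bde}$, the product $f_n^{\la,\bde}g_n^{\bde,\la}$ carries the sign $\si_\bde\tau_\bde=\tau_{\la^\bde}=\tau_\la$; writing $\tau_\la=(-1)^{\la_a(\la_a-1)/2}\tau_{\bar\la}$ and applying the analogous formula to $f_{\bar n}^{\bar\la,\bar\bde}g_{\bar n}^{\bar\bde,\bar\la}$, the identity $(\ref{E210916})$ becomes the sign-free statement
\begin{equation*}
e_\la\psi_{w_0^\la}\psi_{w(\bde)}e_\bde\psi_{u(\bde)}y^\la e_\la
=(-1)^{\la_a(\la_a-1)/2}\, 1_{a^{(m-\la_a)}}\circ\bigl(e_{\bar\la}\psi_{w_0^{\bar\la}}\psi_{w(\bar\bde)}e_{\bar\bde}\psi_{u(\bar\bde)}y^{\bar\la}e_{\bar\la}\bigr)\circ 1_{a^{(\la_a)}}.
\end{equation*}

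\emph{Concatenation factorizations.} From $\bi^\la=a^{(m-\la_a)}\bi^{\bar\la}a^{(\la_a)}$, $\om_\la=(m-\la_a)\cdot\om_{\bar\la}\cdot(\la_a)$ and $y^\la=1_{a^{m-\la_a}}\circ y^{\bar\la}\circ 1_{a^{\la_a}}$ one reads off $e_\la=1_{a^{(m-\la_a)}}\circ e_{\bar\la}\circ 1_{a^{(\la_a)}}$ and $\psi_{w_0^\la}=\psi_{w_{0,m-\la_a}}\circ\psi_{w_0^{\bar\la}}\circ\psi_{w_{0,\la_a}}$, and, using $1_{a^{(k)}}\psi_{w_{0,k}}=\psi_{w_{0,k}}$ (Lemma~\ref{LW0Y0}), that $e_\la\psi_{w_0^\la}$ and $y^\la e_\la$ are concatenations with the $a$-blocks on the outside. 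Because $\bde$ is initial, $\bj^\bde$ is obtained from $a^{m-\la_a}\bj^{\bar\bde}a^{\la_a}$ by sliding, for $r\le m-\la_a$, the $r$th leading $a$ into the $r$th block of $\bj^{\bar\bde}$ and, for $r>m-\la_a$, the appropriate trailing $a$ into the $r$th block; let $v\in\Si_d$ realize this, so $v\cdot\bj^\bde=a^{m-\la_a}\bj^{\bar\bde}a^{\la_a}$ and $v$ moves only $a$-strands. Lemma~\ref{LDJ} then lets one choose reduced expressions so that $w(\bde)$ is the product of $v$ with the image of $w(\bar\bde)$ under $\Si_{\bar d}\hookrightarrow\Si_{m-\la_a}\times\Si_{\bar d}\times\Si_{\la_a}\le\Si_d$ (lengths adding), and likewise for $u(\bde)=w(\bde)^{-1}$; by Lemmas~\ref{LFIndependent} and~\ref{LGIndependent} the particular choices are irrelevant. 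Hence
$$
\psi_{w(\bde)}e_\bde\psi_{u(\bde)}=\bigl(1_{a^{m-\la_a}}\!\circ\psi_{w(\bar\bde)}e_{\bar\bde}\circ 1_{a^{\la_a}}\bigr)\,\bigl(\psi_v\,e_\bde\,\psi_{v^{-1}}\bigr)\,\bigl(1_{a^{m-\la_a}}\!\circ e_{\bar\bde}\psi_{u(\bar\bde)}\circ 1_{a^{\la_a}}\bigr),
$$
so that, after substituting everything back, the left-hand side of the displayed identity is a concatenation except for the single bubble $\psi_v e_\bde\psi_{v^{-1}}$ sitting between the two $a$-blocks.

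\emph{Evaluating the bubble.} Since $v$ moves only $a$-strands and every $a$-strand of $\bj^\bde$ meets only strands of colour $\ge a+1$, every double crossing in $\psi_v e_\bde\psi_{v^{-1}}$ pairs an $a$-strand with a colour-$c$ strand, $c\ge a+1$. By the quadratic KLR relation a double crossing with $c\ge a+2$ is the identity, so that strand is pulled straight through (Lemmas~\ref{LPull},~\ref{LDivPower}); a double crossing with $c=a+1$ contributes $\eps_{a,c}(y_\bullet-y_\bullet)$, and the crucial observation is that along the $m-\la_a$ left $a$-strands the $a$-strand runs to the \emph{left} of the $(a+1)$-strand (so $\eps_{a,a+1}=+1$) whereas along the $\la_a$ right $a$-strands it runs to the \emph{right} (so $\eps_{a+1,a}=-1$), there being $\binom{m-\la_a}{2}$ crossings of the first kind and $\binom{\la_a}{2}$ of the second. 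Thus $\psi_v e_\bde\psi_{v^{-1}}$ is $1_{\bj^{\bar\bde}}$ on the middle strands times explicit $y$-polynomials of degree $\binom{m-\la_a}{2}$, resp. $\binom{\la_a}{2}$, on the two $a$-blocks; sandwiched between the $\psi_{w_{0,m-\la_a}},\psi_{w_{0,\la_a}}$ from $\psi_{w_0^\la}$ and the $1_{a^{(m-\la_a)}},1_{a^{(\la_a)}}$ from $e_\la$, these collapse by the nil-Hecke identities of Lemma~\ref{LW0Y0} to the scalars $1$ and $(-1)^{\binom{\la_a}{2}}$, which is exactly the sign-free identity.

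The main obstacle is this last step. One must unravel the double crossings of $\psi_v e_\bde\psi_{v^{-1}}$ in a legitimate order — the two crossings of a fixed non-$a$ strand are separated by a long stretch of other crossings, so Lemma~\ref{LPull} has to be applied carefully, innermost strands first — and one must verify that the $y$-polynomials left on the $a$-blocks, on which only the homogeneous top-degree part survives by Lemma~\ref{LW0Y0}(\ref{LW0Y0i}), collapse under $\psi_{w_{0,k}}(-)\psi_{w_{0,k}}y_{0,k}$ to precisely $1$ and $(-1)^{\binom{\la_a}{2}}$ and not to some other constant. Everything else — the sign bookkeeping of the first paragraph and the concatenation factorizations of the second — is routine once initiality of $\bde$ is used to place the $a$-strands on the outside.
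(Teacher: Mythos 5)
Your overall strategy is the paper's own, repackaged: combine the two signs into $\tau_{\la}$, use initiality to guarantee that no two $a$-strands cross, open each $(a,a{+}1)$ double crossing with the quadratic relation, discard every sub-top-degree contribution on the $a$-blocks via Lemma~\ref{LW0Y0}(\ref{LW0Y0i}), and collapse the surviving staircase monomials via Lemma~\ref{LW0Y0}(\ref{LW0Y0ii}). The factorization $w(\bde)=(1\times w(\bar\bde)\times 1)v$ and the isolation of the bubble $\psi_v e_\bde\psi_{v^{-1}}$ is a tidy way to organize this (the paper instead counts inversion pairs of $w(\bde)$ directly), and your crossing counts $\binom{m-\la_a}{2}$ and $\binom{\la_a}{2}$ agree with the paper's $\sum_r (r-1)$ and $\sum_s(d-s)$.

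The one genuine problem is the sign accounting inside the bubble, which you yourself flag as the main obstacle. Your ``crucial observation'' that $\eps_{a+1,a}=-1$ along the trailing block does not in fact produce a sign on the surviving term: in $\psi_r^2 1_\bi=\eps_{i_r,i_{r+1}}(y_r-y_{r+1})1_\bi$ the summand carrying the dot on the $a$-strand has coefficient $+1$ in \emph{both} orientations (when the $a$ sits at position $r{+}1$, the factor $\eps=-1$ is cancelled by the minus sign in front of $y_{r+1}$), while the summands with the dot on the $(a{+}1)$-strand lower the degree on the $a$-blocks and die in the sandwich. So no sign whatsoever comes from the crossings; the entire factor $(-1)^{\binom{\la_a}{2}}$ comes from the second half of your computation: the monomial deposited on the trailing block is the reversed staircase $\prod_{t}y_t^{\la_a-t}$ (the outermost trailing $a$ has the fewest crossings), and $\psi_{w_{0,k}}\bigl(\prod_t y_t^{k-t}\bigr)\psi_{w_{0,k}}=(-1)^{\binom{k}{2}}\psi_{w_{0,k}}$, whereas the leading block receives $y_{0,m-\la_a}=\prod_r y_r^{r-1}$, which collapses to $+1$ by Lemma~\ref{LW0Y0}(\ref{LW0Y0ii}). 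If one took your $\eps$-count at face value \emph{and} performed this Demazure evaluation, the two copies of $(-1)^{\binom{\la_a}{2}}$ would cancel and the identity would fail by a sign. A minimal sanity check is $a=b=1$, $m=2$, $\la=(2)$, $\bde=((1),(1))$: there $\psi_{w(\bde)}e_\bde\psi_{u(\bde)}=\psi_2^2 1_{2211}=(y_3-y_2)1_{2211}$, the dot on the trailing $1$-strand appears with coefficient $+1$, and the sign is produced by $\psi_3y_3\psi_3=-\psi_3$. With this correction your argument closes and coincides with the paper's proof.
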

\begin{proof}
By definition,
$$
f_n^{\la,\bde} g_n^{\bde,\la}=(-1)^{\sum_{i=a}^b\la_i(\la_i-1)/2}e_\la\psi_{w_0^\la}\psi_{w(\bde)}\psi_{u(\bde)}y^\la e_\la.
$$
Throughout the proof, `inversion pair' means `inversion pair for $w(\bde)$'. Recall that $w(\bde)=u(\bde)^{-1}$.
Since $\bde$ is initial, in the Khovanov-Lauda diagram for  $\psi_{w(\bde)}$ (for any choice of reduced expression) no strings of color $a$ cross each other.
We want to apply quadratic relations on pairs of strings one of which has color $a$ and the other has color $a+1$.  These correspond to inversion pairs $(r,s)$ with $r\in U^-_a(\la), s\not \in U^-_a(\la)$ or $s\in U^+_a(\la), r\not\in U^+_a(\la)$.

Note that there are exactly $r-1$ inversion pairs of the form $(r,s)$ when $r\in U^-_a(\la)$ and $d-s$ inversion pairs of the form $(r,s)$ when $s\in U^+_a(\la)$. Applying the corresponding quadratic relations, we see that $f_n^{\la,\bde} g_n^{\bde,\la}$ equals
\begin{equation}\label{E190316}
(-1)^{\la_a(\la_a-1)/2}
e_\la\psi_{w_0^\la}
(y_{0,m-\la_a}\circ f_{\bar n}^{\bar\la,\bar\bde} g_{\bar n}^{\bar \bde,\bar \la}\circ y_{0,\la_a}')
y^\la e_\la+(*),
\end{equation}
where $(*)$ a sum of elements of the form
$$
e_\la\psi_{w_0^\la}
\iota_{(m-\la_a)\al_a,\bar \theta,\la_a\al_a}(Y^-\otimes X\otimes Y'^+)
y^\la e_\la,
$$
with $X\in R_{\bar\theta}$, $Y^\pm$ a polynomial in the variables $y_r$ with $r\in U_a^\pm(\la)$, and $\deg Y^-+\deg Y^+<\deg y_{0,m-\la_a}+\deg y_{0,\la_a}'$.
By Lemma~\ref{LW0Y0}(\ref{LW0Y0i}), we have $(*)=0$. So by Lemma~\ref{LW0Y0}(\ref{LW0Y0ii}), the expression (\ref{E190316}) equals the right hand side of (\ref{E210916}).
\end{proof}

Define $\bde_\la =(\de_\la^{(1)}, \cdots, \de_\la^{(m)})$ to be the unique element of $\bLa(n)$ such that for each $a \leq i \leq b$ we have:
\begin{itemize}
\item $i$ precedes $b+1$ in $\bj^{\de_\la^{(r)}}$ for $1\leq r\leq m-\la_i$;
\item $i$ succeeds $b+1$ in $\bj^{\de_\la^{(r)}}$ for $m-\la_i<r\leq  m$.
\end{itemize}
Note that $\la^{\bde_\la}=\la$ but $\bde_{\la^\bde}$ in general differs from $\bde$.

\begin{Lemma} 
Let $\bde\in\bLa$ satisfy $\la^\bde=\la$. Then
$$f_n^{\la,\bde} g_n^{\bde,\la}=
\left\{
\begin{array}{ll}
e_\la &\hbox{if $\bde=\bde_\la$,}\\
0 &\hbox{otherwise.}
\end{array}
\right.
$$
\end{Lemma}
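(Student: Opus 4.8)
The plan is to prove the lemma by induction on $l=b+2-a=\height(\al)$, distinguishing the cases according to whether or not $\bde$ is initial, in the sense defined just before Lemma~\ref{LInd} (i.e.\ initial at the colour $a$); note that $\bde_\la$ is initial, so it falls under the first case.

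I would treat the non-initial case directly. Unwinding the definitions, using $u(\bde)=w(\bde)^{-1}$ and the idempotency of $e_\bde=1_{\bj^\bde}$, the product $f_n^{\la,\bde}g_n^{\bde,\la}$ equals, up to a sign, $e_\la\psi_{w_0^\la}\,\psi_{w(\bde)}1_{\bj^\bde}\psi_{u(\bde)}\,y^\la e_\la$, so it suffices to show $\psi_{w(\bde)}1_{\bj^\bde}\psi_{u(\bde)}=0$. Since $\bde$ is not initial at $a$, the sequence of zeros and ones $(\de^{(1)}_a,\dots,\de^{(m)}_a)$ is not of the shape $0^{m-\la_a}1^{\la_a}$, so there is an $r$ with $\de^{(r)}_a=1$ and $\de^{(r+1)}_a=0$. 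For this $r$, the positions $lr$ and $lr+1$ of $\bj^\bde$ both carry the colour $a$ (being the last letter of the block $\bj^{\de^{(r)}}$ and the first letter of the block $\bj^{\de^{(r+1)}}$), and the defining formulas give $lr\in U_a^+(\bde)$ and $lr+1\in U_a^-(\bde)$; as $w(\bde)$ carries these increasingly into $U_a^+(\la)=[d-\la_a+1,d]$ and $U_a^-(\la)=[1,m-\la_a]$, and $m-\la_a<d-\la_a+1$, we obtain $w(\bde)(lr)>w(\bde)(lr+1)$, i.e.\ $\ell(w(\bde)s_{lr})=\ell(w(\bde))-1$. By Lemmas~\ref{LFIndependent} and \ref{LGIndependent} we may therefore evaluate $\psi_{w(\bde)}1_{\bj^\bde}\psi_{u(\bde)}$ using a reduced word for $w(\bde)$ ending in $s_{lr}$, together with its reverse for $u(\bde)$; the innermost factor then becomes $\psi_{lr}1_{\bj^\bde}\psi_{lr}=\psi_{lr}^2\,1_{\bj^\bde}$ (the transposition $s_{lr}$ fixes $\bj^\bde$ because its $lr$th and $(lr+1)$st entries agree), which vanishes by the KLR relation $\psi_t^2 1_\bi=0$ for $i_t=i_{t+1}$. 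Hence $f_n^{\la,\bde}g_n^{\bde,\la}=0$ for every non-initial $\bde$, and all such $\bde$ are different from $\bde_\la$.

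For initial $\bde$, Lemma~\ref{LInd} reduces the problem to a smaller root:
$$
f_n^{\la,\bde}g_n^{\bde,\la}=1_{a^{(m-\la_a)}}\circ f_{\bar n}^{\bar\la,\bar\bde}g_{\bar n}^{\bar\bde,\bar\la}\circ 1_{a^{(\la_a)}},
$$
where $\bar\al$ has height $l-1$, $\la^{\bar\bde}=\bar\la$, and $\bar\la,\bar\bde,\bar n$ are as in that lemma. The inductive hypothesis (the lemma for $\bar\al$) makes the middle factor $e_{\bar\la}$ if $\bar\bde=\bde_{\bar\la}$ and $0$ otherwise. The only extra input needed is the combinatorial equivalence $\bde=\bde_\la\iff\bar\bde=\bde_{\bar\la}$, valid because $\bde$ is already initial at $a$: both sides assert that $\bde$ is initial at every colour of $[a,b]$, using the characterisation of $\bde_\la$ recalled before the lemma. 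When $\bar\bde=\bde_{\bar\la}$ this yields $1_{a^{(m-\la_a)}}\circ e_{\bar\la}\circ 1_{a^{(\la_a)}}=1_{a^{(m-\la_a)}\bi^{\bar\la}a^{(\la_a)}}=1_{\bi^\la}=e_\la$ by (\ref{E240918_2}); otherwise the product is $0$. This also covers the base case $l=2$ (so $a=b$), where the only initial $\bde$ is $\bde_\la$, the root $\bar\al$ is simple, and $f_0^{\bar\la,\bar\bde}g_0^{\bar\bde,\bar\la}=e_{\bar\la}$ by the explicit length-zero resolution of $\De(\bar\al^m)$ in the simple-root case (cf.\ the Introduction).

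The step I expect to be the main obstacle is the non-initial case --- not because the argument is long once set up, but because one has to spot the right structural feature: non-initiality at $a$ forces two equal-coloured strands of $\psi_{w(\bde)}1_{\bj^\bde}\psi_{u(\bde)}$ into a bigon, and one must have the reduced-expression independence of $f_n,g_n$ in hand in order to slide that bigon to the innermost position and annihilate it by $\psi_t^2 1_\bi=0$. Everything else --- locating $lr,lr+1$ and their $w(\bde)$-images, and the equivalence $\bde=\bde_\la\iff\bar\bde=\bde_{\bar\la}$ threading the induction through Lemma~\ref{LInd} --- is routine bookkeeping.
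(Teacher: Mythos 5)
Your proposal is correct and follows essentially the same route as the paper: the initial/$\bde=\bde_\la$ case is handled by induction on $\height(\al)$ through Lemma~\ref{LInd}, and the remaining case is killed by locating two adjacent equal-coloured $a$-strands (coming from $\de^{(r)}_a=1$, $\de^{(r+1)}_a=0$) whose double crossing in $\psi_{w(\bde)}\psi_{u(\bde)}$ vanishes by the quadratic relation. Your write-up is in fact slightly more explicit than the paper's (verifying that $w(\bde)$ inverts the pair $(lr,lr+1)$ and invoking Lemmas~\ref{LFIndependent} and \ref{LGIndependent} to slide the bigon to the innermost position), but the argument is the same.
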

\begin{proof}
If $\bde=\bde_\la$, the result follows by induction on $\height(\al)$ using from Lemma~\ref{LInd}. If $\bde\neq \bde_\la$, we may assume using Lemma~\ref{LInd} that $\bde$ is not initial. This implies that for some $r\in[1,m)$, we have $\de^{(r)}_a=1$ and $\de^{(r+1)}_a=0$, i.e. the last entry of the word $\bj^{\de^{(r)}}$ and the first entry of the word $\bj^{\de^{(r+1)}}$ are both equal to $a$. It follows that  $
\begin{braid}\tikzset{baseline=0mm}
\draw(0,0.5) node[above]{$a$}--(1,0)--(0,-0.5);
\draw(1,0.5) node[above]{$a$}--(0,0)--(1,-0.5);
\end{braid}
$
is a sub-diagram of a Khovanov-Lauda diagram for $\psi_{w(\bde)}\psi_{u(\bde)}y^\la e_\la$, so
$
f_n^{\la,\bde} g_n^{\bde,\la}=\pm e_\la\psi_{w_0^\la}\psi_{w(\bde)}\psi_{u(\bde)}y^\la e_\la=0.
$
\end{proof}

\begin{Corollary} \label{CSplit} 
For any $n\in\Z_{\geq 0}$, we have $g_n\circ f_n=\id$.
\end{Corollary}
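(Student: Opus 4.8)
The plan is to deduce Corollary~\ref{CSplit} immediately from the lemma just proved, by unwinding what the composite $g_n\circ f_n$ is as a matrix operation. Recall that $f_n=\rho_{F_n}$ and $g_n=\rho_{G_n}$ are right multiplications with the matrices $F_n=(f_n^{\la,\bde})_{\la\in\La(n),\,\bde\in\bLa(n)}$ and $G_n=(g_n^{\bde,\la})_{\bde\in\bLa(n),\,\la\in\La(n)}$. Since composing right multiplications multiplies the matrices in the same order, i.e. $\rho_{G_n}\circ\rho_{F_n}=\rho_{F_nG_n}$, we have $g_n\circ f_n=\rho_{F_nG_n}$, whose $(\la,\la')$-entry is $\sum_{\bde\in\bLa(n)}f_n^{\la,\bde}g_n^{\bde,\la'}\in e_\la R_\theta e_{\la'}$.

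First I would invoke the support conditions built into the definitions of $F_n$ and $G_n$: by definition $f_n^{\la,\bde}=0$ unless $\la=\la^\bde$, and $g_n^{\bde,\la'}=0$ unless $\la'=\la^\bde$. Hence a summand $f_n^{\la,\bde}g_n^{\bde,\la'}$ can be nonzero only when $\la=\la^\bde=\la'$; in particular the off-diagonal entries of $F_nG_n$ vanish, and its $(\la,\la)$-entry equals $\sum_{\bde\in\bLa(n),\ \la^\bde=\la}f_n^{\la,\bde}g_n^{\bde,\la}$. Next I would apply the preceding lemma, which evaluates each term $f_n^{\la,\bde}g_n^{\bde,\la}$ with $\la^\bde=\la$ to be $e_\la$ when $\bde=\bde_\la$ and $0$ otherwise. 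Thus the diagonal sum collapses to the single contribution of $\bde=\bde_\la$, giving $(\la,\la)$-entry $e_\la$. Therefore $F_nG_n$ is the diagonal matrix with entries $e_\la$, and since right multiplication by the idempotent $e_\la$ is the identity on $P_\la=q^{s_\la}R_\theta e_\la$, we conclude $g_n\circ f_n=\rho_{F_nG_n}=\id$ for every $n$.

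There is essentially no obstacle in this corollary itself: all the substantive content has already been absorbed into the lemma just proved --- the reduction to the initial case via Lemma~\ref{LInd}, and the vanishing of the non-initial contributions coming from a same-colour crossing of two $a$-strings trapped under the cap $\psi_{w_0^\la}$. The only minor points to keep straight are the order-of-multiplication convention for composing the right multiplications $\rho_D$ from (\ref{rightmult}), and the matching of grading shifts; the latter was already verified in the corollary to Lemma~\ref{LDeg} asserting that $f_n$ and $g_n$ are homogeneous of degree zero, so the identity $g_n\circ f_n=\id$ holds as an equality of degree-zero $R_\theta$-module maps.
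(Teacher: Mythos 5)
Your proposal is correct and is exactly the argument the paper intends: the reduction of $g_n\circ f_n=\id$ to the identity $\sum_{\bde\in\bLa(n),\,\la^\bde=\la}f_n^{\la,\bde}g_n^{\bde,\la}=e_\la$ is stated at the start of the subsection, and the preceding lemma supplies each summand. The corollary then follows by the same matrix-multiplication bookkeeping you describe.
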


\subsection{Proof of Theorem~\ref{TA}, assuming $f$ and $g$ are chain maps}
\label{SSProof}
In Sections~\ref{SF} and \ref{SG}, we will prove that $f$ and $g$ constructed above are chain maps. The goal of this subsection is to demonstrate that this is sufficient to establish our main result.

Let
$$Y_0 
:= \prod_{k=1}^m y_{kl}^{k-1} \in \k[y_1, \ldots, y_d].$$
We also define $W_0$
to be the longest element of $\D^{(l^m)}$.
For $1\leq r<m$, we have a fully commutative element
$$z_r:=\prod_{k=1}^l((r-1)l+k,rl+k) \in \Si_d.$$
In fact, if $w_{0,m}=s_{r_1}\dots s_{r_N}$ is a reduced decomposition in $\Si_m$, then  $W_0=z_{r_1}\dots z_{r_N}$ in $\Si_d$ with $\ell(W_0)= \ell(z_{r_1})+\dots+\ell( z_{r_N})$.
Recalling (\ref{EBZero}) we have
$$\bj^\bzero=(a\,(a+1)\cdots (b+1))^m\in I^d.$$ Then the following is easy to check:

\begin{Lemma} \label{thickcrossdegree}
	Let $P = \{ w\in \D^{(l^m)} \mid w\cdot \bj^\bzero = \bj^\bzero\}$.
	Then $W_0\in P$ and $\deg(\psi_{W_0}e_\bzero) < \deg(\psi_w e_\bzero)$ for any $w\in P\setminus \{ W_0\}$
\end{Lemma}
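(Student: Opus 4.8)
The plan is to determine the set $P$ completely, then read off all the degrees. For $\sigma\in\Si_m$ write $w_\sigma\in\Si_d$ for the \emph{block permutation} given by $w_\sigma((r-1)l+k)=(\sigma(r)-1)l+k$ for all $r\in[1,m]$, $k\in[1,l]$. The first step is to prove $P=\{w_\sigma\mid\sigma\in\Si_m\}$. The inclusion ``$\supseteq$'' is immediate: each $w_\sigma$ is increasing on every block $[(r-1)l+1,\,rl]$ of $(l^m)$, hence lies in $\D^{(l^m)}$, and it preserves residues modulo $l$, hence preserves the colours of $\bj^\bzero$ (the colour of a position $p$ depending only on $(p-1)\bmod l$), so $w_\sigma\cdot\bj^\bzero=\bj^\bzero$. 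For ``$\subseteq$'', given $w\in P$ the equality $w\cdot\bj^\bzero=\bj^\bzero$ forces $w$ to preserve residues mod $l$, so $w$ restricts to a permutation $\sigma_k\in\Si_m$ of each class $\{(r-1)l+k\mid r\in[1,m]\}$ via $w((r-1)l+k)=(\sigma_k(r)-1)l+k$; since $w\in\D^{(l^m)}$ is increasing on blocks, comparing $w((r-1)l+k)$ with $w((r-1)l+k+1)$ yields $\sigma_k(r)\le\sigma_{k+1}(r)$ for all $r\in[1,m]$ and all $k\in[1,l-1]$, and because $\sum_r\sigma_k(r)=\binom{m+1}{2}$ does not depend on $k$ this collapses to $\sigma_1=\dots=\sigma_l$, i.e.\ $w=w_\sigma$. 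In particular $W_0\in P$, and in fact $W_0=w_{w_{0,m}}$: one checks $z_r=w_{s_r}$ and $\sigma\mapsto w_\sigma$ is a group homomorphism, so the given factorisation $W_0=z_{r_1}\cdots z_{r_N}$ for a reduced $w_{0,m}=s_{r_1}\cdots s_{r_N}$ gives the claim.

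The second step is the degree computation for $w=w_\sigma\in P$, using that $\deg(\psi_{w}e_\bzero)$ depends only on $w$ and $\bj^\bzero$, being the sum over the inversions of $w$ of $-\cc$ evaluated on the colours of the two crossing strands. In a reduced diagram for $w_\sigma$, the strands through bottom positions $(r-1)l+k$ and $(r'-1)l+k'$ cross precisely when $r\ne r'$ and $r,r'$ occur in opposite orders under $\sigma$ (here one uses $|k-k'|<l$); thus for each inversion $\{r,r'\}$ of $\sigma$ all $l^2$ pairs of strands between blocks $r$ and $r'$ cross, and no other pair of strands crosses. The colours occurring in those $l^2$ crossings are exactly the pairs $(a+k-1,\,a+k'-1)$ with $k,k'\in[1,l]$; since $a,a+1,\dots,a+l-1$ is a segment of consecutive vertices of type $A_\infty$, we get $\sum_{k,k'=1}^l(-\cc_{a+k-1,a+k'-1})=-2l+2(l-1)=-2$. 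Hence every inversion of $\sigma$ contributes exactly $-2$, and $\deg(\psi_{w_\sigma}e_\bzero)=-2\inv(\sigma)$.

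Combining the two steps finishes the argument: for $w=w_\sigma\in P$ we have $\deg(\psi_we_\bzero)=-2\inv(\sigma)\ge-2\binom{m}{2}=\deg(\psi_{W_0}e_\bzero)$, with equality iff $\inv(\sigma)=\binom{m}{2}$, i.e.\ iff $\sigma=w_{0,m}$, i.e.\ iff $w=W_0$; hence the strict inequality holds for every $w\in P\setminus\{W_0\}$. The only point requiring some care is the identification $P=\{w_\sigma\}$ --- concretely, the observation that the ``increasing on blocks'' condition satisfied by elements of $\D^{(l^m)}$, together with the fact that $\sum_r\sigma_k(r)$ is independent of $k$, forces the a priori tuple $(\sigma_1,\dots,\sigma_l)$ to be constant, so that $P$ reduces to the block permutations. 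Once that is in place, the remainder is the short type-$A$ Cartan bookkeeping above (together with the standard fact that $\deg(\psi_w 1_\bi)$ is independent of the chosen reduced expression for $w$).
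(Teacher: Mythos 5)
Your argument is correct and is essentially the paper's own: the paper treats this as ``easy to check,'' its implicit argument being that $P$ is the image of $\Si_m$ under $s_r\mapsto z_r$ (i.e.\ consists exactly of the block permutations) and that each block inversion contributes $-2$ to $\deg(\psi_we_\bzero)$, so that the degree is $-2$ times the length of the corresponding element of $\Si_m$ and is minimized precisely at $W_0$. Your write-up supplies the details the paper omits, most usefully the verification that $P$ contains nothing besides the block permutations $w_\sigma$ (via the termwise inequality $\sigma_k(r)\le\sigma_{k+1}(r)$ combined with the equal-sums argument) and the explicit Cartan-matrix count showing each of the $l^2$ crossings attached to a block inversion sums to degree $-2$.
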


\begin{Lemma} \label{thickcrossindep}
Let $\psi_{W_0}$ and $\psi_{W_0}'$ in $R_\theta$ correspond to different reduced expressions of $W_0\in\Si_d$. Then
 $\psi_{W_0}v_\alpha^{\circ m}=\psi_{W_0}'v_\alpha^{\circ m}$ in
 $\Delta(\alpha)^{\circ m}$.
\end{Lemma}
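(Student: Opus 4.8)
The statement asserts that the element $\psi_{W_0}v_\alpha^{\circ m}\in\Delta(\alpha)^{\circ m}$ does not depend on the choice of a reduced expression for $W_0$. My plan is to compare two reduced expressions of $W_0$ via braid moves in $\Si_d$ and track the resulting discrepancies in $R_\theta$ acting on $v_\alpha^{\circ m}$. By the defining relations of $R_\theta$ and Theorem~\ref{TBasis}, if $\psi_{W_0}$ and $\psi_{W_0}'$ correspond to two reduced expressions, then $\psi_{W_0}e_\bzero-\psi_{W_0}'e_\bzero$ is a $\k$-linear combination of terms $\psi_u\, y\, e_\bzero$ with $u\in\Si_d$, $y\in\k[y_1,\dots,y_d]$, where $\deg(\psi_u y e_\bzero)=\deg(\psi_{W_0}e_\bzero)$ and $u\cdot\bj^\bzero=\bj^\bzero$. (The discrepancies only arise from the KLR braid relation and the $\psi_r^2$ relation, both of which preserve the residue word up to the nilHecke-type corrections, which also fix $\bj^\bzero$ since all the relevant colors are distinct at the crossing.) It therefore suffices to show that each such term $\psi_u\, y\, e_\bzero$ annihilates $v_\alpha^{\circ m}$, i.e. $\psi_u\, y\, v_\alpha^{\circ m}=0$ whenever $u\cdot\bj^\bzero=\bj^\bzero$, $u\neq W_0$ contributes, and the degree matches.

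First I would reduce to the case $u\in\D^{(l^m)}$: since $v_\alpha^{\circ m}$ is obtained by inducing up $v_\alpha\boxtimes\cdots\boxtimes v_\alpha$, and each $v_\alpha$ is killed by $\psi_1,\dots,\psi_{l-1}$ (the defining relations of $\Delta(\alpha)$), for any $w\in\Si_d$ factored via Lemma~\ref{LDJ} as $w=xy'$ with $x\in\D^{(l^m)}$, $y'\in\Si_{(l^m)}$ and $\ell(w)=\ell(x)+\ell(y')$, the element $\psi_w v_\alpha^{\circ m}$ either vanishes (if $y'\neq 1$, because the rightmost factors are $\psi_r$'s with $r$ inside a single block, acting on $v_\alpha$ in that block) or reduces to $\psi_x v_\alpha^{\circ m}$ with $x\in\D^{(l^m)}$. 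Strictly, one must be a little careful because the $y$-part $y\in\k[y_1,\dots,y_d]$ sits between, but the $y_t$'s commute with $\psi_r$ for $r,r+1$ in a different block, and on $v_\alpha$ within a block all $y_t$ act by the same operator (the right $\k[x]$-action), so the reduction still goes through after collecting polynomial factors on the right. So we may assume $u\in\D^{(l^m)}$ with $u\cdot\bj^\bzero=\bj^\bzero$, i.e. $u\in P$ in the notation of Lemma~\ref{thickcrossdegree}.

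Next I would invoke Lemma~\ref{thickcrossdegree}: for any $u\in P\setminus\{W_0\}$ we have $\deg(\psi_u e_\bzero)>\deg(\psi_{W_0}e_\bzero)$, hence $\deg(\psi_u\, y\, e_\bzero)\geq\deg(\psi_u e_\bzero)>\deg(\psi_{W_0}e_\bzero)$ whenever $y\neq 0$. But the discrepancy terms all have degree exactly $\deg(\psi_{W_0}e_\bzero)$, forcing $u=W_0$ and $y$ a constant; and the constant terms with $u=W_0$ cancel, or more precisely the straightening produces $y=1$ and $u=W_0$ only with coefficient zero in the difference. Thus $\psi_{W_0}e_\bzero-\psi_{W_0}'e_\bzero$ is in fact a combination of terms that either have $u\in P\setminus\{W_0\}$ (impossible by degree, so coefficient $0$) or $u\notin\D^{(l^m)}$ (killed by $v_\alpha^{\circ m}$ as above) or $u\cdot\bj^\bzero\neq\bj^\bzero$ (killed by $e_\bzero$, hence absent). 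Therefore $(\psi_{W_0}-\psi_{W_0}')v_\alpha^{\circ m}=0$.

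The main obstacle, and the step requiring genuine care, is the reduction to $u\in\D^{(l^m)}$ in the presence of the polynomial factors $y$: one must check that commuting the $y_t$'s past the $\psi_r$'s (for $r$ in a block) and using $y_t v_\alpha = y_u v_\alpha$ within each block does not spoil the degree bookkeeping, and that after this reduction the degree of the resulting term is still $\leq$ the original, so that Lemma~\ref{thickcrossdegree} can be applied. This is essentially the same mechanism already used in the proofs of Lemmas~\ref{LFIndependent} and~\ref{LGIndependent}, and I expect the argument to be a direct adaptation of those, with Lemma~\ref{thickcrossdegree} playing the role that Lemma~\ref{deg1} played there.
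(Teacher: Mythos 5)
Your argument is correct and is essentially the paper's own proof: expand $\psi_{W_0}e_\bzero-\psi_{W_0}'e_\bzero$ in the basis of Theorem~\ref{TBasis}, reduce to $u\in\D^{(l^m)}$ via Lemma~\ref{LDJ} and the vanishing of $\psi_y v_\alpha^{\circ m}$ for $1\neq y\in\Si_{(l^m)}$, and rule out the remaining terms by the degree minimality of $W_0$ in Lemma~\ref{thickcrossdegree}. Your extra care with the polynomial factors $y\in\k[y_1,\dots,y_d]$ (which the paper's statement of the expansion suppresses) is a harmless refinement of the same argument.
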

\begin{proof}
	It follows from the defining relations of the KLR algebra and Theorem~\ref{TBasis} that $\psi_{W_0}e_\bzero - \psi_{W_0}'e_\bzero$ is a linear combination of elements of the form $\psi_w e_\bzero$ where $w\cdot \bj^\bzero = \bj^\bzero$, $w \neq W_0$, and $\deg(\psi_w e_\bzero) = \deg(\psi_{W_0}e_\bzero)$.
	Moreover, for each such $w$, by Lemma~\ref{LDJ}, there exist (unique) elements $x\in \D^{(l^m)}$ and $y\in \Si_{(l^m)}$ such that $w = xy$ and $\ell(w) = \ell(x)+\ell(y)$.
	Since $\psi_y v_\alpha^{\circ m} = 0$ if $y\neq 1$, we may assume $w \in \D^{(l^m)}$.
	We now have $w\in \D^{(l^m)}$, $w\cdot \bj^\bzero = \bj^\bzero$, and $w\neq W_0$.
	By Lemma~\ref{thickcrossdegree}, no such element $w$ can exist, so $(\psi_{W_0}e_\bzero - \psi_{W_0}'e_\bzero)v_\alpha^{\circ m} = 0$.
\end{proof}



\begin{Lemma} \label{FafterG}
 	There is a choice of reduced expression of $W_0$ for which $g_0^{\bzero,0}f_0^{0,\bzero} = e_{\bzero} Y_0 \psi_{W_0}e_{\bzero}$.
\end{Lemma}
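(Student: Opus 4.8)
The plan is to compute $g_0^{\bzero,0}f_0^{0,\bzero}\in e_\bzero R_\theta e_\bzero$ directly and then rewrite it. Since $\bzero\in\bLa(0)$ and $\la^\bzero=0$, all relevant signs are trivial ($t_\bzero=0$, so $\si_\bzero=1$; $\la^\bzero=0$, so $\tau_\bzero=1$). Moreover $e_0=1_{a^{(m)}(a+1)^{(m)}\cdots(b+1)^{(m)}}$, and because the tail of $\la=0$ is empty, $y^0=1_{a^{m}\cdots b^{m}}\circ y_{0,m}$; thus $y^0$ is the dot monomial $\prod_{r=1}^{m}y_{(l-1)m+r}^{\,r-1}$ placed on the $m$ strings of colour $b+1$, which occupy positions $(l-1)m+1,\dots,lm$ of $\bj^0$. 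Using $e_0^2=e_0$ we therefore have
$$
g_0^{\bzero,0}f_0^{0,\bzero}=e_\bzero\,\psi_{u(\bzero)}\,\big(y^0\,e_0\,\psi_{w_0^0}\big)\,\psi_{w(\bzero)}\,e_\bzero .
$$

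First I would simplify the bracketed middle factor. Writing $e_0$ as the $\circ$-product of the divided power idempotents $1_{c^{(m)}}=\psi_{w_{0,m}}y_{0,m}$ and writing $\psi_{w_0^0}$, where $w_0^0$ is the longest element of $\Si_{\om_0}\cong\Si_m^{\times l}$, as the matching $\circ$-product of $\psi_{w_{0,m}}$'s, Lemma~\ref{LW0Y0}(\ref{LW0Y0ii}) gives $1_{c^{(m)}}\psi_{w_{0,m}}=\psi_{w_{0,m}}$ blockwise, whence $e_0\psi_{w_0^0}=\psi_{w_0^0}$ and so $y^0 e_0\psi_{w_0^0}=y^0\psi_{w_0^0}$. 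Next, push the dot monomial $y^0$ upward through $\psi_{u(\bzero)}$. Since $w(\bzero)$, and hence $u(\bzero)=w(\bzero)^{-1}$, sends the strings of each fixed colour to strings of the same colour in increasing order, it has no inversion between equal-colour strings; so $\psi_{u(\bzero)}$ has no equal-colour crossing for any reduced expression, and the dots slide through it with unchanged exponents and without producing extra terms. The string carrying $y_{(l-1)m+s}^{\,s-1}$ exits in position $u(\bzero)\big((l-1)m+s\big)=sl$ of $\bj^\bzero$ (read off the explicit definition of $w(\bzero)$), so $\psi_{u(\bzero)}y^0=Y_0\psi_{u(\bzero)}$, giving
$$
g_0^{\bzero,0}f_0^{0,\bzero}=e_\bzero\,Y_0\,\psi_{u(\bzero)}\,\psi_{w_0^0}\,\psi_{w(\bzero)}\,e_\bzero .
$$

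It then remains to pick the reduced expression of $W_0$. On one-line notation one checks the permutation identity $u(\bzero)\,w_0^0\,w(\bzero)=W_0$ (the reversal of the $m$ blocks of length $l$), while a direct inversion count gives $\ell(w(\bzero))=\binom{m}{2}\binom{l}{2}$, $\ell(w_0^0)=l\binom{m}{2}$ and $\ell(W_0)=l^2\binom{m}{2}$. As $\ell(u(\bzero))=\ell(w(\bzero))$ and $2\binom{m}{2}\binom{l}{2}+l\binom{m}{2}=l^2\binom{m}{2}$, the concatenation of any reduced expressions of $u(\bzero)$, $w_0^0$ and $w(\bzero)$ is a reduced expression of $W_0$; for that expression $\psi_{u(\bzero)}\psi_{w_0^0}\psi_{w(\bzero)}=\psi_{W_0}$, and hence $g_0^{\bzero,0}f_0^{0,\bzero}=e_\bzero Y_0\psi_{W_0}e_\bzero$. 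By Lemmas~\ref{LFIndependent} and~\ref{LGIndependent} the left-hand side is independent of all reduced-expression choices, so this is unambiguous.

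The nil-Hecke collapse (Lemma~\ref{LW0Y0}) and the dot-sliding are routine; the real content is the combinatorics of the last paragraph — the identity $u(\bzero)w_0^0w(\bzero)=W_0$ together with the length additivity $\ell(W_0)=\ell(u(\bzero))+\ell(w_0^0)+\ell(w(\bzero))$, which is exactly what ensures no lower terms intrude when the three $\psi$'s are spliced. One also has to mind the top/bottom string conventions, both in asserting $e_0\psi_{w_0^0}=\psi_{w_0^0}$ rather than $\psi_{w_0^0}e_0$ and in tracking which strings the dots of $y^0$ follow through $\psi_{u(\bzero)}$.
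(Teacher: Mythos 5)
Your proof is correct and follows essentially the same route as the paper's: the same factorization $g_0^{\bzero,0}f_0^{0,\bzero}=e_\bzero\psi_{u(\bzero)}y^0e_0\psi_{w_0^0}\psi_{w(\bzero)}e_\bzero$, the same use of Lemma~\ref{LW0Y0}(\ref{LW0Y0ii}) to absorb $e_0$, the same dot-sliding $\psi_{u(\bzero)}y^0=Y_0\psi_{u(\bzero)}$ via $Y_0=u(\bzero)\cdot y^0$, and the same identity $u(\bzero)w_0^0w(\bzero)=W_0$ with length additivity to fix the reduced expression. You merely supply more detail (explicit inversion counts and the no-equal-colour-crossing justification) than the paper does.
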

\begin{proof}
	Note that $\ell(u(\bzero)w_0^0 w(\bzero)) = \ell(u(\bzero))+\ell(w_0^0)+\ell(w(\bzero))$ and $u(\bzero)w_0^0 w(\bzero) = W_0$. So for an appropriate choice of reduced expression for $W_0$, we have $\psi_{u(\bzero)}\psi_{w_0^0}\psi_{w(\bzero)} = \psi_{W_0}$. We now compute:
	\begin{align*}
		g_0^{\bzero,0}f_0^{0,\bzero} &= e_{\bzero}\psi_{u(\bzero)}y^0 e_0 \psi_{w_0^0}\psi_{w(\bzero)} e_\bzero \\
		&= e_{\bzero}\psi_{u(\bzero)}y^0 \psi_{w_0^0}\psi_{w(\bzero)} e_\bzero \\
		&= e_{\bzero}Y_0\psi_{u(\bzero)}\psi_{w_0^0}\psi_{w(\bzero)} e_\bzero \\
		&= e_\bzero Y_0 \psi_{W_0}e_\bzero,
	\end{align*}
	where the first equality is by definition of $g_0^{\bzero,0}$ and $f_0^{0,\bzero}$, the second follows from Lemma \ref{LW0Y0}(\ref{LW0Y0ii}), the third by the defining relations of $R_\theta$ and the observation that $Y_0 = u(\bzero)\cdot y^0$, and the fourth from the remark at the beginning of the proof.
\end{proof}

Note that
$$f_0^{0,\bzero}=\si_{\bzero}e_0\psi_{w_0^0}\psi_{w(\bzero)}e_\bzero=\psi_g1_{\bi_\al^m},$$
where $g$ is the longest element of $\Si_d$ with $g\cdot(\bi_\al^m)=a^m(a+1)^m\cdots (b+1)^m$. Let
\begin{equation}\label{ESG}
v_{\al^m}:=f_0^{0,\bzero}v_\al^{\circ m}e_m= \psi_gv_\al^{\circ m}e_m\in\De(\al^m).
\end{equation}
We refer to $v_{\al^m}$ as the {\em standard generator} of $\De(\al^m)$. It can be checked directly but also follows from Theorem~\ref{main theorem} that it does generate $\De(\al^m)$. Note that $e_0 v_{\al^m}=v_{\al^m}$, so there is a homomorphism
$${\sf p}:P_0=q^{-lm(m-1)/2}R_\theta e_0 \longrightarrow \De(\al^m),\ xe_0\mapsto xv_{\al^m},$$
where $v_{\al^m}$ is the standard generator of $\De(\al^m)$, see (\ref{ESG}).

\begin{Theorem} \label{main theorem}
If $f$ and $g$ are chain maps then $P_\bullet=P_\bullet^{\al^m}$ is a projective resolution of $\De(\al^m)$, with the augmentation map ${\sf p}$.
\end{Theorem}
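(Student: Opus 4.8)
The plan is to realise $P_\bullet$ as a direct summand of the known resolution $Q_\bullet$ and then identify its $0$th homology.

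\medskip

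\emph{Reduction to a homological statement.} By Corollary~\ref{CSplit} we have $g\circ f=\id_{P_\bullet}$, so (since $f$ and $g$ are chain maps) $e:=f\circ g$ is an idempotent endomorphism of the complex $Q_\bullet$; hence $Q_\bullet\cong\operatorname{im}(e)\oplus\ker(e)$ as complexes of projective $R_\theta$-modules, with $f\colon P_\bullet\xrightarrow{\sim}\operatorname{im}(e)$ an isomorphism of complexes whose inverse is the restriction of $g$. By Lemmas~\ref{L070417} and \ref{circresolution}, $Q_\bullet=q^{m(m-1)/2}(P^\al_\bullet)^{\circ m}$ is a projective resolution of $q^{m(m-1)/2}\De(\al)^{\circ m}$; in particular $H_n(Q_\bullet)=0$ for $n>0$. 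As homology commutes with finite direct sums, $H_n(P_\bullet)$ is a direct summand of $H_n(Q_\bullet)$, so $H_n(P_\bullet)=0$ for $n>0$. Thus $P_\bullet$ is a projective resolution of $C:=H_0(P_\bullet)=\operatorname{coker}(d_0\colon P_1\to P_0)$, and it remains to show that $\sf p$ induces an isomorphism $C\xrightarrow{\sim}\De(\al^m)$.

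\medskip

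\emph{The augmentation descends.} Let $\operatorname{pr}\colon q^{m(m-1)/2}\De(\al)^{\circ m}\twoheadrightarrow\De(\al^m)=q^{m(m-1)/2}\De(\al)^{\circ m}e_m$ be the projection $v\mapsto ve_m$; this is a homomorphism of $R_\theta$-modules because the right $\NH_m$-action on $\De(\al)^{\circ m}$ commutes with the left $R_\theta$-action. Using $f_0^{0,\bzero}=\psi_g 1_{\bi_\al^m}=\psi_g e_\bzero$ together with the formula (\ref{EAugMap}) for $\sf q$ and the definition (\ref{ESG}) of $v_{\al^m}$, one checks that $\operatorname{pr}\circ{\sf q}\circ f_0={\sf p}$: indeed $\operatorname{pr}({\sf q}(f_0(xe_0)))=\operatorname{pr}(x\psi_gv_\al^{\circ m})=x\psi_gv_\al^{\circ m}e_m=xv_{\al^m}={\sf p}(xe_0)$. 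Since $f$ is a chain map, $f_0\circ d_0=c_0\circ f_1$, and ${\sf q}\circ c_0=0$, whence ${\sf p}\circ d_0=\operatorname{pr}\circ{\sf q}\circ c_0\circ f_1=0$. So $\sf p$ factors through a homomorphism $\bar{\sf p}\colon C\to\De(\al^m)$.

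\medskip

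\emph{Identification of $C$.} From the summand decomposition above, $H_0(f)\colon C\to H_0(Q_\bullet)$ is split injective (split by $H_0(g)$), so $C$ is isomorphic to a direct summand of $H_0(Q_\bullet)\cong q^{m(m-1)/2}\De(\al)^{\circ m}\cong q^{m(m-1)/2}[m]^!\,\De(\al^m)$, the last isomorphism by (\ref{deltaiso}). Now $\De(\al^m)$ is indecomposable (it has simple head $L(\al^m)$) and $q^{m(m-1)/2}[m]^!$ is a polynomial in $q^2$ with nonnegative coefficients and constant term $1$; hence, by Krull--Schmidt, every direct summand of $q^{m(m-1)/2}[m]^!\De(\al^m)$ is a direct sum of grading shifts $q^{2a}\De(\al^m)$ with $a\ge 0$. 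Since $C$ is a quotient of the cyclic module $P_0$ it is cyclic, hence indecomposable, so $C\cong q^{2a}\De(\al^m)$ for a single $a\ge 0$. On the other hand $\bar{\sf p}$ is surjective, because its image is $R_\theta\,{\sf p}(e_0)=R_\theta v_{\al^m}$ and $v_{\al^m}$ generates $\De(\al^m)$ (a direct check using $\De(\al^m)=\De(\al)^{\circ m}e_m$, the identity $\psi_{W_0}=\psi_{w(\bzero)^{-1}}\psi_g$ extracted from Lemma~\ref{FafterG}, and the defining relations of $R_\theta$, which together produce $v_\al^{\circ m}e_m\in R_\theta v_{\al^m}$; cf. the remark after (\ref{ESG})). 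A surjection $q^{2a}\De(\al^m)\twoheadrightarrow\De(\al^m)$ of finitely generated graded modules forces $a=0$ (compare, in a fixed weight space $1_{\bi}$, the lowest degree in which the formal characters are nonzero). Therefore $C\cong\De(\al^m)$, and $\bar{\sf p}$ is a surjection between finitely generated modules with equal formal characters; as every graded weight space of a finitely generated $R_\theta$-module is finite dimensional, $\bar{\sf p}$ must be an isomorphism. This proves the theorem.

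\medskip

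\emph{Main obstacle.} Granting that $f$ and $g$ are chain maps — the genuinely hard input, which is the content of Sections~\ref{SF} and \ref{SG} — the only nontrivial point left is the identification of $C$ with $\De(\al^m)$, i.e. pinning down the grading shift $a=0$. Concretely this requires either the direct verification that $v_{\al^m}$ generates $\De(\al^m)$, or, equivalently, the graded Euler-characteristic identity $\operatorname{ch}_q C=\sum_{n\ge 0}(-1)^n\sum_{\la\in\La(n)}q^{s_\la}\operatorname{ch}_q(R_\theta e_\la)=\operatorname{ch}_q\De(\al^m)$, which in turn rests on the formal characters of the divided-power projectives $R_\theta 1_{\bi^\la}$ and the cancellation combinatorics of $\La$ from \S\ref{SSComb}; this bookkeeping is where I would expect to spend the most care.
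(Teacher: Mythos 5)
Your reduction to identifying $H_0(P_\bullet)$ is sound and matches the paper: $e:=f\circ g$ is an idempotent endomorphism of $Q_\bullet$, $P_\bullet$ is a summand, hence exact in positive degrees, and your verification that $\operatorname{pr}\circ{\sf q}\circ f_0={\sf p}$ and that ${\sf p}$ kills $\im d_0$ is correct. The gap is in the identification of $C=H_0(P_\bullet)$. First, the inference ``$C$ is cyclic, hence indecomposable'' is false: a direct sum $L^{\oplus 2}$ of copies of a simple module with $\dim L\ge 2$ is cyclic, and more relevantly so are direct sums of shifted copies of $\De(\al^m)$ generated by a single homogeneous element. So even granting Krull--Schmidt you only obtain $C\cong\bigoplus_j q^{2a_j}\De(\al^m)$ with $a_j\ge 0$, and neither the surjectivity of $\bar{\sf p}$ nor the character comparison rules out more than one summand (a surjection $\bigoplus_j q^{2a_j}\De\twoheadrightarrow\De$ exists for any such sum with some $a_j=0$). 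Second, the structural inputs you invoke --- indecomposability of $\De(\al^m)$ via its simple head, and Krull--Schmidt --- are only available when $\k$ is a field, whereas the theorem is asserted over an arbitrary commutative ground ring (the case $\k=\Z$ being the one motivating the paper). Third, your surjectivity of $\bar{\sf p}$ leans on the claim that $v_{\al^m}$ generates $\De(\al^m)$, which you only sketch; the paper deliberately obtains this as a \emph{consequence} of the theorem rather than an input.

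The paper avoids all of this by computing the idempotent on $H_0(Q_\bullet)$ explicitly rather than structurally: by Lemma~\ref{FafterG}, $g_0^{\bzero,0}f_0^{0,\bzero}=e_\bzero Y_0\psi_{W_0}e_\bzero$, and by (\ref{E221018}) (which rests on Lemma~\ref{thickcrossindep}) the element $Y_0\psi_{W_0}$ acts on $v_\al^{\circ m}$ exactly as the nil-Hecke idempotent $e_m$, so ${\sf q}f_0g_0(Q_0)=q^{m(m-1)/2}\De(\al)^{\circ m}e_m=\De(\al^m)$ on the nose, and ${\sf q}\circ f_0$ sends $e_0$ to $v_{\al^m}$. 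To repair your argument you should replace the Krull--Schmidt step with this computation (or an equivalent one); as written, the identification of $C$ does not go through.
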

\begin{proof}
The modules $P_n$ are projective by construction. By Corollary~\ref{CSplit}, $P_\bullet$ is a complex, isomorphic to a direct summand of the complex $Q_\bullet$. Since $Q_\bullet$ is a resolution of $q^{m(m-1)/2}\De(\al)^{\circ m}$,
it follows from the assumptions that $P_\bullet$ is exact in degrees $>0$ and its $0$th cohomology is 
given by ${\sf q} f_0 g_0(Q_0)$ where ${\sf q}:Q_0\to q^{m(m-1)/2}\De(\al)^{\circ m}$ is the augmentation map (\ref{EAugMap}).

Recalling the $(R_\theta, \NH_m)$-bimodule structure of $q^{m(m-1)/2}\De(\al)^{\circ m}$ from \S\ref{SSStdMod} and the idempotent $e_m\in \NH_m$ defined in (\ref{DeltaIdem}), using Lemma \ref{thickcrossindep}, we have
\begin{equation}\label{E221018}
	Y_0 \psi_{W_0}v_\alpha^{\circ m} = Y_0 \psi_{z_{r_1}}\cdots \psi_{z_{r_N}} v_{\alpha}^{\circ m} = v_\alpha^{\circ m}e_m,
\end{equation}
where $w_{0,m}=s_{r_1}\cdots s_{r_N}$ is a reduced decomposition in $\Si_m$.
We now have:
\begin{align*}
	{\sf q} f_0 g_0(Q_0) &= {\sf q}(q^{m(m-1)/2}R_\theta e_\bzero Y_0\psi_{W_0} e_\bzero) \\
	&= q^{m(m-1)/2}R_\theta Y_0\psi_{W_0} v_\alpha^{\circ m} \\
	&= q^{m(m-1)/2}R_\theta v_\alpha^{\circ m} e_m \\
	&= q^{m(m-1)/2}\Delta(\alpha)^{\circ m}e_m \\
	&= \Delta(\alpha^m),
\end{align*}
where the first equality follows from Lemma \ref{FafterG}, the second from the definition of ${\sf q}$, the third from (\ref{E221018}), and the last two from the definitions.

It remains to note that ${\sf q}(f_0(e_0))=v_{\al^m}$. For,
note that
${\sf q}(f_0(e_0))=f_0v_\al^{\circ m}$, and ${\sf q}(f_0(e_0))\in
{\sf q}(f_0(P_0))={\sf q}(f_0(g_0(Q_0)))=\Delta(\alpha^m)$. So
$f_0v_\al^{\circ m}=f_0v_\al^{\circ m}e_m=
v_{\al^m}$.
\end{proof}

\section{Verification that $f$ is a chain map}
\label{SF}
We continue with the running assumptions of the previous section.
In addition, throughout the section we fix
$$
n\in\Z_{\geq 0},\quad \mu=(\mu_a,\dots,\mu_b)\in\La(n+1)\quad \text{and}\quad \bde=(\de^{(1)},\dots,\de^{(m)})\in \bLa(n).
$$

\subsection{Special reduced expressions}
Recall the notation of \S\ref{SSComb}.
Let $\la\in\La^{\al^m}$ and $\de\in\La^\al$ be such that $\bar\la:=\la-\de\in\La^{\al^{m-1}}$. For $i\in[a,b+1]$, we denote by $p_i=p_i(\de)\in[1,l]$ the position occupied by $i$ in $\bj^{\de}$,  and set
\begin{equation}\label{EQI}
q_i=q_i(\la,\de):=
\left\{
\begin{array}{ll}
l_i^-(\la) &\hbox{if $i\neq b+1$ and $\de_i=0$,}\\
l_i^+(\la) &\hbox{if $i\neq b+1$ and $\de_i=1$,}\\
l_{b+1}(\la) &\hbox{if $i= b+1$.}
\end{array}
\right.
\end{equation}
Let $$Q:=\{q_a,\dots,q_{b+1}\}\subseteq [1,d].$$
Note that $\{p_a,\dots,p_{b+1}\}=[1,l]$.
Define $x_\de^\la\in\Si_d$ to be the permutation which maps $p_i$ to $q_i$ for all $i\in[a,b+1]$, and maps the elements of $[l+1,d]$ increasingly to the elements of $[1,d]\setminus Q$. It is easy to see that $x_\de^\la$ is fully commutative, so $\psi_{x_\de^\la}$ is well defined, and $1_{\bj^\la}\psi_{x_\de^\la}=\psi_{x_\de^\la}1_{\bj^\de\bj^{\bar\la}}$.

Now let $\la=\la^\bde$, and $r\in[1,m]$. Define
$$
\bde^{\geq r}:=(\de^{(r)},\dots,\de^{(m)})\in(\La^\al)^{m-r+1},\quad
\la^{\geq r}:=\la^{\bde^{\geq r}}\in\La^{\al^{m-r+1}}.
$$
Define $x(\bde,1):=x_{\de^{(1)}}^\la\in\Si_d$. More generally, define for all $r=1,\dots,m-1$, the permutations $x(\bde,r)\in\Si_d$ so that $x(\bde,r)$ is the image of
$$(1_{\Si_{(r-1)l}},x_{\de^{(r)}}^{\la^{\geq r}})\in \Si_{(r-1)l}\times\Si_{(m-r+1)l}$$
under the natural embedding $\Si_{(r-1)l}\times\Si_{(m-r+1)l}\into \Si_d$.

Recall the element $w(\bde)\in\Si_d$ defined in \S\ref{SSComp}. The following lemma follows from definitions:

\begin{Lemma} \label{LWX} 
We have $w(\bde)=x(\bde,1)\cdots x(\bde,m-1)$ and $\ell(w(\bde))=\ell(x(\bde,1))+\dots +\ell(x(\bde,m-1))$.
\end{Lemma}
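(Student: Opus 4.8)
The plan is to prove Lemma~\ref{LWX} by induction on $m$, peeling the factor $x(\bde,1)=x_{\de^{(1)}}^\la$ off on the left at each step. When $m=1$ there is nothing to prove: the right-hand product is empty and $w(\bde)=1$ because $\bj^\bde=\bj^\la$. So let $m\geq 2$, put $\bde^{\geq 2}:=(\de^{(2)},\dots,\de^{(m)})\in(\La^\al)^{m-1}$, $\bar\la:=\la-\de^{(1)}=\la^{\geq 2}$, and let $\iota\colon\Si_{(m-1)l}\into\Si_d$ be the embedding onto the last $(m-1)l$ letters, so that $\iota(w)$ fixes $[1,l]$ pointwise. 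Straight from the definition of the $x(\bde,r)$ one has $x(\bde,r)=\iota\big(x(\bde^{\geq 2},r-1)\big)$ for $2\leq r\leq m-1$; hence, by the inductive hypothesis applied to $\bde^{\geq 2}$, the product $v:=x(\bde,2)\cdots x(\bde,m-1)$ equals $\iota(w(\bde^{\geq 2}))$ and $\ell(v)=\sum_{r=2}^{m-1}\ell(x(\bde,r))$. It then remains to prove (a) $w(\bde)=x_{\de^{(1)}}^\la\,v$ and (b) $\ell(w(\bde))=\ell(x_{\de^{(1)}}^\la)+\ell(v)$.

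For (a), I would first observe that, since $w(\bde^{\geq 2})\cdot\bj^{\bde^{\geq 2}}=\bj^{\bar\la}$, the permutation $v$ fixes the first $l$ letters (carrying $\bj^{\de^{(1)}}$) and sends $\bj^\bde=\bj^{\de^{(1)}}\bj^{\bde^{\geq 2}}$ to $\bj^{\de^{(1)}}\bj^{\bar\la}$, and then $x_{\de^{(1)}}^\la$ sends $\bj^{\de^{(1)}}\bj^{\bar\la}$ to $\bj^\la$ by the identity $1_{\bj^\la}\psi_{x_{\de^{(1)}}^\la}=\psi_{x_{\de^{(1)}}^\la}1_{\bj^{\de^{(1)}}\bj^{\bar\la}}$ recorded above. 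Thus $x_{\de^{(1)}}^\la v\cdot\bj^\bde=\bj^\la$; to identify $x_{\de^{(1)}}^\la v$ with $w(\bde)$ I would then check the characterizing increasing property of $w(\bde)$. Fixing $i\in[a,b]$, the set $U_i^-(\bde)$ is the disjoint union of the block-$1$ contribution (the single position $p_i(\de^{(1)})$, present exactly when $\de^{(1)}_i=0$) and $l+U_i^-(\bde^{\geq 2})$. Applying $v$ fixes the block-$1$ position and carries $l+U_i^-(\bde^{\geq 2})$ increasingly onto $l+U_i^-(\bar\la)$, i.e. onto the left-$i$ positions of $\bj^{\de^{(1)}}\bj^{\bar\la}$ coming from $\bj^{\bar\la}$; applying $x_{\de^{(1)}}^\la$ then sends $p_i(\de^{(1)})$ to $q_i(\la,\de^{(1)})=l_i^-(\la)$, the least element of $U_i^-(\la)$, and — being increasing on $[l+1,d]$ and mapping left-$i$ positions to left-$i$ positions — carries the rest increasingly onto the remaining elements of $U_i^-(\la)$. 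Hence $U_i^-(\bde)$ maps increasingly onto $U_i^-(\la)$; the arguments for $U_i^+(\bde)$ and $U_{b+1}(\bde)$ are identical, which yields $x_{\de^{(1)}}^\la v=w(\bde)$.

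For (b), I would use that $x_{\de^{(1)}}^\la$ maps $[l+1,d]$ increasingly, so every inversion of $x_{\de^{(1)}}^\la$ has its smaller index in $[1,l]$, while $v$ fixes $[1,l]$ pointwise. A direct count of the inversions of $x_{\de^{(1)}}^\la v$ (sorting the pairs according to whether $v$ inverts them) then shows that no inversions cancel, i.e. $\ell(x_{\de^{(1)}}^\la v)=\ell(x_{\de^{(1)}}^\la)+\ell(v)$; combined with (a) and the inductive value of $\ell(v)$ this gives the length statement.

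The main obstacle is the bookkeeping in (a): pinning down precisely which element of each $U$-set is sent where. This rests on the two structural features of $x_\de^\la$ — that it moves the first block's letters to the leftmost available slots $q_i$ and fills the remaining positions in increasing order — which together ensure that the relative order of the already-assembled letters is preserved each time a new block is merged in. Once this is in place, everything reduces to comparing finite sets of positions listed in increasing order, and the length additivity is a routine inversion count, so I expect the only genuinely delicate point to be setting up the increasing-order comparison cleanly.
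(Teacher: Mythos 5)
Your proof is correct. The paper gives no argument for this lemma (it is stated as "follows from definitions"), and your induction on $m$ — peeling off $x(\bde,1)=x_{\de^{(1)}}^{\la}$, checking the increasing characterization of $w(\bde)$ block by block, and getting length additivity from the fact that $x_{\de^{(1)}}^{\la}$ is increasing on $[l+1,d]$ while $v$ fixes $[1,l]$ — is exactly the intended unwinding of the definitions, with the bookkeeping carried out correctly.
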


In view of the lemma, when convenient, we will always choose reduced decompositions so that
\begin{equation}\label{WPsiW}
\psi_{w(\bde)}=\psi_{x(\bde,1)}\dots \psi_{x(\bde,m-1)}.
\end{equation}
By Lemma~\ref{LFIndependent}, we then have
\begin{equation}\label{CWX}
f_n^{\la,\bde}=\si_\bde e_\la\psi_{w_0^\la}\psi_{x(\bde,1)}\dots \psi_{x(\bde,m-1)}.
\end{equation}

\begin{Example} 
{\rm
Let $a=1$, $b=2$, $\la=(2,2)$, and $\bde=\big((1,0),(1,1),(0,1),(0,0)\big)$, so that $m=4$ and $n=4$. Then $f_n^{\la,\bde}$ is
$$
\begin{braid}\tikzset{baseline=0mm}
\draw(1,9) node[above]{\small $1$};
\draw(2,9) node[above]{\small $1$};
\draw(3,9) node[above]{\small $2$};
\draw(4,9) node[above]{\small $2$};
\draw(5,9) node[above]{\small $3$};
\draw(6,9) node[above]{\small $3$};
\draw(7,9) node[above]{\small $3$};
\draw(8,9) node[above]{\small $3$};
\draw(9,9) node[above]{\small $2$};
\draw(10,9) node[above]{\small $2$};
\draw(11,9) node[above]{\small $1$};
\draw(12,9) node[above]{\small $1$};
\draw [rounded corners,color=gray] (0.6,10.5)--(2.4,10.5)--(2.4,9.1)--(0.6,9.1)--cycle;
\draw [rounded corners,color=gray] (2.6,10.5)--(4.4,10.5)--(4.4,9.1)--(2.6,9.1)--cycle;
\draw [rounded corners,color=gray] (4.6,10.5)--(8.4,10.5)--(8.4,9.1)--(4.6,9.1)--cycle;
\draw [rounded corners,color=gray] (8.6,10.5)--(10.4,10.5)--(10.4,9.1)--(8.6,9.1)--cycle;
\draw [rounded corners,color=gray] (10.6,10.5)--(12.4,10.5)--(12.4,9.1)--(10.6,9.1)--cycle;
\draw(1,9.1)--(1,8.5);
\draw(2,9.1)--(2,8.5);
\draw(3,9.1)--(3,8.5);
\draw(4,9.1)--(4,8.5);
\draw(5,9.1)--(5,8.5);
\draw(6,9.1)--(6,8.5);
\draw(7,9.1)--(7,8.5);
\draw(8,9.1)--(8,8.5);
\draw(9,9.1)--(9,8.5);
\draw(10,9.1)--(10,8.5);
\draw(11,9.1)--(11,8.5);
\draw(12,9.1)--(12,8.5);
\draw [rounded corners,color=gray] (0.6,8.5)--(2.4,8.5)--(2.4,7.1)--(0.6,7.1)--cycle;
\draw [rounded corners,color=gray] (2.6,8.5)--(4.4,8.5)--(4.4,7.1)--(2.6,7.1)--cycle;
\draw [rounded corners,color=gray] (4.6,8.5)--(8.4,8.5)--(8.4,7.1)--(4.6,7.1)--cycle;
\draw [rounded corners,color=gray] (8.6,8.5)--(10.4,8.5)--(10.4,7.1)--(8.6,7.1)--cycle;
\draw [rounded corners,color=gray] (10.6,8.5)--(12.4,8.5)--(12.4,7.1)--(10.6,7.1)--cycle;
\draw(1.5,7) node[above]{\small $w_0$};
\draw(3.5,7) node[above]{\small $w_0$};
\draw(6.5,7) node[above]{\small $w_0$};
\draw(9.5,7) node[above]{\small $w_0$};
\draw(11.5,7) node[above]{\small $w_0$};
\draw(1,7.1)--(4,4)--(7,1)--(7,-2);
\draw(2,7.1)--(5,4)--(8,1)--(10,-2);
\draw(3,7.1)--(1,4)--(1,1)--(1,-2);
\draw(4,7.1)--(6,4)--(9,1)--(11,-2);
\draw(5,7.1)--(2,4)--(2,1)--(2,-2);
\draw(6,7.1)--(7,4)--(4,1)--(4,-2);
\draw(7,7.1)--(8,4)--(10,1)--(8,-2);
\draw(8,7.1)--(9,4)--(11,1)--(12,-2);
\draw(9,7.1)--(10,4)--(5,1)--(5,-2);
\draw(10,7.1)--(11,4)--(12,1)--(9,-2);
\draw(11,7.1)--(3,4)--(3,1)--(3,-2);
\draw(12,7.1)--(12,4)--(6,1)--(6,-2);
\draw(14,5.5) node{$\left.{
\begin{matrix}
    \\
 \\
 \\
 \\
 \\
\end{matrix}
}\right\}$};
\draw(14,2.5) node{$\left.{
\begin{matrix}
    \\
 \\
 \\
 \\
 \\
\end{matrix}
}\right\}$};
\draw(14,-0.5) node{$\left.{
\begin{matrix}
    \\
 \\
 \\
 \\
 \\
\end{matrix}
}\right\}$};
\draw(16.2,5.5) node{\small $\psi_{x(\bde,1)}$};
\draw(16.2,2.5) node{\small $\psi_{x(\bde,2)}$};
\draw(16.2,-0.5) node{\small $\psi_{x(\bde,3)}$};
\draw(1,-2.8) node{\small $2$};
\draw(2,-2.8) node{\small $3$};
\draw(3,-2.8) node{\small $1$};
\draw(4,-2.8) node{\small $3$};
\draw(5,-2.8) node{\small $2$};
\draw(6,-2.8) node{\small $1$};
\draw(7,-2.8) node{\small $1$};
\draw(8,-2.8) node{\small $3$};
\draw(9,-2.8) node{\small $2$};
\draw(10,-2.8) node{\small $1$};
\draw(11,-2.8) node{\small $2$};
\draw(12,-2.8) node{\small $3$};
\end{braid}
$$
}
\end{Example}

\subsection{A commutation lemma} \label{SSCommL}


It will be convenient to use the following notation. Let $\la\in\La(n)$. Consider the parabolic (non-unital) subalgebra
$$
R^\la:=R_{(m-\la_a)\al_a,\dots,(m-\la_b)\al_b,m\al_{b+1},\la_b\al_b,\dots,\la_a\al_a}\subseteq R_\theta.
$$
We have
$$
R^\la\cong R_{(m-\la_a)\al_a}\otimes \dots\otimes R_{(m-\la_b)\al_b}\otimes R_{m\al_{b+1}}\otimes R_{\la_b\al_b}\otimes \dots\otimes R_{\la_a\al_a}.
$$
The natural (unital) embeddings of the algebras $$R_{(m-\la_a)\al_a},\dots,R_{(m-\la_b)\al_b},R_{m\al_{b+1}},R_{\la_b\al_b},\dots,R_{\la_a\al_a}$$ into $R^\la$, allow us to consider them as (non-unital) subalgebras of $R_\theta$. We denote the corresponding (non-unital) algebra embeddings by
$$
\iota^\la_{a;-},\dots,\iota^\la_{b;-},\iota^\la_{b+1},\iota^\la_{b;+},\dots,\iota^\la_{a;+}.
$$
For example, setting
$$
\psi_{w_0}^\la(i;-):=\iota^\la_{i;-}(\psi_{w_{0,m-\la_i}}),\  \psi_{w_0}^\la(i;+):=\iota^\la_{i;+}(\psi_{w_{0,\la_i}}),\
\psi_{w_0}^\la(b+1):=\iota^\la_{b+1}(\psi_{w_{0,m}}),
$$
for all $i\in [a,b]$, can write $\psi_{w_0^\la}\in R_\theta$ as a commuting product
\begin{equation}\label{E130416}
\psi_{w_0^\la}=\psi_{w_0}^\la(a;-) \dots \psi_{w_0}^\la(b;-) \psi_{w_0}^\la(b+1) \psi_{w_0}^\la(b;+)\dots \psi_{w_0}^\la(a;+).
\end{equation}

\begin{Lemma} \label{L290316} 
Let $i\in[a,b]$ with $\mu_i>0$, and $\la:=\mu-\e_i$. Then
$$1_{\bj^\mu}\psi_{\la;i} \psi_{w_0^\la}=1_{\bj^\mu} \psi_{w_0^\mu}\psi_{l_i^+(\mu)\to l_i^-(\la)}.
$$
\end{Lemma}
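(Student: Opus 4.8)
The plan is to realize both sides as $1_{\bj^\mu}$ times one and the same element $\psi_W$ for a permutation $W\in\Si_d$, after reducing the computation to a monochromatic (nil-Hecke) situation in which $\psi$-elements are unambiguous.

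First I would record the underlying identity in $\Si_d$. With $\la=\mu-\e_i$, a direct check from the definitions of the $r_i^\pm$, $l_i^\pm$ and of $\om_\la,\om_\mu$ shows that the permutation underlying $\psi_{\la;i}$ times $w_0^\la$ equals $w_0^\mu$ times the cycle $(l_i^+(\mu)\to l_i^-(\la))$, and that both factorizations are length additive; the lengths balance because $\ell(w_0^\la)-\ell(w_0^\mu)=(m-\mu_i)-(\mu_i-1)$ is exactly the difference of the two cycle lengths (the bookkeeping $\binom{m-\mu_i+1}{2}+\binom{\mu_i-1}{2}$ versus $\binom{m-\mu_i}{2}+\binom{\mu_i}{2}$). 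Since $w_0^\la$ and $w_0^\mu$ are products of longest elements over the parabolic blocks of $\om_\la$ resp.\ $\om_\mu$, and all those blocks are monochromatic in $\bj^\mu$, the elements $1_{\bj^\mu}\psi_{w_0^\la}$ and $1_{\bj^\mu}\psi_{w_0^\mu}$ are independent of the chosen reduced words, and by (\ref{E130416}) the blocks lying outside the window $P:=[\,l_i^-(\la),\,r_i^+(\mu)\,]$ commute past both cycles. So the whole identity takes place inside the parabolic subalgebra on the positions $P$, and cancelling the common ``transit'' factor---the product of the longest elements on the blocks of colours $i{+}1,\dots,b,b{+}1,b,\dots,i{+}1$, which is literally the same for $\la$ and $\mu$---reduces the claim to a statement about the $m$ strands of colour $i$ together with the strand transported by the cycle.

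Second, I would slide each cycle through the transit blocks (via Lemma~\ref{LPull}) so that the two groups of $i$-coloured strands become adjacent; once they are contiguous the assertion lands inside a single monochromatic block, i.e.\ a copy of $\NH_m\cong R_{m\al_i}$, where it reads (with $p=m-\mu_i$)
\[
\psi_{m\to p+1}\,\psi_{w_{0,[1,p+1]}}\,\psi_{w_{0,[p+2,m]}}
=\psi_{w_{0,[1,p]}}\,\psi_{w_{0,[p+1,m]}}\,\psi_{p+1\to 1}.
\]
Both sides are length-additive products realizing the same $w\in\Si_m$, so they coincide because in $\NH_m$ every $\psi_w$ is independent of the reduced word (the KLR relations there degenerate to $\psi_r^2=0$, far commutation, and $\psi_{r+1}\psi_r\psi_{r+1}=\psi_r\psi_{r+1}\psi_r$); alternatively this last identity follows directly by absorbing the moved strand into the longest element exactly as in Lemma~\ref{LW0Y0}(\ref{LW0Y0ii}) and Lemma~\ref{LDivPower}(\ref{LDivPowerii}).

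I expect the main obstacle to be the sliding step of the second paragraph: commuting a cycle $\psi_{s\to r}$ past the longest element of a monochromatic block of an adjacent colour can in principle produce a correction term through the exceptional case ``$i_s=i_{t-1}=i_t\pm1$'' of Lemma~\ref{LPull}(i) (equivalently, through the cubic braid relation applied to a colour pattern $j,(j{\pm}1),j$). These corrections should never actually occur, for two structural reasons: in a single cycle each pair of strands crosses at most once, so no quadratic relation with $|i_r-i_{r+1}|\le1$ is ever invoked; and the colour word $\bj^\mu=a^{m-\mu_a}\cdots(b{+}1)^{m}\cdots a^{\mu_a}$ is unimodal, so no three of the strands in play ever carry colours $j,(j{\pm}1),j$. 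Making this precise---choosing the order in which the elementary transpositions are passed so that every intermediate word stays unimodal on $P$---is the one genuinely technical point; the rest is the length count of the first paragraph and the nil-Hecke identity of the second.
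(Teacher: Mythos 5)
Your proposal is correct and follows essentially the same route as the paper: decompose $\psi_{w_0^\la}$ into commuting monochromatic blocks as in (\ref{E130416}), pull the cycle through the blocks of colours $j\ne i$ via Lemma~\ref{LPull}(i), and settle the residual identity on the $i$-coloured strands by a length-additive reduced-word argument (the paper invokes the explicit cycle form of $\psi_{\la;i}$ where you invoke reduced-word independence in $\NH_m\cong R_{m\al_i}$). One small inaccuracy in your final paragraph: the transported $i$-strand together with two $(i{+}1)$-strands of a transit block \emph{does} realize the colour pattern $j,(j{\mp}1),j$ (take $j=i{+}1$), so your second structural reason is not literally true; what actually excludes correction terms is the precise exceptional condition $i_s=i_{t-1}=i_t\pm1$ of Lemma~\ref{LPull}(i), which fails here because the transported strand has colour $i$ whereas both strands of each crossing being pulled through have colour $j\ne i$.
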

\begin{proof}
We have
\begin{align*}
1_{\bj^\mu}\psi_{\la;i} \psi_{w_0^\la}&=
1_{\bj^\mu}\psi_{\la;i}1_{\bj^\la} \psi_{w_0^\la}
\\
&=1_{\bj^\mu}\psi_{\la;i} \psi_{w_0}^\la(b+1) \prod_{j\in [a,b]}\psi_{w_0}^\la(j;-)\psi_{w_0}^\la(j;+)
\\
&=1_{\bj^\mu}\psi_{w_0}^\mu(b+1) \Big[\prod_{j\in [a,b]\setminus\{i\}}\psi_{w_0}^\mu(j;-)\psi_{w_0}^\mu(j;+)\Big]
\psi_{\la;i} \psi_{w_0}^\la(i;-)\psi_{w_0}^\la(i;+)
\\
&=1_{\bj^\mu} \psi_{w_0}^\mu(b+1) \Big[\prod_{j\in [a,b]}\psi_{w_0}^\mu(j;-)\psi_{w_0}^\mu(j;+)\Big]\psi_{l_i^+(\mu)\to l_i^-(\la)}
\\
&=1_{\bj^\mu} \psi_{w_0^\mu}\psi_{l_i^+(\mu)\to l_i^-(\la)},
\end{align*}
where we have used (\ref{E130416}) for the second equality,  Lemma~\ref{LPull}(i) for the third equality, and the definition of $\psi_{\la,i}$ as an explicit cycle element for the fourth equality.
\end{proof}

\begin{Example}
We illustrate Lemma~\ref{L290316} diagramatically.  Let $a=1$, $b=2$, $m=4$ and $\mu = (2,3)$.  Then, Lemma~\ref{L290316} claims the following equality:
$$
\begin{braid}\tikzset{baseline=15mm}
\draw(-0.5,7.7) node[above]{\small $1_{\bj^\mu}\{$};
\draw(0.2,6.3) node{$\left.{
\begin{matrix}
\\
\\
\\
\\
\\
\\
\\
\end{matrix}
}\right\{$};
\draw(0.2,3) node{$\left.{
\begin{matrix}
    \\
 \\
 \\
\\
\\
\end{matrix}
}\right\{$};
\draw(-0.9,6.4) node{\small $\psi_{\la;1}$};
\draw(-0.9,3) node{\small $\psi_{w_0^\la}$};
\draw(1,8) node[above]{\small $1$};
\draw(2,8) node[above]{\small $1$};
\draw(3,8) node[above]{\small $2$};
\draw(4,8) node[above]{\small $3$};
\draw(5,8) node[above]{\small $3$};
\draw(6,8) node[above]{\small $3$};
\draw(7,8) node[above]{\small $3$};
\draw(8,8) node[above]{\small $2$};
\draw(9,8) node[above]{\small $2$};
\draw(10,8) node[above]{\small $2$};
\draw(11,8) node[above]{\small $1$};
\draw(12,8) node[above]{\small $1$};
\draw [rounded corners,color=gray] (0.6,4.5)--(3.4,4.5)--(3.4,3.1)--(0.6,3.1)--cycle;
\draw [rounded corners,color=gray] (4.6,4.5)--(8.4,4.5)--(8.4,3.1)--(4.6,3.1)--cycle;
\draw [rounded corners,color=gray] (8.6,4.5)--(11.4,4.5)--(11.4,3.1)--(8.6,3.1)--cycle;
\draw(1,8.1)--(1,4.5);
\draw(2,8.1)--(2,4.5);
\draw(3,8.1)--(4,4.5)--(4,2.5);
\draw(4,8.1)--(5,4.5);
\draw(5,8.1)--(6,4.5);
\draw(6,8.1)--(7,4.5);
\draw(7,8.1)--(8,4.5);
\draw(8,8.1)--(9,4.5);
\draw(9,8.1)--(10,4.5);
\draw(10,8.1)--(11,4.5);
\draw(11,8.1)--(12,4.5)--(12,2.5);
\draw(12,8.1)--(3,4.5);
\draw(1,3.1)--(1,2.5);
\draw(2,3.1)--(2,2.5);
\draw(3,3.1)--(3,2.5);
\draw(5,3.1)--(5,2.5);
\draw(6,3.1)--(6,2.5);
\draw(7,3.1)--(7,2.5);
\draw(8,3.1)--(8,2.5);
\draw(9,3.1)--(9,2.5);
\draw(10,3.1)--(10,2.5);
\draw(11,3.1)--(11,2.5);
\draw(2,3) node[above]{\small $w_0$};
\draw(6.5,3) node[above]{\small $w_0$};
\draw(6.5,3) node[above]{\small $w_0$};
\draw(10,3) node[above]{\small $w_0$};
\draw(1,1) node[above]{\small $1$};
\draw(2,1) node[above]{\small $1$};
\draw(3,1) node[above]{\small $1$};
\draw(4,1) node[above]{\small $2$};
\draw(5,1) node[above]{\small $3$};
\draw(6,1) node[above]{\small $3$};
\draw(7,1) node[above]{\small $3$};
\draw(8,1) node[above]{\small $3$};
\draw(9,1) node[above]{\small $2$};
\draw(10,1) node[above]{\small $2$};
\draw(11,1) node[above]{\small $2$};
\draw(12,1) node[above]{\small $1$};
\end{braid}
=
\begin{braid}\tikzset{baseline=15mm}
\draw(13.9,7.7) node[above]{\small $\} 1_{\bj^\mu}$};
\draw(13,7) node{$\left.{
\begin{matrix}
\\
\\
\\
\\
\end{matrix}
}\right\}$};
\draw(13,3.7) node{$\left.{
\begin{matrix}
\\
\\
\\
\\
\\
\\
\\
\\
\end{matrix}
}\right\}$};
\draw(14.5,7) node{\small $\psi_{w_0^\mu}$};
\draw(16.5,3.7) node{\small $\psi_{l_1^+(\mu) \to l_1^-(\la)}$};
\draw(1,8) node[above]{\small $1$};
\draw(2,8) node[above]{\small $1$};
\draw(3,8) node[above]{\small $2$};
\draw(4,8) node[above]{\small $3$};
\draw(5,8) node[above]{\small $3$};
\draw(6,8) node[above]{\small $3$};
\draw(7,8) node[above]{\small $3$};
\draw(8,8) node[above]{\small $2$};
\draw(9,8) node[above]{\small $2$};
\draw(10,8) node[above]{\small $2$};
\draw(11,8) node[above]{\small $1$};
\draw(12,8) node[above]{\small $1$};
\draw [rounded corners,color=gray] (0.6,7.5)--(2.4,7.5)--(2.4,6.1)--(0.6,6.1)--cycle;
\draw [rounded corners,color=gray] (3.6,7.5)--(7.4,7.5)--(7.4,6.1)--(3.6,6.1)--cycle;
\draw [rounded corners,color=gray] (7.6,7.5)--(10.4,7.5)--(10.4,6.1)--(7.6,6.1)--cycle;
\draw [rounded corners,color=gray] (10.6,7.5)--(12.4,7.5)--(12.4,6.1)--(10.6,6.1)--cycle;
\draw(1,8.1)--(1,7.5);
\draw(2,8.1)--(2,7.5);
\draw(3,8.1)--(3,6.1)--(4,2.5);
\draw(4,8.1)--(4,7.5);
\draw(5,8.1)--(5,7.5);
\draw(6,8.1)--(6,7.5);
\draw(7,8.1)--(7,7.5);
\draw(8,8.1)--(8,7.5);
\draw(9,8.1)--(9,7.5);
\draw(10,8.1)--(10,7.5);
\draw(11,8.1)--(11,7.5);
\draw(12,8.1)--(12,7.5);
\draw(1,6.1)--(2,2.5);
\draw(2,6.1)--(3,2.5);
\draw(4,6.1)--(5,2.5);
\draw(5,6.1)--(6,2.5);
\draw(6,6.1)--(7,2.5);
\draw(7,6.1)--(8,2.5);
\draw(8,6.1)--(9,2.5);
\draw(9,6.1)--(10,2.5);
\draw(10,6.1)--(11,2.5);
\draw(11,6.1)--(1,2.5);
\draw(12,6.1)--(12,2.5);
\draw(1.5,6) node[above]{\small $w_0$};
\draw(5.5,6) node[above]{\small $w_0$};
\draw(9,6) node[above]{\small $w_0$};
\draw(11.5,6) node[above]{\small $w_0$};
\draw(1,1) node[above]{\small $1$};
\draw(2,1) node[above]{\small $1$};
\draw(3,1) node[above]{\small $1$};
\draw(4,1) node[above]{\small $2$};
\draw(5,1) node[above]{\small $3$};
\draw(6,1) node[above]{\small $3$};
\draw(7,1) node[above]{\small $3$};
\draw(8,1) node[above]{\small $3$};
\draw(9,1) node[above]{\small $2$};
\draw(10,1) node[above]{\small $2$};
\draw(11,1) node[above]{\small $2$};
\draw(12,1) node[above]{\small $1$};
\end{braid}
$$
\end{Example}

\subsection{$f$ is a chain map}
Let $\la=\la^\bde$. For  $i\in[a,b+1]$, let
$q_i=q_i(\la,\de^{(1)})$ be defined as in (\ref{EQI}), and consider the cycle
$$
c_i=\left\{
\begin{array}{ll}
r_i^-(\mu)\to l_i^-(\mu) &\hbox{if $i\neq b+1$ and $\de^{(1)}_i=0$,}\\
r_i^+(\mu)\to l_i^+(\mu) &\hbox{if $i\neq b+1$ and $\de^{(1)}_i=1$,}\\
r_{b+1}(\mu)\to l_{b+1}(\mu) &\hbox{if $i= b+1$.}
\end{array}
\right.
$$
Let $c$ be the commuting product of cycles:
$$
c:=c_ac_{a+1}\dots c_{b+1}.
$$
Recall also the elements defined in (\ref{EQ123}).

\begin{Lemma} \label{L140416} 
Suppose $i\in [a,b]$ is such that $\mu_i>0$ and $\la:=\mu-\e_i=\la^\bde$. Set $\bar\theta=(m-1)\al$, $\bar\mu:=\mu-\de^{(1)}$, and $\bar\la:=\la-\de^{(1)}$. Then $1_{\bj^\mu}\psi_{\la;i}e_\la\psi_{w_0^\la}\psi_{x(\bde,1)}$ equals
$$
\left\{
\begin{array}{ll}
1_{\bj^\mu} \psi_{w_0^\mu}\psi_{x(\bde+\bolde_i^{1},1)}\psi_{\bde;1,i}
+1_{\bj^\mu}\psi_c \psi_{x_{\de^{(1)}}^\mu}
(1_\al\circ 1_{\bj^{\bar\mu}}\psi_{\bar\la;i}e_{\bar \la}\psi_{w_0^{\bar\la}})
 &\hbox{if $\de^{(1)}_i=0$,}\\
-1_{\bj^\mu}\psi_c \psi_{x_{\de^{(1)}}^\mu}
(1_\al\circ 1_{\bj^{\bar\mu}}\psi_{\bar\la;i}e_{\bar \la}\psi_{w_0^{\bar\la}}) &\hbox{if $\de^{(1)}_i=1$.}
\end{array}
\right.
$$
\end{Lemma}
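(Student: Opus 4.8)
The idea is to trace what happens to the diagram
$1_{\bj^\mu}\psi_{\la;i}e_\la\psi_{w_0^\la}\psi_{x(\bde,1)}$ as we push the cycle $\psi_{\la;i}$ past $e_\la\psi_{w_0^\la}$ and then past $\psi_{x(\bde,1)}$. The first step is to apply Lemma~\ref{L290316}, which gives
$1_{\bj^\mu}\psi_{\la;i}e_\la\psi_{w_0^\la}=1_{\bj^\mu}\psi_{w_0^\mu}\psi_{l_i^+(\mu)\to l_i^-(\la)}$ (after absorbing $e_\la$ into $\psi_{w_0^\la}$ using Lemma~\ref{LDivPower}, since $e_\la 1_{\bj^\la}=e_\la$ and the divided-power idempotent sits on top of $w_0^\la$). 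So the problem reduces to analyzing $1_{\bj^\mu}\psi_{w_0^\mu}\psi_{l_i^+(\mu)\to l_i^-(\la)}\psi_{x(\bde,1)}$, and everything now comes down to understanding the product of the two permutations $(l_i^+(\mu)\to l_i^-(\la))$ and $x(\bde,1)=x_{\de^{(1)}}^\la$ in $\Si_d$ and the effect of the intervening braid/quadratic relations.

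The second step is a careful combinatorial bookkeeping of the composite $x_{\de^{(1)}}^\la$ followed by (the preimage under it of) the cycle $l_i^+(\mu)\to l_i^-(\la)$. Recall $x_{\de^{(1)}}^\la$ moves the first $l$ strands (the letters of $\bj^{\de^{(1)}}$) into the positions $q_a,\dots,q_{b+1}$ and shifts the remaining $d-l$ strands (the letters of $\bj^{\bar\la}$, where $\bar\la=\la-\de^{(1)}$) increasingly into the complement. The key case split is exactly whether $\de^{(1)}_i=0$ or $1$, i.e.\ whether the strand of color $i$ coming from the first block lands in the ``$-$'' region $U_i^-(\la)$ or the ``$+$'' region $U_i^+(\la)$. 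When $\de_i^{(1)}=0$: the cycle $\psi_{\la;i}$ of $\bj^\mu$ corresponds, after passing through $\psi_{w_0^\mu}$, to moving a strand from position $l_i^+(\mu)$ down to $l_i^-(\la)$; pulling this cycle left through $\psi_{x_{\de^{(1)}}^\la}$ using Lemma~\ref{LPull}(i) repeatedly, it either (a) threads all the way past the first block, producing the first summand $1_{\bj^\mu}\psi_{w_0^\mu}\psi_{x(\bde+\bolde_i^1,1)}\psi_{\bde;1,i}$ — here the point is that incrementing $\bde$ in the $r=1$ component at position $i$ changes exactly which $q$-target the color-$i$ strand of the first block gets, and the leftover cycle on the last $m-1$ blocks is $\psi_{\bde;1,i}$ by its definition in (\ref{EQ123}) — or (b) hits an ``exceptional'' crossing where Lemma~\ref{LPull}(i) fails because $i_s=i_{t-1}=i_t\pm1$, at which point a quadratic relation $\psi^2 1_\bj=\eps(y-y)1_\bj$ (or a triple relation) fires and the surviving term rearranges into $1_{\bj^\mu}\psi_c\psi_{x_{\de^{(1)}}^\mu}(1_\al\circ 1_{\bj^{\bar\mu}}\psi_{\bar\la;i}e_{\bar\la}\psi_{w_0^{\bar\la}})$, i.e.\ the cycle gets ``absorbed'' into the first block (becoming the cycle $c$ and the reindexed $x_{\de^{(1)}}^\mu$) while the action on the remaining $m-1$ strands is the $\bar\theta$-version of the same expression $1_{\bj^{\bar\mu}}\psi_{\bar\la;i}e_{\bar\la}\psi_{w_0^{\bar\la}}$. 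When $\de^{(1)}_i=1$ the color-$i$ strand of the first block already sits in the ``$+$'' region, so case (a) cannot occur; only the absorbed term survives, and the sign is $-1$ because the crossing pattern is reversed relative to the $\de^{(1)}_i=0$ case — this sign is exactly what Lemma~\ref{LSigns} is designed to track.

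The third step is to pin down the signs and grading shifts so the two displayed formulas come out with the stated coefficients $+1$, $+1$ and $-1$. The degree consistency is automatic from Lemma~\ref{LDeg} (both sides of each identity are homogeneous of the correct degree), so the only real content is the sign of each term; here one compares the number of negative-degree (same-color) crossings introduced when the quadratic relation fires against the combinatorial sign conventions $\sgn_{\bde;1,i}$, and uses Lemma~\ref{LSigns} to reconcile $\sgn_{\la;i}\si_\bde$ with $\si_{\bde+\bolde_i^1}\sgn_{\bde;1,i}$. I expect \textbf{the main obstacle} to be case~(b): identifying precisely where and how Lemma~\ref{LPull}(i) breaks down (the single exceptional crossing $i_s=i_{t-1}=i_t\pm1$), checking that applying the quadratic relation there kills one of the two resulting terms (via $\psi_{w_{0,\,\cdot}}f\psi_{w_{0,\,\cdot}}=0$ for $f$ of small degree, Lemma~\ref{LW0Y0}(\ref{LW0Y0i}), exactly as in the proof of Lemma~\ref{LInd}), and bookkeeping that the surviving term really does factor through the horizontal concatenation $1_\al\circ(\text{everything on }\bar\theta)$ with the cycle $c$ and permutation $x_{\de^{(1)}}^\mu$ exactly as written. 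The $\de^{(1)}_i=1$ branch is then a simpler sub-case of the same analysis.
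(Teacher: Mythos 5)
Your plan follows the paper's proof essentially step for step: first Lemma~\ref{L290316} (together with $e_\la\psi_{w_0^\la}=1_{\bj^\la}\psi_{w_0^\la}$) reduces the left-hand side to $1_{\bj^\mu}\psi_{w_0^\mu}\psi_{l_i^+(\mu)\to l_i^-(\la)}\psi_{x(\bde,1)}$, and then one analyzes the resulting diagram by pulling the $(i+1)$-string $T$ (originating at $p_{i+1}$ in the bottom) to the left past the $(i,i)$-crossings; all the error terms except the last one die against the left factor $1_{\bj^\mu}\psi_{w_0^\mu}$, the fully threaded term gives $\psi_{x(\bde+\bolde_i^{1},1)}\psi_{\bde;1,i}$, and the surviving error term is rewritten as $1_{\bj^\mu}\psi_c\psi_{x_{\de^{(1)}}^\mu}(1_\al\circ\cdots)$ via Lemma~\ref{LPull}, exactly as in the paper. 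Note only that in the $\de^{(1)}_i=0$ case the two displayed terms arise together as a sum (threaded term plus last error term), not as alternatives, so your ``either (a) \dots or (b)'' phrasing should be read additively.

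One correction to your sign bookkeeping: the $-1$ in the $\de^{(1)}_i=1$ case is \emph{not} ``what Lemma~\ref{LSigns} is designed to track.'' It comes directly from the quadratic relation $\psi_r^2 1_\bi=\eps_{i_r,i_{r+1}}(y_r-y_{r+1})1_\bi$ applied to the double crossing of the $i$-string $S$ and the $(i+1)$-string $T$: this produces a difference of two dotted terms, the term with the dot on $T$ is killed by $1_{\bj^\mu}\psi_{w_0^\mu}$, and the survivor (dot on $S$) carries the minus sign. Lemma~\ref{LSigns} plays no role inside Lemma~\ref{L140416}; it is invoked only later, in Proposition~\ref{fchain}, to reconcile the global conventions $\sgn_{\la;i}\si_\bde$ with $\si_{\bde+\bolde_i^{1}}\sgn_{\bde;1,i}$. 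With that adjustment, your outline matches the paper's argument.
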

\begin{proof}
By Lemma~\ref{LW0Y0}(\ref{LW0Y0ii}), we have $e_\la\psi_{w_0^\la}=1_{\bj^\la}\psi_{w_0^\la}$. So using also Lemma~\ref{L290316}, we get
$$
1_{\bj^\mu}\psi_{\la;i}e_\la\psi_{w_0^\la}\psi_{x(\bde,1)}=
 1_{\bj^\mu}\psi_{\la;i}1_{\bj^\la}\psi_{w_0^\la}\psi_{x(\bde,1)}
=1_{\bj^\mu} \psi_{w_0^\mu}\psi_{l_i^+(\mu)\to l_i^-(\la)}\psi_{x(\bde,1)}.
$$

As usual, we denote by $p_j$ the position occupied by $j$ in $\de^{(1)}$ and $q_j$ be defined as in (\ref{EQI}).
By Lemma~\ref{LWX}, the permutation $x(\bde,1)$ maps $p_j$ to $q_j$ for all $j\in[a,b+1]$, and the elements of $[l+1,d]$ increasingly to the elements of $[1,d]\setminus Q$.

{\em Case 1: $\de^{(1)}_i=0$}. 
In this case we have $q_i=l_i^-(\la)$. So the KLR diagram $D$ of $1_{\bj^\mu}\psi_{l_i^+(\mu)\to l_i^-(\la)}\psi_{x(\bde,1)}$ has an $i$-string $S$ from the position $p_i$ in the bottom to the position $l_i^+(\mu)$ in the top, and the only $(i,i)$-crossings in $D$ will be the crossings of the string $S$ with the $m-\mu_i$ strings which originate in the positions $L_i^-(\mu)$ in the top.
The $(i+1)$-string $T$ in $D$ originating in position $p_{i+1}$ in the bottom is to the right of all $(i,i)$ crossings. Pulling $T$ to the left produces error terms, which arise from opening  $(i,i)$-crossings, but all of them, except the last one, amount to zero, when multiplied on the left by $1_{\bj^\mu} \psi_{w_0^\mu}$. The last error term is equal to $\psi_{x_{\de^{(1)}}^\mu}
\iota_{\al,\bar\theta}(1_\al\otimes \psi_{l_i^+(\bar\mu)\to l_i^-(\bar\la)}))$, and the result of pulling $T$ past all $(i,i)$-crossings gives $\psi_{x(\bde+\bolde_i^{1},1)}\psi_{\bde;1,i}$. Multiplying on the left by $1_{\bj^\mu} \psi_{w_0^\mu}$, gives
$$
1_{\bj^\mu} \psi_{w_0^\mu}\psi_{x(\bde+\bolde_i^{1},1)}\psi_{\bde;1,i}+1_{\bj^\mu} \psi_{w_0^\mu}\psi_{x_{\de^{(1)}}^\mu}
\iota_{\al,\bar\theta}(1_\al\otimes \psi_{l_i^+(\bar\mu)\to l_i^i(\bar\la)}),
$$
and it remains to observe using Lemma~\ref{LPull} that
$$
\psi_{w_0^\mu}\psi_{x_{\de^{(1)}}^\mu}
\iota_{\al,\bar\theta}(1_\al\otimes \psi_{l_i^+(\bar\mu)\to l_i^i(\bar\la)})=1_{\bj^\mu}\psi_c \psi_{x_{\de^{(1)}}^\mu}
(1_\al\circ 1_{\bj^{\bar\mu}}\psi_{\bar\la;i}e_{\bar \la}\psi_{w_0^{\bar\la}}).
$$

{\em Case 2: $\de^{(1)}_i=1$.}
Let  $D$ be the KLR diagram of $1_{\bj^\mu}\psi_{l_i^+(\mu)\to l_i^-(\la)}\psi_{x(\bde,1)}$.
Let $S$ be the $i$-string originating in the position $l^+_{i}(\mu)$ in the top row of $D$, and $T$ be the $(i+1)$-string originating in the position $p_{i+1}$ in the bottom row of $D$. The quadratic relation on these strings produces a difference of two terms, one with a dot on $S$ and the other with a dot on $T$. The term with a dot on $T$ equals $0$ after multiplying on the left by $1_{\bj^\mu} \psi_{w_0^\mu}$. The term with a dot on $S$, when multiplied on the left by $1_{\bj^\mu} \psi_{w_0^\mu}$, yields
$$
-1_{\bj^\mu} \psi_{w_0^\mu}\psi_{x_{\de^{(1)}}^\mu}
\iota_{\al,\bar\theta}(1_\al\otimes \psi_{l_i^+(\bar\mu)\to l_i^-(\bar\la)})=-1_{\bj^\mu}\psi_c \psi_{x_{\de^{(1)}}^\mu}
(1_\al\circ 1_{\bj^{\bar\mu}}\psi_{\bar\la;i}e_{\bar \la}\psi_{w_0^{\bar\la}}),
$$
where we have used Lemma~\ref{LPull} for the last equality.
\end{proof}

\begin{Example}
We illustrate Lemma~\ref{L140416} diagramatically.
Let $a=1$, $b=2$, $m=4$, $\mu=(2,3)$ and $\bde=((1,0),(0,1),(1,1),(0,1))$.

First, take $i=2$. Then $\de_1^{(2)}=0$ and Lemma~\ref{L140416} claims the following:
$$
\begin{braid}\tikzset{baseline=8mm}
\draw(1,8) node[above]{\small $1$};
\draw(2,8) node[above]{\small $1$};
\draw(3,8) node[above]{\small $2$};
\draw(4,8) node[above]{\small $3$};
\draw(5,8) node[above]{\small $3$};
\draw(6,8) node[above]{\small $3$};
\draw(7,8) node[above]{\small $3$};
\draw(8,8) node[above]{\small $2$};
\draw(9,8) node[above]{\small $2$};
\draw(10,8) node[above]{\small $2$};
\draw(11,8) node[above]{\small $1$};
\draw(12,8) node[above]{\small $1$};
\draw [rounded corners,color=gray] (0.6,4.5)--(2.4,4.5)--(2.4,3.1)--(0.6,3.1)--cycle;
\draw [rounded corners,color=gray] (2.6,4.5)--(4.4,4.5)--(4.4,3.1)--(2.6,3.1)--cycle;
\draw [rounded corners,color=gray] (4.6,4.5)--(8.4,4.5)--(8.4,3.1)--(4.6,3.1)--cycle;
\draw [rounded corners,color=gray] (8.6,4.5)--(10.4,4.5)--(10.4,3.1)--(8.6,3.1)--cycle;
\draw [rounded corners,color=gray] (10.6,4.5)--(12.4,4.5)--(12.4,3.1)--(10.6,3.1)--cycle;
\draw(1,8.1)--(1,4.5);
\draw(2,8.1)--(2,4.5);
\draw(3,8.1)--(3,4.5);
\draw(4,8.1)--(5,4.5);
\draw(5,8.1)--(6,4.5);
\draw(6,8.1)--(7,4.5);
\draw(7,8.1)--(8,4.5);
\draw(8,8.1)--(9,4.5);
\draw(9,8.1)--(10,4.5);
\draw(10,8.1)--(4,4.5);
\draw(11,8.1)--(11,4.5);
\draw(12,8.1)--(12,4.5);
\draw(1,3.1)--(1,2.5);
\draw(2,3.1)--(2,2.5);
\draw(3,3.1)--(3,2.5);
\draw(4,3.1)--(4,2.5);
\draw(5,3.1)--(5,2.5);
\draw(6,3.1)--(6,2.5);
\draw(7,3.1)--(7,2.5);
\draw(8,3.1)--(8,2.5);
\draw(9,3.1)--(9,2.5);
\draw(10,3.1)--(10,2.5);
\draw(11,3.1)--(11,2.5);
\draw(12,3.1)--(12,2.5);
\draw [rounded corners,color=gray] (0.6,2.5)--(2.4,2.5)--(2.4,1.1)--(0.6,1.1)--cycle;
\draw [rounded corners,color=gray] (2.6,2.5)--(4.4,2.5)--(4.4,1.1)--(2.6,1.1)--cycle;
\draw [rounded corners,color=gray] (4.6,2.5)--(8.4,2.5)--(8.4,1.1)--(4.6,1.1)--cycle;
\draw [rounded corners,color=gray] (8.6,2.5)--(10.4,2.5)--(10.4,1.1)--(8.6,1.1)--cycle;
\draw [rounded corners,color=gray] (10.6,2.5)--(12.4,2.5)--(12.4,1.1)--(10.6,1.1)--cycle;
\draw(1.5,1) node[above]{\small $w_0$};
\draw(3.5,1) node[above]{\small $w_0$};
\draw(6.5,1) node[above]{\small $w_0$};
\draw(9.5,1) node[above]{\small $w_0$};
\draw(11.5,1) node[above]{\small $w_0$};
\draw(1,3) node[above]{\small $1$};
\draw(2,3) node[above]{\small $1$};
\draw(3,3) node[above]{\small $2$};
\draw(4,3) node[above]{\small $2$};
\draw(5,3) node[above]{\small $3$};
\draw(6,3) node[above]{\small $3$};
\draw(7,3) node[above]{\small $3$};
\draw(8,3) node[above]{\small $3$};
\draw(9,3) node[above]{\small $2$};
\draw(10,3) node[above]{\small $2$};
\draw(11,3) node[above]{\small $1$};
\draw(12,3) node[above]{\small $1$};
\draw(1,1.1)--(4,-2);
\draw(2,1.1)--(5,-2);
\draw(3,1.1)--(1,-2);
\draw(4,1.1)--(6,-2);
\draw(5,1.1)--(2,-2);
\draw(6,1.1)--(7,-2);
\draw(7,1.1)--(8,-2);
\draw(8,1.1)--(9,-2);
\draw(9,1.1)--(10,-2);
\draw(10,1.1)--(11,-2);
\draw(11,1.1)--(3,-2);
\draw(12,1.1)--(12,-2);

\draw(1,-2.8) node{\small $2$};
\draw(2,-2.8) node{\small $3$};
\draw(3,-2.8) node{\small $1$};
\draw(4,-2.8) node{\small $1$};
\draw(5,-2.8) node{\small $1$};
\draw(6,-2.8) node{\small $2$};
\draw(7,-2.8) node{\small $3$};
\draw(8,-2.8) node{\small $3$};
\draw(9,-2.8) node{\small $3$};
\draw(10,-2.8) node{\small $2$};
\draw(11,-2.8) node{\small $2$};
\draw(12,-2.8) node{\small $1$};
\end{braid}
=
\begin{braid}\tikzset{baseline=8mm}
\draw(1,8) node[above]{\small $1$};
\draw(2,8) node[above]{\small $1$};
\draw(3,8) node[above]{\small $2$};
\draw(4,8) node[above]{\small $3$};
\draw(5,8) node[above]{\small $3$};
\draw(6,8) node[above]{\small $3$};
\draw(7,8) node[above]{\small $3$};
\draw(8,8) node[above]{\small $2$};
\draw(9,8) node[above]{\small $2$};
\draw(10,8) node[above]{\small $2$};
\draw(11,8) node[above]{\small $1$};
\draw(12,8) node[above]{\small $1$};
\draw(1,8.1)--(1,7.5);
\draw(2,8.1)--(2,7.5);
\draw(3,8.1)--(3,6)--(6,2.5);
\draw(4,8.1)--(4,7.5);
\draw(5,8.1)--(5,7.5);
\draw(6,8.1)--(6,7.5);
\draw(7,8.1)--(7,7.5);
\draw(8,8.1)--(8,7.5);
\draw(9,8.1)--(9,7.5);
\draw(10,8.1)--(10,7.5);
\draw(11,8.1)--(11,7.5);
\draw(12,8.1)--(12,7.5);
\draw(1,1) node[above]{\small $3$};
\draw(2,1) node[above]{\small $2$};
\draw(3,1) node[above]{\small $1$};
\draw(4,1) node[above]{\small $1$};
\draw(5,1) node[above]{\small $1$};
\draw(6,1) node[above]{\small $2$};
\draw(7,1) node[above]{\small $3$};
\draw(8,1) node[above]{\small $3$};
\draw(9,1) node[above]{\small $3$};
\draw(10,1) node[above]{\small $2$};
\draw(11,1) node[above]{\small $2$};
\draw(12,1) node[above]{\small $1$};
\draw(1,1.1)--(2,-2.0);
\draw(2,1.1)--(1,-2.0);
\draw(3,1.1)--(3,-2.0);
\draw(4,1.1)--(4,-2.0);
\draw(5,1.1)--(5,-2.0);
\draw(6,1.1)--(6,-2.0);
\draw(7,1.1)--(7,-2.0);
\draw(8,1.1)--(8,-2.0);
\draw(9,1.1)--(9,-2.0);
\draw(10,1.1)--(10,-2.0);
\draw(11,1.1)--(11,-2.0);
\draw(12,1.1)--(12,-2.0);
\draw(1,6.1)--(4,2.5);
\draw(2,6.1)--(5,2.5);
\draw(4,6.1)--(1,2.5);
\draw(5,6.1)--(7,2.5);
\draw(6,6.1)--(8,2.5);
\draw(7,6.1)--(9,2.5);
\draw(8,6.1)--(2,2.5);
\draw(9,6.1)--(10,2.5);
\draw(10,6.1)--(11,2.5);
\draw(11,6.1)--(3,2.5);
\draw(12,6.1)--(12,2.5);
\draw [rounded corners,color=gray] (0.6,7.5)--(2.4,7.5)--(2.4,6.1)--(0.6,6.1)--cycle;
\draw [rounded corners,color=gray] (3.6,7.5)--(7.4,7.5)--(7.4,6.1)--(3.6,6.1)--cycle;
\draw [rounded corners,color=gray] (7.6,7.5)--(10.4,7.5)--(10.4,6.1)--(7.6,6.1)--cycle;
\draw [rounded corners,color=gray] (10.6,7.5)--(12.4,7.5)--(12.4,6.1)--(10.6,6.1)--cycle;
\draw(1.5,6) node[above]{\small $w_0$};
\draw(5.5,6) node[above]{\small $w_0$};
\draw(9,6) node[above]{\small $w_0$};
\draw(11.5,6) node[above]{\small $w_0$};
%
\draw(1,-2.8) node{\small $2$};
\draw(2,-2.8) node{\small $3$};
\draw(3,-2.8) node{\small $1$};
\draw(4,-2.8) node{\small $1$};
\draw(5,-2.8) node{\small $1$};
\draw(6,-2.8) node{\small $2$};
\draw(7,-2.8) node{\small $3$};
\draw(8,-2.8) node{\small $3$};
\draw(9,-2.8) node{\small $3$};
\draw(10,-2.8) node{\small $2$};
\draw(11,-2.8) node{\small $2$};
\draw(12,-2.8) node{\small $1$};
\end{braid}
+
\begin{braid}\tikzset{baseline=8mm}
\draw(1,8) node[above]{\small $1$};
\draw(2,8) node[above]{\small $1$};
\draw(3,8) node[above]{\small $2$};
\draw(4,8) node[above]{\small $3$};
\draw(5,8) node[above]{\small $3$};
\draw(6,8) node[above]{\small $3$};
\draw(7,8) node[above]{\small $3$};
\draw(8,8) node[above]{\small $2$};
\draw(9,8) node[above]{\small $2$};
\draw(10,8) node[above]{\small $2$};
\draw(11,8) node[above]{\small $1$};
\draw(12,8) node[above]{\small $1$};
\draw(1,8.1)--(4,4.5);
\draw(2,8.1)--(5,4.5);
\draw(3,8.1)--(1,4.5);
\draw(4,8.1)--(6,4.5);
\draw(5,8.1)--(7,4.5);
\draw(6,8.1)--(8,4.5);
\draw(7,8.1)--(2,4.5);
\draw(8,8.1)--(9,4.5);
\draw(9,8.1)--(10,4.5);
\draw(10,8.1)--(11,4.5);
\draw(11,8.1)--(12,4.5);
\draw(12,8.1)--(3,4.5);
\draw(1,3) node[above]{\small $2$};
\draw(2,3) node[above]{\small $3$};
\draw(3,3) node[above]{\small $1$};
\draw(4,3) node[above]{\small $1$};
\draw(5,3) node[above]{\small $1$};
\draw(6,3) node[above]{\small $3$};
\draw(7,3) node[above]{\small $3$};
\draw(8,3) node[above]{\small $3$};
\draw(9,3) node[above]{\small $2$};
\draw(10,3) node[above]{\small $2$};
\draw(11,3) node[above]{\small $2$};
\draw(12,3) node[above]{\small $1$};
\draw(1,3.1)--(1,-1.9);
\draw(2,3.1)--(2,-1.9);
\draw(3,3.1)--(3,-1.9);
\draw(4,3.1)--(4,0.7);
\draw(5,3.1)--(5,0.7);
\draw(6,3.1)--(7,0.7);
\draw(7,3.1)--(8,0.7);
\draw(8,3.1)--(9,0.7);
\draw(9,3.1)--(10,0.7);
\draw(10,3.1)--(11,0.7);
\draw(11,3.1)--(6,0.7);
\draw(12,3.1)--(12,0.7);
\draw(4,-0.7)--(4,-1.9);
\draw(5,-0.7)--(5,-1.9);
\draw(6,-0.7)--(6,-3.1);
\draw(7,-0.7)--(7,-1.9);
\draw(8,-0.7)--(8,-1.9);
\draw(9,-0.7)--(9,-1.9);
\draw(10,-0.7)--(10,-1.9);
\draw(11,-0.7)--(11,-1.9);
\draw(12,-0.7)--(12,-3.1);
\draw [rounded corners,color=gray] (3.6,0.7)--(5.4,0.7)--(5.4,-0.7)--(3.6,-0.7)--cycle;
\draw [rounded corners,color=gray] (6.6,0.7)--(9.4,0.7)--(9.4,-0.7)--(6.6,-0.7)--cycle;
\draw [rounded corners,color=gray] (9.6,0.7)--(11.4,0.7)--(11.4,-0.7)--(9.6,-0.7)--cycle;
\draw [rounded corners,color=gray] (3.6,-1.9)--(5.4,-1.9)--(5.4,-3.3)--(3.6,-3.3)--cycle;
\draw [rounded corners,color=gray] (6.6,-1.9)--(9.4,-1.9)--(9.4,-3.3)--(6.6,-3.3)--cycle;
\draw [rounded corners,color=gray] (9.6,-1.9)--(11.4,-1.9)--(11.4,-3.3)--(9.6,-3.3)--cycle;
\draw(4.5,-2.6) node{\small $w_0$};
\draw(8,-2.6) node{\small $w_0$};
\draw(10.5,-2.6) node{\small $w_0$};
\draw(1,-2.8) node{\small $2$};
\draw(2,-2.8) node{\small $3$};
\draw(3,-2.8) node{\small $1$};
\draw(4,-0.8) node[above]{\small $1$};
\draw(5,-0.8) node[above]{\small $1$};
\draw(6,-0.8) node[above]{\small $2$};
\draw(7,-0.8) node[above]{\small $3$};
\draw(8,-0.8) node[above]{\small $3$};
\draw(9,-0.8) node[above]{\small $3$};
\draw(10,-0.8) node[above]{\small $2$};
\draw(11,-0.8) node[above]{\small $2$};
\draw(12,-0.8) node[above]{\small $1$};
\end{braid}
$$

On the other hand, if $i=1$, we have $\de_1^{(1)}=1$, and Lemma~\ref{L140416} posits the following equality.
$$
\begin{braid}\tikzset{baseline=8mm}
\draw(1,8) node[above]{\small $1$};
\draw(2,8) node[above]{\small $1$};
\draw(3,8) node[above]{\small $2$};
\draw(4,8) node[above]{\small $3$};
\draw(5,8) node[above]{\small $3$};
\draw(6,8) node[above]{\small $3$};
\draw(7,8) node[above]{\small $3$};
\draw(8,8) node[above]{\small $2$};
\draw(9,8) node[above]{\small $2$};
\draw(10,8) node[above]{\small $2$};
\draw(11,8) node[above]{\small $1$};
\draw(12,8) node[above]{\small $1$};
\draw [rounded corners,color=gray] (0.6,4.5)--(3.4,4.5)--(3.4,3.1)--(0.6,3.1)--cycle;
\draw [rounded corners,color=gray] (4.6,4.5)--(8.4,4.5)--(8.4,3.1)--(4.6,3.1)--cycle;
\draw [rounded corners,color=gray] (8.6,4.5)--(11.4,4.5)--(11.4,3.1)--(8.6,3.1)--cycle;
\draw(1,8.1)--(1,4.5);
\draw(2,8.1)--(2,4.5);
\draw(3,8.1)--(4,4.5);
\draw(4,8.1)--(5,4.5);
\draw(5,8.1)--(6,4.5);
\draw(6,8.1)--(7,4.5);
\draw(7,8.1)--(8,4.5);
\draw(8,8.1)--(9,4.5);
\draw(9,8.1)--(10,4.5);
\draw(10,8.1)--(11,4.5);
\draw(11,8.1)--(12,4.5);
\draw(12,8.1)--(3,4.5);
\draw(1,3.1)--(1,2.5);
\draw(2,3.1)--(2,2.5);
\draw(3,3.1)--(3,2.5);
\draw(4,3.1)--(4,1.1)--(1,-2);
\draw(5,3.1)--(5,2.5);
\draw(6,3.1)--(6,2.5);
\draw(7,3.1)--(7,2.5);
\draw(8,3.1)--(8,2.5);
\draw(9,3.1)--(9,2.5);
\draw(10,3.1)--(10,2.5);
\draw(11,3.1)--(11,2.5);
\draw(12,3.1)--(12,1.1)--(3,-2);
\draw [rounded corners,color=gray] (0.6,2.5)--(3.4,2.5)--(3.4,1.1)--(0.6,1.1)--cycle;
\draw [rounded corners,color=gray] (4.6,2.5)--(8.4,2.5)--(8.4,1.1)--(4.6,1.1)--cycle;
\draw [rounded corners,color=gray] (8.6,2.5)--(11.4,2.5)--(11.4,1.1)--(8.6,1.1)--cycle;
\draw(2,1) node[above]{\small $w_0$};
\draw(6.5,1) node[above]{\small $w_0$};
\draw(6.5,1) node[above]{\small $w_0$};
\draw(10,1) node[above]{\small $w_0$};
\draw(1,3) node[above]{\small $1$};
\draw(2,3) node[above]{\small $1$};
\draw(3,3) node[above]{\small $1$};
\draw(4,3) node[above]{\small $2$};
\draw(5,3) node[above]{\small $3$};
\draw(6,3) node[above]{\small $3$};
\draw(7,3) node[above]{\small $3$};
\draw(8,3) node[above]{\small $3$};
\draw(9,3) node[above]{\small $2$};
\draw(10,3) node[above]{\small $2$};
\draw(11,3) node[above]{\small $2$};
\draw(12,3) node[above]{\small $1$};
\draw(1,1.1)--(4,-2);
\draw(2,1.1)--(5,-2);
\draw(3,1.1)--(6,-2);
\draw(5,1.1)--(2,-2);
\draw(6,1.1)--(7,-2);
\draw(7,1.1)--(8,-2);
\draw(8,1.1)--(9,-2);
\draw(9,1.1)--(10,-2);
\draw(10,1.1)--(11,-2);
\draw(11,1.1)--(12,-2);

\draw(1,-2.8) node{\small $2$};
\draw(2,-2.8) node{\small $3$};
\draw(3,-2.8) node{\small $1$};
\draw(4,-2.8) node{\small $1$};
\draw(5,-2.8) node{\small $1$};
\draw(6,-2.8) node{\small $1$};
\draw(7,-2.8) node{\small $3$};
\draw(8,-2.8) node{\small $3$};
\draw(9,-2.8) node{\small $3$};
\draw(10,-2.8) node{\small $2$};
\draw(11,-2.8) node{\small $2$};
\draw(12,-2.8) node{\small $2$};
\end{braid}
=
\begin{braid}\tikzset{baseline=8mm}
\draw(1,8) node[above]{\small $1$};
\draw(2,8) node[above]{\small $1$};
\draw(3,8) node[above]{\small $2$};
\draw(4,8) node[above]{\small $3$};
\draw(5,8) node[above]{\small $3$};
\draw(6,8) node[above]{\small $3$};
\draw(7,8) node[above]{\small $3$};
\draw(8,8) node[above]{\small $2$};
\draw(9,8) node[above]{\small $2$};
\draw(10,8) node[above]{\small $2$};
\draw(11,8) node[above]{\small $1$};
\draw(12,8) node[above]{\small $1$};
\draw(1,3) node[above]{\small $2$};
\draw(2,3) node[above]{\small $3$};
\draw(3,3) node[above]{\small $1$};
\draw(4,3) node[above]{\small $1$};
\draw(5,3) node[above]{\small $1$};
\draw(6,3) node[above]{\small $3$};
\draw(7,3) node[above]{\small $3$};
\draw(8,3) node[above]{\small $3$};
\draw(9,3) node[above]{\small $2$};
\draw(10,3) node[above]{\small $2$};
\draw(11,3) node[above]{\small $2$};
\draw(12,3) node[above]{\small $1$};
\draw(1,3.1)--(1,-2);
\draw(2,3.1)--(2,-2);
\draw(3,3.1)--(3,-2);
\draw(4,3.1)--(4,-0.1);
\draw(5,3.1)--(5,-0.1);
\draw(6,3.1)--(7,-0.1);
\draw(7,3.1)--(8,-0.1);
\draw(8,3.1)--(9,-0.1);
\draw(9,3.1)--(10,-0.1);
\draw(10,3.1)--(11,-0.1);
\draw(11,3.1)--(12,-0.1);
\draw(12,3.1)--(6,-0.1);
\draw(4,-1.5)--(4,-1.7);
\draw(5,-1.5)--(5,-1.7);
\draw(6,-1.5)--(6,-1.7);
\draw(7,-1.5)--(7,-1.7);
\draw(8,-1.5)--(8,-1.7);
\draw(9,-1.5)--(9,-1.7);
\draw(10,-1.5)--(10,-1.7);
\draw(11,-1.5)--(11,-1.7);
\draw(12,-1.5)--(12,-1.7);
\draw(1,8.1)--(1,6.3)--(4,4.5);
\draw(2,8.1)--(2,6.3)--(5,4.5);
\draw(3,8.1)--(3,6.3)--(1,4.5);
\draw(4,8.1)--(5,6.3)--(6,4.5);
\draw(5,8.1)--(6,6.3)--(7,4.5);
\draw(6,8.1)--(7,6.3)--(8,4.5);
\draw(7,8.1)--(4,6.3)--(2,4.5);
\draw(8,8.1)--(8,6.3)--(9,4.5);
\draw(9,8.1)--(9,6.3)--(10,4.5);
\draw(10,8.1)--(10,6.3)--(11,4.5);
\draw(11,8.1)--(12,6.3)--(12,4.5);
\draw(12,8.1)--(11,6.3)--(3,4.5);
\draw [rounded corners,color=gray] (3.6,-0.1)--(6.4,-0.1)--(6.4,-1.5)--(3.6,-1.5)--cycle;
\draw [rounded corners,color=gray] (6.6,-0.1)--(9.4,-0.1)--(9.4,-1.5)--(6.6,-1.5)--cycle;
\draw [rounded corners,color=gray] (9.6,-0.1)--(12.4,-0.1)--(12.4,-1.5)--(9.6,-1.5)--cycle;
\draw [rounded corners,color=gray] (3.6,-1.7)--(6.4,-1.7)--(6.4,-3.1)--(3.6,-3.1)--cycle;
\draw [rounded corners,color=gray] (6.6,-1.7)--(9.4,-1.7)--(9.4,-3.1)--(6.6,-3.1)--cycle;
\draw [rounded corners,color=gray] (9.6,-1.7)--(12.4,-1.7)--(12.4,-3.1)--(9.6,-3.1)--cycle;
\draw(5,-3.3) node[above]{\small $w_0$};
\draw(8,-3.3) node[above]{\small $w_0$};
\draw(11,-3.3) node[above]{\small $w_0$};
\draw(0,2) node[above]{\small $-$};
%
\draw(1,-2.8) node{\small $2$};
\draw(2,-2.8) node{\small $3$};
\draw(3,-2.8) node{\small $1$};
\draw(4,-0.8) node{\small $1$};
\draw(5,-0.8) node{\small $1$};
\draw(6,-0.8) node{\small $1$};
\draw(7,-0.8) node{\small $3$};
\draw(8,-0.8) node{\small $3$};
\draw(9,-0.8) node{\small $3$};
\draw(10,-0.8) node{\small $2$};
\draw(11,-0.8) node{\small $2$};
\draw(12,-0.8) node{\small $2$};
\end{braid}
$$
\end{Example}

\begin{Corollary} \label{C160416} 
If $\mu_i>0$ for some $i\in[a,b]$ and $\la:=\mu-\e_i=\la^\bde$, then
\begin{equation}\label{E150416}
1_{\bj^\mu}\psi_{\la;i}e_\la \psi_{w_0^\la}\psi_{w(\bde)}
=\sum_{r\in[1,m]:\ \de_i^{(r)}=0} (-1)^{\sum_{s=1}^{r-1} \de_i^{(s)}}1_{\bj^\mu} \psi_{w_0^\mu}\psi_{w(\bde+\bolde_i^{r})}\psi_{\bde;r,i}.
\end{equation}
\end{Corollary}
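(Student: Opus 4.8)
The plan is to prove (\ref{E150416}) by induction on $m$, peeling the first component $\de^{(1)}$ off $\bde$ by means of Lemma~\ref{L140416} and treating the remaining $m-1$ components via the inductive hypothesis. Throughout we use the convention (\ref{WPsiW}), so $\psi_{w(\bde)}=\psi_{x(\bde,1)}\,\psi_{x(\bde,2)}\cdots\psi_{x(\bde,m-1)}$ and likewise for the $w(\bde+\bolde_i^r)$ appearing on the right of (\ref{E150416}). The base case $m=1$ is immediate: then $w(\bde)=1$, $\psi_{w_0^\la}=1$ and $e_\la=1_{\bj^\la}$, so the left side of (\ref{E150416}) equals $1_{\bj^\mu}\psi_{\la;i}$; moreover $\de_i^{(1)}=0$ necessarily (as $\mu_i=\la_i+1\le m=1$ forces $\la_i=0$), $\psi_{w_0^\mu}=1$, $w(\bde+\bolde_i^1)=1$, and $\psi_{\bde;1,i}=\psi_{\la;i}$ directly from the definition of $\psi_{\bde;1,i}$ in (\ref{EQ123}), so the right side equals the same thing.

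\emph{Inductive step.} Fix $m\ge 2$ and assume (\ref{E150416}) for $m-1$. Put $\bar\theta:=(m-1)\al$, $\bar\bde:=(\de^{(2)},\dots,\de^{(m)})\in(\La^\al)^{m-1}$, $\bar\mu:=\mu-\de^{(1)}$, $\bar\la:=\la-\de^{(1)}$; one checks $\bar\mu_i=\bar\la_i+1>0$ (using $\la_i\ge\de_i^{(1)}$) and $\bar\la=\bar\mu-\e_i=\la^{\bar\bde}$, so (\ref{E150416}) for $m-1$ applies to $(\bar\mu,\bar\bde,i)$. Factor $\psi_{w(\bde)}=\psi_{x(\bde,1)}\cdot\psi_{x(\bde,2)}\cdots\psi_{x(\bde,m-1)}$, apply Lemma~\ref{L140416} to $1_{\bj^\mu}\psi_{\la;i}e_\la\psi_{w_0^\la}\psi_{x(\bde,1)}$, and right-multiply by $\psi_{x(\bde,2)}\cdots\psi_{x(\bde,m-1)}$. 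In the ``head'' term (present exactly when $\de_i^{(1)}=0$), $\psi_{\bde;1,i}$ involves only $\psi_t$ with $t<l$ whereas each $\psi_{x(\bde,r)}$ with $r\ge 2$ involves only $\psi_t$ with $t>l$, so $\psi_{\bde;1,i}$ commutes past them; since $\bde+\bolde_i^1$ agrees with $\bde$ in components $\ge 2$ we have $x(\bde+\bolde_i^1,r)=x(\bde,r)$ there, and Lemma~\ref{LWX} applied to $\bde+\bolde_i^1$ then turns the head into $1_{\bj^\mu}\psi_{w_0^\mu}\psi_{w(\bde+\bolde_i^1)}\psi_{\bde;1,i}$, the $r=1$ summand of (\ref{E150416}). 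In the ``tail'' term, the block-embedding description of the $x(\bde,r)$ (together with Lemma~\ref{LWX} for $\bar\bde$ to match reduced expressions, and Lemma~\ref{LFIndependent} so that these choices are immaterial) identifies $\psi_{x(\bde,2)}\cdots\psi_{x(\bde,m-1)}$ with $1_\al\circ\psi_{w(\bar\bde)}$, so the tail becomes $\pm\,1_{\bj^\mu}\psi_c\,\psi_{x_{\de^{(1)}}^\mu}\bigl(1_\al\circ[\,1_{\bj^{\bar\mu}}\psi_{\bar\la;i}e_{\bar\la}\psi_{w_0^{\bar\la}}\psi_{w(\bar\bde)}\,]\bigr)$ — sign $+$ if $\de_i^{(1)}=0$ and $-$ if $\de_i^{(1)}=1$ — whose bracket is precisely the left side of (\ref{E150416}) for $(\bar\mu,\bar\bde,i)$.

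Applying the inductive hypothesis to that bracket, it remains, for each $r'\in[1,m-1]$ with $\de_i^{(r'+1)}=0$ and $r:=r'+1$, to prove
\[
1_{\bj^\mu}\psi_c\,\psi_{x_{\de^{(1)}}^\mu}\bigl(1_\al\circ[\,1_{\bj^{\bar\mu}}\psi_{w_0^{\bar\mu}}\psi_{w(\bar\bde+\bolde_i^{r'})}\psi_{\bar\bde;r',i}\,]\bigr)=1_{\bj^\mu}\psi_{w_0^\mu}\psi_{w(\bde+\bolde_i^r)}\psi_{\bde;r,i}
\]
(where $\bolde_i^{r'}$ now denotes the basis multicomposition of $(\La^\al)^{m-1}$ with $\e_i$ in its $r'$-th slot), and then to collect signs. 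For the identity one uses: $\la^{\bde+\bolde_i^r}=\mu$, whence $x(\bde+\bolde_i^r,1)=x_{\de^{(1)}}^\mu$ and, by Lemma~\ref{LWX} and the block embedding, $\psi_{w(\bde+\bolde_i^r)}=\psi_{x_{\de^{(1)}}^\mu}\,(1_\al\circ\psi_{w(\bar\bde+\bolde_i^{r'})})$; the identity $\psi_{\bde;r,i}=1_\al\circ\psi_{\bar\bde;r',i}$ from (\ref{EQ123}); and the ``$w_0$-absorption'' identity $1_{\bj^\mu}\psi_{w_0^\mu}\psi_{x_{\de^{(1)}}^\mu}(1_\al\circ Z)=1_{\bj^\mu}\psi_c\,\psi_{x_{\de^{(1)}}^\mu}(1_\al\circ[\psi_{w_0^{\bar\mu}}Z])$, applied with $Z=\psi_{w(\bar\bde+\bolde_i^{r'})}\psi_{\bar\bde;r',i}$. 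This last identity has exactly the shape of the final displayed equality in the proof of Lemma~\ref{L140416} and is proved the same way — by pulling the strings originating in the first block past the $w_0^\mu$-crossings using Lemma~\ref{LPull}, the colour pattern being such that the exceptional cases of that lemma do not occur. Finally the signs match: for $\de_i^{(1)}=0$ the tail is taken with $+$ and the inductive sign $(-1)^{\sum_{s=2}^{r-1}\de_i^{(s)}}$ equals $(-1)^{\sum_{s=1}^{r-1}\de_i^{(s)}}$, while the head supplies the $r=1$ summand with sign $(-1)^0$; for $\de_i^{(1)}=1$ the tail carries $-$, the value $r=1$ is absent on both sides, and $-(-1)^{\sum_{s=2}^{r-1}\de_i^{(s)}}=(-1)^{\sum_{s=1}^{r-1}\de_i^{(s)}}$. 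Summing over the relevant $r$ yields (\ref{E150416}).

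The heart of the argument is this final step — establishing the ``$w_0$-absorption'' identity in the generality needed and confirming that the permutations $x(\bde,r)$, $x(\bde+\bolde_i^r,r)$, $x_{\de^{(1)}}^\mu$ and the cycle product $c$ fit together as asserted; the sign bookkeeping, though fiddly, is routine given the explicit formulas of \S\ref{SSComb}.
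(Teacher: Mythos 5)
Your proposal is correct and follows essentially the same route as the paper's own proof: induction on $m$ with base case $m=1$, peeling off $\de^{(1)}$ via Lemma~\ref{L140416}, identifying the head term with the $r=1$ summand and the tail with the inductive hypothesis for $\bar\bde$, then commuting $\psi_c\psi_{x_{\de^{(1)}}^\mu}(1_\al\circ\psi_{w_0^{\bar\mu}}\cdots)$ back to $\psi_{w_0^\mu}\psi_{w(\bde+\bolde_i^r)}\psi_{\bde;r,i}$ using Lemma~\ref{LPull}, with the same sign bookkeeping (the paper treats $\de_i^{(1)}=1$ only by saying it is ``similar,'' whereas you spell it out).
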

\begin{proof}
The proof is by induction on $m$, the induction base $m=1$ being obvious.  Let $\bar\bde:=(\de^{(2)},\dots,\de^{(m)})$, $\bar\theta=(m-1)\al$, $\bar\mu:=\mu-\de^{(1)}$, and $\bar\la:=\la-\de^{(1)}$.
By~(\ref{WPsiW}), we have
\begin{equation}\label{E200416}
1_{\bj^\mu}\psi_{\la;i}e_\la \psi_{w_0^\la}\psi_{w(\bde)}=1_{\bj^\mu}\psi_{\la;i}e_\la\psi_{w_0^\la}\psi_{x(\bde,1)}\psi_{x(\bde,2)}\dots \psi_{x(\bde,m-1)}.
\end{equation}
Now we apply Lemma~\ref{L140416}. We consider the case $\de_i^{(1)}=0$, the case $\de_i^{(1)}=1$ being similar. Then we get the following expression for the right hand side of (\ref{E200416}):
$$
\big(1_{\bj^\mu} \psi_{w_0^\mu}\psi_{x(\bde+\bolde_i^{1},1)}\psi_{\bde;1,i}
+1_{\bj^\mu}\psi_c \psi_{x_{\de^{(1)}}^\mu}
(1_\al\circ 1_{\bj^{\bar\mu}}\psi_{\bar\la;i}e_{\bar \la}\psi_{w_0^{\bar\la}})\big)
\psi_{x(\bde,2)}\dots \psi_{x(\bde,m-1)}.
$$
Opening parentheses,  we get two summands $S_1+S_2$.
Note that
\begin{align*}
S_1&=1_{\bj^\mu} \psi_{w_0^\mu}\psi_{x(\bde+\bolde_i^{1},1)}\psi_{\bde;1,i}\psi_{x(\bde,2)}\dots \psi_{x(\bde,m-1)}
\\
&=  1_{\bj^\mu} \psi_{w_0^\mu}\psi_{x(\bde+\bolde_i^{1},1)}\psi_{x(\bde,2)}\dots \psi_{x(\bde,m-1)}\psi_{\bde;1,i}
\\
&=  1_{\bj^\mu} \psi_{w_0^\mu}\psi_{w(\bde+\bolde_i^{1})}\psi_{\bde;1,i}.
\end{align*}
Moreover, using the inductive assumption for the third equality below, we get that $S_2$ equals
\begin{align*}
&1_{\bj^\mu}\psi_c \psi_{x_{\de^{(1)}}^\mu}
(1_\al\circ 1_{\bj^{\bar\mu}}\psi_{\bar\la;i}e_{\bar \la}\psi_{w_0^{\bar\la}})\psi_{x(\bde,2)}\dots \psi_{x(\bde,m-1)}
\\
=& 1_{\bj^\mu}\psi_c \psi_{x_{\de^{(1)}}^\mu}
(1_\al\circ 1_{\bj^{\bar\mu}}\psi_{\bar\la;i}e_{\bar \la}\psi_{w_0^{\bar\la}}\psi_{x(\bar\bde,1)}\dots \psi_{x(\bar \bde,m-2)})
\\
=& 1_{\bj^\mu}\psi_c \psi_{x_{\de^{(1)}}^\mu}
(1_\al\circ 1_{\bj^{\bar\mu}}\psi_{\bar\la;i}e_{\bar \la}\psi_{w_0^{\bar\la}}\psi_{w(\bar\bde)})
\\
=& 1_{\bj^\mu}\psi_c \psi_{x_{\de^{(1)}}^\mu}
\big(1_\al\circ \sum_{r\in[2,m]:\ \de_i^{(r)}=0} (-1)^{\sum_{s=2}^{r-1} \de_i^{(s)}}1_{\bj^{\bar\mu}} \psi_{w_0^{\bar \mu}}\psi_{w(\bar \bde+\bolde_i^{r-1})}\psi_{\bar\bde;r-1,i}\big).
\end{align*}
As $\de_i^{(1)}=0$, we have $\sum_{s=2}^{r-1} \de_i^{(s)}=\sum_{s=1}^{r-1} \de_i^{(s)}$. So
\begin{align*}
&\iota_{\al,\bar\theta}\big(1_\al\otimes \sum_{r\in[2,m]:\ \de_i^{(r)}=0} (-1)^{\sum_{s=2}^{r-1} \de_i^{(s)}}1_{\bj^{\bar\mu}} \psi_{w_0^{\bar \mu}}\psi_{w(\bar \bde+\bolde_i^{r-1})}\psi_{\bar\bde;r-1,i}\big)
\\
=&\sum_{r\in[2,m]:\ \de_i^{(r)}=0} (-1)^{\sum_{s=1}^{r-1} \de_i^{(s)}}\iota_{\al,\bar\theta}\big(1_\al\otimes 1_{\bj^{\bar\mu}} \psi_{w_0^{\bar \mu}}\bigg(\prod_{t=1}^{m-2}\psi_{x(\bar \bde+\bolde_i^{r-1},t)}\bigg)\psi_{\bar\bde;r-1,i}\big)
\\
=&\sum_{r\in[2,m]:\ \de_i^{(r)}=0} (-1)^{\sum_{s=1}^{r-1} \de_i^{(s)}}\iota_{\al,\bar\theta}\big(1_\al\otimes 1_{\bj^{\bar\mu}} \psi_{w_0^{\bar \mu}}\big)\bigg(\prod_{t=2}^{m-1}\psi_{x(\bde+\bolde_i^{r},t)}\bigg)\psi_{\bde;r,i}.
\end{align*}
Moreover,
$$
\psi_{x_{\de^{(1)}}^\mu}\prod_{t=2}^{m-1}\psi_{x(\bde+\bolde_i^{r},t)}
=\psi_{x(\bde+\bolde_i^{r},1)}\prod_{t=2}^{m-1}\psi_{x(\bde+\bolde_i^{r},t)}=\psi_{w(\bde+\bolde_i^{r})}.
$$
So $S_2$ equals
\begin{align*}
& \sum_{r\in[2,m]:\ \de_i^{(r)}=0}  (-1)^{\sum_{s=1}^{r-1} \de_i^{(s)}} 1_{\bj^\mu}\psi_c \psi_{x_{\de^{(1)}}^\mu}
\iota_{\al,\bar\theta}(1_\al\otimes 1_{\bj^{\bar\mu}} \psi_{w_0^{\bar \mu}})\bigg(\prod_{t=2}^{m-1}\psi_{x(\bde+\bolde_i^{r},t)}\bigg)\psi_{\bde;r,i}
\\
=& \sum_{r\in[2,m]:\ \de_i^{(r)}=0} (-1)^{\sum_{s=1}^{r-1} \de_i^{(s)}} 1_{\bj^\mu}\psi_{w_0^\mu} \psi_{x_{\de^{(1)}}^\mu}
\bigg(\prod_{t=2}^{m-1}\psi_{x(\bde+\bolde_i^{r},t)}\bigg)\psi_{\bde;r,i}
\\
=& \sum_{r\in[2,m]:\ \de_i^{(r)}=0} (-1)^{\sum_{s=1}^{r-1} \de_i^{(s)}} 1_{\bj^\mu}\psi_{w_0^\mu} \psi_{w(\bde+\bolde_i^{r})}\psi_{\bde;r,i},
\end{align*}
where we have used Lemma~\ref{LPull}(i) to see the first equality.
Thus $S_1+S_2$ equals the right hand side of (\ref{E150416}).
\end{proof}

The following statement means that $f$ is chain map:

\begin{Proposition} \label{fchain}
Let $\mu\in\La(n+1)$ and $\bde\in\bLa(n)$. Then
$$
\sum_{\la\in\La(n)} d_n^{\mu,\la}f^{\la,\bde}_{n}=\sum_{\bga\in\bLa(n+1)} f^{\mu,\bga}_{n+1}c_n^{\bga,\bde}.
$$
\end{Proposition}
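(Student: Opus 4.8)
The plan is to recognize the asserted identity as the $(\mu,\bde)$-component of the matrix equation $D_nF_n=F_{n+1}C_n$ — equivalently, the statement that the square with vertical maps $f_n=\rho_{F_n}$, $f_{n+1}=\rho_{F_{n+1}}$ and horizontal maps $d_n=\rho_{D_n}$, $c_n=\rho_{C_n}$ commutes — and to verify this equality in $R_\theta$ by reducing it to Corollary~\ref{C160416} and Lemma~\ref{LSigns}. Throughout I use the reduced-decomposition convention \eqref{WPsiW}, which is legitimate by Lemma~\ref{LFIndependent}, so that Corollary~\ref{C160416} applies and the entries $f^{\la,\bde}_{n}$, $f^{\mu,\bga}_{n+1}$ are computed consistently.

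First I would isolate which terms can contribute. On the left, $f_n^{\la,\bde}\neq 0$ forces $\la=\la^\bde$, while $d_n^{\mu,\la}\neq 0$ forces $\mu=\la+\e_i$ for some $i\in[a,b]$; hence the left-hand side vanishes unless $\mu-\la^\bde=\e_i$ for a (then unique) $i\in[a,b]$, and in that case it equals the single summand $d_n^{\mu,\la^\bde}f_n^{\la^\bde,\bde}$. On the right, $c_n^{\bga,\bde}\neq 0$ forces $\bga=\bde+\bolde_i^{r}$ with $\de_i^{(r)}=0$, and $f_{n+1}^{\mu,\bga}\neq 0$ forces $\la^\bga=\mu$; since $\la^{\bde+\bolde_i^{r}}=\la^\bde+\e_i$, the right-hand side also vanishes unless $\mu-\la^\bde=\e_i$ for the same $i$, in which case it equals $\sum_{r:\,\de_i^{(r)}=0}f_{n+1}^{\mu,\bde+\bolde_i^{r}}c_n^{\bde+\bolde_i^{r},\bde}$. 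So both sides vanish outside the case $\mu-\la^\bde=\e_i$, and it remains to treat that case; set $\la:=\la^\bde$.

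For the left-hand side, $d_n^{\mu,\la}f_n^{\la,\bde}=\sgn_{\la;i}\si_\bde\,e_\mu\psi_{\la;i}e_\la\psi_{w_0^\la}\psi_{w(\bde)}e_\bde$. Writing $e_\mu=e_\mu 1_{\bj^\mu}$ and applying Corollary~\ref{C160416} to the factor $1_{\bj^\mu}\psi_{\la;i}e_\la\psi_{w_0^\la}\psi_{w(\bde)}$ gives
$$
\sum_{r:\,\de_i^{(r)}=0}\sgn_{\la;i}\si_\bde(-1)^{\sum_{s=1}^{r-1}\de_i^{(s)}}\,e_\mu\psi_{w_0^\mu}\psi_{w(\bde+\bolde_i^{r})}\,e_{\bde+\bolde_i^{r}}\,\psi_{\bde;r,i}\,e_\bde,
$$
where I re-absorbed $e_\mu 1_{\bj^\mu}=e_\mu$ and inserted the idempotent $e_{\bde+\bolde_i^{r}}=1_{\bj^{\bde+\bolde_i^{r}}}$ at the common weight $\bj^{\bde+\bolde_i^{r}}$ between $\psi_{w(\bde+\bolde_i^{r})}$ and $\psi_{\bde;r,i}$ (using $\psi_{w(\bga)}$ carries $\bj^\bga$ to $\bj^{\la^\bga}=\bj^\mu$, together with $1_{\bj^\mu}\psi_{w_0^\mu}=1_{\bj^\mu}\psi_{w_0^\mu}1_{\bj^\mu}$ since $\bj^\mu$ is constant on the blocks of $\om_\mu$, and $\psi_{\bde;r,i}1_{\bj^\bde}=1_{\bj^\bga}\psi_{\bde;r,i}$). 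For the right-hand side, for $\bga=\bde+\bolde_i^{r}$ with $\de_i^{(r)}=0$ the definitions of $f_{n+1}^{\mu,\bga}$ and $c_n^{\bga,\bde}$ give directly $f_{n+1}^{\mu,\bga}c_n^{\bga,\bde}=\si_\bga\sgn_{\bde;r,i}\,e_\mu\psi_{w_0^\mu}\psi_{w(\bga)}e_\bga\psi_{\bde;r,i}e_\bde$. Hence the two expressions agree summand by summand provided that for each $r$ with $\de_i^{(r)}=0$,
$$
\sgn_{\la;i}\si_\bde(-1)^{\sum_{s=1}^{r-1}\de_i^{(s)}}=\si_{\bde+\bolde_i^{r}}\sgn_{\bde;r,i},
$$
and this is exactly Lemma~\ref{LSigns} (with $\la=\la^\bde$ and $\bga=\bde+\bolde_i^{r}$).

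I do not expect a genuine obstacle in this proposition: the substantive diagrammatic work — pulling the $(i+1)$-strand past the divided-power $(i,i)$-crossings to produce the expansion \eqref{E150416} — has already been carried out in Corollary~\ref{C160416} (which in turn rests on Lemma~\ref{L140416}). The only points requiring care are the correct insertion and absorption of the idempotents $e_\mu$ and $e_{\bde+\bolde_i^{r}}$ around the $\psi$-factors, justified by the weight-compatibility relations just cited, and the sign matching, which is precisely Lemma~\ref{LSigns}.
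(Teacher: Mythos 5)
Your proposal is correct and follows essentially the same route as the paper's proof: reduce to the single nontrivial case $\mu=\la^\bde+\e_i$, expand the left-hand side via Corollary~\ref{C160416}, and match signs summand by summand using Lemma~\ref{LSigns}. The extra care you take with inserting and absorbing the idempotents $e_\mu$ and $e_{\bde+\bolde_i^{r}}$ is a minor elaboration of bookkeeping the paper leaves implicit, not a different argument.
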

\begin{proof}
By definition, $d_n^{\mu,\la}=0$ unless $\la=\mu-\e_i$ for some $i\in [a,b]$, and $f_n^{\la,\bde}=0$ unless $\la=\la^\bde$. On the other hand, $f_n^{\mu,\bga}=0$ unless $\mu=\la^\bga$,
and
$c_n^{\bga,\bde}=0$, unless $\bde=\bga-\bolde_i^{r}$ for some $i\in [a,b]$ and $r\in[1,m]$. So we may assume that $\mu=\la^{\bde+\bolde_i^{r}}$ for some $i\in [a,b]$ and $r\in[1,m]$ such that $\de_i^{(r)}=0$. In this case, letting $\la:=\la^{\bde}$, we have to prove
$$
d_n^{\mu,\la}f^{\la,\bde}_{n}=\sum_{r\in [1,m]:\ \de_i^{(r)}=0} f^{\mu,\bde+\bolde_i^{r}}_{n+1}c_n^{\bde+\bolde_i^{r},\bde}.
$$
By definition of the elements involved, this means
\begin{align*}
&(\sgn_{\la;i}e_\mu\psi_{\la;i}e_\la)
(\si_\bde e_\la\psi_{w_0^\la}\psi_{w(\bde)}e_\bde)
\\
=
&\sum_{r\in [1,m]:\ \de_i^{(r)}=0}
(\si_{\bde+\bolde_i^{r}} e_\mu\psi_{w_0^\mu}\psi_{w(\bde+\bolde_i^{r})}e_{\bde+\bolde_i^{r}})
(\sgn_{\bde;r,i}e_{\bde+\bolde_i^{r}}\psi_{\bde;r,i}e_\bde).
\end{align*}
Equivalently, we need to prove
\begin{align*}
\sgn_{\la;i}\si_\bde e_\mu\psi_{\la;i}e_\la
 \psi_{w_0^\la}\psi_{w(\bde)}
=
\sum_{r\in [1,m]:\ \de_i^{(r)}=0}
\si_{\bde+\bolde_i^{r}} \sgn_{\bde;r,i}
e_\mu\psi_{w_0^\mu}\psi_{w(\bde+\bolde_i^{r})}
\psi_{\bde;r,i},
\end{align*}
which, in view of Corollary~\ref{C160416}, is equivalent to the statement that
$$
\sgn_{\la^;i}\si_\bde=\si_{\bde+\bolde_i^{r}} \sgn_{\bde;r,i}(-1)^{\sum_{s=1}^{r-1} \de_i^{(s)}}
$$
for all $r\in [1,m]$ such that $\de_i^{(r)}=0$. But this is Lemma~\ref{LSigns}.
\end{proof}

\section{Verification that $g$ is a chain map}
\label{SG}
We continue with the running assumptions of Section~\ref{SDPR}.
In addition, throughout the section we fix $n\in\Z_{\geq 0}$, $\la\in\La(n)$ and $\bga=(\ga^{(1)},\dots,\ga^{(m)})\in \bLa(n+1)$.

\subsection{Special reduced expressions and a commutation lemma}\label{SSSpecialG}
Recall the notation of \S\ref{SSComb}.
Let $\mu\in\La^{\al^m}$ and $\ga\in\La^\al$ be such that $\bar\mu:=\mu-\ga\in\La^{\al^{m-1}}$. For $i\in[a,b+1]$, we denote
\begin{eqnarray}
p^i&:=&(m-1)l+(\text{the position occupied by $i$ in $\bj^{\gamma}$}),
\label{EQIG'}
\\
\label{EQIG}
q^i&:=&
\left\{
\begin{array}{ll}
r_i^-(\mu) &\hbox{if $i\neq b+1$ and $\ga^{(m)}_i=0$,}\\
r_i^+(\mu) &\hbox{if $i\neq b+1$ and $\ga^{(m)}_i=1$,}\\
r_{b+1}(\mu) &\hbox{if $i= b+1$.}
\end{array}
\right.
\end{eqnarray}
Let $Q:=\{q^a,\dots,q^{b+1}\}$.
Note that $\{p^a,\dots,p^{b+1}\}=(d-l,d]$.

Define $z_\mu^\ga\in\Si_d$ to be the permutation which maps $q^i$ to $p^i$ for all $i\in[a,b+1]$, and maps the elements of
$[1,d]\setminus Q$ increasingly to the elements of $[1,d-l]$. It is easy to see that $z_\mu^\ga$ is fully commutative, so $\psi_{z_\mu^\ga}$ is well defined, and $\psi_{z_\mu^\ga}1_{\bj^\mu}=1_{\bj^{\bar\mu}\bj^\ga}\psi_{z_\mu^\ga}$.

Now let $\mu:=\la^\bga$, and $r\in[1,m]$. Define
$$
\bga^{\leq r}:=(\ga^{(1)},\dots,\ga^{(r)})\in\bLa^{\al^{r}},\quad
\mu^{\leq r}:=\la^{\bga^{\leq r}}\in\La^{\al^{r}}.
$$
Define $z(\bga,m):=z_\mu^{\ga^{(m)}}\in\Si_d$. More generally, define for all $r=2,\dots,m$, the permutations
$$
z(\bga,r):=(x_{\mu^{\leq r}}^{\ga^{(r)}}, 1_{\Si_{(m-r)l}})\in \Si_{rl}\times \Si_{(m-r)l} \leq \Si_d.
$$

Recall the element $u(\bga)\in\Si_d$ defined in \S\ref{SSComp}. The following lemma follows from definitions:

\begin{Lemma} \label{LWXG} 
We have $u(\bga)=z(\bga,2)\cdots z(\bga,m)$ and $\ell(u(\bga))=\ell(z(\bga,1))+\dots +\ell(z(\bga,m))$.
\end{Lemma}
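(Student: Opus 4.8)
The plan is to prove Lemma~\ref{LWXG} by direct comparison of the permutations involved, exactly mirroring the proof of the analogous statement Lemma~\ref{LWX} for the $f$-side, but with the factorization proceeding from right to left rather than left to right. First I would unwind the definition of $u(\bga)$ from \S\ref{SSComp}: it is the element of $\Si_d$ which, for each $i\in[a,b]$, maps the elements of $U^\pm_i(\la^\bga)$ increasingly to the elements of $U^\pm_i(\bga)$, and the elements of $U_{b+1}(\la^\bga)$ increasingly to those of $U_{b+1}(\bga)$. Since $\bga=(\ga^{(1)},\dots,\ga^{(m)})$ and the sets $U^{(\pm)}_i(\bga)$, $U_{b+1}(\bga)$ decompose as disjoint unions over the blocks $[l(r-1)+1,lr]$ for $r\in[1,m]$ according to the component words $\bj^{\ga^{(r)}}$, the target word $\bj^\bga=\bj^{\ga^{(1)}}\cdots\bj^{\ga^{(m)}}$ is built block by block.

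Next I would peel off the last block: by definition $z(\bga,m)=z_\mu^{\ga^{(m)}}$ is precisely the permutation that moves the letters of the final $l$ positions into place (mapping $q^i\mapsto p^i$ and filling the remaining $d-l$ positions order-preservingly into $[1,d-l]$), so after applying $z(\bga,m)$ the rightmost $l$ entries already agree with $\bj^{\ga^{(m)}}$, and the first $d-l$ entries record the word $\bj^{\bga^{\leq m-1}}$ in the shape dictated by $\mu^{\leq m-1}=\la^{\bga^{\leq m-1}}$. The permutations $z(\bga,r)$ for $r<m$ act only on the first $rl$ letters (they are $(x_{\mu^{\leq r}}^{\ga^{(r)}},1)$ under $\Si_{rl}\times\Si_{(m-r)l}\hookrightarrow\Si_d$), so by an evident induction on $m$ the product $z(\bga,2)\cdots z(\bga,m)$ carries $\bj^\mu$ to $\bj^\bga$ and realizes exactly the increasing-on-each-$U$ requirement characterizing $u(\bga)$; hence $u(\bga)=z(\bga,2)\cdots z(\bga,m)$. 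For the length additivity I would observe that each $z(\bga,r)$ acts on a disjoint "layer" of positions in a way that does not create any cancellable inversion with the layers below — more precisely, $z(\bga,r)$ permutes $[1,rl]$ while fixing the letters already placed in $(rl,d]$, and the fully commutative form of the $x_\mu^\ga$'s shows no crossing is ever undone — so lengths add: $\ell(u(\bga))=\sum_{r=2}^m \ell(z(\bga,r))$, which one may also state as $\ell(z(\bga,1))+\dots+\ell(z(\bga,m))$ with the convention $\ell(z(\bga,1))=0$.

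The main obstacle, as usual in this circle of lemmas, is the bookkeeping with the index sets $U^\pm_i$, $U_{b+1}$ and the fact that $z_\mu^\ga$ is defined only once one knows $\bar\mu=\mu-\ga\in\La^{\al^{m-1}}$; I would need to check at each inductive step that the truncated data $(\mu^{\leq r},\ga^{(r)})$ indeed satisfy this compatibility, i.e. that $\mu^{\leq r}-\ga^{(r)}=\mu^{\leq r-1}$ lies in $\La^{\al^{r-1}}$, which is immediate from $\mu^{\leq r}=\mu^{\leq r-1}+\ga^{(r)}$. Since the statement is asserted in the paper to "follow from definitions," I expect the entire argument to be a careful but routine verification parallel to Lemma~\ref{LWX}; no genuinely new idea is required beyond organizing the layered action of the $z(\bga,r)$ and invoking the description of $u(\bga)$ via the increasing maps on the $U$-sets.
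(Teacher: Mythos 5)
Your proposal is correct and matches the paper's intent exactly: the paper gives no argument beyond the assertion that the lemma ``follows from definitions,'' and your block-by-block peeling of $z(\bga,m)=z_\mu^{\ga^{(m)}}$ followed by induction on the layers $z(\bga,r)$ acting on $[1,rl]$ is precisely the routine verification being alluded to (parallel to Lemma~\ref{LWX}). Your convention $\ell(z(\bga,1))=0$ also correctly resolves the paper's slip of including the undefined $z(\bga,1)$ in the length formula.
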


In view of the lemma, when convenient, we will always choose reduced decompositions so that
\begin{equation}\label{WPsiWG}
\psi_{u(\bga)}=\psi_{z(\bga,2)}\dots \psi_{z(\bga,m)}.
\end{equation}

In view of Lemma~\ref{LFIndependent}, we have
\begin{equation}\label{CWX2}
g_{n+1}^{\bga,\mu}= \tau_\bga \psi_{z(\bga,2)}\dots \psi_{z(\bga,m)}y^\mu e_\mu.
\end{equation}

\begin{Example} 
{\rm
Let $a=1$, $b=2$, $\mu=(2,2)$, and $\bga=\big((1,0),(1,1),(0,1),(0,0)\big)$, so that $m=4$ and $n=4$. Then $g_n^{\la,\bga}$ is
$$
\begin{braid}\tikzset{baseline=0mm}
\draw(1,9) node[above]{\small $2$};
\draw(2,9) node[above]{\small $3$};
\draw(3,9) node[above]{\small $1$};
\draw(4,9) node[above]{\small $3$};
\draw(5,9) node[above]{\small $2$};
\draw(6,9) node[above]{\small $1$};
\draw(7,9) node[above]{\small $1$};
\draw(8,9) node[above]{\small $3$};
\draw(9,9) node[above]{\small $2$};
\draw(10,9) node[above]{\small $1$};
\draw(11,9) node[above]{\small $2$};
\draw(12,9) node[above]{\small $3$};
\draw [rounded corners,color=gray] (0.6,-2.7)--(2.4,-2.7)--(2.4,-4.1)--(0.6,-4.1)--cycle;
\draw [rounded corners,color=gray] (2.6,-2.7)--(4.4,-2.7)--(4.4,-4.1)--(2.6,-4.1)--cycle;
\draw [rounded corners,color=gray] (4.6,-2.7)--(8.4,-2.7)--(8.4,-4.1)--(4.6,-4.1)--cycle;
\draw [rounded corners,color=gray] (8.6,-2.7)--(10.4,-2.7)--(10.4,-4.1)--(8.6,-4.1)--cycle;
\draw [rounded corners,color=gray] (10.6,-2.7)--(12.4,-2.7)--(12.4,-4.1)--(10.6,-4.1)--cycle;
\draw [rounded corners,color=gray] (4.6,-1.1)--(8.4,-1.1)--(8.4,-2.3)--(4.6,-2.3)--cycle;

\draw(6.5,-2.6) node[above]{\small $y_0$};
\draw(1,9.1)--(1,5.6)--(2,2.6)--(3,-0.4)--(3,-2.7);
\draw(2,9.1)--(2,5.6)--(3,2.6)--(5,-0.4)--(5,-1.1);
\draw(3,9.1)--(5,5.6)--(8,2.6)--(11,-0.4)--(11,-2.7);
\draw(4,9.1)--(3,5.6)--(4,2.6)--(6,-0.4)--(6,-1.1);
\draw(5,9.1)--(4,5.6)--(6,2.6)--(9,-0.4)--(9,-2.7);
\draw(6,9.1)--(6,5.6)--(9,2.6)--(12,-0.4)--(12,-2.7);
\draw(7,9.1)--(7,5.6)--(1,2.6)--(1,-0.4)--(1,-2.7);
\draw(8,9.1)--(8,5.6)--(5,2.6)--(7,-0.4)--(7,-1.1);
\draw(9,9.1)--(9,5.6)--(7,2.6)--(10,-0.4)--(10,-2.7);
\draw(10,9.1)--(10,5.6)--(10,2.6)--(2,-0.4)--(2,-2.7);
\draw(11,9.1)--(11,5.6)--(11,2.6)--(4,-0.4)--(4,-2.7);
\draw(12,9.1)--(12,5.6)--(12,2.6)--(8,-0.4)--(8,-1.1);
\draw(5,-2.3)--(5,-2.7);
\draw(6,-2.3)--(6,-2.7);
\draw(7,-2.3)--(7,-2.7);
\draw(8,-2.3)--(8,-2.7);
\draw(14,7.35) node{$\left.{
\begin{matrix}
    \\
 \\
 \\
 \\
 \\
 \\
\end{matrix}
}\right\}$};
\draw(14,4.05) node{$\left.{
\begin{matrix}
    \\
 \\
 \\
 \\
 \\
\end{matrix}
}\right\}$};
\draw(14,1.0) node{$\left.{
\begin{matrix}
    \\
 \\
 \\
 \\
 \\
 \\
\end{matrix}
}\right\}$};
\draw(16.2,7.35) node{\small $\psi_{z(\bga,2)}$};
\draw(16.2,4.05) node{\small $\psi_{z(\bga,3)}$};
\draw(16.2,1.0) node{\small $\psi_{z(\bga,4)}$};
\draw(1,-3.4) node{\small $1$};
\draw(2,-3.4) node{\small $1$};
\draw(3,-3.4) node{\small $2$};
\draw(4,-3.4) node{\small $2$};
\draw(5,-3.4) node{\small $3$};
\draw(6,-3.4) node{\small $3$};
\draw(7,-3.4) node{\small $3$};
\draw(8,-3.4) node{\small $3$};
\draw(9,-3.4) node{\small $2$};
\draw(10,-3.4) node{\small $2$};
\draw(11,-3.4) node{\small $1$};
\draw(12,-3.4) node{\small $1$};
\end{braid}
$$
}
\end{Example}

\begin{Lemma} \label{L290316G} 
Let $i\in[a,b]$ with $\la_i<m$, and $\mu:=\la+\e_i$. Then
$$y^\mu e_\mu\psi_{\la;i} e_\la=\psi_{\la;i} y^\la e_\la.
$$
\end{Lemma}
\begin{proof}
Recall the notation of \S\ref{SSCommL}, we have that $y^\mu e_\mu\psi_{\la;i} e_\la$ equals
\begin{align*}
&
\Big[\prod_{j\in[a,b]} \iota^\mu_{j,-}(1_{j^{(m-\mu_j)}})
\iota^\mu_{j,+}(1_{j^{(\mu_j)}})\Big]
\iota_{b+1}^\mu(y_{0,m}1_{(b+1)^{(m)}})
\psi_{\la;i} e_\la
\\
=& \iota^\mu_{i,-}(1_{i^{(m-\mu_i)}})
\iota^\mu_{i,+}(1_{i^{(\mu_i)}})
\Big[\prod_{j\in[a,b]\setminus\{i\}} \iota^\mu_{j,-}(1_{j^{(m-\mu_j)}})
\iota^\mu_{j,+}(1_{j^{(\mu_j)}})\Big]
\\
&\times \iota_{b+1}^\mu(y_{0,m}1_{(b+1)^{(m)}})
\psi_{\la;i} e_\la
\\
=& \iota^\mu_{i,-}(1_{i^{(m-\mu_i)}})
\iota^\mu_{i,+}(1_{i^{(\mu_i)}})
\psi_{\la;i}
\Big[\prod_{j\in[a,b]\setminus\{i\}} \iota^\la_{j,-}(1_{j^{(m-\la_j)}})
\iota^\la_{j,+}(1_{j^{(\la_j)}})\Big]
\\
&\times \iota_{b+1}^\la(y_{0,m})\iota_{b+1}^\la(1_{(b+1)^{(m)}})
 e_\la
 \\
=& \iota^\mu_{i,-}(1_{i^{(m-\mu_i)}})
\iota^\mu_{j,+}(1_{j^{(\mu_j)}})
\psi_{\la;i}
y^\la e_\la=\psi_{\la;i} y^\la e_\la,
\end{align*}
where we have used Lemma~\ref{LPull} for the second equality and Lemma~\ref{LDivPower} for the last equality.
\end{proof}

\begin{Example}
We illustrate Lemma~\ref{L290316G} diagramatically.  Let $a=1$, $b=2$, $m=4$, $\mu=(2,3)$ and $\la=(1,3)$ so that $i=1$ using the notation of Lemma~\ref{L290316G}.  Then, the lemma claims the following equality.
$$
\begin{braid}\tikzset{baseline=0mm}
\draw(-0.5,2.8) node{$\left.{
\begin{matrix}
\\
\\
\\
\\
\\
\\
\end{matrix}
}\right\{$};
\draw(-0.5,-1.3) node{$\left.{
\begin{matrix}
\\
\\
\\
\\
\\
\\
\\
\\
\end{matrix}
}\right\{$};
\draw(-2,2.8) node{\small $y^\mu e_\mu$};
\draw(-2.2,-1.3) node{\small $\psi_{\la;1}e_\la$};
\draw(1,1) node[above]{\small $1$};
\draw(2,1) node[above]{\small $1$};
\draw(3,1) node[above]{\small $2$};
\draw(4,1) node[above]{\small $3$};
\draw(5,1) node[above]{\small $3$};
\draw(6,1) node[above]{\small $3$};
\draw(7,1) node[above]{\small $3$};
\draw(8,1) node[above]{\small $2$};
\draw(9,1) node[above]{\small $2$};
\draw(10,1) node[above]{\small $2$};
\draw(11,1) node[above]{\small $1$};
\draw(12,1) node[above]{\small $1$};
\draw [rounded corners,color=gray] (0.6,2.5)--(2.4,2.5)--(2.4,1.1)--(0.6,1.1)--cycle;
\draw [rounded corners,color=gray] (3.6,4.5)--(7.4,4.5)--(7.4,3.1)--(3.6,3.1)--cycle;
\draw [rounded corners,color=gray] (3.6,2.5)--(7.4,2.5)--(7.4,1.1)--(3.6,1.1)--cycle;
\draw [rounded corners,color=gray] (7.6,2.5)--(10.4,2.5)--(10.4,1.1)--(7.6,1.1)--cycle;
\draw [rounded corners,color=gray] (10.6,2.5)--(12.4,2.5)--(12.4,1.1)--(10.6,1.1)--cycle;
\draw [rounded corners,color=gray] (0.6,-2.5)--(3.4,-2.5)--(3.4,-3.9)--(0.6,-3.9)--cycle;
\draw [rounded corners,color=gray] (4.6,-2.5)--(8.4,-2.5)--(8.4,-3.9)--(4.6,-3.9)--cycle;
\draw [rounded corners,color=gray] (8.6,-2.5)--(11.4,-2.5)--(11.4,-3.9)--(8.6,-3.9)--cycle;
\draw(1,1.1)--(1,-2.5);
\draw(2,1.1)--(2,-2.5);
\draw(3,1.1)--(4,-2.5);
\draw(4,1.1)--(5,-2.5);
\draw(5,1.1)--(6,-2.5);
\draw(6,1.1)--(7,-2.5);
\draw(7,1.1)--(8,-2.5);
\draw(8,1.1)--(9,-2.5);
\draw(9,1.1)--(10,-2.5);
\draw(10,1.1)--(11,-2.5);
\draw(11,1.1)--(12,-2.5);
\draw(12,1.1)--(3,-2.5);
\draw(1,4.5)--(1,2.5);
\draw(2,4.5)--(2,2.5);
\draw(3,4.5)--(3,2.5);
\draw(4,3.1)--(4,2.5);
\draw(5,3.1)--(5,2.5);
\draw(6,3.1)--(6,2.5);
\draw(7,3.1)--(7,2.5);
\draw(8,4.5)--(8,2.5);
\draw(9,4.5)--(9,2.5);
\draw(10,4.5)--(10,2.5);
\draw(11,4.5)--(11,2.5);
\draw(12,4.5)--(12,2.5);
\draw(5.5,3) node[above]{\small $y_0$};
\draw(1,-4) node[above]{\small $1$};
\draw(2,-4) node[above]{\small $1$};
\draw(3,-4) node[above]{\small $1$};
\draw(4,-4) node[above]{\small $2$};
\draw(5,-4) node[above]{\small $3$};
\draw(6,-4) node[above]{\small $3$};
\draw(7,-4) node[above]{\small $3$};
\draw(8,-4) node[above]{\small $3$};
\draw(9,-4) node[above]{\small $2$};
\draw(10,-4) node[above]{\small $2$};
\draw(11,-4) node[above]{\small $2$};
\draw(12,-4) node[above]{\small $1$};
\end{braid}
=
\begin{braid}\tikzset{baseline=0mm}
\draw(13.5,1.7) node{$\left.{
\begin{matrix}
\\
\\
\\
\\
\\
\\
\\
\\
\end{matrix}
}\right\}$};
\draw(13.5,-2) node{$\left.{
\begin{matrix}
\\
\\
\\
\\
\\
\end{matrix}
}\right\}$};
\draw(15.2,1.7) node{\small $\psi_{\la;1}$};
\draw(15.2,-2) node{\small $y^\la e_\la$};
\draw(1,3) node[above]{\small $1$};
\draw(2,3) node[above]{\small $1$};
\draw(3,3) node[above]{\small $2$};
\draw(4,3) node[above]{\small $3$};
\draw(5,3) node[above]{\small $3$};
\draw(6,3) node[above]{\small $3$};
\draw(7,3) node[above]{\small $3$};
\draw(8,3) node[above]{\small $2$};
\draw(9,3) node[above]{\small $2$};
\draw(10,3) node[above]{\small $2$};
\draw(11,3) node[above]{\small $1$};
\draw(12,3) node[above]{\small $1$};
\draw [rounded corners,color=gray] (4.6,-0.5)--(8.4,-0.5)--(8.4,-1.9)--(4.6,-1.9)--cycle;
\draw [rounded corners,color=gray] (0.6,-2.5)--(3.4,-2.5)--(3.4,-3.9)--(0.6,-3.9)--cycle;
\draw [rounded corners,color=gray] (4.6,-2.5)--(8.4,-2.5)--(8.4,-3.9)--(4.6,-3.9)--cycle;
\draw [rounded corners,color=gray] (8.6,-2.5)--(11.4,-2.5)--(11.4,-3.9)--(8.6,-3.9)--cycle;
\draw(5,-1.9)--(5,-2.5);
\draw(6,-1.9)--(6,-2.5);
\draw(7,-1.9)--(7,-2.5);
\draw(8,-1.9)--(8,-2.5);
%
\draw(1,3.1)--(1,-0.5)--(1,-2.5);
\draw(2,3.1)--(2,-0.5)--(2,-2.5);
\draw(3,3.1)--(4,-0.5)--(4,-2.5);
\draw(4,3.1)--(5,-0.5);
\draw(5,3.1)--(6,-0.5);
\draw(6,3.1)--(7,-0.5);
\draw(7,3.1)--(8,-0.5);
\draw(8,3.1)--(9,-0.5)--(9,-2.5);
\draw(9,3.1)--(10,-0.5)--(10,-2.5);
\draw(10,3.1)--(11,-0.5)--(11,-2.5);
\draw(11,3.1)--(12,-0.5)--(12,-2.5);
\draw(12,3.1)--(3,-0.5)--(3,-2.5);
\draw(6.5,-2) node[above]{\small $y_0$};
\draw(1,-4) node[above]{\small $1$};
\draw(2,-4) node[above]{\small $1$};
\draw(3,-4) node[above]{\small $1$};
\draw(4,-4) node[above]{\small $2$};
\draw(5,-4) node[above]{\small $3$};
\draw(6,-4) node[above]{\small $3$};
\draw(7,-4) node[above]{\small $3$};
\draw(8,-4) node[above]{\small $3$};
\draw(9,-4) node[above]{\small $2$};
\draw(10,-4) node[above]{\small $2$};
\draw(11,-4) node[above]{\small $2$};
\draw(12,-4) node[above]{\small $1$};
\end{braid}
$$
\end{Example}


\subsection{$g$ is a chain map}
Recall that we have fixed  $\la\in\La(n)$ and $\bga\in \bLa(n+1)$.

\begin{Lemma} \label{L140416G} 
Suppose that $i\in [a,b]$ is such that $\la_i<m$ and $\mu:=\la+\e_i=\la^\bga$. Set $\bar\theta=(m-1)\al$, $\bar\mu:=\mu-\ga^{(m)}$, and $\bar\la:=\la-\ga^{(m)}$. Then $\psi_{z(\bga,m)}y^\mu e_\mu\psi_{\la;i}e_\la$ equals
$$
\left\{
\begin{array}{ll}
\begin{array}{ll}
\psi_{\bga-\bolde_i^{m};m,i}\psi_{z(\bga-\bolde_i^{m},m)}y^\la e_\la
\\
-(y^{\bar\mu}e_{\bar\mu}\psi_{\bar\la;i}e_{\bar\la} \circ 1_\al)\psi_{z^{\ga^{(m)}}_\la}y_{r_{b+1}(\la)}^{m-1}e_\la
\end{array}
 &\hbox{if $\ga^{(m)}_i=1$,}\\
(y^{\bar\mu}e_{\bar\mu}\psi_{\bar\la;i}e_{\bar\la} \circ 1_\al)\psi_{z^{\ga^{(m)}}_\la}y_{r_{b+1}(\la)}^{m-1}e_\la &\hbox{if $\ga^{(m)}_i=0$.}
\end{array}
\right.
$$
\end{Lemma}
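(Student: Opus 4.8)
The plan is to follow the template of the proof of Lemma~\ref{L140416}, transposed: the divided power idempotent $e_\la=1_{\bi^\la}$ takes over the role played there by the external factor $\psi_{w_0^\mu}$, in that the $\psi_{w_0}$-boxes hidden inside the $1_{j^{(k)}}$ factors of $e_\la$ (together with the $y_{0,m}$-box of $y^\la$ sitting over its $(b+1)^{(m)}$ block) will annihilate the unwanted error terms by Lemma~\ref{LW0Y0}. The first step is to move the hook off the dot-box using Lemma~\ref{L290316G}, which gives $\psi_{z(\bga,m)}y^\mu e_\mu\psi_{\la;i}e_\la=\psi_{z(\bga,m)}\psi_{\la;i}y^\la e_\la$, so the whole question reduces to analysing the product $\psi_{z(\bga,m)}\psi_{\la;i}$ of the cycle $\psi_{\la;i}=\psi_{r_i^+(\mu)\to r_i^-(\la)}$ with the fully commutative element $z(\bga,m)=z_\mu^{\ga^{(m)}}$, where $\mu=\la^\bga=\la+\e_i$. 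The key geometric observation is that $z_\mu^{\ga^{(m)}}$ extracts exactly one $i$-string, the one at the position $q^i$ of $\bj^\mu$ from \S\ref{SSSpecialG}, and that $q^i=r_i^-(\mu)$ lies in the \emph{left} $i$-block when $\ga^{(m)}_i=0$, whereas $q^i=r_i^+(\mu)$ is precisely the top endpoint of the hook $\psi_{\la;i}$ when $\ga^{(m)}_i=1$; moreover $z_\mu^{\ga^{(m)}}$ always carries off the rightmost $(b+1)$-string, which runs straight down through $\psi_{\la;i}$ to position $r_{b+1}(\la)$ and so carries the factor $y_m^{m-1}=y_{r_{b+1}(\la)}^{m-1}$ coming from $y_{0,m}$ inside $y^\la$. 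This already accounts for the shape of the statement: the dotted recursive term is the ``peel off the last $\al$-block'' contribution and occurs in both cases (using $y_{0,m}=y_{0,m-1}\,y_m^{m-1}$, so that $y_{0,m-1}$ is absorbed into $y^{\bar\mu}$ and $y_m^{m-1}$ survives separately), while the undotted main term $\psi_{\bga-\bolde_i^{m};m,i}\,\psi_{z(\bga-\bolde_i^{m},m)}\,y^\la e_\la$ can only occur when $z_\mu^{\ga^{(m)}}$ carries off the hook's own $i$-string, that is, when $\ga^{(m)}_i=1$.

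In the case $\ga^{(m)}_i=1$ I would slide the hook's $i$-string upward through $z_\mu^{\ga^{(m)}}$ towards the last $\al$-block; the product $\psi_{z(\bga,m)}\psi_{\la;i}$ fails to be reduced at one crossing of this string with an adjacent $(i\pm1)$-string, and applying there the quadratic relation $\psi_r^2 1_\bi=\eps_{i_r,i_{r+1}}(y_r-y_{r+1})1_\bi$, together with moving the string past the remaining crossings, splits the expression into two parts: the fully rearranged undotted summand, which reorganizes via Lemma~\ref{LPull} and Lemma~\ref{LDivPower} into $\psi_{\bga-\bolde_i^{m};m,i}\,\psi_{z(\bga-\bolde_i^{m},m)}\,y^\la e_\la$ (recall $\psi_{\bga-\bolde_i^{m};m,i}=1^{\circ(m-1)}\circ\psi_{(\ga^{(m)}-\e_i);i}$ and $z(\bga-\bolde_i^{m},m)=z_\la^{\ga^{(m)}-\e_i}$); and a difference of two dotted summands, of which the one carrying its dot on the $(i\pm1)$-string dies once pushed into a $\psi_{w_0}$-box of $e_\la$ by Lemma~\ref{LW0Y0}(\ref{LW0Y0i}), while the one carrying its dot on the $i$-string, after that dot is slid across the $(b+1)^{(m)}$-block using $\psi_{w_{0,m}}y_{0,m}\psi_{w_{0,m}}=\psi_{w_{0,m}}$ (Lemma~\ref{LW0Y0}(\ref{LW0Y0ii})) and Lemma~\ref{LPull}, becomes exactly $-(y^{\bar\mu}e_{\bar\mu}\psi_{\bar\la;i}e_{\bar\la}\circ 1_\al)\psi_{z^{\ga^{(m)}}_\la}y_{r_{b+1}(\la)}^{m-1}e_\la$. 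In the case $\ga^{(m)}_i=0$ there is no fully rearranged term of the required form; instead the $i$-string carried off by $z_\mu^{\ga^{(m)}}$ originates in the left $i$-block and must be pulled past the hook's $i$-string and the other $i$-strings of that block, exactly one such pass being obstructed; the quadratic relation there again produces a difference of two dotted summands, the one with its dot on the ``wrong'' string dies inside a $\psi_{w_0}$-box of $e_\la$, and the survivor is the same recursive expression, now with a plus sign, so that $\psi_{z(\bga,m)}y^\mu e_\mu\psi_{\la;i}e_\la=(y^{\bar\mu}e_{\bar\mu}\psi_{\bar\la;i}e_{\bar\la}\circ 1_\al)\psi_{z^{\ga^{(m)}}_\la}y_{r_{b+1}(\la)}^{m-1}e_\la$, as claimed.

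The main obstacle will be the combinatorial bookkeeping in the Khovanov--Lauda diagram: pinning down exactly which crossing of $\psi_{z(\bga,m)}\psi_{\la;i}$ is obstructed, verifying that every error term except the designated one lands under a genuine $\psi_{w_0}$-box of $e_\la$ carrying a polynomial of degree below the threshold of Lemma~\ref{LW0Y0}(\ref{LW0Y0i}), and then matching the two surviving terms with the stated formulas on the nose --- that the recombined permutation is $z_\la^{\ga^{(m)}-\e_i}$, that the extra hook factor sits in the $m$-th slot as $\psi_{(\ga^{(m)}-\e_i);i}$, that the dot exponent is $m-1$, and that the signs come out $+$ and $-$ as in the statement. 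As in Lemma~\ref{L140416}, this is cleanest if organized around the diagram, tracking only the relevant $i$- and $(i\pm1)$-strings together with the rightmost $(b+1)$-string, with the two cases run essentially in parallel, the sole difference being whether $z_\mu^{\ga^{(m)}}$ carries off the hook's $i$-string or not.
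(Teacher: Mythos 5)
Your skeleton matches the paper's: the first move is indeed Lemma~\ref{L290316G}, reducing everything to $\psi_{z(\bga,m)}\psi_{\la;i}y^\la e_\la$; the case split according to whether $z_\mu^{\ga^{(m)}}$ carries off the top end of the hook is the right dichotomy; and your account of where $y_{r_{b+1}(\la)}^{m-1}$ comes from (splitting $y_{0,m}$ over the $(b+1)$-block and pushing $y_{0,m-1}$ into $y^{\bar\mu}$) is correct. The gap is in the diagrammatic mechanism, which is the actual content of the lemma, and it is misidentified in both cases.

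In the case $\ga^{(m)}_i=1$ the word $\psi_{z(\bga,m)}\psi_{\la;i}$ is in fact reduced: the hook string runs from bottom position $r_i^-(\la)$ up to $q^i=r_i^+(\mu)$ and on to $p^i$, moving monotonically rightward, and it meets every other string at most once (strings of colours $i+1,\dots,b+1$ and the right $i$-block inside $\psi_{\la;i}$, strings of colours $<i$ inside $z_\mu^{\ga^{(m)}}$); all remaining strings are vertical in $\psi_{\la;i}$ and $z_\mu^{\ga^{(m)}}$ is reduced. So there is no double crossing and no quadratic relation to apply. The correction term arises instead from \emph{braid} relations: to reach the form $\psi_{\bga-\bolde_i^{m};m,i}\psi_{z(\bga-\bolde_i^{m},m)}$ one pulls the extracted $(i+1)$-string rightward past the $\la_i$ same-coloured crossings of the hook string with the right $i$-block, and each such move contributes an ``opened-crossing'' error term carrying no dots; all but the last of these die against $y^\la e_\la$, and the survivor is the recursive term (its dots come entirely from $y^\la$). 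Conversely, in the case $\ga^{(m)}_i=0$ the genuine double crossing is between the hook's $i$-string and the extracted $(i+1)$-string --- adjacent colours, crossing once inside $\psi_{\la;i}$ and once inside $z_\mu^{\ga^{(m)}}$ --- not between the extracted $i$-string and the other $i$-strings of the left block, which meet each other at most once; and a same-coloured double crossing would in any event give $\psi_r^2 1_\bi=0$, not the dotted difference your argument needs. Since the whole lemma consists of exactly this bookkeeping, the plan as written would stall at the step you yourself flag as the main obstacle.
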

\begin{proof}
Using Lemma~\ref{L290316G}, we get
$$
\psi_{z(\bga,m)}y^\mu e_\mu\psi_{\la;i}e_\la=
\psi_{z(\bga,m)}\psi_{\la;i}y^\la e_\la
$$
Recalling (\ref{EQIG'}),(\ref{EQIG}), the permutation $z(\bga,m)$ maps $q^j$ to $p^j$ for all $j\in[a,b+1]$, and the elements of
$[1,d]\setminus Q$ increasingly to the elements of $[1,d-l]$.

{\em Case 1: $\ga^{(m)}_i=1$}. 
In this case we have $q^i=r_i^+(\la)$. So the KLR diagram $D$ of $\psi_{z(\bga,m)}\psi_{\la;i}1^{\bj^\la}$ has an $i$-string $S$ from the position $r_i^-(\la)$ in the bottom to the position $p^i$ in the top, and the only $(i,i)$-crossings in $D$ will be the crossings of the string $S$ with the $\la_i$ strings which originate in the positions $R_i^+(\la)$ in the bottom.
The $(i+1)$-string $T$ in $D$ originating in the position $p_{i+1}$ in the top is to the left of all $(i,i)$ crossings. Pulling $T$ to the right produces error terms, which arise from opening $(i,i)$-crossings, but all of them, except the last one, amount to zero, when multiplied on the right by $y^\la e_\la$. The last error term is equal to
$
-\iota_{\bar\theta,\al}(\psi_{\bar\la;i} \otimes 1_\al)
\psi_{z^{\ga^{(m)}}_\la},
$
and the result of pulling $T$ past all $(i,i)$-crossings gives $\psi_{\bga-\bolde_i^{m};m,i}\psi_{z(\bga-\bolde_i^{m},m)}$.
Multiplying on the left by $y^\la e_\la$, gives
$$
-\iota_{\bar\theta,\al}(\psi_{\bar\la;i} \otimes 1_\al)
\psi_{z^{\ga^{(m)}}_\la}y^\la e_\la+
\psi_{\bga-\bolde_i^{m};m,i}\psi_{z(\bga-\bolde_i^{m},m)}y^\la e_\la,
$$
and it remains to observe using Lemma~\ref{LPull} that
$$
-\iota_{\bar\theta,\al}(\psi_{\bar\la;i} \otimes 1_\al)
\psi_{z^{\ga^{(m)}}_\la}y^\la e_\la=
-(y^{\bar\mu}e_{\bar\mu}\psi_{\bar\la;i}e_{\bar\la} \circ 1_\al)\psi_{z^{\ga^{(m)}}_\la}y_{r_{b+1}(\la)}^{m-1}e_\la.
$$

{\em Case 2: $\ga^{(m)}_i=0$.}
Let  $D$ be the KLR diagram of $\psi_{z(\bga,m)}\psi_{\la;i}1_{\bj^\la}$.
Let $S$ be the $i$-string originating in the position $r^-_{i}(\la)$ in the bottom row of $D$, and $T$ be the $(i+1)$-string originating in the position $p^{i+1}$ in the top row of $D$. The quadratic relation on these strings produces a linear combination of two diagrams, one with a dot on $S$ and the other with a dot on $T$. The term containing a dot on $T$ produces a term which is zero after multiplying on the right by $y^\la e^\la$. The term containing a dot on $S$, when multiplied on the right by $y^\la e^\la$, yields
$$
\iota_{\bar\theta,\al}(\psi_{\bar\la;i} \otimes 1_\al)\psi_{z^{\ga^{(m)}}_\la} y^\la e_\la
=(y^{\bar\mu}e_{\bar\mu}\psi_{\bar\la;i}e_{\bar\la} \circ 1_\al)\psi_{z^{\ga^{(m)}}_\la} y_{r_{b+1}(\la)}^{m-1}e_\la,
$$
where we have used Lemmas~\ref{LPull} and~\ref{LDivPower} to deduce the last equality.
\end{proof}

\begin{Example}
We illustrate Lemma~\ref{L140416G} diagramatically.  Using the notation of the lemma, let $a=1$, $b=2$, $\mu=(2,2)$, $\la=(1,2)$, $m=4$ and $i=1$.  Then, if $\bga =\big((1,0),(1,1),(0,1),(0,0)\big)$, we are in the $\gamma_i^{(m)}=0$ case and the lemma claims the following equality.
$$
\begin{braid}\tikzset{baseline=-7mm}
\draw(1,2.5) node[above]{\small $1$};
\draw(2,2.5) node[above]{\small $2$};
\draw(3,2.5) node[above]{\small $3$};
\draw(4,2.5) node[above]{\small $3$};
\draw(5,2.5) node[above]{\small $3$};
\draw(6,2.5) node[above]{\small $2$};
\draw(7,2.5) node[above]{\small $2$};
\draw(8,2.5) node[above]{\small $1$};
\draw(9,2.5) node[above]{\small $1$};
\draw(10,2.5) node[above]{\small $1$};
\draw(11,2.5) node[above]{\small $2$};
\draw(12,2.5) node[above]{\small $3$};
\draw [rounded corners,color=gray] (0.6,-2.7)--(2.4,-2.7)--(2.4,-4.1)--(0.6,-4.1)--cycle;
\draw [rounded corners,color=gray] (2.6,-2.7)--(4.4,-2.7)--(4.4,-4.1)--(2.6,-4.1)--cycle;
\draw [rounded corners,color=gray] (4.6,-2.7)--(8.4,-2.7)--(8.4,-4.1)--(4.6,-4.1)--cycle;
\draw [rounded corners,color=gray] (8.6,-2.7)--(10.4,-2.7)--(10.4,-4.1)--(8.6,-4.1)--cycle;
\draw [rounded corners,color=gray] (10.6,-2.7)--(12.4,-2.7)--(12.4,-4.1)--(10.6,-4.1)--cycle;
\draw [rounded corners,color=gray] (4.6,-1.1)--(8.4,-1.1)--(8.4,-2.3)--(4.6,-2.3)--cycle;
\draw(6.5,-2.6) node[above]{\small $y_0$};
\draw(2,2.6)--(3,-0.4)--(3,-2.7);
\draw(3,2.6)--(5,-0.4)--(5,-1.1);
\draw(8,2.6)--(11,-0.4)--(11,-2.7);
\draw(4,2.6)--(6,-0.4)--(6,-1.1);
\draw(6,2.6)--(9,-0.4)--(9,-2.7);
\draw(9,2.6)--(12,-0.4)--(12,-2.7);
\draw(1,2.6)--(1,-0.4)--(1,-2.7);
\draw(5,2.6)--(7,-0.4)--(7,-1.1);
\draw(7,2.6)--(10,-0.4)--(10,-2.7);
\draw(10,2.6)--(2,-0.4)--(2,-2.7);
\draw(11,2.6)--(4,-0.4)--(4,-2.7);
\draw(12,2.6)--(8,-0.4)--(8,-1.1);
\draw(5,-2.3)--(5,-2.7);
\draw(6,-2.3)--(6,-2.7);
\draw(7,-2.3)--(7,-2.7);
\draw(8,-2.3)--(8,-2.7);
\draw(1,-3.4) node{\small $1$};
\draw(2,-3.4) node{\small $1$};
\draw(3,-3.4) node{\small $2$};
\draw(4,-3.4) node{\small $2$};
\draw(5,-3.4) node{\small $3$};
\draw(6,-3.4) node{\small $3$};
\draw(7,-3.4) node{\small $3$};
\draw(8,-3.4) node{\small $3$};
\draw(9,-3.4) node{\small $2$};
\draw(10,-3.4) node{\small $2$};
\draw(11,-3.4) node{\small $1$};
\draw(12,-3.4) node{\small $1$};
\draw(1,-9) node[above]{\small $1$};
\draw(2,-9) node[above]{\small $1$};
\draw(3,-9) node[above]{\small $1$};
\draw(4,-9) node[above]{\small $2$};
\draw(5,-9) node[above]{\small $2$};
\draw(6,-9) node[above]{\small $3$};
\draw(7,-9) node[above]{\small $3$};
\draw(8,-9) node[above]{\small $3$};
\draw(9,-9) node[above]{\small $3$};
\draw(10,-9) node[above]{\small $2$};
\draw(11,-9) node[above]{\small $2$};
\draw(12,-9) node[above]{\small $1$};
\draw [rounded corners,color=gray] (0.6,-8.9)--(3.4,-8.9)--(3.4,-7.4)--(0.6,-7.4)--cycle;
\draw [rounded corners,color=gray] (3.6,-8.9)--(5.4,-8.9)--(5.4,-7.4)--(3.6,-7.4)--cycle;
\draw [rounded corners,color=gray] (5.6,-8.9)--(9.4,-8.9)--(9.4,-7.4)--(5.6,-7.4)--cycle;
\draw [rounded corners,color=gray] (9.6,-8.9)--(11.4,-8.9)--(11.4,-7.4)--(9.6,-7.4)--cycle;
\draw(1,-4.1)--(1,-7.4);
\draw(2,-4.1)--(2,-7.4);
\draw(3,-4.1)--(4,-7.4);
\draw(4,-4.1)--(5,-7.4);
\draw(5,-4.1)--(6,-7.4);
\draw(6,-4.1)--(7,-7.4);
\draw(7,-4.1)--(8,-7.4);
\draw(8,-4.1)--(9,-7.4);
\draw(9,-4.1)--(10,-7.4);
\draw(10,-4.1)--(11,-7.4);
\draw(11,-4.1)--(12,-7.4);
\draw(12,-4.1)--(3,-7.4);
\end{braid}
=
\begin{braid}\tikzset{baseline=-7mm}
\draw(1,1) node[above]{\small $1$};
\draw(2,1) node[above]{\small $2$};
\draw(3,1) node[above]{\small $3$};
\draw(4,1) node[above]{\small $3$};
\draw(5,1) node[above]{\small $3$};
\draw(6,1) node[above]{\small $2$};
\draw(7,1) node[above]{\small $2$};
\draw(8,1) node[above]{\small $1$};
\draw(9,1) node[above]{\small $1$};
\draw(10,1) node[above]{\small $1$};
\draw(11,1) node[above]{\small $2$};
\draw(12,1) node[above]{\small $3$};
\draw [rounded corners,color=gray] (0.6,-2.7)--(2.4,-2.7)--(2.4,-4.1)--(0.6,-4.1)--cycle;
\draw [rounded corners,color=gray] (3.6,-2.7)--(6.4,-2.7)--(6.4,-4.1)--(3.6,-4.1)--cycle;
\draw [rounded corners,color=gray] (6.6,-2.7)--(8.4,-2.7)--(8.4,-4.1)--(6.6,-4.1)--cycle;
\draw [rounded corners,color=gray] (2.6,4.1)--(5.4,4.1)--(5.4,2.8)--(2.6,2.8)--cycle;
\draw(4,2.5) node[above]{\small $y_0$};
\draw [rounded corners,color=gray] (2.6,1.1)--(5.4,1.1)--(5.4,2.5)--(2.6,2.5)--cycle;
\draw [rounded corners,color=gray] (5.6,1.1)--(7.4,1.1)--(7.4,2.5)--(5.6,2.5)--cycle;
\draw [rounded corners,color=gray] (7.6,1.1)--(9.4,1.1)--(9.4,2.5)--(7.6,2.5)--cycle;
\draw(1,4.1)--(1,2.5);
\draw(2,4.1)--(2,2.5);
\draw(3,2.8)--(3,2.5);
\draw(4,2.8)--(4,2.5);
\draw(5,2.8)--(5,2.5);
\draw(6,4.1)--(6,2.5);
\draw(7,4.1)--(7,2.5);
\draw(8,4.1)--(8,2.5);
\draw(9,4.1)--(9,2.5);
\draw(10,4.1)--(10,2.5);
\draw(11,4.1)--(11,2.5);
\draw(12,4.1)--(12,2.5);
\draw(1,1.1)--(1,-2.7);
\draw(2,1.1)--(3,-2.7);
\draw(3,1.1)--(4,-2.7);
\draw(4,1.1)--(5,-2.7);
\draw(5,1.1)--(6,-2.7);
\draw(6,1.1)--(7,-2.7);
\draw(7,1.1)--(8,-2.7);
\draw(8,1.1)--(9,-2.7);
\draw(9,1.1)--(2,-2.7);
\draw(10,1.1)--(10,-2.7);
\draw(11,1.1)--(11,-2.7);
\draw(12,1.1)--(12,-2.7);
\draw(1,-3.4) node{\small $1$};
\draw(2,-3.4) node{\small $1$};
\draw(3,-3.4) node{\small $2$};
\draw(4,-3.4) node{\small $3$};
\draw(5,-3.4) node{\small $3$};
\draw(6,-3.4) node{\small $3$};
\draw(7,-3.4) node{\small $2$};
\draw(8,-3.4) node{\small $2$};
\draw(9,-3.4) node{\small $1$};
\draw(10,-3.4) node{\small $1$};
\draw(11,-3.4) node{\small $2$};
\draw(12,-3.4) node{\small $3$};
\draw(1,-9) node[above]{\small $1$};
\draw(2,-9) node[above]{\small $1$};
\draw(3,-9) node[above]{\small $1$};
\draw(4,-9) node[above]{\small $2$};
\draw(5,-9) node[above]{\small $2$};
\draw(6,-9) node[above]{\small $3$};
\draw(7,-9) node[above]{\small $3$};
\draw(8,-9) node[above]{\small $3$};
\draw(9,-9) node[above]{\small $3$};
\draw(10,-9) node[above]{\small $2$};
\draw(11,-9) node[above]{\small $2$};
\draw(12,-9) node[above]{\small $1$};
\draw [rounded corners,color=gray] (0.6,-8.9)--(3.4,-8.9)--(3.4,-7.4)--(0.6,-7.4)--cycle;
\draw [rounded corners,color=gray] (3.6,-8.9)--(5.4,-8.9)--(5.4,-7.4)--(3.6,-7.4)--cycle;
\draw [rounded corners,color=gray] (5.6,-8.9)--(9.4,-8.9)--(9.4,-7.4)--(5.6,-7.4)--cycle;
\draw [rounded corners,color=gray] (9.6,-8.9)--(11.4,-8.9)--(11.4,-7.4)--(9.6,-7.4)--cycle;
\draw(1,-4.1)--(1,-7.4);
\draw(2,-4.1)--(3,-7.4);
\draw(3,-4.1)--(4,-7.4);
\draw(4,-4.1)--(6,-7.4);
\draw(5,-4.1)--(7,-7.4);
\draw(6,-4.1)--(8,-7.4);
\draw(7,-4.1)--(10,-6.2)--(10,-7.4);
\draw(8,-4.1)--(11,-6.2)--(11,-7.4);
\draw(9,-4.1)--(12,-6.2)--(12,-7.4);
\draw(10,-4.1)--(2,-7.4);
\draw(11,-4.1)--(5,-7.4);
\draw(12,-4.1)--(9,-6.2)--(9,-7.4);
\blackdot(9,-7.1);
\blackdot(9,-6.6);
\blackdot(9.2,-6.1);
\end{braid}
$$
If instead we let $\bga =\big((0,0),(1,1),(0,1),(1,0)\big)$, we are in the $\gamma_i^{(m)}=1$ case and the lemma claims the following equality.
$$
\begin{braid}\tikzset{baseline=-7mm}
\draw(1,2.5) node[above]{\small $1$};
\draw(2,2.5) node[above]{\small $1$};
\draw(3,2.5) node[above]{\small $2$};
\draw(4,2.5) node[above]{\small $3$};
\draw(5,2.5) node[above]{\small $3$};
\draw(6,2.5) node[above]{\small $3$};
\draw(7,2.5) node[above]{\small $2$};
\draw(8,2.5) node[above]{\small $2$};
\draw(9,2.5) node[above]{\small $1$};
\draw(10,2.5) node[above]{\small $2$};
\draw(11,2.5) node[above]{\small $3$};
\draw(12,2.5) node[above]{\small $1$};
\draw [rounded corners,color=gray] (0.6,-2.7)--(2.4,-2.7)--(2.4,-4.1)--(0.6,-4.1)--cycle;
\draw [rounded corners,color=gray] (2.6,-2.7)--(4.4,-2.7)--(4.4,-4.1)--(2.6,-4.1)--cycle;
\draw [rounded corners,color=gray] (4.6,-2.7)--(8.4,-2.7)--(8.4,-4.1)--(4.6,-4.1)--cycle;
\draw [rounded corners,color=gray] (8.6,-2.7)--(10.4,-2.7)--(10.4,-4.1)--(8.6,-4.1)--cycle;
\draw [rounded corners,color=gray] (10.6,-2.7)--(12.4,-2.7)--(12.4,-4.1)--(10.6,-4.1)--cycle;
\draw [rounded corners,color=gray] (4.6,-1.1)--(8.4,-1.1)--(8.4,-2.3)--(4.6,-2.3)--cycle;
\draw(6.5,-2.6) node[above]{\small $y_0$};
\draw(2,2.6)--(2,-0.4)--(2,-2.7);
\draw(3,2.6)--(3,-0.4)--(3,-2.7);
\draw(8,2.6)--(10,-0.4)--(10,-2.7);
\draw(4,2.6)--(5,-0.4)--(5,-1.1);
\draw(6,2.6)--(7,-0.4)--(7,-1.1);
\draw(9,2.6)--(11,-0.4)--(11,-2.7);
\draw(1,2.6)--(1,-0.4)--(1,-2.7);
\draw(5,2.6)--(6,-0.4)--(6,-1.1);
\draw(7,2.6)--(9,-0.4)--(9,-2.7);
\draw(10,2.6)--(4,-0.4)--(4,-2.7);
\draw(11,2.6)--(8,-0.4)--(8,-1.1);
\draw(12,2.6)--(12,-0.4)--(12,-2.7);
\draw(5,-2.3)--(5,-2.7);
\draw(6,-2.3)--(6,-2.7);
\draw(7,-2.3)--(7,-2.7);
\draw(8,-2.3)--(8,-2.7);
\draw(1,-3.4) node{\small $1$};
\draw(2,-3.4) node{\small $1$};
\draw(3,-3.4) node{\small $2$};
\draw(4,-3.4) node{\small $2$};
\draw(5,-3.4) node{\small $3$};
\draw(6,-3.4) node{\small $3$};
\draw(7,-3.4) node{\small $3$};
\draw(8,-3.4) node{\small $3$};
\draw(9,-3.4) node{\small $2$};
\draw(10,-3.4) node{\small $2$};
\draw(11,-3.4) node{\small $1$};
\draw(12,-3.4) node{\small $1$};
\draw(1,-9) node[above]{\small $1$};
\draw(2,-9) node[above]{\small $1$};
\draw(3,-9) node[above]{\small $1$};
\draw(4,-9) node[above]{\small $2$};
\draw(5,-9) node[above]{\small $2$};
\draw(6,-9) node[above]{\small $3$};
\draw(7,-9) node[above]{\small $3$};
\draw(8,-9) node[above]{\small $3$};
\draw(9,-9) node[above]{\small $3$};
\draw(10,-9) node[above]{\small $2$};
\draw(11,-9) node[above]{\small $2$};
\draw(12,-9) node[above]{\small $1$};
\draw [rounded corners,color=gray] (0.6,-8.9)--(3.4,-8.9)--(3.4,-7.4)--(0.6,-7.4)--cycle;
\draw [rounded corners,color=gray] (3.6,-8.9)--(5.4,-8.9)--(5.4,-7.4)--(3.6,-7.4)--cycle;
\draw [rounded corners,color=gray] (5.6,-8.9)--(9.4,-8.9)--(9.4,-7.4)--(5.6,-7.4)--cycle;
\draw [rounded corners,color=gray] (9.6,-8.9)--(11.4,-8.9)--(11.4,-7.4)--(9.6,-7.4)--cycle;
\draw(1,-4.1)--(1,-7.4);
\draw(2,-4.1)--(2,-7.4);
\draw(3,-4.1)--(4,-7.4);
\draw(4,-4.1)--(5,-7.4);
\draw(5,-4.1)--(6,-7.4);
\draw(6,-4.1)--(7,-7.4);
\draw(7,-4.1)--(8,-7.4);
\draw(8,-4.1)--(9,-7.4);
\draw(9,-4.1)--(10,-7.4);
\draw(10,-4.1)--(11,-7.4);
\draw(11,-4.1)--(12,-7.4);
\draw(12,-4.1)--(3,-7.4);
\end{braid}
=
\begin{braid}\tikzset{baseline=-7mm}
\draw(1,2.5) node[above]{\small $1$};
\draw(2,2.5) node[above]{\small $1$};
\draw(3,2.5) node[above]{\small $2$};
\draw(4,2.5) node[above]{\small $3$};
\draw(5,2.5) node[above]{\small $3$};
\draw(6,2.5) node[above]{\small $3$};
\draw(7,2.5) node[above]{\small $2$};
\draw(8,2.5) node[above]{\small $2$};
\draw(9,2.5) node[above]{\small $1$};
\draw(10,2.5) node[above]{\small $2$};
\draw(11,2.5) node[above]{\small $3$};
\draw(12,2.5) node[above]{\small $1$};
\draw(1,2.5)--(1,-2.7);
\draw(2,2.5)--(2,-2.7);
\draw(3,2.5)--(3,-2.7);
\draw(4,2.5)--(4,-2.7);
\draw(5,2.5)--(5,-2.7);
\draw(6,2.5)--(6,-2.7);
\draw(7,2.5)--(7,-2.7);
\draw(8,2.5)--(8,-2.7);
\draw(9,2.5)--(9,-2.7);
\draw(10,2.5)--(11,-2.7);
\draw(11,2.5)--(12,-2.7);
\draw(12,2.5)--(10,-2.7);
\draw(1,-3.4) node{\small $1$};
\draw(2,-3.4) node{\small $1$};
\draw(3,-3.4) node{\small $2$};
\draw(4,-3.4) node{\small $3$};
\draw(5,-3.4) node{\small $3$};
\draw(6,-3.4) node{\small $3$};
\draw(7,-3.4) node{\small $2$};
\draw(8,-3.4) node{\small $2$};
\draw(9,-3.4) node{\small $1$};
\draw(10,-3.4) node{\small $1$};
\draw(11,-3.4) node{\small $2$};
\draw(12,-3.4) node{\small $3$};
\draw [rounded corners,color=gray] (5.6,-6)--(9.4,-6)--(9.4,-7.2)--(5.6,-7.2)--cycle;
\draw(7.5,-7.5) node[above]{\small $y_0$};
\draw(1,-9) node[above]{\small $1$};
\draw(2,-9) node[above]{\small $1$};
\draw(3,-9) node[above]{\small $1$};
\draw(4,-9) node[above]{\small $2$};
\draw(5,-9) node[above]{\small $2$};
\draw(6,-9) node[above]{\small $3$};
\draw(7,-9) node[above]{\small $3$};
\draw(8,-9) node[above]{\small $3$};
\draw(9,-9) node[above]{\small $3$};
\draw(10,-9) node[above]{\small $2$};
\draw(11,-9) node[above]{\small $2$};
\draw(12,-9) node[above]{\small $1$};
\draw [rounded corners,color=gray] (0.6,-8.9)--(3.4,-8.9)--(3.4,-7.4)--(0.6,-7.4)--cycle;
\draw [rounded corners,color=gray] (3.6,-8.9)--(5.4,-8.9)--(5.4,-7.4)--(3.6,-7.4)--cycle;
\draw [rounded corners,color=gray] (5.6,-8.9)--(9.4,-8.9)--(9.4,-7.4)--(5.6,-7.4)--cycle;
\draw [rounded corners,color=gray] (9.6,-8.9)--(11.4,-8.9)--(11.4,-7.4)--(9.6,-7.4)--cycle;
\draw(1,-4.1)--(1,-7.4);
\draw(2,-4.1)--(3,-7.4);
\draw(3,-4.1)--(4,-7.4);
\draw(4,-4.1)--(6,-6);
\draw(5,-4.1)--(7,-6);
\draw(6,-4.1)--(8,-6);
\draw(7,-4.1)--(10,-6.2)--(10,-7.4);
\draw(8,-4.1)--(11,-6.2)--(11,-7.4);
\draw(9,-4.1)--(12,-6.2)--(12,-7.4);
\draw(10,-4.1)--(2,-6)--(2,-7.4);
\draw(11,-4.1)--(5,-6)--(5,-7.4);
\draw(12,-4.1)--(9,-6);
\draw(6,-7.2)--(6,-7.4);
\draw(7,-7.2)--(7,-7.4);
\draw(8,-7.2)--(8,-7.4);
\draw(9,-7.2)--(9,-7.4);
\end{braid}
-
\begin{braid}\tikzset{baseline=-7mm}
\draw(1,1) node[above]{\small $1$};
\draw(2,1) node[above]{\small $1$};
\draw(3,1) node[above]{\small $2$};
\draw(4,1) node[above]{\small $3$};
\draw(5,1) node[above]{\small $3$};
\draw(6,1) node[above]{\small $3$};
\draw(7,1) node[above]{\small $2$};
\draw(8,1) node[above]{\small $2$};
\draw(9,1) node[above]{\small $1$};
\draw(10,1) node[above]{\small $2$};
\draw(11,1) node[above]{\small $3$};
\draw(12,1) node[above]{\small $1$};
\draw [rounded corners,color=gray] (3.6,4.1)--(6.4,4.1)--(6.4,2.8)--(3.6,2.8)--cycle;
\draw(5,2.5) node[above]{\small $y_0$};
\draw(1,4.1)--(1,2.5);
\draw(2,4.1)--(2,2.5);
\draw(3,4.1)--(3,2.5);
\draw(4,2.8)--(4,2.5);
\draw(5,2.8)--(5,2.5);
\draw(6,2.8)--(6,2.5);
\draw(7,4.1)--(7,2.5);
\draw(8,4.1)--(8,2.5);
\draw(9,4.1)--(9,2.5);
\draw(10,4.1)--(10,2.5);
\draw(11,4.1)--(11,2.5);
\draw(12,4.1)--(12,2.5);
\draw(1,1.1)--(1,-2.7);
\draw(2,1.1)--(2,-2.7);
\draw(3,1.1)--(4,-2.7);
\draw(4,1.1)--(5,-2.7);
\draw(5,1.1)--(6,-2.7);
\draw(6,1.1)--(7,-2.7);
\draw(7,1.1)--(8,-2.7);
\draw(8,1.1)--(9,-2.7);
\draw(9,1.1)--(3,-2.7);
\draw(10,1.1)--(10,-2.7);
\draw(11,1.1)--(11,-2.7);
\draw(12,1.1)--(12,-2.7);
\draw(1,-3.4) node{\small $1$};
\draw(2,-3.4) node{\small $1$};
\draw(3,-3.4) node{\small $1$};
\draw(4,-3.4) node{\small $2$};
\draw(5,-3.4) node{\small $3$};
\draw(6,-3.4) node{\small $3$};
\draw(7,-3.4) node{\small $3$};
\draw(8,-3.4) node{\small $2$};
\draw(9,-3.4) node{\small $2$};
\draw(10,-3.4) node{\small $2$};
\draw(11,-3.4) node{\small $3$};
\draw(12,-3.4) node{\small $1$};
\draw [rounded corners,color=gray] (0.6,-4.1)--(3.4,-4.1)--(3.4,-2.7)--(0.6,-2.7)--cycle;
\draw [rounded corners,color=gray] (4.6,-4.1)--(7.4,-4.1)--(7.4,-2.7)--(4.6,-2.7)--cycle;
\draw [rounded corners,color=gray] (7.6,-4.1)--(9.4,-4.1)--(9.4,-2.7)--(7.6,-2.7)--cycle;
\draw(1,-9) node[above]{\small $1$};
\draw(2,-9) node[above]{\small $1$};
\draw(3,-9) node[above]{\small $1$};
\draw(4,-9) node[above]{\small $2$};
\draw(5,-9) node[above]{\small $2$};
\draw(6,-9) node[above]{\small $3$};
\draw(7,-9) node[above]{\small $3$};
\draw(8,-9) node[above]{\small $3$};
\draw(9,-9) node[above]{\small $3$};
\draw(10,-9) node[above]{\small $2$};
\draw(11,-9) node[above]{\small $2$};
\draw(12,-9) node[above]{\small $1$};
\draw [rounded corners,color=gray] (0.6,1.1)--(2.4,1.1)--(2.4,2.5)--(0.6,2.5)--cycle;
\draw [rounded corners,color=gray] (3.6,1.1)--(6.4,1.1)--(6.4,2.5)--(3.6,2.5)--cycle;
\draw [rounded corners,color=gray] (6.6,1.1)--(8.4,1.1)--(8.4,2.5)--(6.6,2.5)--cycle;
\draw [rounded corners,color=gray] (0.6,-8.9)--(3.4,-8.9)--(3.4,-7.4)--(0.6,-7.4)--cycle;
\draw [rounded corners,color=gray] (3.6,-8.9)--(5.4,-8.9)--(5.4,-7.4)--(3.6,-7.4)--cycle;
\draw [rounded corners,color=gray] (5.6,-8.9)--(9.4,-8.9)--(9.4,-7.4)--(5.6,-7.4)--cycle;
\draw [rounded corners,color=gray] (9.6,-8.9)--(11.4,-8.9)--(11.4,-7.4)--(9.6,-7.4)--cycle;
\draw(1,-4.1)--(1,-7.4);
\draw(2,-4.1)--(2,-7.4);
\draw(3,-4.1)--(3,-7.4);
\draw(4,-4.1)--(4,-7.4);
\draw(5,-4.1)--(6,-7.4);
\draw(6,-4.1)--(7,-7.4);
\draw(7,-4.1)--(8,-7.4);
\draw(8,-4.1)--(10,-6.2)--(10,-7.4);
\draw(9,-4.1)--(11,-6.2)--(11,-7.4);
\draw(10,-4.1)--(5,-7.4);
\draw(12,-4.1)--(12,-7.4);
\draw(11,-4.1)--(9,-6.2)--(9,-7.4);
\blackdot(9,-7.1);
\blackdot(9,-6.6);
\blackdot(9.2,-6.1);
\end{braid}
$$
\end{Example}

\begin{Corollary} \label{C160416G} 
Suppose that $i\in [a,b]$ is such that $\la_i<m$ and $\mu:=\la+\e_i=\la^\bga$. Then
\begin{equation*}
\psi_{u(\bga)}y^\mu e_\mu\psi_{\la;i}e_\la
=\sum_{r\in[1,m]:\ \ga_i^{(r)}=1} (-1)^{\sum_{s=r+1}^{m}\ga_i^{(s)}}\psi_{\bga-\bolde_i^{r};r,i}\psi_{u(\bga-\bolde_i^{r})}y^\la e_\la.
\end{equation*}
\end{Corollary}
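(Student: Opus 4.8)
The plan is to prove Corollary~\ref{C160416G} by induction on $m$, following the proof of Corollary~\ref{C160416} but reducing the \emph{last} block $m$ rather than the first, and using Lemma~\ref{L140416G} (together with Lemmas~\ref{L290316G} and~\ref{LWXG}) in place of Lemma~\ref{L140416}. For the base case $m=1$ we have $u(\bga)=1$, an empty product in~\eqref{WPsiWG}, so the left-hand side is $y^\mu e_\mu\psi_{\la;i}e_\la=\psi_{\la;i}y^\la e_\la$ by Lemma~\ref{L290316G}; on the right, the only nonzero term is $r=1$, where $\ga^{(1)}_i=\mu_i=1$, and it equals $\psi_{\la;i}y^\la e_\la$ since $\bga-\bolde_i^{1}=(\la)$, $u((\la))=1$ and $\psi_{(\la);1,i}=\psi_{\la;i}$ by~\eqref{EQ123}.

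For the inductive step I would set $\bar\bga:=(\ga^{(1)},\dots,\ga^{(m-1)})$, $\bar\theta:=(m-1)\al$, $\bar\mu:=\mu-\ga^{(m)}$ and $\bar\la:=\la-\ga^{(m)}$, factor $\psi_{u(\bga)}=\bigl(\psi_{z(\bga,2)}\cdots\psi_{z(\bga,m-1)}\bigr)\psi_{z(\bga,m)}$ via~\eqref{WPsiWG}, and observe that each $z(\bga,r)$ with $r\le m-1$ depends only on $\ga^{(1)},\dots,\ga^{(r)}$ and is supported on the first $m-1$ blocks, so that $\psi_{z(\bga,2)}\cdots\psi_{z(\bga,m-1)}=\iota_{\bar\theta,\al}(\psi_{u(\bar\bga)}\otimes 1_\al)$ for compatible reduced expressions (Lemma~\ref{LWXG} applied to $\bar\bga$). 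I would then evaluate $\psi_{z(\bga,m)}y^\mu e_\mu\psi_{\la;i}e_\la$ using Lemma~\ref{L140416G}, which splits into the cases $\ga^{(m)}_i=0$ and $\ga^{(m)}_i=1$.

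If $\ga^{(m)}_i=0$, Lemma~\ref{L140416G} yields $(y^{\bar\mu}e_{\bar\mu}\psi_{\bar\la;i}e_{\bar\la}\circ 1_\al)\,\psi_{z^{\ga^{(m)}}_\la}\,y_{r_{b+1}(\la)}^{m-1}e_\la$; left–multiplying by $\iota_{\bar\theta,\al}(\psi_{u(\bar\bga)}\otimes 1_\al)$, folding the two $\iota_{\bar\theta,\al}(-\otimes 1_\al)$ factors into one (as $\iota_{\bar\theta,\al}$ is an algebra map), and invoking the inductive hypothesis for $\bar\bga$ (legitimate because $\ga^{(m)}_i=0$ forces $\bar\la_i=\la_i\le m-2$), I reduce to verifying, for each $r\in[1,m-1]$ with $\ga^{(r)}_i=1$, the absorption identity
$$
\bigl(\psi_{\bar\bga-\bolde_i^{r};r,i}\,\psi_{u(\bar\bga-\bolde_i^{r})}\,y^{\bar\la}e_{\bar\la}\circ 1_\al\bigr)\psi_{z^{\ga^{(m)}}_\la}\,y_{r_{b+1}(\la)}^{m-1}e_\la=\psi_{\bga-\bolde_i^{r};r,i}\,\psi_{u(\bga-\bolde_i^{r})}\,y^\la e_\la .
$$
This is exactly the computation at the end of the proof of Lemma~\ref{L140416G}, now with $\psi_{\bar\la;i}$ replaced by $\psi_{\bar\bga-\bolde_i^{r};r,i}\psi_{u(\bar\bga-\bolde_i^{r})}$: since $\ga^{(r)}_i=1$ one has $\la^{\bga-\bolde_i^{r}}=\la$, $z(\bga-\bolde_i^{r},m)=z^{\ga^{(m)}}_\la$, and $z(\bga-\bolde_i^{r},r')$ agrees with the embedded $z(\bar\bga-\bolde_i^{r},r')$ for $r'\le m-1$, while $\psi_{\bga-\bolde_i^{r};r,i}=\iota_{\bar\theta,\al}(\psi_{\bar\bga-\bolde_i^{r};r,i}\otimes 1_\al)$ is supported on block $r\le m-1$; the surplus $y$'s and the thick crossings are absorbed by Lemmas~\ref{LPull} and~\ref{LDivPower}. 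The signs then match because $\ga^{(m)}_i=0$ gives $\{r\in[1,m-1]:\ga^{(r)}_i=1\}=\{r\in[1,m]:\ga^{(r)}_i=1\}$ and $\sum_{s=r+1}^{m-1}\ga^{(s)}_i=\sum_{s=r+1}^{m}\ga^{(s)}_i$. If instead $\ga^{(m)}_i=1$, Lemma~\ref{L140416G} contributes in addition the term $\psi_{\bga-\bolde_i^{m};m,i}\,\psi_{z(\bga-\bolde_i^{m},m)}\,y^\la e_\la$ and an overall $-1$ on the remainder; after left multiplication by $\iota_{\bar\theta,\al}(\psi_{u(\bar\bga)}\otimes 1_\al)$ the first term becomes the $r=m$ summand $\psi_{\bga-\bolde_i^{m};m,i}\psi_{u(\bga-\bolde_i^{m})}y^\la e_\la$ (Lemma~\ref{LWXG}, since $\psi_{\bga-\bolde_i^{m};m,i}$ is supported on block $m$), with the prescribed empty-product sign $(-1)^{\sum_{s=m+1}^m\ga^{(s)}_i}=+1$, while the remainder is handled exactly as above and the extra $-1$ absorbs into the inductive sign via $-(-1)^{\sum_{s=r+1}^{m-1}\ga^{(s)}_i}=(-1)^{\sum_{s=r+1}^{m}\ga^{(s)}_i}$; the boundary case $\la_i=0$ (which forces $\ga^{(r)}_i=0$ for $r<m$) is trivial, as then both the remainder of Lemma~\ref{L140416G} and the inductive sum are empty and only the $r=m$ term remains.

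The hard part will be the absorption identity displayed above: checking that applying $\iota_{\bar\theta,\al}(\psi_{u(\bar\bga)}\otimes 1_\al)$, then the inductive hypothesis on the $(m-1)$-block tuple $\bar\bga$, then reabsorbing $\psi_{z^{\ga^{(m)}}_\la}$ and the extra powers of $y$, reconstitutes $\psi_{u(\bga-\bolde_i^{r})}\,y^\la e_\la$ on the nose — i.e. that the recursive descriptions of $u(\bga)$, $u(\bga-\bolde_i^{r})$ and their $(m-1)$-block truncations are mutually compatible and leave no spurious polynomial factor. This step carries the bulk of the work, but it is routine given Lemmas~\ref{LWXG}, \ref{LPull} and~\ref{LDivPower} and mirrors the analogous step in the proof of Corollary~\ref{C160416}. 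Note that, unlike the chain-map statement for $g$ to follow, the present corollary needs no appeal to Lemma~\ref{LSignsG}: every sign manipulation here is one of the elementary reindexings above.
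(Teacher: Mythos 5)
Your proposal is correct and follows essentially the same route as the paper's own proof: induction on $m$, peeling off the last block via (\ref{WPsiWG}), applying Lemma~\ref{L140416G} with the case split on $\ga^{(m)}_i$, identifying the $r=m$ summand directly, and feeding the remainder through the inductive hypothesis on $\bar\bga$ before reabsorbing $\psi_{z^{\ga^{(m)}}_\la}$ and the surplus dots via Lemmas~\ref{LPull} and~\ref{LDivPower}. Your side remarks (the legitimacy check $\bar\la_i<m-1$, the sign reindexings, and the observation that Lemma~\ref{LSignsG} is not needed here) are all accurate.
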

\begin{proof}
The proof is by induction on $m$, the induction base $m=1$ being obvious.  Let $\bar\ga:=(\ga^{(1)},\dots,\ga^{(m-1)})$, $\bar\theta=(m-1)\al$, $\bar\mu:=\mu-\ga^{(m)}$, and $\bar\la:=\la-\ga^{(m)}$.
By~(\ref{WPsiWG}), we have
\begin{equation}\label{E200416G}
\psi_{u(\bga)}y^\mu e_\mu\psi_{\la;i}e_\la
=\psi_{z(\bga,2)}\dots \psi_{z(\bga,m)}
y^\mu e_\mu\psi_{\la;i}e_\la.
\end{equation}
Now we apply Lemma~\ref{L140416G}. We consider the case $\ga_i^{(m)}=1$, the case $\ga_i^{(m)}=0$ being similar. Then we get the following expression for the right hand side of (\ref{E200416G}):
\begin{align*}
&\psi_{z(\bga,2)}\dots \psi_{z(\bga,m-1)}\big(\psi_{\bga-\bolde_i^{m};m,i}\psi_{z(\bga-\bolde_i^{m},m)}y^\la e_\la
\\
-&(y^{\bar\mu}e_{\bar\mu}
\psi_{\bar\la;i}e_{\bar\la} \circ 1_\al)\psi_{z^{\ga^{(m)}}_\la} y_{r_{b+1}(\la)}^{m-1}e_\la\big)
.
\end{align*}
Opening parentheses,  we get two summands $S_1+S_2$.
Note that
\begin{align*}
S_1&=\psi_{z(\bga,2)}\dots \psi_{z(\bga,m-1)}
\psi_{\bga-\bolde_i^{m};m,i}\psi_{z(\bga-\bolde_i^{m},m)}y^\la e_\la
\\
&=  \psi_{\bga-\bolde_i^{m};m,i}\psi_{z(\bga,2)}\dots \psi_{z(\bga,m-1)}
\psi_{z(\bga-\bolde_i^{m},m)}y^\la e_\la
\\
&=  \psi_{\bga-\bolde_i^{m};m,i}\psi_{u(\bga-\bolde_i^{m})}y^\la e_\la.
\end{align*}
Moreover,
using the inductive assumption,
for the third equality below,
 $S_2$ equals
\begin{align*}
&-\psi_{z(\bga,2)}\dots \psi_{z(\bga,m-1)}
(y^{\bar\mu}e_{\bar\mu}\psi_{\bar\la;i}e_{\bar\la} \circ 1_\al)\psi_{z^{\ga^{(m)}}_\la}y_{r_{b+1}(\la)}^{m-1}e_\la
\\
=& -
\big(\psi_{z(\bar\bga,2)}\dots \psi_{z(\bar\bga,m-1)}y^{\bar\mu}e_{\bar\mu}\psi_{\bar\la;i}e_{\bar\la} \circ 1_\al\big)\psi_{z^{\ga^{(m)}}_\la}y_{r_{b+1}(\la)}^{m-1}e_\la
\\
=& -
(\psi_{u(\bar\bga)}y^{\bar\mu}e_{\bar\mu}\psi_{\bar\la;i}e_{\bar\la} \circ 1_\al)\psi_{z^{\ga^{(m)}}_\la}y_{r_{b+1}(\la)}^{m-1}e_\la
\\
=& -
\Big[\sum_{r\in[1,m-1]:\ \ga_i^{(r)}=1} \hspace{-7mm}(-1)^{\sum_{s=r+1}^{m-1}\ga_i^{(s)}}\psi_{\bar\bga-\bolde_i^{r};r,i}\psi_{u(\bar\bga-\bolde_i^{r})}y^{\bar \la} e_{\bar \la} \circ 1_\al\Big]
 \psi_{z^{\ga^{(m)}}_\la} y_{r_{b+1}(\la)}^{m-1}e_\la
\\
=& \sum_{r\in[1,m-1]:\ \ga_i^{(r)}=1} (-1)^{\sum_{s=r+1}^m \ga_i^{(s)}}\psi_{\bga-\bolde_i^{r};r,i}
\psi_{z(\bga-\bolde_i^{r},2)}\dots \psi_{z(\bga-\bolde_i^{r},m-1)}
\\
&\times \Big[y^{\bar \la} e_{\bar \la} \circ 1_\al\Big]\psi_{z(\bga-\bolde_i^{r},m)}y_{r_{b+1}(\la)}^{m-1}e_\la
\\
=& \sum_{r\in[1,m-1]:\ \ga_i^{(r)}=1} (-1)^{\sum_{s=r+1}^m \ga_i^{(s)}}\psi_{\bga-\bolde_i^{r};r,i}
\psi_{z(\bga-\bolde_i^{r},2)}\dots \psi_{z(\bga-\bolde_i^{r},m)}
\\
&\times \Big[y^{\bar \la} e_{\bar \la} \circ 1_\al\Big]y_{r_{b+1}(\la)}^{m-1}e_\la
\\
=& \sum_{r\in[1,m-1]:\ \ga_i^{(r)}=1} (-1)^{\sum_{s=r+1}^m \ga_i^{(s)}}\psi_{\bga-\bolde_i^{r};r,i}
\psi_{u(\bga-\bolde_i^{r})} y^\la e_\la.
\end{align*}
Thus $S_1+S_2$ equals the right hand side of the expression in the corollary.
\end{proof}

The following statement means that $g$ is chain map:

\begin{Proposition} \label{gchain}
Let $\la\in\La(n)$ and $\bga\in\bLa(n+1)$. Then
$$
\sum_{\mu\in\La(n+1)} g^{\bga,\mu}_{n+1}d_n^{\mu,\la}=\sum_{\bde\in\bLa(n)} c_n^{\bga,\bde}g^{\bde,\la}_{n}.
$$
\end{Proposition}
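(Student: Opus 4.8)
The plan is to mimic, almost verbatim, the proof of Proposition~\ref{fchain}. I would begin by observing that both sides of the asserted identity vanish for formal reasons except in one essentially unique situation: by definition $g_{n+1}^{\bga,\mu}=0$ unless $\mu=\la^\bga$, while $d_n^{\mu,\la}=0$ unless $\mu=\la+\e_i$ for some $i\in[a,b]$; and on the right-hand side $c_n^{\bga,\bde}=0$ unless $\bga=\bde+\bolde^{r}_i$ for some $i\in[a,b]$ and $r\in[1,m]$, while $g_n^{\bde,\la}=0$ unless $\la=\la^\bde$. Hence both sides are zero unless $\la^\bga=\la+\e_i$ for some (then unique) $i\in[a,b]$. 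Fixing such an $i$ and setting $\mu:=\la+\e_i=\la^\bga$, the proposition reduces to
\[
g_{n+1}^{\bga,\mu}\,d_n^{\mu,\la}
=\sum_{r\in[1,m]:\ \ga_i^{(r)}=1} c_n^{\bga,\bga-\bolde^{r}_i}\,g_n^{\bga-\bolde^{r}_i,\la}.
\]
Unwinding the definitions of $g$, $d$ and $c$, and using $e_\bde\psi_{u(\bde)}=\psi_{u(\bde)}1_{\bj^\la}$ for any $\bde$ with $\la^\bde=\la$, the left-hand side equals $\tau_\bga\sgn_{\la;i}\,e_\bga\psi_{u(\bga)}y^\mu e_\mu\psi_{\la;i}e_\la$, and the $r$-th summand on the right equals $\sgn_{\bga-\bolde^{r}_i;r,i}\,\tau_{\bga-\bolde^{r}_i}\,e_\bga\psi_{\bga-\bolde^{r}_i;r,i}\psi_{u(\bga-\bolde^{r}_i)}y^\la e_\la$.

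Next I would invoke Corollary~\ref{C160416G}, which states precisely that
\[
\psi_{u(\bga)}y^\mu e_\mu\psi_{\la;i}e_\la
=\sum_{r\in[1,m]:\ \ga_i^{(r)}=1}(-1)^{\sum_{s=r+1}^{m}\ga_i^{(s)}}\psi_{\bga-\bolde^{r}_i;r,i}\psi_{u(\bga-\bolde^{r}_i)}y^\la e_\la.
\]
Multiplying on the left by $\tau_\bga\sgn_{\la;i}e_\bga$ writes the left-hand side of the reduced identity as the sum over $\{r:\ \ga_i^{(r)}=1\}$ of $\tau_\bga\sgn_{\la;i}(-1)^{\sum_{s=r+1}^{m}\ga_i^{(s)}}$ times the element $e_\bga\psi_{\bga-\bolde^{r}_i;r,i}\psi_{u(\bga-\bolde^{r}_i)}y^\la e_\la$, which is exactly the $R_\theta$-element multiplying the sign $\sgn_{\bga-\bolde^{r}_i;r,i}\tau_{\bga-\bolde^{r}_i}$ in the $r$-th summand on the right. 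Therefore it suffices to prove, for each $r\in[1,m]$ with $\ga_i^{(r)}=1$, the combinatorial identity
\[
\tau_\bga\,\sgn_{\la;i}\,(-1)^{\sum_{s=r+1}^{m}\ga_i^{(s)}}
=\sgn_{\bga-\bolde^{r}_i;r,i}\,\tau_{\bga-\bolde^{r}_i}.
\]
Since $\mu=\la^\bga$ agrees with $\la$ in every part indexed by $j<i$, we have $\sgn_{\la;i}=\sgn_{\la^\bga;i}$, and (using $(-1)^{2t}=1$) the displayed identity is then exactly Lemma~\ref{LSignsG}. This completes the argument.

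A remark on where the difficulty sits: it is not in this proposition. Granting Corollary~\ref{C160416G} and the sign bookkeeping of Lemma~\ref{LSignsG}, the reduction above is purely formal and strictly parallel to the treatment of $f$ in Proposition~\ref{fchain}. The genuine content has already been absorbed into Lemma~\ref{L140416G} and its inductive packaging in Corollary~\ref{C160416G}: commuting $\psi_{z(\bga,m)}$ past the cycle $\psi_{\la;i}$, tracking which ``error terms'' arising from opening same-colour crossings survive right multiplication by $y^\la e_\la$, and identifying the surviving one with $(y^{\bar\mu}e_{\bar\mu}\psi_{\bar\la;i}e_{\bar\la}\circ 1_\al)\psi_{z^{\ga^{(m)}}_\la}y_{r_{b+1}(\la)}^{m-1}e_\la$ by means of Lemmas~\ref{LPull} and \ref{LDivPower}. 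Thus the main obstacle was cleared before reaching this statement; here it only remains to line Corollary~\ref{C160416G} up against the definitions of $g$, $d$, $c$ and to cite the sign lemma.
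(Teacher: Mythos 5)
Your proposal is correct and follows essentially the same route as the paper's own proof: reduce to the case $\la^\bga=\la+\e_i$, unwind the definitions of $g$, $d$, $c$, apply Corollary~\ref{C160416G}, and finish with the sign identity of Lemma~\ref{LSignsG} (your explicit remark that $\sgn_{\la;i}=\sgn_{\la^\bga;i}$ is a small point the paper leaves implicit). Nothing further is needed.
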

\begin{proof}
By definition, $d_n^{\mu,\la}=0$ unless $\mu=\la+\e_i$ for some $i\in [a,b]$, and $g^{\bga,\mu}_{n+1}=0$ unless $\mu=\la^\bga$. On the other hand, $g^{\bde,\la}_{n}=0$ unless $\la=\la^\bde$,
and
$c_n^{\bga,\bde}=0$, unless $\bde=\bga-\bolde_i^{r}$ for some $i\in [a,b]$ and $r\in[1,m]$. So we may assume that there exists $i\in [a,b]$ such that $\la^\bga=\la+\e_i$, in which case, setting $\mu:=\la^\bga$, we have to prove
$$
g^{\bga,\mu}_{n+1}d_n^{\mu,\la}=\sum_{r\in [1,m]:\ \ga_i^{(r)}=1}
c_n^{\bga,\bga-\bolde_i^{r}}g^{\bga-\bolde_i^{r},\la}_{n}.
$$
By definition of the elements involved, this means
\begin{align*}
&(\tau_\bga e_\bga\psi_{u(\bga)}y^\mu e_\mu)
(\sgn_{\la;i}e_\mu\psi_{\la;i}e_\la)
\\
=
&\sum_{r\in [1,m]:\ \ga_i^{(r)}=1}
(\sgn_{\bga-\bolde_i^{r};r,i}e_{\bga}\psi_{\bga-\bolde_i^{r};r,i}e_{\bga-\bolde_i^{r}})
(\tau_{\bga-\bolde_i^{r}} e_{\bga-\bolde_i^{r}}\psi_{u(\bga-\bolde_i^{r})}y^\la e_\la).
\end{align*}
Equivalently, we need to prove
\begin{align*}
\tau_\bga \sgn_{\la;i} \psi_{u(\bga)}y^\mu  e_\mu\psi_{\la;i}e_\la
=
\sum_{r\in [1,m]:\ \ga_i^{(r)}=1}
\sgn_{\bga-\bolde_i^{r};r,i} \tau_{\bga-\bolde_i^{r}}
\psi_{\bga-\bolde_i^{r};r,i}\psi_{u(\bga-\bolde_i^{r})}y^\la e_\la,
\end{align*}
which, in view of Corollary~\ref{C160416G}, is equivalent to the statement that
$$
\tau_\bga \sgn_{\la;i}=\sgn_{\bga-\bolde_i^{r};r,i} \tau_{\bga-\bolde_i^{r}}(-1)^{\sum_{s=r+1}^m \ga_i^{(s)}}
$$
for all $r\in [1,m]$ such that $\ga_i^{(r)}=1$. But this is Lemma~\ref{LSignsG}.
\end{proof}

\end{document}